\numberwithin{equation}{section}
\newcommand{\SU}{\mathrm{SU}(2)}
\newcommand{\OS}{O_{\mathrm{SU}(2)}}
\newcommand{\OO}{O_{\mathrm{SO}(3)}}
\newcommand{\PB}{\mathbb{PB}}
\newcommand{\lla}{\langle\langle}
\newcommand{\rra}{\rangle\rangle}
\newcommand{\lP}{P}
\newcommand{\lQ}{Q}
\newcommand{\lD}{{\lP/\lQ}}
\newcommand{\La}{\Lambda}
\newcommand{\dD}{\Delta}
\newcommand{\dS}{S}
\newcommand{\Hom}{\mathrm{Hom}}
\newcommand{\cB}{{\mathcal B}}
\newcommand{\modu}{{\text{-mod}}}
\newcommand{\wt}{{\text{wt}}}
\newcommand{\ab}{{\text{ab}}}
\newcommand{\N}{\mathbb{N}}
\newcommand{\Z}{\mathbb{Z}}
\newcommand{\R}{\mathbb{R}}
\newcommand{\C}{\mathbb{C}}
\newcommand{\Q}{\mathbb{Q}}
\newcommand{\mO}{\mathrm{O}}
\newcommand{\so}{{\mathrm{so}_{2n+1}}}
\newcommand{\vep}{{\varepsilon}}
\newcommand{\va}{\bm{1}}
\newcommand{\1}{\bm{1}}
\newcommand{\0}{{\bar{0}}}
\newcommand{\id}{{\mathrm{id}}}
\newcommand{\z}{{\bar{z}}}
\newcommand{\h}{{\bar{h}}}
\newcommand{\p}{{\bar{p}}}
\newcommand{\uz}{\underline{z}}
\newcommand{\s}{{\bar{s}}}
\newcommand{\n}{{\bar{n}}}
\newcommand{\Vect}{{\bf{Vec}}}
\newcommand{\al}{\alpha}
\newcommand{\be}{\beta}
\newcommand{\ga}{\gamma}
\newcommand{\om}{\omega}
\newcommand{\la}{\lambda}
\newcommand{\g}{{\mathfrak{g}}}
\newcommand{\fg}{{\mathfrak{g}}}
\newcommand{\fh}{{\mathfrak{h}}}
\newcommand{\CB}{{\mathcal{B}}}
\newcommand{\CE}{{\mathcal{E}}}
\newcommand{\Ld}{{\overline{L}}}
\newcommand{\tw}{{{I\hspace{-.1em}I}}}
\newcommand{\End}{\mathrm{End}}
\newcommand{\fora}{\text{ for any }}
\newcommand{\cop}{{\mathrm{cop}}}
\newcommand{\op}{{\mathrm{op}}}
\newcommand{\rev}{{\mathrm{rev}}}
\newtheorem{thm}{Theorem}[section]
\newtheorem{dfn}[thm]{Definition}
\newtheorem{lem}[thm]{Lemma}
\newtheorem{prop}[thm]{Proposition}
\newtheorem{cor}[thm]{Corollary}
\newtheorem{rem}[thm]{Remark}
\newtheorem{conj}{Conjecture}
\newtheorem{conji}{Conjecture}
\newtheorem{mainthm}{Main Theorem}
\begin{document}
\title[Quantum coordinate ring]{}

\begin{center}
{\LARGE \bf Quantum coordinate ring in WZW model and
affine vertex algebra extensions
} \par \bigskip

\renewcommand*{\thefootnote}{\fnsymbol{footnote}}
{\normalsize
Yuto Moriwaki \footnote{email: \texttt{moriwaki.yuto (at) gmail.com}}
}
\par \bigskip
{\footnotesize Research Institute for Mathematical Sciences, Kyoto University\\
Kyoto, Japan}

\par \bigskip
\end{center}

\noindent
\textbf{Abstract.}
In this paper, we construct various simple vertex superalgebras which are extensions of 
affine vertex algebras, by using abelian cocycle twists of representation categories of quantum groups.
This solves the Creutzig and Gaiotto conjectures \cite[Conjecture 1.1 and 1.4]{CG}
in the case of type ABC.
If the twist is trivial, the resulting algebras correspond to chiral differential operators in the chiral case, 
and to WZW models in the non-chiral case.

\vspace*{8mm}

\begin{center}
{\large \bf Introduction
}
\end{center}
 \par \bigskip

Let $\g$ be a finite-dimensional simple Lie algebra,
$h^\vee$ the dual Coxeter number
and $r^\vee$ the lacing number, that is,
$r^\vee=1$ (resp. $r^\vee=2$ and $r^\vee=3$) if the simple Lie algebra $\g$ is simply-raced
(resp. of type BCF and of type G).
Let $k,k' \in \C \setminus \Q$ and $N \in \Z$
satisfy 
\begin{align}
\frac{1}{r^\vee(k+h^\vee)}+\frac{1}{r^\vee(k'+h^\vee)}=N. \label{eq_level}
\end{align}
Let $\lP$ be the weight lattice and $\lQ$ the root lattice and set
\begin{align*}
V_{\g,k,k'}^N(\lP) = \bigoplus_{\la \in \lP^+}
L_{\g,k}(\la)\otimes L_{\g,k'}(\la^*),
\end{align*}
where $\lP^+$ is the dominant integer weights.
Here, $L_{\g,k}(\la)$ is {\it the Weyl module} induced from $L(\la)$, the irreducible finite dimensional representation of $\g$ with highest weight $\la$. The module induced from the dual representation  $L(\la)^*$ of level $k'$ is denoted by $L_{\g,k'}(\la^*)$.

Creutzig and Gaiotto conjectured that $V_{\g,k,k'}^N(\lP)$ inherits a {vertex superalgebra} structure based on gauge theory \cite[Conjecture 1.1]{CG}.
The construction of this algebra is considered to be important not only in gauge theory but also in various areas of mathematics, such as the quantum geometric Langlands program (see \cite{FG}).

For $N=0$, the condition \eqref{eq_level} can be written as $k+k'=-2h^\vee$.
Such a $\bar{k}=k'$ is called a {\it dual level}.
The $V_{\g,k,\bar{k}}^0(\lP)$ is called a {\it chiral differential operator}, 
and its geometric constructions are known \cite{AG,FS,GMS1,GMS2,Zh}.
In \cite[Corollary 1.4]{CKM2} and \cite[Proposition 5.3]{Fe}, it was shown that $V_{\g,k,k'}^N(\lP)$ is a vertex algebra when $N \in 2n_\g\Z$. Here $n_\g$ is the smallest positive integer such that
$n_\g \lP$ is an integral lattice with respect to the bilinear form $\lla-,-\rra:\lP\times \lP\rightarrow \Q$ 
which is normalized as $(\al,\al)=2$ for short roots $\al$.

In this paper, we will show that the construction for a general integer $N$ follows from a certain conjecture about the representation category of quantum groups (see Conjecture A below).
More precisely, if Conjecture A is true for $(\g,N)$,
then we show that $V_{\g,k,k'}^N(\lP)$ is an abelian intertwining algebra
with abelian cocycle $\mathrm{EM}^{-1}(\lQ_{\g}^N) \in H_\ab^3(\lD,\C^\times)$.

Conjecture A is proved for any $N \in 2\Z$ and any simple Lie algebra $\g$ (Proposition \ref{even_twist})
and for any $N \in \Z$ and simple Lie algebras $\g$ of type ABC (see Main Theorem A below).
This in particular solves the Creutzig-Gaiotto conjecture for type ABC.
In fact, we can construct a more general family of vertex superalgebras including $V_{\g,k,k'}^N(\lP)$.

More specifically, we construct families of simple vertex superalgebras which are extensions of 
$V_M\otimes \bigotimes_{i=1}^r L_{\g_i,k_i}(0)\otimes L_{\g_i,k'_i}(0)$, where $V_M$ is the lattice vertex (super)algebra associated with an integral lattice $M$  (for the precise statement, see Theorem B below).
This allows us to construct, for example, the following vertex superalgebra for any $n\geq 2$ (Proposition \ref{example_GL} and \ref{example_r}):
\begin{align*}
&\bigoplus_{\la \in \lP_{\mathrm{sl}_n}} L_{\mathrm{sl}_n,k}(\la)\otimes L_{\mathrm{sl}_n,k'}(\la^*)\otimes 
V_{\frac{i(\la)\sqrt{n}}{n}+\sqrt{n}\Z},\;\;\;\;\;\;
\text{ for }\frac{1}{k+n}+\frac{1}{k'+n}=1
\\
&\bigoplus_{\la \in \lP_{\mathrm{sl}_n}} \bigotimes_{i=1}^n
L_{\mathrm{sl}_n,k_i}(\la)\otimes L_{\mathrm{sl}_n,k'_i}(\la^*),\;\;\;\;\;\;\;\;\;\;\;\;\;\;\;\;
\text{ for }\frac{1}{k_i+n}+\frac{1}{k'_i+n}=1
\text{ and }i=1,\dots,n,
\end{align*}
where $V_{\sqrt{n}\Z}$ is a rank one lattice vertex (super)algebra and
$V_{\frac{i(\la)\sqrt{n}}{n}+\sqrt{n}\Z}$ is its module, $i$ is a map given by $i:\lP_{\mathrm{sl}_n}\rightarrow \lP_{\mathrm{sl}_n}/\lQ_{\mathrm{sl}_n} \cong \Z/n\Z$ (The first algebra is the algebra conjectured in \cite[Conjecture 1.4]{CG}).

In the following, we will explain that {\it quantum groups} and {\it quantum coordinate rings} appear naturally when considering extensions of 
affine vertex algebras, and then we will discuss the conjecture about the representation categories of quantum groups and the main results derived from it.

\noindent
\begin{center}
{0.1. \bf 
Quantum coordinate rings and WZW models
}
\end{center}


There is a natural correspondence between {\it commutative algebra objects} in the representation category of a vertex (operator) algebra and extensions of the vertex (operator) algebra \cite{FRS,HKL}.
The representation category of the affine VOA $L_{\g,k}(0)$
at level $k\in\C \setminus \Q$ is called the Drinfeld category $D(\g,k)$
\cite{TK,FZ,EFK}
and is equivalent to the representation category of the quantum group $(U_q(\g),R(\rho))\modu$ as braided tensor categories \cite{Dr1,Dr2,KL,Lu2}.
Here, $q=\exp(\pi i \rho), \rho=\frac{1}{r^\vee(k+h^\vee)}$, and $R(\rho)$ is the R-matrix of $U_q(\g)$ which gives the braiding.

The dual Hopf algebra of $U_q(\g)$ is called a {\it quantum coordinate ring}
and denoted by $\mO_q(G)$.
By the natural $U_q(\g)$-bimodule structure on $\mO_q(G)$,
it defines a commutative algebra object in $(U_q(\g),R(\rho))\otimes (U_q(\g)^\cop,R(\rho)^{-1})\modu$ (Proposition \ref{dual_commutative} and Proposition \ref{commutative_algebra}).


Using an isomorphism of quasi-triangular Hopf algebras 
$(U_q(\g),R(\rho)_{21}^{-1}) \cong (U_q(\g)^\cop,R(\rho)^{-1})$
(Proposition \ref{op_isomorphism})
and the Kazhdan-Lusztig correspondence, we can think of $\mO_q(G)$ as a commutative algebra object of $D(\g,k)\otimes D(\g,k)^\rev$,
where $D(\g,k)^\rev$ is the braided tensor category with the reverse braiding.
Hence, $\mO_q(G)$ defines an extension of
the tensor product of the ``holomorphic'' affine vertex algebra $L_{\g,k}(0)$ and an ``anti-holomorphic'' affine vertex algebra  $\overline{L_{\g,k}(0)}$,
\begin{align*}
F_{G,k}=\bigoplus_{\la \in \lP^+} L_{\g,k}(\la)\otimes \overline{L_{\g,k}(\la^*)},\;\;
Y:F_{G,k} \rightarrow \End F_{G,k}[[z,\z,|z|^\C]].
\end{align*}
This algebra satisfies the axiom of a
\renewcommand*{\thefootnote}{\fnsymbol{footnote}}
{\normalsize
{\it full vertex algebra}
 \footnote{
$F_{G,k}$ also satisfies the axiom of a {\it full field algebra}
introduced by Huang and Kong in \cite{HK}.
}}
 (one of the formulations of the non-chiral CFT introduced in \cite{M1}) and is nothing but the analytic continuation of {\it the WZW model} in physics (Proposition \ref{prop_WZW}).

Also, using an isomorphism $(U_q(\g)^\cop,R(\rho)^{-1}) \cong (U_{q^{-1}}(\g),R(-\rho))$ (Proposition \ref{psi_isomorphism}), 
$\mO_q(G)$
gives a commutative algebra object in 
$D(\g,k)\otimes D(\g,\bar{k})$,
which corresponds to the chiral differential operator 
$V_{\g,k,\bar{k}}^0(\lP)$.
These are the stories for the $N=0$ case.


\noindent
\begin{center}
{0.2. \bf 
Graded twisting
}
\end{center}

Next, we discuss the case where the level is shifted by $N \in \Z$.
The representation category of $U_q(\g)$ is natural graded by $\lP/\lQ$.
In general, for an abelian group $A$ and an $A$-graded braided tensor category, 
we can ``twist'' the category by an abelian cocycle $(\al,c) \in H_\ab^3(A,\C^\times)$ (see \cite{KW,NY}).
By Eilenberg and Mac Lane, $H_\ab^3(A,\C^\times)$ and the space of all quadratic forms on $A$ are isomorphic as a group.

Now, for $N\in \Z$, the quadratic form $Q_\g^N:\lD\rightarrow \C^\times$ on $\lP/\lQ$ is defined by
$$
Q_\g^N(\la) = \exp(N\pi i \lla \la,\la \rra) \text{ for }\la \in \lD.
$$
Denote by $(U_q(\g),R(\rho))\modu^{Q_\g^N}$
the twist of the braided tensor category $(U_q(\g),R(\rho))\modu$ by 
the abelian cocycle associated with the quadratic form $Q_\g^N$
 (for more precise definitions, see Section \ref{sec_twist_intro}
and Section \ref{sec_twist}).
Then, we expect that:
%
\begin{conji}
For any $N \in \Z$ and any simple Lie algebra $\g$,
$(U_q(\g),R(\rho))\modu^{Q_\g^N}$ and 
$(U_q(\g),R(\rho+N))$ are equivalent
as braided tensor categories.
\end{conji}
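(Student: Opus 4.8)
The plan is to realize the equivalence as (a twist of) the identity functor on the common underlying category --- both sides have the same simple objects, the modules $L(\la)$ indexed by $\la\in\lP^+$ and graded by the class of $\la$ in $\lD=\lP/\lQ$ --- and to check that the associativity and braiding constraints differ by exactly the abelian cocycle $\mathrm{EM}^{-1}(Q_\g^N)$. The only $\rho$-dependence of the braided structure that survives beyond the value of $q=\exp(\pi i\rho)$ sits in the Cartan/Casimir part of $R(\rho)$. Concretely, using ribbon twists $\theta_{L(\la)}=\exp(\pm\pi i\rho\,c_\la)$ with $c_\la=\lla\la,\la+2\varrho\rra$ ($\varrho$ the Weyl vector), the shift $\rho\to\rho+N$ multiplies the braiding on each isotypic summand $L(\lambda)\subseteq L(\mu)\otimes L(\nu)$ by $\exp\!\bigl(\pi i N\cdot\tfrac12(c_\lambda-c_\mu-c_\nu)\bigr)$, and replaces $q$ by $(-1)^N q$ in the unipotent (quasi-$R$-matrix) part. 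The first step is to read off these two pieces of data and match them against a cocycle twist.

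For $N\in 2\Z$ this is a clean identity. The scalar $\exp(\pi i N\lla\mu,\nu\rra)$ is well defined on $\lD\times\lD$ because $\lla\lP,\lQ\rra\subseteq\Z$, and since $\lambda\equiv\mu+\nu\pmod{\lQ}$ one has $\tfrac12(c_\lambda-c_\mu-c_\nu)=\lla\mu,\nu\rra+(-\lla\mu+\nu,\beta\rra+\tfrac12\lla\beta,\beta\rra-\lla\beta,\varrho\rra)$ for $\lambda=\mu+\nu-\beta$, $\beta\in\lQ$; the parenthesised correction is an integer (the root lattice is even and $\varrho\in\lP$), so for even $N$ the braiding-shift factor is the single scalar $\exp(\pi i N\lla\mu,\nu\rra)$ on all of $L(\mu)\otimes L(\nu)$. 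As $q$ is unchanged, the unipotent part and hence the whole underlying braided category is identical, the associator stays standard, and the lone modification is the bimultiplicative braiding phase $\exp(\pi i N\lla\mu,\nu\rra)$ --- precisely the braiding part of $\mathrm{EM}^{-1}(Q_\g^N)$ paired with a trivial associator. This is Proposition \ref{even_twist}.

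The odd case is the substance of the conjecture, and both simplifications break. First, $q\mapsto -q$, so $(U_q(\g),R(\rho+N))$ is genuinely a different quasi-triangular Hopf algebra, and one must relate $U_{-q}(\g)$ to $U_q(\g)$ by an isomorphism in the spirit of Propositions \ref{op_isomorphism} and \ref{psi_isomorphism}; this sign is exactly what makes the resulting extension a vertex \emph{super}algebra. Second, $\exp(\pi i N\lla\mu,\nu\rra)$ is no longer well defined on $\lD$ --- only $\exp(N\pi i\lla\la,\la\rra)=Q_\g^N(\la)$ is --- and the integer correction above now contributes genuine signs $(-1)^{N(\ldots)}$, so the braiding-shift is not a single scalar on $L(\mu)\otimes L(\nu)$. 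The plan is to show these two defects cancel: twisting the identification by the $\lD$-grading automorphism (equivalently, a shift by the Weyl vector) upgrades the ill-defined bilinear phase to the genuinely quadratic, well-defined $Q_\g^N$, while the residual isotypic signs are absorbed into the nontrivial associator $\alpha$ of the abelian cocycle $\mathrm{EM}^{-1}(Q_\g^N)$. In the language of braided zesting, the assertion is that an integer shift of $\rho$ is the braided zesting of $(U_q(\g),R(\rho))\modu$ by the quadratic form $Q_\g^N$.

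The main obstacle is constructing and verifying this isomorphism uniformly in $\g$ when $N$ is odd. The Eilenberg--Mac Lane correspondence guarantees abstractly that a cocycle with quadratic form $Q_\g^N$ exists, so the \emph{target} of the equivalence is never in doubt; the difficulty is to produce an \emph{actual} braided tensor functor realizing it, which amounts to matching all the coherence (pentagon/hexagon) signs coming from the $q\mapsto -q$ twist with those prescribed by $\alpha$. This compatibility is controlled by the fine arithmetic of $\lla-,-\rra$ on $\lP$ --- the evenness of $\lla\beta,\beta\rra$ on $\lQ$, the class of $\lla\la,\varrho\rra$ modulo $\Z$, and the behaviour of $Q_\g^N$ on the finite group $\lD$ --- and I expect it to hold in general only after a type-by-type analysis. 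For type $ABC$ the explicit presentation of $U_q(\g)$ and of $R(\rho)$ makes the verification tractable, which is what Main Theorem A accomplishes; the remaining types are precisely where the conjecture is left open.
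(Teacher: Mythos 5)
Your proposal and the paper are in the same position overall: the statement is genuinely a conjecture, and both you and the paper establish it only in the even case plus partial odd cases. Your even-$N$ argument is correct and is essentially the paper's Proposition \ref{even_twist}: since $Q_\g^{2N}(x)=B_N(x,x)$ for the bicharacter $B_N(\mu,\nu)=\exp(2\pi i N\lla\mu,\nu\rra)$ on $\lD$, Lemma \ref{lem_degenerate_quad} lets one take the associator trivial, and the identity functor works because only the Cartan factor $f_\rho$ in $R(\rho)=(\Theta\circ f_\rho)^{-1}$ changes; your Casimir computation $\tfrac12(c_\la-c_\mu-c_\nu)\equiv\lla\mu,\nu\rra \pmod\Z$ is a ribbon-twist rephrasing of the paper's direct check $B_N\circ f_\rho^{-1}=f_{\rho+2N}^{-1}$ on weight spaces.

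Where you diverge from the paper is in how the settled odd cases are actually proved, and here your anticipated route would not go through. You expect the type ABC cases to follow from "the explicit presentation of $U_q(\g)$ and of $R(\rho)$," i.e.\ a direct, type-by-type matching of coherence signs. The paper does this only for type B (the Appendix, Theorem \ref{thm_B}), and the reason it is possible there is Lemma \ref{lem_compatible}: the sign representations $\C\chi_\la$ factor through $\lD=\lP/\lQ$ exactly when $\lla\al,\be\rra\in2\Z$ for all $\al,\be\in\lQ$, which holds only for types $A_1$ and $B_n$. Outside those types the mechanism of implementing the graded twist by tensoring with $\chi$ breaks down, so an explicit presentation-based verification in the style you describe is not available for $\mathrm{sl}_n$ ($n\geq3$), $\mathrm{sp}_{2n}$, or $\mathrm{so}_{2n}$. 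For those the paper instead invokes reconstruction theorems of Kazhdan--Wenzl (type A) and Tuba--Wenzl (types BCD): a semisimple rigid monoidal category with the given fusion rules is determined by a pair of parameters, and the braiding is determined by its eigenvalues on $X\otimes X$ for the vector representation $X$. This collapses the entire coherence problem to matching two or three explicit scalars, which is what the proof of Theorem \ref{odd_twist} does; the price is that the argument only applies where such classification theorems exist, and for type D the Tuba--Wenzl result covers only the subcategory generated by $X$ (no spin representations), which is precisely why the paper's type D statement is weaker than the conjecture. So your even case stands, but your sketch of the odd case conflates the paper's two distinct strategies, and the one you propose is exactly the one that fails beyond type B.
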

We will show this conjecture in the following cases (Proposition \ref{even_twist} and Theorem \ref{odd_twist}):
\begin{mainthm}
Conjecture A is true at least in the following cases:
\begin{enumerate}
\item
$N \in 2\Z$ for any simple Lie algebra $\g$;
\item
$N \in \Z$ for a simple Lie algebra $\g$ of type ABC.
\end{enumerate}
Furthermore, in the case of type D,
$(U_q(\mathrm{so}_{2n}),R(\rho),\Lambda_v)\modu^{Q_{\mathrm{so}_{2n}}}$ and $(U_q(\mathrm{so}_{2n}),R(\rho+N),\Lambda_v)\modu$
are equivalent as braided tensor categories,
where $(U_q(\mathrm{so}_{2n}),R(\rho),\Lambda_v)\modu$ is the full subcategory of $(U_q(\mathrm{so}_{2n}),R(\rho))\modu$ generated by the vector representation of $U_q(\mathrm{so}_{2n})$.
\end{mainthm}
We briefly explain the proof of Theorem A. In the case of type B, we use the Hopf algebra isomorphism $\phi:U_q(\so)\rightarrow U_{-q}(\so)$. In this case, the pullback by $\phi$ does not transfer the type 1 representation of $U_{-q}(\so)$ to the type 1 representation $U_{q}(\so)$. 
This is why ``the twisted category'' appears. This will be discussed in detail in the appendix.
In the case of type ACD, Theorem A can be proved by using a characterization of the braided tensor category of type A (resp. type BCD) by \cite{KW}(resp. \cite{TW}), instead of a direct proof using such Hopf algebra isomorphisms.
The result for the braided tensor category of type BD by \cite{TW} is a characterization of subcategories, and hence the claim for type D is weaker than Conjecture A.


\noindent
\begin{center}
{0.3. \bf 
Lax monoidal functors and main theorem
}
\end{center}

We will now state the result and the proof of the construction of vertex superalgebras (Main Theorem B) using Conjecture A.
Let $A$ be an abelian group and let $\Vect_A$ be the 
category of $A$-graded vector spaces.
Then, $\Vect_A$ is a trivial braided tensor category.
Using the fact that $\mO_q(G)$ gives a commutative algebra object in
 $(U_q(\g),R(\rho))\otimes (U_{q^{-1}}(\g),R(-\rho))\modu$
and that $\mO_q(G)$ is a $\lD$-graded algebra,
we can construct {\it a lax monoidal functor}
$$
O_\g: \Vect_\lD \rightarrow (U_q(\g),R(\rho))\otimes (U_{-q}(\g),R(-\rho))\modu
$$
which preserves the braiding (we call it {\it a lax braided monoidal functor}, see Section \ref{sec_braided}).

Using Conjecture A with simultaneous grading twists of $\Vect_\lD$ and $(U_q(\g),R(\rho))\modu$, we obtain a lax braided monoidal functor (Lemma \ref{lem_lax0} and Proposition \ref{prop_lax})
$$
O_\g^N: \Vect_\lD^{Q_\g^N} \rightarrow (U_q(\g),R(\rho+N))\otimes (U_{-q}(\g),R(-\rho))\modu.
$$
Since a lax braided monoidal functor transfers a supercommutative algebra object to a supercommutative algebra object (Lemma \ref{lem_lax}), we only need to find a supercommutative algebra object in $\Vect_\lD^{Q_\g^N}$.

To find a supercommutative algebra object in a more flexible framework, we consider an even lattice $M$.
The dual lattice of $M$ is $M^\vee$, and a quadratic form on the abelian group $M^\vee/M$ is defined by $Q_M(\la)=\exp(\pi i (\la,\la))$.
Then, from \cite{DL}, the representation category of the lattice vertex algebra $V_M$ is equivalent to $\Vect_{M^\vee/M}^{Q_M}$
as a braided tensor category.
Thus, from supercommutative algebra objects in 
$\Vect_{\lD \oplus M^\vee/M}^{Q_\g^N\oplus Q_M}$, we can construct extensions of 
$L_{\g,k}(0)\otimes L_{\g,k'}(0)\otimes V_M$. Here, $k,k'$ 
satisfy 
\begin{align*}
\rho+N = \frac{1}{r^\vee(k+h^\vee)},\;\;\;\;\;\;\;\;
-\rho = \frac{1}{r^\vee(k'+h^\vee)},
\end{align*}
which is essentially the same with $\frac{1}{r^\vee(k+h^\vee)}+\frac{1}{r^\vee(k'+h^\vee)}=N$.
Also, supercommutative algebra objects in 
$\Vect_{\lD\oplus M^\vee/M}^{Q_\g^N\oplus Q_M}$
can be constructed from {\it a super isotropic subspace} of quadratic space $(\lD\oplus M^\vee/M,Q_\g^N\oplus Q_M)$.
(for the definition of a super isotropic subspace, see Section \ref{sec_twist_intro}). Our main result can then be stated as follows:
\begin{mainthm}
Let $r \in \Z_{>}$ and $\g_i$ be simple Lie algebras
and $k_i,k_i' \in \C \setminus \Q$ and $N_i \in \Z$
satisfy 
$$
\frac{1}{r_i^\vee(k_i+h_i^\vee)}+\frac{1}{r_i^\vee(k_i'+h_i^\vee)}=N_i
$$
for $i=1,\dots,r$.
Let $M$ be an even lattice
and $(A,Q)$ a quadratic space defined by
\begin{align*}
A=(\bigoplus_{i=1}^r \lP_i/\lQ_i) \oplus M^\vee/M,\;\;\;\;\;\;\;
Q=(\bigoplus_{i=1}^r Q_{\g_i}^{N_i})\oplus Q_M.
\end{align*}
Let $(I,p)$ be a super isotropic subspace of the quadratic space $(A,Q)$.
Set 
\begin{align*}
V_{\vec{\g},\vec{k},\vec{k'},M}^{\vec{N}}(I)&=\bigoplus_{(\la_1,\dots,\la_r,\mu) \in I} \bigotimes_{i=1}^r L_{\g_i,k_i,k_i'}^{N_i}(\la_i+\lQ_i)\otimes V_{\mu+M},
\end{align*}
for $(\g,k,k',M,I)$ with $\vec{\g}=(g_1,\dots,g_r)$ and $\vec{k}=(k_1,\dots,k_r),\vec{k'}=(k'_1,\dots,k'_r),\vec{N}=(N_1,\dots,N_r)$,
where 
\begin{align*}
L_{\g_i,k_i,k_i'}^{N_i}(\la_i+\lQ_i)
=\bigoplus_{\la \in (\la_i+\lQ_i)\cap \lP_i^+}
L_{\g_i,k_i}(\la) \otimes L_{\g_i,k'_i}(\la).
\end{align*}
Assume that for each $a=1,\dots,r$
one of the following conditions is satisfied:
\begin{enumerate}
\item
$N_a$ is even;
\item
The simple Lie algebra $\g_a$ is of type ABC;
\item
The simple Lie algebra $\g_a$ is of type D
and $\mathrm{pr}_a(I) \subset \Lambda_v/\lQ_i$,
where $\mathrm{pr}_a:(\oplus_{i=1}^r \lP_i/\lQ_i) \oplus M^\vee/M
\rightarrow \lP_a/\lQ_a$ is the projection to the $a$-th component.
\end{enumerate}
Then, there is a simple vertex superalgebra structure on $V_{\vec{\g},\vec{k},\vec{k'},M}^{\vec{N}}(I)$
as an extension of $\left(\bigotimes_{i=1}^r L_{\g_i,k_i}(0)\otimes {L_{\g_i,k'_i}(0)}\right) \otimes V_M$.
Furthermore, the even part ($s=0$) and the odd part ($s=1$) are given by
$$
V_{\vec{\g},\vec{k},\vec{k'},M}^{\vec{N}}(I)_s=\bigoplus_{\substack{(\la_1,\dots,\la_r,\mu) \in I\\p(\la_1,\dots,\la_r,\mu)=s}} \bigotimes_{i=1}^r L_{\g_i,k_i,k_i'}^{N_i}(\la_i+\lQ)\otimes V_{\mu+M}.
$$
\end{mainthm}
We remark that Conditions (1), (2) and (3) in the theorem are due to the fact that Conjecture A has only been partially proved.
The list of all vertex superalgebras (except for type A) that can be constructed from the theorem for $M=0,r=1$ is summarized in Table \ref{table1_intro} and \ref{table_S_intro}.
In this case, super isotropic subspaces $I \subset \lD$ correspond to lattices $L$ such that $\lQ \subset L \subset \lP$.
\begin{table}[htbp]
  \begin{tabular}{cc}
    \begin{minipage}[c]{0.5\hsize}
      \centering
  \begin{tabular}{|c||c|c|c|} \hline
type & shift $N$& lattice $L$ & super \\ \hline \hline
$B_{2n}$ & $1+2\Z$ & $\lP$ & S \\
$B_{2n}$ & $2\Z$ & $\lP$ & \\
$B_{2n+1}$ & $\Z$ & $\lQ$ & \\
$B_{2n+1}$ & $2+4\Z$ & $\lP$ & S \\
$B_{2n+1}$ & $4\Z$ & $\lP$ &\\ \hline
$C_n$ & $1+2\Z$ & $\lP$ & S \\
$C_n$ & $2\Z$ & $\lP$ & \\ \hline
$D_{n}$ & $1+2\Z$ & $\La_v$ & S \\
$D_{n}$ & $2\Z$ & $\La_v$ & \\
$D_{4n+2}$ & $2+4\Z$ & $\lP$ & S\\
$D_{4n}$ & $2\Z$ & $\lP$ & \\
$D_{2n}$ & $4\Z$ & $\lP$ & \\
$D_{2n+1}$ & $4+8\Z$ & $\lP$ & S \\
$D_{2n+1}$ & $8\Z$ & $\lP$ & \\ \hline
 \hline
\end{tabular}
\caption{List of type BCD}
\label{table1_intro}
    \end{minipage} &
    \begin{minipage}[c]{0.5\hsize}
      \centering
  \begin{tabular}{|c||c|c|c|} \hline
type & shift $N$ & lattice $L$ & super\\ \hline \hline
$E_{6}$ & $6\Z$ & $\lP$ &  \\
$E_{6}$ & $2\Z$ & $\lQ$ &  \\
$E_{7}$ & $2+4\Z$ & $\lP$ & S \\
$E_{7}$ & $4\Z$ & $\lP$ & \\
$E_{8}$ & $2\Z$ & $\lP$ &  \\ \hline
$F_{4}$ & $2\Z$ & $\lP$ &  \\ \hline
$G_{2}$ & $2\Z$ & $\lP$ &  \\
 \hline
\end{tabular}
\caption{List of type EFG}
\label{table_S_intro}
    \end{minipage}
  \end {tabular}
\end{table}

In the case of $A_{n-1}$ type, there are various ways to choose shift $N$ and lattice $L$. In fact, for any $N \in \Z$,
we can show that $V_{\mathrm{sl}_{n},k,k'}^{N}(m\lP+\lQ)$
is a vertex superalgebra if $\exp(\frac{m^2N(n-1)}{n}\pi i)=-1$
and a non-super vertex algebra if $\exp(\frac{m^2N(n-1)}{n}\pi i)=1$ (see Proposition \ref{example_even} (4)).


\vspace{4mm}

This paper is organized as follows:
We recall the definition of 
a braided tensor category, a lax monoidal functor and supercommutative algebra object in Section \ref{sec_braided}
and 
the fundamental results of abelian cocycles, graded twists
and super isotropic subspaces in Section \ref{sec_twist_intro}.
In Section \ref{sec_quasi}, we recall some elementary results of
quasi-triangular Hopf algebra
and in Section \ref{sec_affine1},
we briefly review the Drinfeld category
and the fact that supercommutative algebra objects in 
the Drinfeld category corresponds to vertex superalgebra extensions
of the affine vertex algebra. Non-chiral cases are briefly explained in Section \ref{sec_affine2}, which will be used for the construction of WZW models.

Section \ref{sec_quantum_def} and \ref{sec_bilinear}
is devoted to recalling the definition and some results of quantum group, its R-matrix and the Kazhdan-Lusztig correspondence.
Then, we show that isomorphisms among quantum groups
give equivalences of braided tensor categories in Section \ref{sec_isomorphism}.
In Section \ref{sec_twist}, we state Conjecture A and prove Theorem A.

In Section \ref{sec_quantum_coordinate} and \ref{sec_lax},
we recall the definition of quantum coordinate rings
and construct the lax monoidal functor by using it.
Then, Theorem B is proved.
Various vertex algebras as applications of Theorem B are constructed in Section \ref{sec_application}.
Finally, in Appendix, Theorem A is proved in the case of type B.

\noindent
\begin{center}
{\bf Acknowledgements}
\end{center}

I wish to express my gratitude to
Shigenori Nakatsuka for letting me know about the Creutzig and Gaiotto's conjecture and valuable discussions
and to Yuki Arano for discussions on quantum groups,
and to Makoto Yamashita
and Hironori Oya for giving me the references.
I would also like to thank Tomoyuki Arakawa and Thomas Creutzig
for valuable comments.
This work was supported by the Research Institute for Mathematical Sciences,
an International Joint Usage/Research Center located in Kyoto University.


\section{Preliminary}
\label{sec_preliminary}
\subsection{Notations}
\label{sec_notation}

We fix the following notations:
\begin{itemize}
\item[] $\g$ is a finite-dimensional simple Lie algebra over $\C$,
\item[] $\fh$ its Cartan subalgebra,
\item[] $\Delta \subset \fh^*$ the root system,
\item[] $\Pi =\{\al_1,\dots,\al_r \} \subset \Delta$ the set of simple roots,
\item[] Let $(-,-)$ be an invariant bilinear form on $\g$ normalized by $(\al,\al)=2$ for long roots $\al$,
\item[] $h^\vee$ the dual Coxeter number,
\item[] $\lP \subset \fh^*$ the weight lattice,
\item[] $\lP^+ \subset \lP$ the dominant integer weights,
\item[] $\lQ \subset \lP$ the root lattice,
\item[] $\rho \in \lP$ half the sum of all positive roots,
\item[] Let $\lla-,-\rra$ be another invariant bilinear form on $\g$ normalized by 
$\lla\al,\al \rra=2$
for short roots $\al$.
\end{itemize}

Let $z$ and $\z$ be independent formal variables.
We will use the notation $\underline{z}$ for the pair $(z,\z)$ and $|z|$ for $z\z$.
For a vector space $V$,
we denote by $V[[z^\C,\z^\C]]$
the set of formal sums 
$$\sum_{s,\s \in \C} a_{s,\s}
z^{s} \bar{z}^{\s}$$ such that $a_{s,\s} \in V$.
The space $V[[z^\C,\z^\C]]$ contains various useful subspaces:
\begin{align*}
V[[z^\C]]&=\{\sum_{s \in \C} a_{s}
z^{s}\;|\;a_s \in V \}, \\
V[[z^\pm]]&=\{\sum_{n \in \Z} a_{n}
z^{n}\;|\;a_n \in V \}, \\
V[[z,\z,|z|^\C]] &= \{\sum_{s,\s \in \C} a_{s,\s}
z^{s} \bar{z}^{\s}\;|\;a_{s,\s}=0 \text{ unless }s-\s \in \Z
\},\\
V[[z,\z]] &= \{\sum_{n,\n \in \N} a_{n,\n}
z^{n} \bar{z}^{\n}\;|\;a_{n,\n} \in V\}.
\end{align*}

We also denote by
$V((z,\z,|z|^\C))$ the subspace of $V[[z,\z,|z|^\C]]$
consisting of the series
$\sum_{s,\s \in \R} a_{s,\s}z^{s} \bar{z}^{\s} \in V[[z,\z,|z|^\C]]$  such that:
\begin{enumerate}
\item
For any $H \in \R$,
$\#\{(s,\s)\in \R^2\;|\; a_{s,\s}\neq 0 \text{ and }\mathrm{Re}\; (s+\s)\leq H \}
$ is finite.
\item
There exists $N \in \R$ such that
$a_{s,\s}=0$ unless $\mathrm{Re}\;s \geq N$ and $\mathrm{Re}\;\s \geq N$
\end{enumerate}
and $V((z))$ the subspace of $V[[z^\pm]]$ consisting of the series
$\sum_{n \in \Z} a_{n}z^{n} \in V[[z^\pm]]$  such that:
\begin{enumerate}
\item
There exists $N \in \R$ such that
$a_{n}=0$ unless $n \geq N$.
\end{enumerate}
The space $V((z))$ is called the space of formal Laurent series.
Thus, $V((z,\z,|z|^\C))$ is a generalization of the Laurent series to two-variables.

\subsection{Braided tensor category}
\label{sec_braided}
This section is mainly based on \cite{EGNO} (for a lax monoidal functor, see also \cite{AM}).
Let $\mathcal{B}$ be an essentially small, $\C$-linear, monoidal category.
We write 
$$\otimes\colon \mathcal{B}\times \mathcal{B}\rightarrow \mathcal{B},\quad (M,N)\mapsto M\otimes N$$
for the tensor product functor with unit object $1_\mathcal{B}$, by 
\begin{align*}
&l_{\bullet}\colon 1_\mathcal{B} \otimes \bullet\xrightarrow{\simeq} \bullet,\quad (l_M\colon 1_\mathcal{B}\otimes M\xrightarrow{\simeq} M),\\
&r_{\bullet}\colon \bullet \otimes 1_\mathcal{B}\xrightarrow{\simeq} \bullet,\quad (r_M\colon M\otimes  1_\mathcal{B}\xrightarrow{\simeq} M),\\
&{\al}_{\bullet,\bullet,\bullet}\colon (\bullet \otimes\bullet)\otimes \bullet\xrightarrow{\simeq} \bullet\otimes (\bullet\otimes \bullet),\quad \left({\al}_{M,N,L}\colon (M\otimes N)\otimes L\xrightarrow{\simeq} M\otimes (N\otimes L)\right),
\end{align*}
the structural natural isomorphisms of functors satisfying the pentagon and triangle axioms, see \cite{Ka,BK}.
The last one is called the associativity isomorphism.

{\it A braided monoidal tensor category} is a monoidal tensor category $\mathcal{B}$ equipped with a natural isomorphism of functors, called the \emph{braiding},
$${B}_{\bullet,\bullet}\colon(\bullet\otimes\bullet)\xrightarrow{\simeq} ( \bullet\otimes \bullet)\circ \sigma,\quad ({B}_{M,N}\colon M\otimes N\xrightarrow{\simeq}N\otimes M),$$
where $\sigma$ is the functor
$$\sigma\colon \mathcal{C}\times \mathcal{C}\rightarrow \mathcal{C}\times \mathcal{C},\quad (M,N)\mapsto (N,M).$$
The natural isomorphism ${B}_{\bullet,\bullet}$ is required to satisfy the hexagon identity,
$$
\begin{array}{ccccc}
   (M \otimes N) \otimes L 
   &\stackrel{\al_{M,N,L}}{\rightarrow}&
   M \otimes (N \otimes L)
   &\stackrel{B_{M,N \otimes L}}{\rightarrow}&
   (N \otimes L) \otimes M
   \\
   \downarrow^{B_{M,N}\otimes Id}
   &&&&
   \downarrow^{\al_{N,L,M}}
   \\
   (N \otimes M) \otimes L
   &\stackrel{\al_{N,M,L}}{\rightarrow}&
   N \otimes (M \otimes L)
   &\stackrel{Id \otimes B_{M,L}}{\rightarrow}&
   N \otimes (L \otimes M)
\end{array}
$$
and
$$
\begin{array}{ccccc}
   M \otimes (N \otimes L) 
   &\stackrel{\al_{M,N,L}^{-1}}{\to}&
   (M \otimes N) \otimes z
   &\stackrel{B_{M \otimes N, L}}{\to}&
   L \otimes (M \otimes N)
   \\
   \downarrow^{Id \otimes B_{N,L}}
   &&&&
   \downarrow^{\al_{L,M,N}^{-1}}
   \\
   M \otimes (L \otimes N)
   &\stackrel{\al^{-1}_{M,L,N}}{\to}&
   (M \otimes L) \otimes N
   &\stackrel{B_{M,L} \otimes Id}{\to}&
   (L \otimes M) \otimes N
\end{array}.
$$

A \emph{twist}, or a \emph{balance}, in a braided monoidal category $\cB$ is a natural transformation $\theta$ from the identity functor on $\cB$ to itself satisfying 
$$
B_{M,N}B_{N,M}=\theta_{M\otimes N}\circ (\theta_M^{-1} \otimes \theta_N^{-1}).
$$
A {\it balanced monoidal category} is a braided monoidal category equipped with such a balance.

Let $(\cB,\al,{B})$ be a braided tensor category
and $B_{\bullet,\bullet}^\rev$ a natural isomorphism defined by
$$
B_{M,N}^\rev = B_{N,M}^{-1}: M \otimes N \rightarrow N\otimes M.
$$
Then, it is clear that $(\cB,\al,B^\rev)$ is a braided tensor category,
which is denoted by $\cB^\rev$.

%

A {\it supercommutative algebra object} in $\mathcal{B}$ is a triple $(S=S_0\oplus S_1, \{m_{i,j}\}_{i,j =0,1}, \eta)$,
consisting of objects $S_0,S_1 \in Ob(\mathcal{B})$ and morphisms  $m_{i,j} \in \Hom_\mathcal{B}(S_i\otimes S_j,S_{i+j})$ and a nonzero morphism $\eta \in \Hom_\mathcal{B}(1_\mathcal{B},S_0)$ such that:
\begin{enumerate}
\item[(SCA1)] $m\circ (\eta\otimes \id)\circ l_S^{-1} = \id =m\circ (\id \otimes \eta)\circ r_S^{-1}$ as a map $S\rightarrow S$;
\item[(SCA2)]  $m \circ (m \otimes \id) = m \circ (\id \otimes m)\circ \al_{S,S,S}$ as a map $(S\otimes S)\otimes S \rightarrow S$,
\end{enumerate}
where $m:S\otimes S \rightarrow S$ is a map defined linearly by $\{m_{i,j}\}_{i,j =0,1}$,
and 
\begin{enumerate}
\item[(SCA3)] $m_{j,i} \circ B_{S_i,S_j} = (-1)^{ij} m_{i,j}$ as a map $S_i\otimes S_j \rightarrow S_{i+j}$ for any $i,j \in \Z_2$.
\end{enumerate}
If a supercommutative algebra object in $\mathcal{B}$ consists of only the even part, i.e., $S_1=0$,
then the triple $(S_0, m_{0,0}, \eta)$ is called a {\it commutative algebra object} in $\mathcal{B}$.

Let $\mathcal{C}$ and $\mathcal{D}$ be braided tensor categories.
A {\it lax monoidal} functor between them is a functor $F:\mathcal{C} \rightarrow \mathcal{D}$ and a morphism 
$$\epsilon:1_{{\mathcal{D}}} \rightarrow F(1_{\mathcal{C}})$$
and a natural transformation 
$$\mu_{M,N}:F(M)\otimes_\mathcal{D} F(N) \rightarrow F(M\otimes_{\mathcal{C}} N) \;\;\;\;\;\;\;\;(\text{for all } M,N \in \mathcal{C})$$
such that:
\begin{enumerate}
\item[LM1)]
For all objects $M,N,L \in \mathcal{C}$, the following diagram commutes
\begin{align*}
\begin{array}{ccc}
    (F(M) \otimes_{\mathcal{D}} F(N)) \otimes_{\mathcal{D}} F(L)
      &\stackrel{\al^{\mathcal{D}}_{F(M),F(N),F(L)}}{\longrightarrow}&
    F(M) \otimes_{\mathcal{D}}( F(N)\otimes_{\mathcal{D}} F(L) )
    \\
   \downarrow^{\mu_{M,N} \otimes id}
      && 
    \downarrow^{{id\otimes \mu_{N,L}}}
    \\
    F(M \otimes_{\mathcal{C}} N) \otimes_{\mathcal{D}} F(L)
     &&
    F(M) \otimes_{\mathcal{D}} F(N \otimes_{\mathcal{C}} L)
    \\
   \downarrow^{\mu_{M \otimes_{\mathcal{C}} N , L}}
      && 
    \downarrow^{{\mu_{ M, N \otimes_{\mathcal{C}} L  }}}
    \\
    F((M \otimes_{\mathcal{C}} N) \otimes_{\mathcal{C}} L)
      &\underset{F(\al^{\mathcal{C}}_{M,N,L})}{\longrightarrow}&
    F(M \otimes_{\mathcal{C}} (N \otimes_{\mathcal{C}} L))
\end{array}
\end{align*}
\item[LM2)]
For all $M \in \mathcal{C}$ the following diagram commute
\begin{align*}
\begin{array}{ccc}
    1_{\mathcal{D}} \otimes_{\mathcal{D}} F(M)
      &\overset{\epsilon \otimes id}{\longrightarrow}&
    F(1_{\mathcal{C}}) \otimes_{\mathcal{D}} F(M)
    \\
    {}^{{l^{\mathcal{D}}_{F(M)}}}\downarrow 
      && 
    \downarrow^{{\mu_{1_{\mathcal{C}}, M }}}
    \\
    F(M) 
      &\overset{F(l^{\mathcal{C}}_M )}{\longleftarrow}&
    F(1 \otimes_{\mathcal{C}} M  )
\end{array}
\end{align*}
and
\begin{align*}
\begin{array}{ccc}
    F(M) \otimes_{\mathcal{D}}  1_{\mathcal{D}}
      &\overset{id \otimes \epsilon }{\longrightarrow}&
    F(M) \otimes_{\mathcal{D}}  F(1_{\mathcal{C}}) 
    \\
    {}^{{r^{\mathcal{D}}_{F(M)}}}\downarrow 
      && 
    \downarrow^{{\mu_{M, 1_{\mathcal{C}} }}}
    \\
    F(M) 
      &\overset{F(r^{\mathcal{C}}_M )}{\longleftarrow}&
    F(M \otimes_{\mathcal{C}} 1 ),
\end{array}
\end{align*}
where $l^\mathcal{C},r^\mathcal{C},l^\mathcal{D},r^\mathcal{D}$ denote the left and right unitors of
the two monoidal categories, respectively.
\end{enumerate}
Note that if $\epsilon$ and all $\mu_{M,N}$ are isomorphism, then $F$ is called a {\it monoidal functor} (sometimes called a strong monoidal functor).

A lax monoidal functor $F:\mathcal{C}\rightarrow \mathcal{D}$ is called a {\it lax braided monoidal functor} if
\begin{align*}
\begin{array}{ccc}
    F(M) \otimes_{\mathcal{D}}  F(N)
      &\overset{B_{F(M),F(N)}^\mathcal{D}}{\longrightarrow}&
    F(N) \otimes_{\mathcal{D}}  F(M)
    \\
    {}^{{\mu_{M,N}}}\downarrow 
      && 
    \downarrow^{{\mu_{N,M}}}
    \\
    F(M\otimes_{\mathcal{C}} N) 
      &\overset{F(B_{M,N}^\mathcal{C})}{\longrightarrow}&
    F(N\otimes_{\mathcal{C}} M)
\end{array}
\end{align*}
for all objects $M,N \in \mathcal{C}$.

The following lemma is clear from the definition.
\begin{lem}
\label{lem_lax}
Let $F:\mathcal{C} \rightarrow \mathcal{D}$ be a lax braided monoidal functor
and $(S=S_0\oplus S_1, \{m_{i,j}\}_{i,j =0,1}, \eta)$ be a supercommutative algebra object in $\mathcal{C}$.
Then, $F(S_0)\oplus F(S_1)$ is naturally equipped with the structure of a supercommutative algebra object in $\mathcal{D}$
by setting
$$
\eta^F:1_{\mathcal{D}} \overset{\epsilon}{\rightarrow} F(1_\mathcal{C})  \overset{ F(\eta)}{\rightarrow} F(S_0)
$$
and for $i,j =0,1$
$$
m_{i,j}^F: F(S_i)\otimes_{\mathcal{D}} F(S_j)  \overset{\mu_{S_i,S_j}}{\rightarrow} F(S_i\otimes_{C} S_j )  
\overset{ F(m_{i,j})}{\rightarrow} F(S_{i+j}).
$$
\end{lem}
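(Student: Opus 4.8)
The plan is to check the three axioms (SCA1), (SCA2) and (SCA3) directly for the candidate triple $(F(S_0)\oplus F(S_1),\{m^F_{i,j}\},\eta^F)$, treating each as a diagram chase in $\mathcal{D}$. The single recurring tool is the naturality of the structure transformation $\mu$: for morphisms $f\colon M\to M'$ and $g\colon N\to N'$ in $\mathcal{C}$ one has $\mu_{M',N'}\circ(F(f)\otimes F(g))=F(f\otimes g)\circ\mu_{M,N}$. In each axiom I would use this to slide the inner occurrences of $F(m)$ (resp.\ $F(\eta)$) past $\mu$, thereby rewriting every composite as $F$ applied to a composite of $\mathcal{C}$-morphisms, at which point the matching axiom for $S$ in $\mathcal{C}$ together with one of the functor coherences (LM1, LM2 or the lax braided square) finishes the verification. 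Since $F$ is $\C$-linear it respects the decomposition $F(S)=F(S_0)\oplus F(S_1)$, so it is harmless to work with the assembled maps $m\colon S\otimes S\to S$ and $m^F\colon F(S)\otimes F(S)\to F(S)$.

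For associativity (SCA2), naturality of $\mu$ (with $f=m$, $g=\id_S$) gives $m^F\circ(m^F\otimes\id)=F\!\big(m\circ(m\otimes\id)\big)\circ\mu_{S\otimes S,S}\circ(\mu_{S,S}\otimes\id)$, and similarly $m^F\circ(\id\otimes m^F)\circ\al^{\mathcal{D}}_{FS,FS,FS}=F\!\big(m\circ(\id\otimes m)\big)\circ\mu_{S,S\otimes S}\circ(\id\otimes\mu_{S,S})\circ\al^{\mathcal{D}}$. Applying (SCA2) for $S$, namely $m\circ(m\otimes\id)=m\circ(\id\otimes m)\circ\al^{\mathcal{C}}$, inside $F$ and then invoking exactly the coherence diagram LM1 to identify $F(\al^{\mathcal{C}})\circ\mu_{S\otimes S,S}\circ(\mu_{S,S}\otimes\id)$ with $\mu_{S,S\otimes S}\circ(\id\otimes\mu_{S,S})\circ\al^{\mathcal{D}}$ equates the two sides. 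The unit axiom (SCA1) is analogous but shorter: naturality of $\mu$ with $f=\eta$, $g=\id_S$ reduces $m^F\circ(\eta^F\otimes\id)$ to $F\!\big(m\circ(\eta\otimes\id)\big)\circ\mu_{1_{\mathcal{C}},S}\circ(\epsilon\otimes\id)$; since $m\circ(\eta\otimes\id)=l^{\mathcal{C}}_S$ by (SCA1) for $S$, the left unitor square LM2 collapses this to $l^{\mathcal{D}}_{FS}$, giving $\id$ after composing with $(l^{\mathcal{D}}_{FS})^{-1}$, and the right unit is symmetric.

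For supercommutativity (SCA3), starting from $m^F_{j,i}\circ B^{\mathcal{D}}_{F(S_i),F(S_j)}=F(m_{j,i})\circ\mu_{S_j,S_i}\circ B^{\mathcal{D}}_{F(S_i),F(S_j)}$, the lax braided square (with $M=S_i$, $N=S_j$) rewrites $\mu_{S_j,S_i}\circ B^{\mathcal{D}}$ as $F(B^{\mathcal{C}}_{S_i,S_j})\circ\mu_{S_i,S_j}$, so the whole expression becomes $F\!\big(m_{j,i}\circ B^{\mathcal{C}}_{S_i,S_j}\big)\circ\mu_{S_i,S_j}$. Now (SCA3) for $S$ gives $m_{j,i}\circ B^{\mathcal{C}}_{S_i,S_j}=(-1)^{ij}m_{i,j}$, and the $\C$-linearity of $F$ pulls the scalar out to yield $(-1)^{ij}F(m_{i,j})\circ\mu_{S_i,S_j}=(-1)^{ij}m^F_{i,j}$, as required.

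There is no deep obstacle here, which is why the statement is asserted to be ``clear from the definition''; the content is entirely bookkeeping. The only point needing genuine care is keeping the naturality squares of $\mu$ oriented correctly and matching their source and target objects to the coherences LM1, LM2 and the lax braided square, since each axiom for $S$ must be inserted at precisely the stage where the composite has been turned into $F$ of a $\mathcal{C}$-morphism. The associativity check is the most involved, as it is the only one that couples the naturality of $\mu$ with the nontrivial coherence LM1; the sign in (SCA3) is handled purely by the $\C$-linearity of $F$ and so presents no difficulty.
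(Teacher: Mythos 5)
Your proposal is correct, and it fills in exactly the routine verification that the paper leaves implicit: the paper offers no proof beyond the remark that the lemma is "clear from the definition," and your diagram chase (naturality of $\mu$ to move $F(m)$ and $F(\eta)$ past $\mu$, then LM1, LM2, and the lax braided square combined with (SCA1)--(SCA3) for $S$, with $\C$-linearity handling the sign) is precisely the intended argument. No discrepancy with the paper's approach, and each coherence square is invoked with the correct orientation.
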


\subsection{Graded twist of braided tensor categories}
\label{sec_twist_intro}
In this section, we will review how to construct a new braided tensor category using an abelian cocycle
and give examples of braided tensor categories and (super) commutative algebra objects, which play an important role in this paper.

Let $A$ be a finite abelian group, and let $\om:A\times A\times A\rightarrow \C^\times$
and $c:A\times A \rightarrow \C^\times$ be maps satisfying
\begin{align}
\om(\al_1+\al_2,\al_3,\al_4)\om(\al_1,\al_2,\al_3+\al_4)
&=\om(\al_1,\al_2,\al_3)\om(\al_1,\al_2+\al_3,\al_4)\om(\al_2,\al_3,\al_4)\nonumber \\
\om(\al_2,\al_3,\al_1)c(\al_1,\al_2+\al_3)\om(\al_1,\al_2,\al_3)
&=c(\al_1,\al_3)\om(\la_2,\al_1,\al_3)c(\al_1,\al_2)\label{eq_ab_co} \\
\om(\al_3,\al_1,\al_2)^{-1}c(\al_1+\al_2,\al_3)\om(\al_1,\al_2,\al_3)^{-1}
&=c(\al_1,\al_3)\om(\la_1,\al_3,\al_2)^{-1}c(\al_2,\al_3)\nonumber 
\end{align}
for any $\al_1,\al_2,\al_3,\al_4 \in A$.
Let $Z_{\ab}^3(A,\C^\times)$ be the set of all pairs of functions $(\om,c)$ satisfying \eqref{eq_ab_co},
which is an abelian group with respect to the pointwise multiplication,
and elements of this group are called {\it abelian cocycles}.
Any map $k:A\times A \rightarrow \C^\times$ defines an abelian cocycle $(d_2(k),c_k)$ by
\begin{align}
(d_2 k)(\al_1,\al_2,\al_3)&=
k(\al_2,\al_3)k(\al_1+\al_2,\al_3)^{-1}k(\al_1,\al_2+\al_3)k(\al_1,\al_2)^{-1},\label{eq_ab_bo} \\
c_k(\al_1,\al_2)&=k(\al_1,\al_2)k(\al_2,\al_1)^{-1},\nonumber
\end{align}
for $\al_1,\al_2,\al_3 \in A$.
Let $B_\ab^3(A,\C^\times)$ be the subgroup of {\it abelian coboundaries},
that is, of the abelian cocycles defined by \eqref{eq_ab_bo}.
The group $H_\ab^3(A,\C^\times)=Z_\ab^3(A,\C^\times) / B_\ab^3(A,\C^\times)$
is called the {\it abelian cohomology group} of $A$ with coefficients in $\C^\times$.

Abelian cocycles $Z_\ab^3(A,\C^\times)$ can be used to construct a new braided tensor category
from an $A$-graded braided tensor category.
{\it An $A$-graded category} $C$ is a $\C$-linear category with full subcategories
$C_a$ for $a \in A$, such that any object $M$ in $C$ admits a unique (up to isomorphism) decomposition
$M\cong \bigoplus_{a \in A}M_a$ with $M_a \in C_a$ 
and there are no nonzero morphisms between objects in $C_a$ and $C_b$ for any $a\neq b$.
We say that $C$ is {\it an $A$-graded braided tensor category} if in addition it is a braided tensor category,
such that $1_C \in C_0$ and the monoidal structure satisfies $M \otimes N \in C_{ab}$
for all homogeneous objects $M \in C_a$ and $N \in C_b$.
For a semisimple braided tensor category $C$, there is a universal grading (see \cite{BNY}).

Let $(\mathcal{B},\al,B)$ be an $A$-graded braided tensor category and $(\om,c) \in Z_{\ab}^3(A,\C^\times)$.
Let us define a new braided tensor category $\mathcal{B}^{\om,c}$ as follows.
As a category, it is the same as $\mathcal{B}$ and the bifunctor $\otimes$ and
the unit object $1_\mathcal{B}$ are also the same as those in $\mathcal{B}$.
The differences are in the new associativity isomorphism $\al^\om$
and the unit morphisms $l^\om$, $r^\om$ and
the braiding $B^c$.
They are defined by the formulas,
\begin{align*}
\al_{M_1,M_2,M_3}^\om &= \om(\al_1,\al_2,\al_3) \al_{M_1,M_2,M_3}
:(M_1\otimes M_2)\otimes M_3\rightarrow M_1\otimes (M_2\otimes M_3)\\
l_{M_1}^\om&=\om(0,0,\al_1)^{-1}l_{M_1}: 1_{\mathcal{B}}\otimes M_1 \rightarrow M_1\\
r_{M_1}^\om&=\om(\al_1,0,0)r_{M_1} :M_1 \otimes 1_{\mathcal{B}} \rightarrow M_1\\
B_{M_1,M_2}^c &= c(\al_1,\al_2) B_{M_1,M_2} : M_1\otimes M_2\rightarrow M_2\otimes M_1
\end{align*}
for any $\al_i \in A$ and $M_i \in \mathcal{B}_{\al_i}$ ($i=1,2,3$).
We note that by \cite[Remark 2.6.3]{EGNO}, we may assume that the abelian cocycle satisfies $\om(\al,0,0)=\om(0,0,\be)=1$ for any $\al,\be \in A$, which is called a {\it normalized cocycle}.

A category of $A$-graded vector spaces give an example of an $A$-graded braided tensor category.
Let $\Vect_A$ denote the category of $A$-graded vector spaces over $\C$, i.e., vector spaces $V$ with a
decomposition $V=\bigoplus_{\al \in A}V_\al$.
Morphisms in this category are linear maps which preserve the grading. 
Define the tensor product on this category by the formula 
\begin{align*}
(V\otimes W)_\al=\bigoplus_{\be \in A} V_{\be}\otimes W_{\al-\be}
\end{align*}
and the unit object ${\bf1}_{\Vect_A}=\C \delta_0$,
where for $\al \in A$ $\C \delta_\al$ is a one-dimensional vector space
defined by
\begin{align*}
(\C \delta_\al)_\be=\begin{cases}
\C & (\text{if }\al=\be)\\
0 & (\text{otherwise}).
\end{cases}
\end{align*}
Then $\Vect_A$ is a braided tensor category with the associativity isomorphism, unit morphisms, and the braiding being the identities.
Since $\Vect_A$ has an $A$-grading as a braided tensor category,
for any $(\om,c) \in Z_\ab^3(A,\C^\times)$, $\Vect_A^{\om,c}$ is a braided tensor category.
We note that the equivalence classes of $\Vect_{A}^{\om,c}$ as braided tensor categories 
only depends on the cohomology classes in $H_\ab^3(A,\C^\times)$.

Due to Eilenberg and Mac Lane, any abelian cocycle can be obtained from quadratic forms on $A$.
More precisely, a quadratic form on $A$ with values in $\C^\times$ is a map
$Q:A\rightarrow \C^\times$ such that $Q(\al)=Q(-\al)$ and the symmetric function
$$
b(\al,\be)=\frac{Q(\al+\be)}{Q(\al)Q(\be)}
$$
is a bicharacter, i.e., $b(\al_1+\al_2,\be)=b(\al_1,\be)b(\al_2,\be)$ for any $\al_1,\al_2,\be \in A$.
Set $\mathrm{Quad}(A)$ be the group of quadratic forms on $A$.

For any abelian cocycle $(\om,c) \in Z_\al^3(A,\C^\times)$,
let $Q_{\om,c}:A\rightarrow \C^\times$ be a function defined by
$Q_{\om,c}(\al)=c(\al,\al)$ for $\al \in A$.
Then, it is easy to check that $Q_{\om,c}$ is a quadratic form
and the map $\mathrm{EM}:H_\ab^3(A,\C^\times)\rightarrow \mathrm{Quad}(A),\;(\om,c)\mapsto Q_{\om,c}$
is well-defined. Then, Eilenberg and Mac Lane show the following result (see \cite{EGNO}):
\begin{thm}[Eilenberg and Mac Lane]
\label{thm_coho}
The above map $\mathrm{EM}:H_\ab^3(A,\C^\times)\rightarrow \mathrm{Quad}(A)$ is an isomorphism of abelian groups.
\end{thm}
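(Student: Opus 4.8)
The plan is to show that $\mathrm{EM}$ is a well-defined group homomorphism and then to produce an explicit two-sided inverse $\Phi:\mathrm{Quad}(A)\to H_\ab^3(A,\C^\times)$; bijectivity will follow by checking that both composites are the identity. First I would verify well-definedness at the level of cochains. The second and third relations in \eqref{eq_ab_co} express that $c$ is bimultiplicative up to the coboundary of $\om$; extracting the diagonal and forming $b(\al,\be)=Q_{\om,c}(\al+\be)/(Q_{\om,c}(\al)Q_{\om,c}(\be))$, the $\om$-contributions cancel and one is left with $b(\al,\be)=c(\al,\be)c(\be,\al)$, which these same relations show to be a bicharacter; together with $Q_{\om,c}(-\al)=Q_{\om,c}(\al)$ this makes $Q_{\om,c}$ a genuine quadratic form. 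That $\mathrm{EM}$ descends to cohomology is immediate, since a coboundary has $c_k(\al,\al)=k(\al,\al)k(\al,\al)^{-1}=1$, and the homomorphism property is clear because both groups carry the pointwise product and $(c_1c_2)(\al,\al)=c_1(\al,\al)c_2(\al,\al)$.

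Next I would construct $\Phi$. Using the structure theorem, fix an isomorphism $A\cong\bigoplus_i\Z/n_i\Z$, and split a quadratic form $Q$ into its restrictions $Q_i$ to the cyclic factors together with the cross bicharacters $b_{ij}$ between distinct factors. On a single cyclic factor $\Z/n\Z$, with representatives $[\,\cdot\,]\in\{0,\dots,n-1\}$, I would set $c(a,b)=t^{[a][b]}$, where $t$ is the root of unity with $Q(1)=t$; a direct check shows that the well-definedness of $c$ modulo $n$ is equivalent to the order condition on $t$ forced by $Q$ being quadratic on $\Z/n\Z$, and that $c(x,x)=t^{[x]^2}=Q(x)$. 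The accompanying $\om$ is then the normalized $3$-cocycle on $\Z/n\Z$ determined (up to coboundary) by the hexagon relations. The cross terms $b_{ij}$ are realized by purely bilinear pairings, i.e.\ abelian cocycles with trivial $\om$ and $c$ equal to the pairing. Assembling these pieces over the decomposition defines $\Phi(Q)$, and by construction its diagonal is $Q$, so $\mathrm{EM}\circ\Phi=\id$.

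The main obstacle is the reverse composite $\Phi\circ\mathrm{EM}=\id$, which is exactly the injectivity statement: every abelian cocycle is cohomologous to the standard one attached to its quadratic form. Here I would argue that, given $(\om,c)$, one may choose a function $k:A\times A\to\C^\times$ that is bilinear on the fixed decomposition and whose $c_k$ equals the ratio $c\,c_{\mathrm{std}}^{-1}$ of the braiding of $(\om,c)$ to that of $\Phi(Q_{\om,c})$; this ratio has trivial diagonal and trivial symmetric part, hence is a skew form admissible as such a $c_k$. Multiplying by the coboundary $(d_2k,c_k)$ reduces to the case where the two braidings agree, and then the residual cocycle has trivial braiding part, so it is a symmetric abelian $3$-cocycle with trivial ($\pm1$-valued) quadratic form; such a cocycle is a coboundary, a classical fact that can be checked directly on the cyclic factors. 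The two delicate points are that the required roots of the relevant roots of unity exist (using divisibility of $\C^\times$) and that the hexagon relations are maintained simultaneously across all factors of the decomposition; organizing this verification factor-by-factor, with the cross terms handled by the bilinear pairings above, completes the proof that $\mathrm{EM}$ is an isomorphism.
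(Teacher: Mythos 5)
First, a point of comparison: the paper itself offers no proof of this theorem — it is quoted as a classical result of Eilenberg and Mac Lane with \cite{EGNO} as the reference — so your argument has to stand entirely on its own. Your first two steps follow the standard direct approach and are fine in outline: the hexagon relations do give that $b(\al,\be)=c(\al,\be)c(\be,\al)$ is a bicharacter, and surjectivity follows from the explicit cocycle $c(a,b)=t^{[a][b]}$ (with its compensating $\om$) on each cyclic factor, together with bicharacter cocycles (trivial $\om$, $c=b_{ij}$) for the cross terms. One small but real omission: you assert $Q_{\om,c}(-\al)=Q_{\om,c}(\al)$ without proof. Evenness is \emph{not} a consequence of $b$ being a bicharacter — on $\Z/4\Z$ the character $Q(x)=i^{x}$ has trivial associated $b$ yet $Q(-1)=-i\neq i=Q(1)$ — so it must be extracted from the hexagon identities (e.g.\ via $Q(n\al)=Q(\al)^{n^2}$), which takes an actual computation.

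The genuine gap is in the injectivity step. You want a \emph{bilinear} $k$ with $c_k=r$, where $r=c\,c_{\mathrm{std}}^{-1}$ is the ratio of braidings. For bilinear $k$ one has $d_2(k)=1$ and $c_k$ is a skew \emph{bicharacter}; but $r$, while skew and trivial on the diagonal, is in general not bimultiplicative — the braiding component of an abelian cocycle is multiplicative only up to $\om$-factors, and this defect survives in the ratio. Already abelian coboundaries give counterexamples: on $A=\Z/2\Z$ take $k(0,1)=s\neq 1$ and $k=1$ otherwise; then $c_k(0,1)=s$ while $c_k(1,1)=c_k(0,0)=1$, so $c_k(0+1,1)=1\neq s=c_k(0,1)c_k(1,1)$. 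This $(d_2k,c_k)$ is itself an abelian cocycle with trivial quadratic form whose braiding cannot equal $c_{k'}$ for any bilinear $k'$, so your reduction — first equalize the braidings by a coboundary with trivial $3$-cocycle part, then dispose of a braiding-trivial symmetric cocycle — cannot even begin. Allowing non-bilinear $k$ destroys the second step (then $d_2k\neq 1$), and you are back to proving exactly the statement at issue: every abelian cocycle with trivial quadratic form is a coboundary. That is the hard half of the Eilenberg--Mac Lane theorem; the known proofs go through a K\"unneth-type decomposition of $H_\ab^3(A,\C^\times)$ for finite abelian $A$, or through the identification $H_\ab^3(A,\C^\times)\cong H^4(K(A,2),\C^\times)$ together with the computation of $H_4(K(A,2))$ as Whitehead's universal quadratic group — machinery your sketch replaces with an assertion that the residual cocycle "can be checked directly on the cyclic factors."
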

We will denote by $\Vect_{A}^Q$ the braided tensor category associated with
$\mathrm{EM}^{-1}(Q) \in H_\ab^3(A,\C^\times)$ for $Q \in \mathrm{Quad}(A)$.
The following proposition in \cite[Section 8.4]{EGNO} is useful:
\begin{lem}
\label{lem_degenerate_quad}
Let $Q \in \mathrm{Quad}(A)$ and $(\om,c) \in Z_\ab^3(A,\C^\times)$ be a corresponding abelian cocycle.
The following conditions are equivalent:
\begin{enumerate}
\item
There exists a bicharacter $B:A\times A\rightarrow \C^\times$ such that
$Q(x)=B(x,x)$ for all $x \in A$.
\item
The 3-cocycle $\om$ is trivial, i.e., there exists a map $k:A\times A\rightarrow \C^\times$
such that $\om =d_2(k)$.
\end{enumerate}
\end{lem}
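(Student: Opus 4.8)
The plan is to reduce both implications to the Eilenberg--Mac Lane isomorphism (Theorem \ref{thm_coho}) together with an explicit description of what an abelian cocycle looks like once its $3$-cocycle part $\om$ is normalized to be trivial. The starting observation is that abelian coboundaries are invisible to $\mathrm{EM}$: for the coboundary $(d_2 k,c_k)$ defined in \eqref{eq_ab_bo} one has $c_k(\al,\al)=k(\al,\al)k(\al,\al)^{-1}=1$, so its associated quadratic form is trivial and $\mathrm{EM}$ indeed factors through $H_\ab^3(A,\C^\times)$. Consequently, replacing $(\om,c)$ by any cohomologous cocycle does not change the quadratic form $Q$, and I am free to choose the most convenient representative in each direction.

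First I would record the shape of abelian cocycles with $\om\equiv 1$. Substituting $\om\equiv 1$ into the second and third relations of \eqref{eq_ab_co} collapses them to $c(\al_1,\al_2+\al_3)=c(\al_1,\al_2)c(\al_1,\al_3)$ and $c(\al_1+\al_2,\al_3)=c(\al_1,\al_3)c(\al_2,\al_3)$; that is, $(1,c)\in Z_\ab^3(A,\C^\times)$ if and only if $c$ is a bicharacter, in which case its quadratic form is exactly $Q_{1,c}(\al)=c(\al,\al)$. This is the technical heart, and it is a purely routine verification from the definitions.

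For (2) $\Rightarrow$ (1): assume $\om=d_2(k)$. Then $(\om,c)\cdot(d_2 k,c_k)^{-1}=(1,\,c\,c_k^{-1})$ is cohomologous to $(\om,c)$, hence carries the same quadratic form $Q$; by the previous paragraph $B:=c\,c_k^{-1}$ is a bicharacter and $Q(\al)=Q_{1,B}(\al)=B(\al,\al)$, giving (1). Conversely, for (1) $\Rightarrow$ (2): given a bicharacter $B$ with $Q(x)=B(x,x)$, the pair $(1,B)$ is an abelian cocycle with $\mathrm{EM}(1,B)=Q=\mathrm{EM}(\om,c)$. Since $\mathrm{EM}$ is an isomorphism by Theorem \ref{thm_coho}, in particular injective on cohomology classes, $(1,B)$ and $(\om,c)$ are cohomologous, so $(\om,c)=(1,B)\cdot(d_2 k,c_k)$ for some $k:A\times A\to\C^\times$, whence $\om=d_2(k)$.

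The only real obstacle is the bookkeeping in the first step --- correctly specializing \eqref{eq_ab_co} at $\om\equiv 1$ and confirming that the two surviving identities are precisely multiplicativity of $c$ in each variable; everything else is formal once the $\mathrm{EM}$ isomorphism and the triviality of coboundaries under $\mathrm{EM}$ are in hand. I would also work with a normalized representative, so that $\om(\al,0,0)=\om(0,0,\be)=1$, to avoid spurious boundary factors when manipulating the unit morphisms, although this choice does not affect the cohomological argument itself.
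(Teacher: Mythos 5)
Your proof is correct. Note that the paper itself does not prove this lemma at all: it is quoted directly from \cite[Section 8.4]{EGNO}, so there is no in-paper argument to compare against, and what you have written is a genuine self-contained replacement for that citation. Your reduction is the standard one and it holds up: (i) abelian coboundaries $(d_2k,c_k)$ have $c_k(\al,\al)=1$, so $\mathrm{EM}$ descends to cohomology and cohomologous cocycles share the same quadratic form; (ii) setting $\om\equiv 1$ in the two hexagon relations of \eqref{eq_ab_co} collapses them to multiplicativity of $c$ in each argument, so the cocycles with trivial $3$-cocycle part are exactly the pairs $(1,B)$ with $B$ a bicharacter; (iii) the direction (2)$\Rightarrow$(1) then follows by multiplying $(\om,c)$ by $(d_2k,c_k)^{-1}$, and (1)$\Rightarrow$(2) follows from injectivity of $\mathrm{EM}$ (Theorem \ref{thm_coho}), which forces $(\om,c)$ and $(1,B)$ to be cohomologous. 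The only input beyond routine computation is the Eilenberg--Mac Lane isomorphism, which the paper states before the lemma, so there is no circularity; in effect your argument shows the lemma is a formal corollary of Theorem \ref{thm_coho} plus the identification of $\ker(\om\text{-part})$ with bicharacters, which is also how the source text treats it.
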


We will construct (super) commutative algebra objects in $\Vect_{A}^Q$.
Let $Q$ be a quadratic form on $A$.
A pair of a subgroup $I \subset A$ and a group homomorphism $p:I\rightarrow \Z_2$
 is called a {\it super isotropic subspace} of $(A,Q)$ if it satisfies
\begin{align}
Q(\al) = (-1)^{p(\al)}, \;\;\;\;\;\text{ for any }\al \in I. \label{eq_isotropic}
\end{align}
If a {super isotropic subspace} $(I,p)$ satisfies $p(\al)=0$ for any $\al \in I$,
i.e., the restriction of the quadratic form on $I$ is trivial,
then $I$ is called an {\it isotropic subspace} of $(A,Q)$.

Let $(I,p)$ be a {super isotropic subspace} of $A$
and set 
\begin{align*}
S(I)_{0}=\bigoplus_{\al \in A,p(\al)=0}\C\delta_{\al},\\
S(I)_{1}=\bigoplus_{\al \in A,p(\al)=1} \C\delta_{\al},
\end{align*}
objects in $\Vect_{A}^Q$.
Let $(\om_Q,c_Q) \in Z_\ab^3(A,\C^\times)$ be the normalized abelian cocycle associated with the quadratic form $Q$
by Theorem \ref{thm_coho}.

\begin{lem}
\label{lem_existence_coboundary}
There is a function $k:I \times I \rightarrow \C^\times$ such that:
\begin{align}
k(\al,0)&=1=k(0,\al),\nonumber \\
\om_Q(\al_1,\al_2,\al_3)&=
k(\al_2,\al_3)k(\al_1+\al_2,\al_3)^{-1}k(\al_1,\al_2+\al_3)k(\al_1,\al_2)^{-1},\label{eq_resolve} \\
c_Q(\al_1,\al_2)&=(-1)^{p(\al_1)p(\al_2)}k(\al_1,\al_2)k(\al_2,\al_1)^{-1}, \nonumber
\end{align}
for any $\al,\al_1,\al_2,\al_3 \in I$.
\end{lem}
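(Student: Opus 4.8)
The plan is to restrict everything to $I$ and to build $k$ in two stages: first solve the $\om$-identity using Lemma~\ref{lem_degenerate_quad}, then correct $k$ by a bimultiplicative factor so that the $c_Q$-identity also holds, without disturbing the $\om$-identity or the normalization. First I would note that the restriction $(\om_Q|_I,c_Q|_I)$ is again an abelian cocycle, and since $c_Q(\al,\al)=Q(\al)$ it corresponds under $\mathrm{EM}$ to the restricted form $Q|_I$. Because $p$ is a homomorphism and $Q(\al)=(-1)^{p(\al)}$ on $I$, the symmetric bicharacter $b(\al,\be)=Q(\al+\be)/\bigl(Q(\al)Q(\be)\bigr)$ is identically $1$ on $I$; equivalently $B(\al,\be):=(-1)^{p(\al)p(\be)}$ is a bicharacter on $I$ with $B(\al,\al)=(-1)^{p(\al)}=Q(\al)$. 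Hence condition (1) of Lemma~\ref{lem_degenerate_quad} holds for $(I,Q|_I)$, so condition (2) furnishes a function $k_0\colon I\times I\to\C^\times$ with $\om_Q|_I=d_2(k_0)$. Evaluating $d_2(k_0)$ at $(\al,0,0)$ and $(0,0,\al)$ and using that $\om_Q$ is normalized gives $k_0(\al,0)=k_0(0,\al)=k_0(0,0)$ for all $\al$; dividing $k_0$ by the constant $k_0(0,0)$, which alters neither $d_2(k_0)$ nor $c_{k_0}$, I may assume $k_0(\al,0)=1=k_0(0,\al)$.

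Next I would compare the two abelian cocycles $(\om_Q|_I,c_Q|_I)$ and $(d_2(k_0),c_{k_0})$ on $I$. They have the same first component, so their quotient is $(1,\be)$ with $\be:=c_Q|_I\cdot c_{k_0}^{-1}$; substituting $\om\equiv 1$ into the second and third relations of \eqref{eq_ab_co} yields $\be(\al_1,\al_2+\al_3)=\be(\al_1,\al_2)\be(\al_1,\al_3)$ and $\be(\al_1+\al_2,\al_3)=\be(\al_1,\al_3)\be(\al_2,\al_3)$, i.e. $\be$ is a bicharacter. Since $c_{k_0}(\al,\al)=1$, I get $\be(\al,\al)=c_Q(\al,\al)=Q(\al)=(-1)^{p(\al)}$.

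Now set $\ga(\al_1,\al_2):=\be(\al_1,\al_2)\,(-1)^{p(\al_1)p(\al_2)}$, a product of two bicharacters and hence a bicharacter, and observe $\ga(\al,\al)=(-1)^{p(\al)}(-1)^{p(\al)}=1$. Thus $\ga$ is an \emph{alternating} bicharacter on the finite abelian group $I$, so $\ga(\al_2,\al_1)=\ga(\al_1,\al_2)^{-1}$. I would then realize $\ga$ as the commutator of a bimultiplicative function: writing $I\cong\bigoplus_{i=1}^m\Z/n_i$ with generators $e_i$, set $\la(e_i,e_j)=\ga(e_i,e_j)$ for $i<j$, $\la(e_i,e_j)=1$ for $i\ge j$, and extend bimultiplicatively. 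This is well defined because $\ga(e_i,e_j)$ is a root of unity of order dividing $\gcd(n_i,n_j)$, a direct check gives $\la(\al_1,\al_2)\la(\al_2,\al_1)^{-1}=\ga(\al_1,\al_2)$, and any bimultiplicative $\la$ satisfies $d_2(\la)=1$ together with $\la(\al,0)=1=\la(0,\al)$.

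Putting $k:=k_0\cdot\la$ then preserves $d_2(k)=d_2(k_0)=\om_Q|_I$ and the normalization $k(\al,0)=1=k(0,\al)$, giving the first two identities of Lemma~\ref{lem_existence_coboundary} (the second being \eqref{eq_resolve}), while $c_k=c_{k_0}\cdot c_\la=c_{k_0}\cdot\ga=c_{k_0}\cdot\be\cdot(-1)^{p(\al_1)p(\al_2)}=c_Q|_I\cdot(-1)^{p(\al_1)p(\al_2)}$, so that $(-1)^{p(\al_1)p(\al_2)}c_k(\al_1,\al_2)=c_Q(\al_1,\al_2)$, which is the third identity. The main obstacle is this last construction: producing a bimultiplicative square root $\la$ of the alternating form $\ga$, which is precisely where the structure theorem for finite abelian groups and the compatibility of the chosen values with the orders $n_i$ are needed; the rest is the formal bookkeeping of abelian (co)cycles combined with Lemma~\ref{lem_degenerate_quad}.
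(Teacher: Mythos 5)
Your proof is correct, but the second half takes a genuinely different route from the paper's. Both arguments open the same way: the bicharacter $B_I(\al,\be)=(-1)^{p(\al)p(\be)}$ has diagonal $Q|_I$ by the super-isotropy condition \eqref{eq_isotropic}. The paper then finishes in one stroke: $(1_I,B_I)$ and $(\om_Q|_I,c_Q|_I)$ are two abelian cocycles on $I$ with the same image under $\mathrm{EM}$, so by Theorem \ref{thm_coho} — specifically the \emph{injectivity} of $\mathrm{EM}$ on $H^3_\ab(I,\C^\times)$ — they differ by an abelian coboundary $(d_2k,c_k)$, which is precisely the assertion. You instead use Lemma \ref{lem_degenerate_quad} only to trivialize the associator, $\om_Q|_I=d_2(k_0)$, and then repair the braiding by hand: the leftover $\ga(\al_1,\al_2)=c_Q(\al_1,\al_2)c_{k_0}(\al_1,\al_2)^{-1}(-1)^{p(\al_1)p(\al_2)}$ is an alternating bicharacter, which you split as $c_\la$ for a bimultiplicative $\la$ defined on a basis of $I$ (the well-definedness check via $\gcd(n_i,n_j)$ and the skew-symmetry $\ga(\al_2,\al_1)=\ga(\al_1,\al_2)^{-1}$ are both verified correctly). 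In effect you re-prove, by explicit construction, exactly the special case of $\mathrm{EM}$-injectivity that the paper quotes wholesale; what this buys is a more elementary, self-contained argument (only the structure theorem for finite abelian groups plus Lemma \ref{lem_degenerate_quad}), at the cost of a generator-dependent construction, whereas the paper's proof is shorter but leans on the full strength of the stated isomorphism $H_\ab^3(A,\C^\times)\cong\mathrm{Quad}(A)$. A minor additional point in your favor: your normalization step (dividing $k_0$ by the constant $k_0(0,0)$, which changes neither $d_2(k_0)$ nor $c_{k_0}$) is spelled out more carefully than in the paper, which derives $k(0,\al)=k(\al,0)$ from normalizedness of $\om_Q$ and asserts the value is $1$ without recording this harmless rescaling.
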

\begin{proof}
Let $B_I:I\times I\rightarrow \C^\times$
be a bicharacter defined by 
$B_I(\al,\be)= (-1)^{p(\al)p(\be)}$ for $\al,\be \in I$.
Then, by \eqref{eq_isotropic}, 
$B_I(\al,\al)=Q(\al)$ for any $\al \in I$
and by Lemma \ref{lem_degenerate_quad},
$\mathrm{EM}(1_I,B_I)=Q|_I \in \mathrm{Quad}(I)$,
where $Q|_I$ is the restriction of the quadratic form $Q$ on $I \subset A$
and $1_I$ is the trivial 3-cocycle on $I$.
Since the restriction of $(\om_Q,c_Q) \in Z_\ab^3(A,\C^\times)$
on $I \subset A$
also satisfies 
$\mathrm{EM}(\om_Q|_I,c_Q|_I)=Q|_I$,
by Theorem \ref{thm_coho} there exists $k:I\times I\rightarrow \C^\times$ such that:
\begin{align*}
\om_Q(\al_1,\al_2,\al_3)&=
k(\al_2,\al_3)k(\al_1+\al_2,\al_3)^{-1}k(\al_1,\al_2+\al_3)k(\al_1,\al_2)^{-1},\\
c_Q(\al_1,\al_2)&=B_I(\al_1,\al_2)k(\al_1,\al_2)k(\al_2,\al_1)^{-1}
\end{align*}
for any $\al_1,\al_2,\al_3 \in I$.
Since $\om_Q$ is normalized,
$\om_Q(\al_1,0,\al_2)=k(0,\al_3)k(\al_1,0)^{-1}$
implies that $k(-,-)$ satisfies 
$k(\al,0)=1=k(0,\al)$ for any $\al \in I$.
\end{proof}


Define maps $m_{i,j}: S(N)_i\otimes S(N)_j \rightarrow S(N)_{i+j}$
by the linear extensions of
$k(\al,\be)\mathrm{id}_{\C\delta_{\al+\be}}:
\C\delta_{\al} \otimes \C\delta_{\be} \rightarrow \C\delta_{\al+\be}
$ for $\al \in p^{-1}(i)$ and $\be \in p^{-1}(j)$,
and 
$\eta:\C \delta_0 \rightarrow S(N)_0$ by the inclusion map.
Then, we have:
\begin{lem}
\label{lem_commutative}
Let $(I,p)$ be a {super isotropic subspace} of $(A,Q)$.
Then, the above triple $(S(I)=S(I)_0\oplus S(I)_1,\{m_{i,j}\}_{i,j =0,1},\eta)$
is a supercommutative algebra object in $\Vect_{A}^Q$.
\end{lem}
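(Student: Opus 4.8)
The plan is to verify the three axioms (SCA1), (SCA2), (SCA3) directly, reducing everything to the defining equations \eqref{eq_resolve} of the function $k$ produced by Lemma \ref{lem_existence_coboundary}. Since $S(I)$ is built from the one-dimensional homogeneous pieces $\C\delta_\al$ for $\al \in I$, and all the structural morphisms $m_{i,j}$, the associator $\al^{\om_Q}$, the unitors $l^{\om_Q},r^{\om_Q}$, and the braiding $B^{c_Q}$ act on these pieces by multiplication by scalars, every axiom becomes a scalar identity indexed by elements of $I$. So the entire proof is the bookkeeping of scalars, and no genuinely categorical difficulty arises once $k$ is in hand.

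\textbf{Unit axiom (SCA1).} First I would check that $\eta$ is a two-sided unit. On the component $\C\delta_0 \otimes \C\delta_\al$, the composite $m \circ (\eta \otimes \id) \circ (l^{\om_Q})^{-1}$ multiplies by $\om_Q(0,0,\al)^{-1} \cdot k(0,\al)$, using the definition of the twisted left unitor $l^{\om_Q}_{M_1}=\om(0,0,\al_1)^{-1}l_{M_1}$. Since $\om_Q$ is normalized we have $\om_Q(0,0,\al)=1$, and by Lemma \ref{lem_existence_coboundary} we have $k(0,\al)=1$; hence this scalar is $1$ and the composite is the identity. The right-unit identity follows symmetrically from $r^{\om_Q}_{M_1}=\om(\al_1,0,0)r_{M_1}$, the normalization $\om_Q(\al,0,0)=1$, and $k(\al,0)=1$.

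\textbf{Associativity axiom (SCA2).} On the component $(\C\delta_{\al_1}\otimes \C\delta_{\al_2})\otimes \C\delta_{\al_3}$, the left-hand composite $m\circ(m\otimes\id)$ acts by the scalar $k(\al_1,\al_2)\,k(\al_1+\al_2,\al_3)$, while the right-hand composite $m\circ(\id\otimes m)\circ \al^{\om_Q}_{S,S,S}$ acts by $\om_Q(\al_1,\al_2,\al_3)\,k(\al_2,\al_3)\,k(\al_1,\al_2+\al_3)$, where the extra factor $\om_Q(\al_1,\al_2,\al_3)$ comes from the twisted associator $\al^{\om_Q}_{M_1,M_2,M_3}=\om(\al_1,\al_2,\al_3)\al_{M_1,M_2,M_3}$. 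Equality of these two scalars is exactly the cocycle relation \eqref{eq_resolve}, rearranged as $\om_Q(\al_1,\al_2,\al_3)=k(\al_1,\al_2)\,k(\al_1+\al_2,\al_3)\,k(\al_2,\al_3)^{-1}\,k(\al_1,\al_2+\al_3)^{-1}$, so (SCA2) holds.

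\textbf{Supercommutativity axiom (SCA3).} On $\C\delta_{\al_1}\otimes \C\delta_{\al_2}$ with $\al_1\in p^{-1}(i)$, $\al_2\in p^{-1}(j)$, the composite $m_{j,i}\circ B^{c_Q}_{S_i,S_j}$ multiplies by $c_Q(\al_1,\al_2)\,k(\al_2,\al_1)$, using the twisted braiding $B^{c}_{M_1,M_2}=c(\al_1,\al_2)B_{M_1,M_2}$, while $m_{i,j}$ multiplies by $k(\al_1,\al_2)$. The third relation of \eqref{eq_resolve}, namely $c_Q(\al_1,\al_2)=(-1)^{p(\al_1)p(\al_2)}k(\al_1,\al_2)k(\al_2,\al_1)^{-1}$, gives $c_Q(\al_1,\al_2)\,k(\al_2,\al_1)=(-1)^{ij}k(\al_1,\al_2)$, which is precisely (SCA3). \emph{The one subtlety} worth flagging is that the morphisms $m_{i,j}$ must be well-defined on the grading classes $S(I)_0,S(I)_1$ rather than on individual homogeneous lines; but since $p:I\to\Z_2$ is a group homomorphism, $\al_1+\al_2$ lands in $p^{-1}(i+j)$ whenever $\al_1\in p^{-1}(i)$, $\al_2\in p^{-1}(j)$, so the target $\C\delta_{\al_1+\al_2}\subset S(I)_{i+j}$ is correct and the linear extension is consistent. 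This is the only place the homomorphism property of $p$ is used, and it is the step I would single out as requiring care, though it is routine. Collecting the three verifications completes the proof.
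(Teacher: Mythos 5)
Your strategy is the one the paper intends: Lemma \ref{lem_commutative} is stated with no proof at all (the paper passes directly from the construction of $m_{i,j}$ and Lemma \ref{lem_existence_coboundary} to the statement), and the implicit argument is exactly your reduction of (SCA1)--(SCA3) to scalar identities on the one-dimensional pieces $\C\delta_\al$. Your treatment of (SCA1), of (SCA3), and of the well-definedness point (that $p$ being a homomorphism puts $\C\delta_{\al_1+\al_2}$ inside $S(I)_{i+j}$) is correct; in (SCA1) the inverse unitor actually contributes the scalar $\om_Q(0,0,\al)$ rather than $\om_Q(0,0,\al)^{-1}$, but both equal $1$ by normalization and by $k(0,\al)=1$, so this slip is harmless.

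The genuine issue is in (SCA2). The identity your computation needs,
\[
\om_Q(\al_1,\al_2,\al_3)=k(\al_1,\al_2)\,k(\al_1+\al_2,\al_3)\,k(\al_2,\al_3)^{-1}\,k(\al_1,\al_2+\al_3)^{-1},
\]
is \emph{not} a rearrangement of the second line of \eqref{eq_resolve}: it is its reciprocal. Taking \eqref{eq_resolve} at face value, the two sides of your (SCA2) computation agree only when $(d_2k)^2=1$, which is not guaranteed; and defining $m_{i,j}$ with $k^{-1}$ instead would repair (SCA2) but break (SCA3), so no choice of multiplication satisfies both axioms with \eqref{eq_resolve} read literally. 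The source of the trouble is a convention clash inside the paper itself: the pair $(d_2k,c_k)$ of \eqref{eq_ab_bo} does not satisfy the hexagon identities in \eqref{eq_ab_co}, whereas the pair $((d_2k)^{-1},c_k)$ (equivalently $(d_2k,c_k^{-1})$) does, as a direct computation shows. Consequently, the form of Lemma \ref{lem_existence_coboundary} that actually follows from Theorem \ref{thm_coho} with conventions matching \eqref{eq_ab_co} is the one in which the $\om_Q$-equation of \eqref{eq_resolve} carries the opposite exponent while the $c_Q$-equation stays as printed --- precisely the two identities your (SCA2) and (SCA3) verifications consume. So your proof is correct once this mismatch is fixed, but you cannot silently label the inversion a ``rearrangement'': you should state explicitly that the second line of \eqref{eq_resolve} must be inverted (or rerun the proof of Lemma \ref{lem_existence_coboundary} with the coboundary convention compatible with \eqref{eq_ab_co}), since this is the only non-routine point in the entire verification.
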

\begin{rem}
Let $k':I\times I\rightarrow \C^\times$ be another map satisfying \eqref{eq_resolve}.
Then, $f(\al,\be)=k(\al,\be)k'(\al,\be)^{-1}$ satisfies $d_2(f)(\al,\be,\ga)=1$ for any $\al,\be,\ga \in I$.
Thus, $f$ is a normalized 2-cocycle of $I$ (in the sense of the usual group cohomology).
Since $f(\al,\be)=f(\be,\al)$ by \eqref{eq_resolve}, $f$ is coboundary (see for example \cite{FLM}).
Thus, the non-trivial superalgebra structure on $S(I)$ is unique up to isomorphism.
\end{rem}

It is noteworthy that the braided tensor category $\Vect_{A}^Q$ naturally arises from
lattice vertex operator algebras. We end this section by recalling this fact.
{\it An even lattice} is a finite rank free abelian group $M$
equipped with a non-degenerate symmetric bilinear form $(-,-):M\times M \rightarrow \Z$ 
such that
$(\al,\al) \in 2\Z$ for any $\al \in M$. 
Let $M$ be an even lattice (not assumed to be positive-definite).
We can extend the bilinear form $(-,-)$ linearly on the vector space $M\otimes_\Z \R$.
Set $M^\vee = \{\la \in M\otimes_\Z \R \mid (\la,\al) \in \Z \text{ for any }\al \in M \}$,
the dual lattice.
Define a quadratic form $Q_M$ on $M^\vee/M$ by
$$
Q_M(\la)=\exp(\pi i (\la,\la)) \text{ for }\la \in M^\vee.
$$
Since $M$ is an even lattice, $Q_M$ is well-defined.

Let $V_M$ be the lattice vertex operator algebra.
Then, by \cite{DL}, we have:
\begin{prop}
\label{prop_lattice}
The module category of the lattice vertex operator algebra $V_M$ is equivalent to $\Vect_{M^\vee/M}^{Q_M}$ as braided tensor categories.
\end{prop}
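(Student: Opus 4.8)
The plan is to exploit that $\Rep V_M$ is a semisimple \emph{pointed} braided tensor category, so that by the Eilenberg--Mac Lane classification (Theorem \ref{thm_coho}) it is determined up to braided equivalence by a single quadratic form, which I would then compute and identify with $Q_M$.

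First I would recall the representation theory of the lattice vertex operator algebra from \cite{DL}: the algebra $V_M$ is rational, its irreducible modules are exactly the $V_{\la+M}$ indexed by the cosets $\la+M \in M^\vee/M$, and every module is a finite direct sum of these; hence as an abelian category $\Rep V_M \simeq \Vect_{M^\vee/M}$. Moreover the tensor (fusion) product satisfies $V_{\la+M}\boxtimes V_{\mu+M}\cong V_{(\la+\mu)+M}$, so every simple object is invertible and the Grothendieck ring is the group algebra of $M^\vee/M$. Thus $\Rep V_M$ is a pointed braided tensor category whose underlying grading group is $M^\vee/M$, with the braided tensor structure (associativity and braiding) supplied by the intertwining-operator theory of \cite{DL}, compatibly with \cite{HKL}.

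Next I would invoke the general classification of pointed braided tensor categories: any such category with group of isomorphism classes of simple objects $A$ is braided equivalent to some $\Vect_A^{\om,c}$ with $(\om,c)\in Z_\ab^3(A,\C^\times)$, and the braided equivalence class depends only on the cohomology class in $H_\ab^3(A,\C^\times)$. Applied with $A=M^\vee/M$, this reduces the problem to pinning down the class $[(\om,c)]\in H_\ab^3(M^\vee/M,\C^\times)$ attached to $\Rep V_M$. By Theorem \ref{thm_coho} this class is completely determined by the quadratic form $\mathrm{EM}[(\om,c)](\la)=c(\la,\la)$, namely by the scalar self-braiding on each invertible simple object, so it suffices to compute this scalar and match it with $Q_M$. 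Here I would use the balance $\theta$ of the lattice vertex algebra: on $V_{\la+M}$ the twist acts by $\theta_{V_{\la+M}}=\exp(2\pi i\,h_\la)$ with $h_\la=\tfrac{1}{2}(\la,\la)$ the lowest conformal weight, so $\theta_{V_{\la+M}}=\exp(\pi i(\la,\la))=Q_M(\la)$. For a pointed braided category carrying its canonical (conformal) balance the twist on an invertible object coincides with the self-braiding scalar $c(\la,\la)$ extracted by $\mathrm{EM}$; this is exactly the cocycle identity $c(\la,\mu)c(\mu,\la)=Q(\la+\mu)Q(\la)^{-1}Q(\mu)^{-1}$ read through the balancing axiom $B_{M,N}B_{N,M}=\theta_{M\otimes N}\circ(\theta_M^{-1}\otimes\theta_N^{-1})$. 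Hence $\mathrm{EM}[(\om,c)]=Q_M$, and by Theorem \ref{thm_coho} the class of $\Rep V_M$ is $\mathrm{EM}^{-1}(Q_M)$, giving the braided equivalence $\Rep V_M\simeq \Vect_{M^\vee/M}^{Q_M}$.

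I expect the main obstacle to be the honest computation of the self-braiding (equivalently the twist) directly from the intertwining operators of $V_M$: these carry both a monodromy factor $z^{(\la,\mu)}$, contributing $\exp(\pi i(\la,\mu))$ under half-monodromy, and a sign coming from the $2$-cocycle $\vep$ defining the twisted group algebra of $M$; one must check that on the diagonal these combine to precisely $\exp(\pi i(\la,\la))$ and are insensitive to the coboundary ambiguity in $\vep$. Using Theorem \ref{thm_coho} is what lets me avoid the far more laborious direct identification of the associativity constraint with $\om_{Q_M}$, since the quadratic form alone determines the class.
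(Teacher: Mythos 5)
Your overall strategy (semisimplicity and pointedness of $\Rep V_M$, fusion rules given by $M^\vee/M$, then the Eilenberg--Mac Lane reduction via Theorem \ref{thm_coho}) is reasonable, and it differs from the paper, which offers no argument at all and simply quotes the result from \cite{DL}. However, the step you use to pin down the quadratic form has a genuine gap: you claim that for the conformal balance the twist on an invertible object coincides with the self-braiding scalar $c(\la,\la)$, justified by reading the polarization identity through the balancing axiom. The balancing axiom $B_{M,N}B_{N,M}=\theta_{M\otimes N}\circ(\theta_M^{-1}\otimes\theta_N^{-1})$ only yields $c(\la,\mu)c(\mu,\la)=\theta(\la+\mu)\theta(\la)^{-1}\theta(\mu)^{-1}$, and on the diagonal this determines $c(\la,\la)^{2}$, never $c(\la,\la)$ itself. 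Concretely, on a fixed pointed braided category $\Vect_A^{Q}$ the balances form a torsor over the characters of $A$: every $\theta=Q\cdot\chi$ with $\chi:A\to\C^\times$ a character satisfies the axiom. So knowing the twist determines $Q$ only up to a character, and the ambiguity is real: super vector spaces admit the identity balance ($\theta\equiv 1$ while $c(1,1)=-1$). In the first nontrivial lattice case $M=\sqrt{2}\,\Z$, where $M^\vee/M\cong\Z/2\Z$ and the conformal twist is $\theta(\la)=\exp(2\pi i\cdot\tfrac14)=i$, both the ``semion'' form $q(\la)=i=Q_M(\la)$ and the ``anti-semion'' form $q(\la)=-i$ are compatible with this twist, and they define inequivalent braided categories; your argument as written cannot decide between them.

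The missing sign is exactly where the content of the proposition lies, and it is settled only by the computation you relegated to a closing ``consistency check'': one must work with the intertwining operators themselves, $Y(e^{\la},z)e^{\mu}=\epsilon(\la,\mu)\,z^{(\la,\mu)}e^{\la+\mu}(1+O(z))$, and track the exact exponent $(\la,\mu)$ (not merely its class mod $\Z$) together with the cocycle $\epsilon$ under the half-monodromy $z\mapsto e^{\pi i}z$ in the skew-symmetry defining the braiding. Note also that for $\la\notin M$ the factor set $\epsilon$ cannot in general be chosen bimultiplicatively -- this failure is precisely the nontriviality of $\om_{Q_M}$ -- so this analysis cannot be decoupled from the associator either. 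That direct computation is what \cite{DL} carries out, which is why the paper cites it rather than proving the statement. If you replace the twist identification by this computation of the self-braiding scalar, your Eilenberg--Mac Lane reduction (which then legitimately disposes of having to match the associativity data by hand) does yield a complete proof.
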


\subsection{Quasi-triangular Hopf algebra}
\label{sec_quasi}

In this section, we recall the definition of a quasi-triangular Hopf algebra and its standard results from \cite{Ka,ES,KS}.
Let $H$ be a Hopf algebra. We denote the unit morphism of $H$ by $\eta: \C \rightarrow H$ 
and the counit $\epsilon: H \rightarrow \C$, the multiplication $m:H\otimes H \rightarrow H$,
the co-multiplication $\Delta:H \rightarrow H\otimes H$
and the antipode $S:H\rightarrow H$.
Throughout of this paper, the antipode is assumed to be invertible.

Let $(H,\eta,\epsilon,m,\Delta,S)$ be a Hopf algebra
and $P_{21}:H\otimes H \rightarrow H\otimes H$ a transposition defined by $a \otimes b \mapsto b \otimes a$
for $a,b \in H$ and set 
\begin{align*}
m_\op = m \circ P_{21}: H \otimes H \rightarrow H,\\
\Delta^\cop = P_{21}\circ \Delta: H \otimes H \rightarrow H.
\end{align*}
Then, $(H,\eta,\epsilon,m_\op,\Delta,S^{-1})$, 
$(H,\eta,\epsilon,m,\Delta^\cop,S^{-1})$ and $(H,\eta,\epsilon,m_\op,\Delta^\cop,S)$
 are again Hopf algebras,
which are denoted by $H_\op$, $H^\cop$ and $H_\op^\cop$, respectively.

A Hopf algebra $H$ is {\it quasi-triangular} if there exists an invertible element $R \in H\otimes H$ such that
\begin{enumerate}
\item
$R \Delta(x) R^{-1} = \Delta^\op(x)$ for all $x \in H$;
\item
$(\Delta\otimes 1)(R) = R_{13}R_{23}$;
\item
$(1\otimes \Delta)(R) = R_{13}R_{12}$.
\end{enumerate}
The element $R \in H\otimes H$ is called an {\it R-matrix}.

Let $(H,R)$ be a quasi-triangular Hopf algebra.
Then, the category of left $H$-modules, denoted by $(H,R)\modu$, inherits a natural braided tensor category structure.
The braiding $B_{M,N}: M\otimes N \rightarrow N \otimes M$ is defined by
$$
B_{M,N} = P_{M,N}\circ (\rho_M \otimes \rho_N)(R),
$$
where $\rho_M:H\rightarrow \End\, M$ and 
$\rho_N:H\rightarrow \End\, N$ are the structure homomorphisms.
Quasi-triangular Hopf algebras $(H_1,R_1)$ and $(H_2,R_2)$ are said to be isomorphic if
there exits a Hopf algebra isomorphism $f: H_1 \rightarrow H_2$ such that $(f \otimes f )(R_1)=R_2$.
If quasi-triangular Hopf algebras $(H_1,R_1)$ and $(H_2,R_2)$ are isomorphic,
then their module categories $(H_1,R_1)\modu$ and $(H_2,R_2)\modu$ are isomorphic as
braided tensor categories.

Importantly, an R-matrix of a Hopf algebra $H$ is not unique.
In fact, for an R-matrix $R \in H\otimes H$, $R_{21}^{-1} \in H\otimes H$ is again an R-matrix,
where $R_{21} = P_{21}(R)=\sum_{i} \be_i \otimes \al_i$ for $R = \sum_{i} \al_i \otimes \be_i$, and the quasi-triangular Hopf algebras $(H,R)$ and $(H,R_{21}^{-1})$ are not isomorphic in general.
The following lemma is clear from the definition:
\begin{lem}
\label{lem_rev}
Let $(H,R)$ be a quasi-triangular Hopf algebra.
Then, $(H,R_{21}^{-1})\text{-mod}$ and $((H,R)\modu)^\rev$
are equivalent as braided tensor categories.
\end{lem}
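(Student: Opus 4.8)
The plan is to show that the identity functor on the category of left $H$-modules is the required braided tensor equivalence. First I would observe that for a quasi-triangular Hopf algebra the entire monoidal structure on $(H,R)\modu$ --- tensor product, associator, and unit constraints --- is determined by the underlying Hopf algebra $(H,\eta,\epsilon,m,\Delta,S)$ alone, and the R-matrix intervenes \emph{only} in the braiding $B_{M,N}=P_{M,N}\circ(\rho_M\otimes\rho_N)(R)$. Hence $(H,R_{21}^{-1})\modu$, the category $(H,R)\modu$, and its reverse $((H,R)\modu)^\rev$ share one and the same underlying monoidal category of left $H$-modules. The identity functor is therefore a strict monoidal isomorphism, and the whole content of the lemma reduces to checking that it carries the braiding of $(H,R_{21}^{-1})\modu$ to the braiding of $((H,R)\modu)^\rev$.

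Next I would spell out the two braidings on a pair $M,N$ of modules. By the definition of the reverse category the braiding of $((H,R)\modu)^\rev$ is $(B_{N,M})^{-1}$, where $B_{N,M}=P_{N,M}\circ(\rho_N\otimes\rho_M)(R)$; inverting and using that $\rho_N\otimes\rho_M$ is an algebra homomorphism (so it preserves inverses) gives
\[
(B_{N,M})^{-1}=(\rho_N\otimes\rho_M)(R^{-1})\circ P_{M,N}\colon M\otimes N\to N\otimes M.
\]
On the other hand the braiding of $(H,R_{21}^{-1})\modu$ is $P_{M,N}\circ(\rho_M\otimes\rho_N)(R_{21}^{-1})$. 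To compare the two I would invoke the equivariance of the flip, namely $P_{M,N}\circ(\rho_M\otimes\rho_N)(X)=(\rho_N\otimes\rho_M)(X_{21})\circ P_{M,N}$ for every $X\in H\otimes H$, which reduces the claim to the single algebraic identity $(R_{21}^{-1})_{21}=R^{-1}$.

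That identity is the only real computation, and it is immediate: the flip $P_{21}\colon H\otimes H\to H\otimes H$ is a homomorphism for the tensor-product algebra structure, so it preserves inverses, and since $P_{21}^2=\id$ we get $P_{21}(R_{21}^{-1})=P_{21}(R_{21})^{-1}=R^{-1}$, i.e.\ $(R_{21}^{-1})_{21}=R^{-1}$. I do not expect any genuine obstacle: the statement is essentially definitional, as the paper itself indicates, and the only care needed is the bookkeeping of the flips $P$ and of which leg of $R$ acts on which tensor factor. The one point worth isolating cleanly is the multiplicativity of $P_{21}$, since it is exactly what converts the apparent mismatch between ``reverse the $R$-braiding'' and ``use the new R-matrix $R_{21}^{-1}$'' into the single identity above.
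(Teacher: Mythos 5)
Your proof is correct and is exactly the verification the paper leaves implicit: the paper states this lemma as ``clear from the definition,'' and your argument (identity functor, same underlying monoidal structure, and the flip identity $(R_{21}^{-1})_{21}=R^{-1}$ matching the two braidings) is precisely that definitional check, carried out carefully.
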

The following lemma is also clear from the definition:
\begin{lem}
\label{op_triangle}
Let $(H,R)$ be a quasi-triangular Hopf algebra.
Then, 
\begin{enumerate}
\item
$R^{-1}$ and $R_{21}$ are R-matrices of $H_\op$;
\item
$R^{-1}$ and $R_{21}$ are R-matrices of $H^\cop$.
\end{enumerate}
\end{lem}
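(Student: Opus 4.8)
The plan is to verify the three defining axioms of a quasi-triangular Hopf algebra directly in each case, exploiting the fact that every permutation of tensor factors is an algebra homomorphism of the relevant tensor-product algebra. The key tool is the observation that the flip $P_{21}\colon H\otimes H\to H\otimes H$, and more generally each transposition $P_{ij}$ and each cyclic shift of factors on $H\otimes H\otimes H$, is an algebra homomorphism, so it carries products to products and inverses to inverses. Two further remarks are what let one transport the axioms of $(H,R)$ to the twisted algebras: the product on $H_\op\otimes H_\op$ (resp.\ on $H_\op^{\otimes 3}$) is exactly the opposite of the product on $H\otimes H$ (resp.\ on $H^{\otimes 3}$), while the comultiplication of $H^\cop$ is the opposite one, $\Delta^\cop=P_{21}\circ\Delta=\Delta^\op$. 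Note also that the inverse of an element taken in the opposite algebra coincides with its inverse in $H\otimes H$, so the element $R'^{-1}$ occurring in the axioms is unambiguous.

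First I would reduce the four assertions to two. Since $R_{21}^{-1}$ is again an R-matrix of $(H,R)$ (noted in the discussion preceding Lemma \ref{lem_rev}), and since the Hopf algebras $H_\op$ and $H^\cop$ depend only on the underlying Hopf algebra $H$ and not on the chosen R-matrix, it suffices to prove that $R_{21}$ is an R-matrix of both $H_\op$ and $H^\cop$. Applying this to the quasi-triangular Hopf algebra $(H,R_{21}^{-1})$ and using that $(R_{21}^{-1})_{21}=R^{-1}$ (because $P_{21}$ preserves inverses and $P_{21}(R_{21})=R$) then immediately yields that $R^{-1}$ is an R-matrix of $H_\op$ and of $H^\cop$ as well.

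For $H^\cop$, whose comultiplication is $\Delta^\op$, I would check axiom (1) by applying the algebra homomorphism $P_{21}$ to the relation $R\,\Delta(x)\,R^{-1}=\Delta^\op(x)$, obtaining $R_{21}\,\Delta^\op(x)\,R_{21}^{-1}=\Delta(x)=(\Delta^\cop)^\op(x)$; and axioms (2) and (3) by applying the outer transposition $P_{13}$ on $H^{\otimes 3}$ to the original axioms (3) and (2), respectively, using $P_{13}(R_{13})=(R_{21})_{13}$, $P_{13}(R_{12})=(R_{21})_{23}$, and $P_{13}(R_{23})=(R_{21})_{12}$. For $H_\op$, whose multiplication is opposite and whose comultiplication is unchanged, axiom (1) follows from the relation $R_{21}\,\Delta^\op(x)\,R_{21}^{-1}=\Delta(x)$ just established, rearranged as $R_{21}^{-1}\,\Delta(x)\,R_{21}=\Delta^\op(x)$, together with the fact that the product on $H_\op\otimes H_\op$ is the opposite one; and axioms (2) and (3) follow by applying the two cyclic shifts of the three tensor factors to the original axioms (3) and (2). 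Here one uses that the opposite product on $H_\op^{\otimes 3}$ reverses the order of the two legs of $R$ on the right-hand side, and this reversal is matched precisely by the cyclic permutation, giving for instance $(\Delta\otimes 1)(R_{21})=(R_{21})_{23}(R_{21})_{13}$ in $H^{\otimes 3}$, which is exactly $(R_{21})_{13}$ times $(R_{21})_{23}$ in the opposite product.

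The computations are entirely routine, and there is no conceptual obstacle. The only place where an ordering slip could occur is the bookkeeping of which permutation of the legs of $R_{12},R_{13},R_{23}$ lands on the correct subscripts of $R_{21}$, interacting with the reversal of the two tensor legs of $R$ that is forced by passing to the opposite product in the $H_\op$ case. Thus the hardest part is simply keeping this indexing straight.
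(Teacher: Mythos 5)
Your proof is correct: the reduction via $(H,R_{21}^{-1})$ is legitimate (the paper itself records that $R_{21}^{-1}$ is again an R-matrix, just before Lemma \ref{lem_rev}), and all of your permutation bookkeeping — $P_{13}(R_{13})=(R_{21})_{13}$, $P_{13}(R_{12})=(R_{21})_{23}$, $P_{13}(R_{23})=(R_{21})_{12}$, and the cyclic-shift identities matching the opposite product in the $H_\op$ case — checks out. The paper offers no proof at all (it declares the lemma ``clear from the definition''), so your direct verification of the three axioms is precisely the routine argument the paper has in mind.
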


We will give an example of a commutative algebra object in a braided tensor category,
introduced in the previous section.
Let $(H,\eta,\epsilon,m,\Delta)$ be a finite-dimensional Hopf algebra
and $H^\vee$ is a dual vector space of $H$.
Then, $H^\vee$ is canonically a Hopf algebra, called a {\it dual Hopf algebra}.
The unit, counit, multiplication and co-multiplication are given by
\begin{align}
\eta^\vee &= \epsilon \in H^\vee, \nonumber \\
\epsilon^\vee(f) &= f(\eta(1)), \nonumber \\
m^\vee(f\otimes g) &= (f \otimes g) \circ \Delta \in H^\vee \label{eq_multiplication}  \\
\Delta^\vee (f) &= f \circ m \in H^\vee\otimes H^\vee \nonumber
\end{align}
for $f,g \in H$.
Let $\langle - \rangle: H^\vee \otimes H \rightarrow \C$ be the canonical pairing.
Then, it satisfies
\begin{align}
\langle m^\vee(f\otimes g), a  \rangle &= \langle f\otimes g, \Delta(a) \rangle, \nonumber \\
\langle f, m(a\otimes b) \rangle &= \langle \Delta^\vee(f), a\otimes b \rangle, \label{eq_pairing}
\end{align}
for $a,b \in H$ and $f,g \in H^\vee$.


Furthermore, $H^\vee$ is a left $H\otimes H$ module where the left module structure is defined by
\begin{align}
(a \otimes b) \cdot f = f(S(b)-a)
\label{eq_bimodule}
\end{align}
for $a,b \in H$ and $f \in H^\vee$.

As we will see in the following lemma, it is natural to view $H^\vee$ as an $H\otimes H^\cop$-module.
\begin{lem}
\label{module_hom}
The unit $\eta^\vee: \C \rightarrow H^\vee$
and the multiplication $m^\vee: H^\vee \otimes H^\vee \rightarrow H^\vee$
are morphisms in $H\otimes H^\cop\modu$.
\end{lem}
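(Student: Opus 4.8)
The goal is to verify that for a finite-dimensional Hopf algebra $H$ equipped with an R-matrix, the unit $\eta^\vee$ and multiplication $m^\vee$ of the dual Hopf algebra $H^\vee$ are morphisms in the braided tensor category $H\otimes H^\cop\modu$, where $H^\vee$ carries the $H\otimes H^\cop$-module structure given by \eqref{eq_bimodule}. Recall that a morphism in a module category is precisely a module homomorphism, so the entire content of the lemma is intertwining: I must check that $\eta^\vee$ and $m^\vee$ commute with the $H\otimes H^\cop$-action. The plan is therefore to expand both sides of the intertwining conditions using the explicit formulas \eqref{eq_multiplication} for the dual Hopf algebra structure and \eqref{eq_bimodule} for the module action, and then match them via the defining axioms of a Hopf algebra.

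First I would treat the unit $\eta^\vee:\C\to H^\vee$, which sends $1$ to $\epsilon \in H^\vee$. The unit object of $H\otimes H^\cop\modu$ is $\C$ with the trivial action through the counit $\epsilon_{H\otimes H^\cop}$, so I must show that $(a\otimes b)\cdot \epsilon = \epsilon(a)\epsilon(b)\,\epsilon$ for all $a,b\in H$. Applying \eqref{eq_bimodule}, the left side is the functional $x\mapsto \epsilon(S(b)xa)$; since $\epsilon$ is an algebra homomorphism and $\epsilon\circ S = \epsilon$, this equals $\epsilon(b)\epsilon(x)\epsilon(a)$, matching the right side. This part is a short direct computation.

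The substantive step is the multiplication $m^\vee: H^\vee\otimes H^\vee\to H^\vee$. I need the action on the tensor product $H^\vee\otimes H^\vee$: an element $a\otimes b$ of $H\otimes H^\cop$ acts via the coproduct of $H\otimes H^\cop$, and here one must be careful that the coproduct in the $\cop$ (co-opposite) factor is $\Delta^\cop = P_{21}\circ\Delta$. So I would write $\Delta(a)=\sum a_{(1)}\otimes a_{(2)}$ and $\Delta^\cop(b)=\sum b_{(2)}\otimes b_{(1)}$ in Sweedler notation, act on $f\otimes g$, then apply $m^\vee$ and compare with the result of first applying $m^\vee$ and then acting by $a\otimes b$. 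Using \eqref{eq_multiplication}, $m^\vee(f\otimes g)(x)=\sum f(x_{(1)})g(x_{(2)})$, and using \eqref{eq_bimodule} to compute $(a\otimes b)\cdot\bigl(m^\vee(f\otimes g)\bigr)$ evaluated at $x$ gives $\sum f\bigl((S(b)xa)_{(1)}\bigr)g\bigl((S(b)xa)_{(2)}\bigr)$. The matching of the two expressions then reduces to the multiplicativity of $\Delta$ (that $\Delta$ is an algebra map, so $\Delta(S(b)xa)=\Delta(S(b))\Delta(x)\Delta(a)$) together with the antimultiplicativity of the antipode, $\Delta(S(b))=\sum S(b_{(2)})\otimes S(b_{(1)})$, which is exactly the source of the $\cop$ twist on the second tensor factor.

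The main obstacle, and the step where care is genuinely required, is tracking the order of the Sweedler legs through the antipode. The identity $\Delta\circ S = (S\otimes S)\circ \Delta^\cop$ reverses the two legs, and it is precisely this reversal that forces the second factor to be $H^\cop$ rather than $H$; I would expect that writing everything out and checking that the co-opposite coproduct absorbs exactly this sign-free leg-swap is where a misstep could occur. Once the bookkeeping is aligned, the verification that $(a\otimes b)\cdot m^\vee(f\otimes g) = m^\vee\bigl((a\otimes b)\cdot(f\otimes g)\bigr)$ follows formally. I would present the computation in Sweedler notation and invoke only the standard Hopf algebra axioms (coassociativity, the antipode antihomomorphism property, and compatibility of $\Delta$ with multiplication), all of which are available from the setup in this section.
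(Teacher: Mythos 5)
Your proposal is correct and follows essentially the same route as the paper's proof: the unit case is the same direct computation using $\epsilon\circ S=\epsilon$, and the multiplication case rests on exactly the two identities the paper invokes, namely multiplicativity of $\Delta$ and the anti-coalgebra property $\Delta\circ S=(S\otimes S)\circ\Delta^{\mathrm{cop}}$, which you correctly identify as the reason the second tensor factor must carry the co-opposite structure. The only difference is presentational — you use Sweedler notation where the paper phrases the computation through the canonical pairing \eqref{eq_pairing} — and this has no mathematical significance.
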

\begin{proof}
Let $a,b \in H$, $f,g \in H^\vee$ and $x \in H$.
By
$(a\otimes b) \eta^\vee(1)=(a\otimes b) \epsilon =\epsilon(S(b) - a)
=\epsilon(S(b))\epsilon(-)\epsilon(a) =\epsilon(b)\epsilon(-)\epsilon(a) =\eta^\vee((a\otimes b)1)$,
$\eta^\vee: \C \rightarrow H^\vee$ is 
a $H\otimes H^\cop$-module homomorphism.
By
\begin{align*}
\langle (a\otimes b)\cdot m^\vee(f\otimes g), x  \rangle
&=\langle m^\vee(f\otimes g), S(b) x a  \rangle\\
&=\langle f\otimes g, \Delta(S(b) x a)  \rangle\\
&=\langle f\otimes g, \Delta(S(b)) \Delta(x) \Delta(a))  \rangle\\
&=\langle f\otimes g, S (\Delta^\cop(b)) \Delta(x) \Delta(a)) \rangle\\
&=\langle (a \otimes b) \cdot (f\otimes g), \Delta(x) \rangle\\
&=\langle m^\vee( (a \otimes b)\cdot (f\otimes g)),x \rangle\\
\end{align*}

\end{proof}

Let us assume that $R=\sum_{i} \al_i \otimes \be_i \in H\otimes H$ is an R-matrix of $H$. Then, by Lemma \ref{op_triangle}, $(H^\cop,R^{-1})$ is a quasi-triangular Hopf algebra.
Let $R^{-1} = \sum_{j} \al'_j \otimes \be'_j$
and set $R^{1,-1} \equiv \sum_{i,j} \al_i \otimes \al'_j \otimes \be_i \otimes \be'_j \in H\otimes H\otimes H\otimes H$.
Then, $R^{1,-1}$ is an R-matrix of $H \otimes H^\cop$.

The following result is mentioned in many literatures
(see \cite{CW} or \cite{DM}):
\begin{prop}
\label{dual_commutative}
Let $(H,R)$ be a quasi-triangular Hopf algebra.
Then, $(H^\vee,m^\vee,\eta^\vee)$ is a commutative algebra object in
$(H \otimes H^\cop,R^{1,-1})\modu$.
\end{prop}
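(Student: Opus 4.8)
The plan is to verify the three defining conditions of a commutative algebra object, namely (SCA1), (SCA2), and the commutativity condition (SCA3) with $S_1 = 0$, for the triple $(H^\vee, m^\vee, \eta^\vee)$ in the braided tensor category $(H\otimes H^\cop, R^{1,-1})\modu$. By Lemma \ref{module_hom}, $\eta^\vee$ and $m^\vee$ are already morphisms in $H\otimes H^\cop\modu$, so the structure maps live in the correct category; what remains is to check the algebraic axioms. First I would note that (SCA1), the unit axiom, is immediate from the fact that $H^\vee$ is an algebra with unit $\eta^\vee = \epsilon$ under the convolution product $m^\vee$: the counit of $H$ is the two-sided identity for convolution, which follows directly from the counit axiom of $H$ via the pairing \eqref{eq_pairing}. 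Similarly, (SCA2), the associativity axiom, holds because $m^\vee$ is an associative product (this is dual to the coassociativity of $\Delta$), and in the twisted category the associator $\al^\cD$ acts by a scalar that must be absorbed; since $H\otimes H^\cop\modu$ is the genuine module category of a Hopf algebra its associator is trivial, so associativity of convolution suffices.

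The substantive content, and the step I expect to be the main obstacle, is the commutativity condition (SCA3), which with $S_1=0$ reads $m^\vee \circ B_{H^\vee, H^\vee} = m^\vee$, where $B$ is the braiding on $(H\otimes H^\cop, R^{1,-1})\modu$ induced by the R-matrix $R^{1,-1}$. The plan is to compute both sides by pairing against an arbitrary element $a\otimes b \in H\otimes H$ and using the characterization \eqref{eq_multiplication} of $m^\vee$ together with the formula for the braiding $B_{M,N} = P_{M,N}\circ(\rho_M\otimes\rho_N)(R^{1,-1})$ from Section \ref{sec_quasi}. Concretely, I would evaluate $\langle m^\vee(B_{H^\vee,H^\vee}(f\otimes g)), a\rangle$ by first letting the four R-matrix components of $R^{1,-1} = \sum_{i,j}\al_i\otimes\al'_j\otimes\be_i\otimes\be'_j$ act on $f$ and $g$ through the module structure \eqref{eq_bimodule}, then applying the transposition $P$ and the convolution product, and finally unwinding everything back through $\Delta(a)$.

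The calculation should collapse to the claim that the two expressions agree as functionals on $H$, and the key algebraic input that forces this collapse is precisely the first defining property of an R-matrix, $R\Delta(x)R^{-1} = \Delta^\op(x)$, i.e.\ that $R$ intertwines $\Delta$ and $\Delta^\cop$. Written out via the pairing, the commutativity of $m^\vee$ after braiding is equivalent to the statement that $f\otimes g$, evaluated on $R\,\Delta(a)\,R^{-1}$ in the appropriate slots, equals its evaluation on $\Delta^\op(a)$; the R-matrix axiom supplies exactly this identity once the $H^\cop$ factor is accounted for (its coproduct is $\Delta^\cop$ and its R-matrix is $R^{-1}$ by Lemma \ref{op_triangle}, so the two transpositions cancel in the right way). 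The hard part will be bookkeeping: keeping track of the antipode $S$ appearing in \eqref{eq_bimodule}, the order reversal coming from $H^\cop$, and the interplay of the two tensor slots of $R^{1,-1}$ without sign or index errors. I expect that a clean way to organize this is to first prove the purely Hopf-algebraic lemma that $m^\vee$ is commutative \emph{up to the braiding induced by $R$} on $H^\vee$ viewed in $(H\otimes H^\cop, R^{1,-1})\modu$, reducing the whole verification to a single application of the R-matrix intertwining relation, and then to observe that since $S_1 = 0$ the sign $(-1)^{ij}$ in (SCA3) is always $1$, so the supercommutative axiom reduces to ordinary commutativity and no further signs intervene.
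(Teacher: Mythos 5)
Your plan follows the paper's proof in outline: dispose of (SCA1) and (SCA2) by noting that $(H^\vee,m^\vee,\eta^\vee)$ is a unital associative algebra and that the associator of $(H\otimes H^\cop,R^{1,-1})\modu$ is the trivial vector-space one, then verify the commutativity axiom by pairing $m^\vee\circ B_{H^\vee,H^\vee}$ against elements of $H$ via \eqref{eq_pairing} and \eqref{eq_bimodule}, with the intertwining axiom $R\Delta(x)R^{-1}=\Delta^\op(x)$ doing the final work. So the route is the paper's route.

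There is, however, a concrete gap in the step you dismiss as bookkeeping. The action \eqref{eq_bimodule} applies the antipode to the element acting in the $H^\cop$ slot, and in $R^{1,-1}=\sum_{i,j}(\al_i\otimes\al'_j)\otimes(\be_i\otimes\be'_j)$ those slots are occupied exactly by the components $\al'_j,\be'_j$ of $R^{-1}$. Unwinding your computation therefore produces
$$
\bigl\langle f\otimes g,\;(S\otimes S)(R^{-1})\,\Delta^\cop(x)\,R\bigr\rangle ,
$$
and the intertwining axiom cannot be applied to this expression as it stands: you first need the identity $(S\otimes S)(R)=R$ (equivalently $(S\otimes S)(R^{-1})=R^{-1}$). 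This is a standard fact about quasi-triangular Hopf algebras (it follows from $(S\otimes \mathrm{id})(R)=R^{-1}$ and $(\mathrm{id}\otimes S)(R^{-1})=R$), but it is a genuinely separate input, not something that falls out of careful index-tracking, and the paper explicitly recalls it at the start of its verification of (CA3). With it, the expression becomes $\langle f\otimes g,\,R^{-1}\Delta^\cop(x)R\rangle=\langle f\otimes g,\,\Delta(x)\rangle=\langle m^\vee(f\otimes g),x\rangle$ and the proof closes; without identifying it, your computation stalls one line before the end, since the R-matrix axiom alone says nothing about $(S\otimes S)(R^{-1})$.
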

\begin{proof}
As discussed above, $H^\vee$ is an $H \otimes H^\cop$-module
and  by Lemma \ref{module_hom},
 $m^\vee$ and $\eta^\vee$ are $H \otimes H^\cop$-module homomorphisms.
Since the dual Hopf algebra is a unital associative algebra,
(CA1) and (CA2) hold.
To show (CA3), we recall that $(S\otimes S) (R) = R$ holds for any R-matrix.
Then, by \eqref{eq_pairing}, for any $x \in H$ and $f,g \in H^\vee$, we have
\begin{align*}
\langle  m^\vee(P\circ R^{1,-1}\cdot (f\otimes g)), x  \rangle 
&=
\langle  P\circ R^{1,-1}\cdot (f\otimes g), \Delta(x)  \rangle \\
&=
\langle  R^{1,-1}\cdot (f\otimes g), \Delta(x)^\cop  \rangle \\
&=
\langle  f\otimes g, S(R^{-1}) \Delta(x)^\cop R \rangle \\
&=
\langle  f\otimes g, R^{-1} \Delta^\cop(x) R \rangle\\
&=\langle  f\otimes g, \Delta(x) \rangle\\
&=\langle m^\vee( f\otimes g), x \rangle.
\end{align*}
Hence, (CA3) holds.
\end{proof}

\subsection{Affine vertex algebra and Drinfeld category}
\label{sec_affine1}
In the previous section, braided tensor categories were constructed by using quasi-trianglular Hopf algebras. Another way to construct braided tensor categories is to use the representation theory of vertex algebras \cite{Hua}.
In the case of affine vertex algebras, the resulting braided tensor category coincides with   Drinfeld categories defined from the Knizhnik-Zamolodchikov equations \cite{KZ,Dr1,TK,ES}.
In this section, we briefly explain the relationship between affine vertex algebras and  Drinfeld categories based on \cite{EFK}.

Let $k \in \C\setminus \Q$ and $\fg$ be a simple complex Lie algebra.
We will use the notations introduced in section \ref{sec_notation}.
Let $\hat{\g}=\g\otimes \C[t,t^{-1}] \oplus \C c$ be the affine Lie algebra which is naturally
$\Z$-graded: $\mathrm{deg}(a\otimes t^n)=-n$,
$\mathrm{deg}(c)=0$.
Define $\hat{\g}^+ = \g \otimes t \C[t],
\hat{\g}^- = \g \otimes t^{-1} \C[t^{-1}],
\hat{\g}^0 = \g \oplus \C c,$ and 
$
\hat{\g}^{\geq 0} =\hat{\g}^+ \oplus \hat{\g}^{0}.$
Let $L(\la)$ denote the finite dimensional irreducible representation of $\g$ with
highest weight $\ga \in \lP^+$, which can be regarded as a $\hat{\g}^{\geq 0}$-module by letting
$\hat{\g}^+$ act trivially and the central element $c$ act by the scalar $k$.
The induced module $L_{\g,k}(\la) =\mathrm{Ind}_{\hat{\g}^{\geq 0}}^{\hat{\g}} L({\la})$ is called {\it the Weyl module}.
It has a natural $\mathbb{C}$-grading, $L_{\g,k}(\la)=\bigoplus_{n=0}^\infty (L_{\g,k}(\la))_{\Delta(\la) + n}$
with $\Delta(\la)= \frac{(\la+2\rho,\la)}{2(k+h^\vee)}$
and $(L_{\g,k}(\la))_{\Delta(\la)}=L(\la)$.

Then, $L_{{\g,k}}(0)$ inherits a vertex operator algebra structure, called an {\it affine vertex operator algebra}
and $L_{\g,k}(\la)$ is a $L_{\g,k}(0)$-module.
We consider a category of $L_{\g,k}(0)$-modules
whose object is a direct sum of $L_{\g,k}(\la)$'s for $\la \in \lP^+$
and morphisms are $L_{\g,k}(0)$-module homomorphisms.
We denote this $\C$-linear abelian category by $D(\g,k)$.

Following \cite{EFK}, we will briefly review the fact that the abelian category $D(\g,k)$ has a
balanced braided tensor category structure.
Let $\la_0,\la_1,\la_2,\la_3,\la \in \lP^+$.
It is well-known that
for any $\g$-module homomorphisms,
 $f \in \mathrm{Hom}_\g(L({\la_1})\otimes L({\la_2}),L({\la_0}))$,
there exists a unique intertwining operator
$$
I_f(- ,z): L_{\g,k}(\la_1) \rightarrow \mathrm{Hom}\,(L_{\g,k}(\la_2),L_{\g,k}(\la_0))[[z^\C]]
$$
such that  it can be written as
\begin{align}
I_f(a,z)=\sum_{n \in \Z}a(n-\Delta)z^{-n-1+\Delta} \text{ with }\Delta =\Delta(\la_0)-\Delta(\la_1)-\Delta(\la_2)
\label{eq_inter}
\end{align}
for any $a \in L_{\g,k}(\la_1)$
and 
$$a(-\Delta-1)b =f(a\otimes b)$$
 for any $a \in L_{\g,k}(\la_1)_{\Delta(\la_1)} = L({\la_1})$
and $b \in L_{\g,k}(\la_2)_{\Delta(\la_2)} = L({\la_2})$ (See \cite[Theorem 3.1.1]{EFK}
or \cite[Theorem 1.5.3]{FZ}).

We denote the space of $\g$-module homomorphisms,
 $\mathrm{Hom}_\g(L({\la_1})\otimes L({\la_2}),L({\la_0}))$ by $V_{\la_1\la_2}^{\la_0}$
and
$\mathrm{Hom}_\g(L({\la_1})\otimes L({\la_2})\otimes L({\la_3}),L({\la_0}))$ by $V_{\la_1\la_2\la_3}^{\la_0}$.
Then, define a tensor product by
$$
L_{\g,k}(\la_1)\otimes L_{\g,k}(\la_2)
= \bigoplus_{\al \in P^+} V_{\la_1\la_2}^\al L_{\g,k}(\al).
$$
Then,
by the natural $\C$-linear isomorphism
$
V_{\la_1\la_2\la_3}^{\la_0} \cong \bigoplus_{\al \in \lP^+} V_{\la_1\al}^{\la_0}\otimes V_{\la_2\la_3}^\al,
$
we have
\begin{align}
L_{\g,k}(\la_1)\otimes (L_{\g,k}(\la_2)\otimes L_{\g,k}(\la_3))
&\cong L_{\g,k}(\la_1) \otimes (\bigoplus_{\al} V_{\la_2\la_3}^\al\otimes L_{\g,k}(\al))\nonumber \\
&\cong \bigoplus_{\la_0}\bigoplus_{\al}V_{\la_1\al}^{\la_0}\otimes
V_{\la_2\la_3}^{\al}\otimes L_{\g,k}(\la_0)  \label{eq_decomposition} \\
&\cong V_{\la_1\la_2\la_3}^{\la_0}\otimes L_{\g,k}(\la_0).\nonumber
\end{align}

Set $Y_2 =\{(z_1,z_2) \in \C^2 \mid z_1 \neq z_2, z_1 \neq 0 \text{ and }z_2 \neq 0 \}$,
an open subset of $\C^2$.
Then, a composition of vertex operator defines a multi-valued holomorphic function
on $Y_2$ and the monodromy defines a braided tensor category structure on $D(\g,k)$.

More precisely,
set $Y_2^> = \{(z_1,z_2) \in Y_2 \mid |z_1|>|z_2|, |\mathrm{Arg }\,z_2| < \pi,
|\mathrm{Arg }\,z_1| < \pi \}$
and $Y_2^< = \{(z_1,z_2) \in Y_2 \mid |z_1| < |z_2|, |\mathrm{Arg }\,z_2| < \pi,
|\mathrm{Arg }\,z_1| < \pi \}$.
Let $Mul(Y_2^>)$ (resp. $Mul(Y_2^<)$) be the space of holomorphic functions on $Y_2^>$
(resp. $Y_2^<$) such that it has an analytic continuation to a multi-valued holomorphic function on $Y_2$.

Let $f \in V_{\la_1\al}^{\la_0}$ and $g \in V_{\la_2\la_3}^\al$
and $a_i \in L_{\g,k}(\la_i)$ ($i=1,2,3$) and $u$ be an element of the restricted dual space 
$L_{\g,k}(\la_0)^\vee = \bigoplus_{h} L_{\g,k}(\la_0)_{h}^*$.
Then, the composition of the intertwining operators
$$
u(I_f(a_1,z_1)I_g(a_2,z_2)a_3) \in \C[[z_1^\C,z_2^\C]]
$$
is absolutely convergent to a holomorphic function on $Y_2^>$
and has an analytic continuation to a multivalued holomorphic function on
$Y_2$.
This follows since the formal power series
$u(I_f(a_1,z_1)I_g(a_2,z_2)a_3)$ satisfies
the Knizhnik-Zamolodchikov equation, see \cite{KZ,TK,ES}.
Thus, we have a linear map
$$
C(u,a_1,a_2,a_3;z_1,z_2): V_{\la_1\al}^{\la_0} \otimes V_{\la_2\la_3}^\al \rightarrow Mul(Y_2^>)
$$
for $a_i \in L_{\g,k}(\la_i)$ and $u \in L_{\g,k}(\la_0)^\vee$.
Hence, by
$
V_{\la_1\la_2\la_3}^{\la_0} \cong \bigoplus_{\al \in \lP^+} V_{\la_1\al}^{\la_0}\otimes V_{\la_2\la_3}^\al,
$
we have 
$$
C(u,a_1,a_2,a_3;z_1,z_2): V_{\la_1\la_2\la_3}^{\la_0} \rightarrow Mul(Y_2^>)
$$
and similarly
$$
C(u,a_1,a_2,a_3;z_2,z_1): V_{\la_1\la_2\la_3}^{\la_0} \rightarrow Mul(Y_2^<)
$$
by changing the role of $z_1$ and $z_2$.

Let $\ga$ be a path from a point in $Y_2^>$ to a point in $Y_2^<$
and $A(\ga):Mul(Y_2^>)\rightarrow Mul(Y_2^<)$ a linear map defined by
the analytic continuation of a function in $Mul(Y_2^>)$ along the path $\ga$.
Then, by using the Knizhnik-Zamolodchikov equation,
there exists 
a unique linear isomorphism 
$$M(\ga): V_{\la_1\la_2\la_3}^{\la_0} \rightarrow V_{\la_2\la_1\la_3}^{\la_0}$$
such that for any $a_i \in V_{\g,k}(\la_i)$ and $u \in V_{\g,k}(\la_0)^\vee$
the following diagram commutes:
$$
\begin{array}{ccc}
V_{\la_1\la_2\la_3}^{\la_0}    &\stackrel{M(\ga)}{\longrightarrow}&
V_{\la_2\la_1\la_3}^{\la_0} \\
\downarrow_{C(u,a_1,a_2,a_3;z_1,z_2)}
   &&
\downarrow_{C(u,a_2,a_1,a_3;z_2,z_1)}   \\
Mul(Y_2^>)
   &\stackrel{A(\ga)}{\longrightarrow}&
Mul(Y_2^<).
\end{array}
$$
It is noteworthy that $M(\ga)$ is independent of the choice of $a_i \in L_{\g,k}(\la_i)$ and $u \in L_{\g,k}(\la_0)^\vee$
and thus by \eqref{eq_decomposition} this linear map $M(\ga)$ gives ``a structure morphism of braided tensor category'',
$$
M(\ga): L_{\g,k}(\la_1)\otimes (L_{\g,k}(\la_2)\otimes L_{\g,k}(\la_3)) \rightarrow 
L_{\g,k}(\la_2)\otimes (L_{\g,k}(\la_1)\otimes L_{\g,k}(\la_3)).
$$

In fact, by setting $\la_3=0$,
it gives an isomorphism 
$L_{\g,k}(\la_1)\otimes L_{\g,k}(\la_2) \rightarrow L_{\g,k}(\la_2)\otimes L_{\g,k}(\la_1)$,
which is the braiding $B_{L_{\g,k}(\la_1),L_{\g,k}(\la_2)}$
and
the composition of the braiding and $M(\ga)$ gives
the associative isomorphism
$L_1\otimes (L_2\otimes L_3) 
\stackrel{1\otimes B}{\rightarrow} L_1\otimes (L_3 \otimes L_2)
\stackrel{M}{\rightarrow} L_3\otimes (L_1 \otimes L_2)
\stackrel{B}{\rightarrow}  (L_1 \otimes L_2)\otimes L_3$.
In this way, the monodromy of the intertwining operators define a braided tensor category structure on $D(\g,k)$ (see for example \cite{ES,Ka}).
This braided tensor category is introduced by Drinfeld by using the Knizhnik-Zamolodchikov equation and called a {\it Drinfeld category}.

We remark that we can consider antiholomorphic formal variable $\z_1,\z_2$.
Then, the formal power series 
$$
u(I_f(a_1,\z_1)I_g(a_2,\z_2)a_3) \in \C[[\z_1^\C, \z_2^\C]]
$$
is absolutely convergent to an antiholomorphic function on $Y_2^>$.
All of the above discussions can be carried out in parallel,
 and a braided tensor category can be defined.
We denote this antiholomorphic Drinfeld category by $\overline{D(\g,k)}$.
An object in $\overline{D(\g,k)}$ is written as $\overline{L_{\g,k}(\la)}$ to emphasize that it is 
an object in $\overline{D(\g,k)}$.

Since the holomorhic and antiholomorphic solutions of a KZ-equation
are the same on the real subspace $\R^2\cap Y_2$,
the associative isomorphisms of $D(\fg,k)$ and $\overline{D(\fg,k)}$ are the same.
Furthermore, the difference of the braidings is an inverse of each other.
This is because for $\al \in \C$ the monodromy of the holomorphic function $z^\al$
around the origin (counterclockwise) is $\exp(2\pi i \al)$,
and the monodromy of an antiholomorphic function  $\z^\al$ around the origin is $\exp( - 2\pi i \al)$.
Hence, we have:
\begin{lem}
For $k \in \C \setminus \Q$, 
$\overline{D(\fg,k)}$ and $D(\fg,k)^\rev$ are equivalent as braided tensor categories.
\end{lem}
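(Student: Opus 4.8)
The plan is to show that the identity assignment on objects and morphisms,
$$\Phi\colon \overline{D(\g,k)} \longrightarrow D(\g,k)^\rev,\qquad \overline{L_{\g,k}(\la)} \mapsto L_{\g,k}(\la),$$
upgrades to an equivalence of braided tensor categories. The first point is that the two sides already share the same underlying monoidal structure. Indeed, in both categories the tensor product is
$$L_{\g,k}(\la_1)\otimes L_{\g,k}(\la_2)=\bigoplus_{\al\in \lP^+} V_{\la_1\la_2}^\al\, L_{\g,k}(\al),$$
and the multiplicity spaces $V_{\la_1\la_2}^\al=\Hom_\g(L(\la_1)\otimes L(\la_2),L(\al))$ are defined purely through $\g$-module maps, so they do not depend on whether one uses the holomorphic variable $z$ or the antiholomorphic variable $\z$. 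Hence I would take the tensor-structure morphisms $\mu_{M,N}$ and $\epsilon$ of $\Phi$ to be identities; since the unitors are trivial on both sides, the coherence axioms LM1 and LM2 and the braiding compatibility then reduce to the single requirement that $\Phi$ carry the associativity and braiding of $\overline{D(\g,k)}$ to those of $D(\g,k)^\rev$, which is all that remains to check.

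For the braiding, recall that $B_{M,N}$ in $D(\g,k)$ is the monodromy map $M(\ga)$ at $\la_3=0$, obtained by continuing a composition $u(I_f(a_1,z_1)I_g(a_2,z_2)a_3)$ of holomorphic intertwining operators along a path $\ga$ that exchanges $z_1$ and $z_2$, i.e.\ a half-loop around the collision $z_1=z_2$. The braiding $\overline{B}_{M,N}$ of $\overline{D(\g,k)}$ is produced by the same continuation applied to the antiholomorphic composition $u(I_f(a_1,\z_1)I_g(a_2,\z_2)a_3)$. Since the local monodromy of $\z^\al$ about the origin is $\exp(-2\pi i\al)$, the inverse of the monodromy $\exp(2\pi i\al)$ of $z^\al$, conjugation reverses the orientation of this half-loop; tracking source and target, this yields $\overline{B}_{M,N}=B_{N,M}^{-1}=B^\rev_{M,N}$, which is exactly the braiding of $D(\g,k)^\rev$.

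For the associativity, the key input is the observation recorded just before the statement: on the real locus $\R^2\cap Y_2$ the holomorphic and antiholomorphic solutions of the KZ equation coincide, because for real positive arguments $z^s$ and $\z^s$ are the same principal branch. The associator is a connection isomorphism between the two fusion channels, computed from the regularized limits of KZ solutions along a real path joining two of the singular points $0,1,\infty$, rather than from a loop encircling any one of them. As these limits are determined by the values of the solutions on $\R^2\cap Y_2$, where the holomorphic and antiholomorphic solutions agree, the associator of $\overline{D(\g,k)}$ equals that of $D(\g,k)$, and by definition the latter is also the associator of $D(\g,k)^\rev$. Together with the previous paragraph this shows that $\Phi$ intertwines both structural data, so it is the desired braided tensor equivalence.

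I expect the associativity comparison to be the main obstacle. The delicate part is to argue cleanly that the associator acquires no monodromy phase under conjugation, in contrast to the braiding: one must exhibit the connection matrix between fusion channels as genuinely real-analytic data on $\R^2\cap Y_2$, while isolating the single place—the half-loop about $z_1=z_2$—at which conjugation reverses orientation and hence inverts the braiding. Carrying this out requires consistent bookkeeping of the base points, the path $\ga$, and the chosen branches for the conjugate solutions; by comparison, the monodromy computation $\z^\al\mapsto\exp(-2\pi i\al)$ that drives the braiding statement is elementary.
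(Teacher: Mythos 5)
Your proposal is correct and follows essentially the same route as the paper: both arguments identify the two categories via the (identity) functor, deduce equality of associators from the agreement of holomorphic and antiholomorphic KZ solutions on the real locus $\R^2\cap Y_2$, and deduce that the braidings are mutually inverse from the monodromies $\exp(\pm 2\pi i\al)$ of $z^\al$ and $\z^\al$. Your write-up merely spells out the functor data and the connection-matrix interpretation of the associator in more detail than the paper's two-sentence argument.
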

We note that $D(\g,k)$ is a balanced braided tensor category with 
the balance 
\begin{align}
\theta_{L_{\g,k}(\la)} =\exp (2\pi i \Delta(\la)) = \exp\left(\pi i \frac{(\la+2\rho,\la)}{k+h^\vee}\right)
\label{eq_balance}
\end{align}
and the balance on $\overline{D(\g,k)}$ is given by
\begin{align}
\theta_{\overline{L_{\g,k}(\la)}} = \exp\left(- \pi i \frac{(\la+2\rho,\la)}{k+h^\vee}\right).
\label{eq_balance2}
\end{align}

Finally, we briefly review the fact that
for any commutative algebra object in the Drinfeld categories $D(\g,k)\otimes D(\g,k')$ with
$k,k' \in \C \setminus \Q$,
we can construct a vertex algebra as an 
extension of the affine vertex algebra $L_{\g,k}(0)\otimes L_{\g,k'}(0)$ (see for more detail \cite{HKL}).

Let $(V,m,\eta)$ be a commutative algebra object
in $D(\fg,k) \otimes D(\fg,k')$ such that the balance $\theta$ trivially acts on $V$,
$\theta_V=\mathrm{id}$.
Then, $V = \bigoplus_{\la,\la' \in \lP^+} (L_{\g,k}(\la) \otimes L_{\g,k'}(\la'))^{n_{\la,\la'}}$
where $n_{\la,\la'} \in \Z_{\geq 0}$ is the multiplicity.
By \eqref{eq_inter} and \eqref{eq_balance},
$n_{\la,\la'}=0$ unless
$$
\frac{(\la+2\rho,\la)}{2(k+h^\vee)} + \frac{(\la'+2\rho,\la')}{2(k'+h^\vee)} \in \Z,
$$
that is, $V$ is $\Z$-graded.
By the definition of $D(\g,k)$, the multiplication $m:V\otimes V \rightarrow V$
corresponds to an intertwining operator $I_m(-,z): V \rightarrow \mathrm{End}\,(V,V)[[z^{\pm}]]$.
For any $a_1,a_2,a_3 \in V$ and $u \in V^\vee=\bigoplus_{n \in \Z}V_n^*$
and any path $\ga$ from $Y_2^>$ to $Y_2^<$,
\begin{align*}
A(\ga)u(I_m(a_1,z_1)I_m(a_2,z_2)a_3)
&=A(\ga)C(u,a_1,a_2,a_3;z_1,z_2)(m(\id\otimes m))\\
&=C(u,a_2,a_1,a_3;z_2,z_1)(M(\ga)m(\id \otimes m))\\
&=C(u,a_2,a_1,a_3;z_2,z_1)(m(\id \otimes m))\\
&=u(I_m(a_2,z_2)I_m(a_1,z_1)a_3).
\end{align*}
Here, we used the assumption that $V$ is a commutative associative algebra object.
Hence, $I_m(-,z)$ satisfies the locality condition,
which implies that $V$ is a vertex algebra.
The vacuum vector $\1$ is defined by the non-zero morphism
$\eta:L_{\g,k}(0)\otimes L_{\g,k'}(0) \rightarrow V$,
i.e., $\1 = \eta(\1_{L_{\g,k}(0)}\otimes \1_{L_{\g,k'}(0)})$,
where $\1_{L_{\g,k}(0)}$ (resp. $\1_{L_{\g,k'}(0)}$)
is the vacuum vector of  $L_{\g,k}(0)$ (resp. $L_{\g,k'}(0)$).
Hence, we have (We omit the details here, but refer instead to \cite{HKL}):
\begin{prop}
\label{vertex}
Let $k,k' \in \C \setminus \Q$
and $(V,m,\eta)$ be a commutative algebra object
in $D(\fg,k) \otimes D(\fg,k')$ such that $\theta_V=\mathrm{id}$.
Then, $(V,I_m(-,z),\1)$ is a $\Z$-graded vertex algebra.
\end{prop}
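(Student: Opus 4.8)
The plan is to realize the vertex algebra structure by taking $Y(\cdot,z)=I_m(\cdot,z)$ as the state--field correspondence and $\1=\eta(\1_{L_{\g,k}(0)}\otimes\1_{L_{\g,k'}(0)})$ as the vacuum, and then to verify the axioms of a $\Z$-graded vertex algebra one by one, the decisive point being locality. First I would record that the balance hypothesis $\theta_V=\id$ together with \eqref{eq_balance} and \eqref{eq_inter} forces every summand $L_{\g,k}(\la)\otimes L_{\g,k'}(\la')$ occurring in $V$ to have conformal weight $\Delta(\la)+\Delta(\la')\in\Z$; hence $V$ is genuinely $\Z$-graded and the intertwining operator $I_m(\cdot,z)$ attached to $m$ by \eqref{eq_inter} involves only integral powers of $z$, i.e.\ $I_m(a,z)\in(\End V)[[z^\pm]]$, and by the lower-truncation property of the Weyl modules $I_m(a,z)v\in V((z))$ for all $a,v$.

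Next I would dispose of the elementary axioms. The unit axiom (CA1) for $(V,m,\eta)$ translates, via the identification of $m$ with $I_m$, into the vacuum and creation properties $Y(\1,z)=\id_V$ and $Y(a,z)\1\in V[[z]]$ with $Y(a,z)\1|_{z=0}=a$, the latter following from the normalization $a(-\Delta-1)b=f(a\otimes b)$ in \eqref{eq_inter} applied to the unit morphism. The $\Z$-grading is compatible with $Y$ because $I_m$ shifts weights additively, and the translation ($L(-1)$-derivative) property is inherited from the corresponding property of intertwining operators of affine vertex algebras. None of these steps should present difficulty.

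The main step, and the main obstacle, is locality. The computation displayed just before the statement shows that for $a_1,a_2,a_3\in V$ and $u\in V^\vee$ the two iterated matrix coefficients $u(I_m(a_1,z_1)I_m(a_2,z_2)a_3)$ and $u(I_m(a_2,z_2)I_m(a_1,z_1)a_3)$ are analytic continuations of one another along a path $\ga$ from $Y_2^>$ to $Y_2^<$, precisely because commutativity and associativity of the algebra object ((CA2) and (CA3)) make $M(\ga)$ fix the morphism $m\circ(\id\otimes m)$. The real work is to convert this \emph{categorical/analytic} identity into the \emph{formal algebraic} locality axiom. Since $V$ is $\Z$-graded the exponents of $z_1,z_2$ appearing are integers, so the multivalued functions on $Y_2$ produced by $A(\ga)$ are in fact single-valued; combined with the convergence of the compositions of intertwining operators this shows that each matrix coefficient is the expansion, in the respective domain $|z_1|>|z_2|$ or $|z_1|<|z_2|$, of one and the same rational function with poles only along $z_1=z_2$ and $z_i=0$. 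Clearing the pole at $z_1=z_2$ then yields $(z_1-z_2)^N[Y(a_1,z_1),Y(a_2,z_2)]=0$ for $N\gg 0$, which is locality. I expect the care required here to lie in making the passage from multivalued monodromy to single-valued rationality rigorous and in controlling the pole order $N$ uniformly on graded pieces.

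Finally, with the $\Z$-grading, vacuum, creation and locality in hand, I would invoke the standard reconstruction of a vertex algebra from mutually local fields (as in \cite{HKL}, which also supplies associativity of $Y$ from (CA2)) to conclude that $(V,I_m(\cdot,z),\1)$ is a $\Z$-graded vertex algebra.
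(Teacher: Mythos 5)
Your proposal is correct and follows essentially the same route as the paper: the balance hypothesis forces the $\Z$-grading and integer powers in $I_m(-,z)$, the displayed monodromy computation (where commutativity and associativity make $M(\ga)$ fix $m\circ(\id\otimes m)$) yields locality, and the remaining standard verifications are deferred to \cite{HKL}. The paper's own proof is in fact terser than yours — it omits exactly the passage from the analytic-continuation identity to formal locality that you spell out — so your additional care there is consistent with, not divergent from, the paper's argument.
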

%
Similarly, let $(V=V_0\oplus V_1,\{m_{i,j}\}_{i,j=01},\eta)$ be a supercommutative algebra object
in $D(\fg,k) \otimes D(\fg,k')$ such that the balance $\theta|_{V_i}=(-1)^i$ for $i=0,1$.
By the definition of $D(\g,k)$, the multiplication $m:V\otimes V \rightarrow V$
corresponds to an intertwining operator $I_m(-,z): V \rightarrow \mathrm{End}\,(V,V)[[z^{\pm \frac{1}{2}}]]$.
Since $m$ does not have any component which is included in 
$\Hom(V_i\otimes V_j,V_{1+i+j})$,
the vertex operator $I_m(-,z) \in \mathrm{End}\,(V,V)[[z^{\pm}]]$.
The rest of the argument is completely similar to the above (for more detail, see \cite{CKM}).
Hence, we have:
\begin{prop}
\label{super_vertex}
Let $k,k' \in \C \setminus \Q$
and $(V=V_0\oplus V_1,\{m_{i,j}\}_{i,j=01},\eta)$ be a supercommutative algebra object
in $D(\fg,k) \otimes D(\fg,k')$ such that $\theta_{V_i}=(-1)^i$.
Then, $(V,I_m(-,z),\1)$ is a vertex superalgebra.
\end{prop}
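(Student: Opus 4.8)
The plan is to follow the monodromy argument used in the proof of Proposition \ref{vertex} almost verbatim, the only new ingredient being the sign $(-1)^{ij}$ supplied by the supercommutativity axiom (SCA3). First I would record the grading forced by the balance. Writing a homogeneous summand of $V$ as $L_{\g,k}(\la)\otimes L_{\g,k'}(\la')$, the balance on $D(\g,k)\otimes D(\g,k')$ acts by $\exp(2\pi i(\Delta(\la)+\Delta(\la')))$ according to \eqref{eq_balance}. Hence the hypothesis $\theta_{V_i}=(-1)^i$ forces $\Delta(\la)+\Delta(\la')\in \frac{i}{2}+\Z$ for every summand of $V_i$, so that $V_0$ is $\Z$-graded and $V_1$ is $(\tfrac12+\Z)$-graded.

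Second, I would verify that the operator attached to $m$ has integral powers of $z$. As in the non-super case, $m$ corresponds under the intertwining-operator correspondence to an operator $I_m(-,z)$; by \eqref{eq_inter} the exponents of $z$ occurring in a component $V_i\otimes V_j\to V_l$ differ by integers from the conformal-weight difference, which modulo $\Z$ equals $\frac{l-i-j}{2}$ by the first step. Because $m$ has components only of the form $m_{i,j}\in \Hom(V_i\otimes V_j,V_{i+j})$ and no component landing in $V_{1+i+j}$, one has $l=i+j$ for every contributing term, so the exponent is integral and $I_m(-,z)\in \End(V,V)[[z^\pm]]$. This is exactly the step at which the half-integrality is eliminated, and it is the only place where the grading hypothesis is used beyond bookkeeping.

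Third comes the super-locality, which is the heart of the argument. For $a_1\in V_{i_1}$, $a_2\in V_{i_2}$, $a_3\in V$ and $u\in V^\vee$, I would repeat the chain of equalities from the proof of Proposition \ref{vertex}, replacing the commutative identity $M(\ga)\,m(\id\otimes m)=m(\id\otimes m)$ used there by its super analogue. Concretely, $M(\ga)$ is implemented (after absorbing the associativity isomorphisms exactly as in the non-super case) by the braiding $B_{V_{i_1},V_{i_2}}$, and (SCA3) gives $m_{i_2,i_1}\circ B_{V_{i_1},V_{i_2}}=(-1)^{i_1 i_2}m_{i_1,i_2}$. Feeding this through the monodromy diagram yields
\begin{align*}
A(\ga)\,u(I_m(a_1,z_1)I_m(a_2,z_2)a_3)=(-1)^{i_1 i_2}\,u(I_m(a_2,z_2)I_m(a_1,z_1)a_3).
\end{align*}
Together with the integrality from the second step, this is precisely the super-locality condition for $I_m$.

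The main obstacle I anticipate is the careful identification in the third step of the topological operator $M(\ga)$ with the algebraic braiding carrying the supercommutativity sign: one must check that the associativity and unit isomorphisms (the $\om$-twists entering $M(\ga)$) cancel in the super setting exactly as they do in the non-super setting, so that the \emph{only} modification is the scalar $(-1)^{i_1 i_2}$. Once super-locality is established, the vacuum $\1=\eta(\1_{L_{\g,k}(0)}\otimes \1_{L_{\g,k'}(0)})$ together with the associativity axiom (SCA2) give the remaining vertex-superalgebra axioms by the same reduction to \cite{CKM,HKL} indicated in the non-super case, completing the proof.
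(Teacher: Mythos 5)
Your proposal is correct and follows essentially the same route as the paper: the paper likewise observes that the balance hypothesis puts $I_m(-,z)$ a priori in $\End(V,V)[[z^{\pm\frac{1}{2}}]]$, that the absence of components of $m$ in $\Hom(V_i\otimes V_j,V_{1+i+j})$ forces integral powers, and then dispatches super-locality by the same monodromy argument as in Proposition \ref{vertex} (citing \cite{CKM} for details). Your write-up merely makes explicit the steps the paper compresses into ``the rest of the argument is completely similar to the above.''
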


\subsection{Non-chiral case}
\label{sec_affine2}
In the last section we introduce a complex conjugate of the Drinfeld category, $\overline{D(\g,k)}$
which is defined by
the intertwining operator $I(-,\z)$ with an antiholomophic formal variable.
Thus, a commutative algebra object in the Drinfeld categories $D(\g,k)\otimes \overline{D(\g,k')}$
should corresponds to a non-chiral vertex algebra 
whose vertex operator $Y(-,\uz)$ consisting of holomorphic and antiholomorphic formal variables,
$$
Y(a,\uz)=\sum_{r,s \in \C} a(r,s)z^{-r-1}\z^{-s-1}.
$$
Such an algebra is introduced in \cite{M1} and called a full vertex algebra.
In this section, we briefly recall the definition of a full vertex algebra
and show a result similar to Proposition \ref{vertex} for the FULL case.


For a $\C^2$-graded vector space $F=\bigoplus_{h,\h \in \C^2} F_{h,\h}$, 
set $F^\vee=\bigoplus_{h,\h \in \C^2} F_{h,\h}^*$,
where $F_{h,\h}^*$ is the dual vector space of $F_{h,\h}$.
{\it A full vertex algebra} is a $\C^2$-graded $\C$-vector space
$F=\bigoplus_{h,\h \in \C^2} F_{h,\h}$ equipped with a linear map 
$$Y(-,\uz):F \rightarrow \End (F)[[z^\pm,\z^\pm,|z|^\C]],\; a\mapsto Y(a,\uz)=\sum_{r,s \in \C}a(r,s)z^{-r-1}\z^{-s-1}$$
and a non-zero element $\va \in F_{0,0}$ 
satisfying the following conditions:
%
\begin{enumerate}
\item[FV1)]
For any $a,b \in F$, $Y(a,\uz)b \in F((z,\z,|z|^\C))$;
\item[FV2)]
$F_{h,\h}=0$ unless $h-\h \in \Z$;
\item[FV3)]
For any $a \in F$, $Y(a,\uz)\va \in F[[z,\z]]$ and $\lim_{z \to 0}Y(a,\uz)\va = a(-1,-1)\va=a$;
\item[FV4)]
$Y(\va,\uz)=\mathrm{id} \in \End F$;
\item[FV5)]
For any $a,b,c \in F$ and $u \in F^\vee$,
the formal power series
$u\left(Y(a,\uz_1)Y(b,\uz_2)c\right)$, $u\left(Y(b,\uz_2)Y(a,\uz_1)c\right)$ and 
$u\left(Y(Y(a,\uz_0)b,\uz_2)c\right)$
are absolutely convergent in the region $\{|z_1|>|z_2|\} =Y_2^>$), $\{|z_2|>|z_1|\} =Y_2^<$ and 
$\{|z_2|>|z_0|\}$ respectively
and have analytic continuations to the same single-valued real analytic function on $Y_2$
by taking $z_0=z_1-z_2$ (See \cite{M1} for more detailed definition).
%
%
%
\item[FV6)]
$F_{h,\h}(r,s)F_{h',\h'} \subset F_{h+h'-r-1,\h+\h'-s-1}$ for any $r,s,h,h',\h,\h' \in \C$.
\end{enumerate}
\begin{rem}
In the original definition in \cite{M1},
a full vertex algebra $F$ is assumed to be $\R^2$-graded
and $Y(a,\uz) \in \End F[[z,\z,|z|^\R]]$
since any conformal field theory satisfies this condition.
\end{rem}

Let $(F,Y,\1)$ be a full vertex algebra.
Set $\bar{F}=F$ and
$\bar{F}_{h,\h}=F_{\h,h}$ for $h,\h\in \C$.
Define $\bar{Y}(-,\uz):\bar{F} \rightarrow \End (\bar{F})[[z,\z,|z|^\C]]$
by $\bar{Y}(a,\uz)=\sum_{r,s \in \C}a(r,s)z^{-s-1}\z^{-r-1}$.
Then, $(\bar{F},\bar{Y},\1)$ is a full vertex algebra.
We call it a conjugate full vertex algebra of $(F,Y,\1)$.

We will consider a full vertex algebra $L_{\g,k}(0)\otimes \overline{L_{\g,k'}(0)}$
which is the tensor product of the vertex algebra $L_{\g,k}(0)$
and the conjugate (full) vertex algebra $\overline{L_{\g,k'}(0)}$.

Let $(F,m,\eta)$ be a commutative algebra object
in $D(\g,k)\otimes \overline{D(\g,k')}$ such that $\theta_F=\mathrm{id}$.
Then, $F = \bigoplus_{\la,\la' \in \lP^+} (L_{\g,k}(\la) \otimes \overline{L_{\g,k'}(\la')})^{n_{\la,\la'}}$
where $n_{\la,\la'} \in \Z_{\geq 0}$ is the multiplicity.
Similarly to the last section, by \eqref{eq_inter} and \eqref{eq_balance2},
$n_{\la,\la'}=0$ unless
$$
\frac{(\la+2\rho,\la)}{2(k+h^\vee)} - \frac{(\la'+2\rho,\la')}{2(k'+h^\vee)} \in \Z,
$$
that is, $F$ satisfies (FV2).
By the definition of the Drinfeld category, the multiplication $m:F\otimes F \rightarrow F$
corresponds to an intertwining operator $I_m(-,\uz): F \rightarrow \mathrm{End}\,(F,F)[[z^{\C},\z^{\C}]]$.
(FV6) follows from the definition of the intertwining operators
and (FV2) and (FV6) imply 
$I_m(a,\uz) \in \mathrm{End}\,(F,F)[[z,\z,|z|^{\C}]]$ for any $a \in F$.
Thus, (FV1), (FV3) and (FV4) follows from the definition of the intertwining operators
and the assumption (CA1).
(FV5) follows from the same argument in the last section.
Hence, we have:
\begin{prop}
\label{full_vertex}
Let $k,k' \in \C \setminus \Q$
and $(F,m,\eta)$ be a commutative algebra object
in $D(\fg,k) \otimes \overline{D(\fg,k')}$ such that $\theta_F=\mathrm{id}$.
Then, $(F,I_m(-,\uz),\1)$ is a full vertex algebra.
\end{prop}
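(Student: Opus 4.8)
The plan is to mirror the chiral argument (Proposition \ref{vertex}) step for step, replacing the holomorphic intertwining operators of the second tensor factor by their antiholomorphic counterparts, and to verify that the resulting correlators are single-valued \emph{real analytic} functions on $Y_2$ rather than multivalued holomorphic ones. First I would decompose $F=\bigoplus_{\la,\la'\in\lP^+}(L_{\g,k}(\la)\otimes\overline{L_{\g,k'}(\la')})^{n_{\la,\la'}}$ as an object of $D(\g,k)\otimes\overline{D(\g,k')}$, with multiplicities $n_{\la,\la'}\in\Z_{\geq0}$. By the construction of the Drinfeld category, the multiplication $m\colon F\otimes F\to F$ is the same datum as an intertwining operator; since the second factor is antiholomorphic, this operator carries both a holomorphic variable $z$ and an antiholomorphic variable $\z$, giving $I_m(-,\uz)\colon F\to\End(F)[[z^\C,\z^\C]]$, and the vacuum is $\va=\eta(\1_{L_{\g,k}(0)}\otimes\1_{\overline{L_{\g,k'}(0)}})$.

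The axioms (FV2), (FV6), (FV1), (FV3) and (FV4) are then bookkeeping. For (FV2) I would combine the balance hypothesis $\theta_F=\id$ with \eqref{eq_balance} and \eqref{eq_balance2}: a summand $L_{\g,k}(\la)\otimes\overline{L_{\g,k'}(\la')}$ survives only where $\theta$ acts as the identity, which forces $\frac{(\la+2\rho,\la)}{2(k+h^\vee)}-\frac{(\la'+2\rho,\la')}{2(k'+h^\vee)}\in\Z$, i.e.\ $h-\h\in\Z$ on each graded piece. This is precisely the condition making a monomial $z^h\z^\h$ single-valued around the origin, which is why the non-chiral theory is well-defined. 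Axiom (FV6) is immediate from the mode expansion \eqref{eq_inter}; together with (FV1) it promotes $I_m(a,\uz)$ to an element of $\End(F)[[z,\z,|z|^\C]]$; and (FV3), (FV4) follow from the unit axiom (CA1) exactly as creativity and the vacuum property are extracted in the chiral case.

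The substantive axiom is (FV5), and this is where the work lies. One must show that $u(I_m(a_1,\uz_1)I_m(a_2,\uz_2)a_3)$ converges on $Y_2^>$, that $u(I_m(a_2,\uz_2)I_m(a_1,\uz_1)a_3)$ converges on $Y_2^<$, that $u(I_m(I_m(a_1,\uz_0)a_2,\uz_2)a_3)$ converges near $\{|z_2|>|z_0|\}$, and that all three extend to \emph{one} single-valued real analytic function on $Y_2$. The plan is to write each correlator, on fixed graded pieces, as a finite sum of products of a holomorphic correlator in $(z_1,z_2)$ with an antiholomorphic correlator in $(\z_1,\z_2)$, reflecting the factorization $D(\g,k)\otimes\overline{D(\g,k')}$: the holomorphic factors converge and continue as in Section \ref{sec_affine1} via the Knizhnik--Zamolodchikov equation, and the antiholomorphic factors do the same for the conjugate equation. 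Each factor alone is multivalued on $Y_2$, but since the braiding of $\overline{D(\g,k')}$ is the reverse of that of $D(\g,k')$ the two monodromies are mutually inverse, and (FV2) makes the phases $\exp(\pm2\pi i(\cdots))$ cancel, so the product is single-valued. The \emph{equality} of the three continuations is then exactly the commutativity and associativity identities (CA2) and (CA3) for $(F,m,\eta)$, transported through the monodromy isomorphisms $M(\ga)$ of Section \ref{sec_affine1} --- the same computation displayed just before Proposition \ref{vertex}, now run with the mixed variable $\uz$. I expect the main obstacle to be this convergence-and-single-valuedness bookkeeping for the mixed holomorphic/antiholomorphic correlators, for which I would appeal to the analytic estimates of \cite{M1} rather than redo them here.
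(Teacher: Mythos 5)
Your overall plan coincides with the paper's proof: decompose $F$ into $(L_{\g,k}(\la)\otimes\overline{L_{\g,k'}(\la')})^{n_{\la,\la'}}$, read off (FV2) from $\theta_F=\id$ together with \eqref{eq_balance} and \eqref{eq_balance2}, obtain (FV1), (FV3), (FV4), (FV6) from the definition of the intertwining operator $I_m(-,\uz)$ and (CA1), and settle (FV5) by rerunning the monodromy computation displayed before Proposition \ref{vertex} with the mixed variable $\uz$. The paper itself is exactly this terse (``(FV5) follows from the same argument in the last section''), so in outline you have reproduced its argument.

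However, one step of your treatment of (FV5) is wrong as stated, and it is precisely the place where the algebra axioms must enter. You claim that each product of a holomorphic block in $(z_1,z_2)$ with an antiholomorphic block in $(\z_1,\z_2)$ is already single-valued because ``the two monodromies are mutually inverse'' and (FV2) cancels the phases. This cannot be right: the holomorphic factors are conformal blocks of $D(\g,k)$ (KZ equation at level $k$), while the antiholomorphic factors are blocks of $\overline{D(\g,k')}$ (conjugate KZ equation at level $k'$), and for $k\neq k'$ --- which the proposition allows --- these two monodromy representations are unrelated, let alone inverse to one another. Moreover $M(\ga)$ acts on the space $V_{\la_1\la_2\la_3}^{\la_0}$, which in general has dimension greater than one, so the monodromy is not given by scalars and there are no ``phases'' to cancel; (FV2) only makes each monomial $z^{s}\z^{\s}$ single-valued, not the analytic continuation of the full series. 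Single-valuedness of the correlator is not a factor-by-factor phenomenon: it is the statement that the specific vector $m\circ(\id\otimes m)$ is fixed by the combined monodromy $M(\ga)$ for every loop $\ga$ in $Y_2$, and that invariance is exactly what (CA2) and (CA3) supply --- the same computation you invoke only for the ``equality of the three continuations.'' The repair is simply to drop the cancellation argument and let the $M(\ga)$-invariance of the multiplication morphism do both jobs at once: invariance under loops gives single-valuedness, and invariance along paths from $Y_2^>$ to $Y_2^<$ identifies the continuations, exactly as in the chiral case.
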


\section{Quantum group and isomorphisms}
\label{sec_quantum_group}
As shown in the previous section, in order to construct extensions of affine vertex algebras we would like to construct 
a commutative algebra object in $D(\g,k)\otimes D(\g,k')$.

In Section \ref{sec_quantum_def} and \ref{sec_bilinear}, we review the equivalence between the Drinfeld category
$D(\g,k)$ and a module category of a quantum group $U_q(\g)$,
and prepare the settings and results needed later.
Then, we will prove (twisted) equivalences among
the module categories at different levels by using isomorphisms among quantum groups and the explicit description of the R-matrix
in Section \ref{sec_isomorphism}
and by the characterization of the braided tensor category 
in Section \ref{sec_twist}.

\subsection{Definition of quantum group}
\label{sec_quantum_def}
In this section, we recall the definition of a quantum group based on \cite{Ja,KS}.

Fix an element $q \in \C \setminus \mathbb{Q}$.
Recall the notations for the simple Lie algebra $\g$ (see Section \ref{sec_notation}),
in particular, the normalized invariant bilinear form $\lla-,-\rra$
which satisfies $\lla \al,\al\rra =2$ for short roots $\al$.
	Put $q_\al := q^{\lla \alpha,\alpha \rra/2}$ for $\al \in \Pi$,
	\[n_q := \frac{q^n - q^{-n}}{q - q^{-1}},\]
	\[n_q ! := n_q (n-1)_q \dots 1_q,\]
	\[\left( \begin{matrix} n \\ m \end{matrix} \right)_q := \frac{n_q!}{m_q! (n-m)_q!}.\]

	\begin{dfn}
The quantized enveloping algebra $U_q(\fg)$ is the algebra defined by generators $\{E_\alpha, F_\alpha, K_{\al}, K_{-\al} \mid \alpha \in \Pi\}$ and relations
	\[K_0 = 1, \quad K_\al K_\be = K_{\al+\be},\]
	\[K_\al E_\be K_{-\al} = q^{\lla\alpha,\be\rra} E_\be,\quad K_\al F_\be K_{-\al} = q^{-\lla\alpha,\be\rra} F_\be,\]
	\[[E_\alpha,F_\beta] = \delta_{\alpha,\beta} \frac{K_\alpha - K_{\alpha}^{-1}}{q_\alpha - q_\alpha^{-1}},\]
	\[\sum_{r=0}^{1-a_{\al\be}} (-1)^r \left( \begin{matrix} 1-a_{\al\be} \\ r \end{matrix} \right)_{q_\al} E_\al^r E_\be E_\al^{1-a_{\al\be}-r} = 0,\]
	\[\sum_{r=0}^{1-a_{\al\be}} (-1)^r \left( \begin{matrix} 1-a_{\al\be} \\ r \end{matrix} \right)_{q_\al} F_\al^r F_\be F_\al^{1-a_{\al\be}-r} = 0,\]
where $a_{\al\be}=\frac{2(\al,\be)}{(\al,\al)}=\frac{2\lla\al,\be\rra}{\lla\al,\al\rra}$, the Cartan matrix, for any $\al,\be \in \Pi$.
	One may define a Hopf algebra structure on $U_q(\fg)$ by
	\[\dD(K_\al) = K_\al \otimes K_\al, \quad \vep(K_\al) = 1, \quad \dS(K_\al) = K_{\al}^{-1},\]
	\[\dD(E_\alpha) = E_\alpha \otimes 1 + K_\alpha \otimes E_\alpha,\quad \vep(E_\alpha) = 0,\quad \dS(E_\alpha) = -K_{\alpha}^{-1} E_\alpha,\]
	\[\dD(F_\alpha) = F_\alpha \otimes K_{\alpha}^{-1} + 1 \otimes F_\alpha,\quad \vep(F_\alpha)=0,\quad \dS(F_\alpha) = -F_\alpha K_\alpha.\]
	\end{dfn}

The algebra $U_q(\g)$ is graded by the root lattice $\lQ=\sum_{\al \in \Pi} \Z \al$:
The grading is given by
$\mathrm{deg}\, E_\al =\al, \mathrm{deg}\,F_\al =-\al$ and $\mathrm{deg}\,K_\al =0$.
	Let $U_q^+(\g)$ (resp.\ $U_q^0(\g)$, $U_q^-(\g)$) be the subalgebra of $U_q(\g)$ generated by
all $E_\alpha$'s (resp.\ $K_\al$'s, $F_\alpha$'s) with $\al \in \Pi$.
Then, these subalgebras are graded subalgebras of $U_q(\g)=\bigoplus_{\mu \in \lQ} U_q(\g)_{\mu}$.
Since $U_q^0(\g)$ is a commutative algebra,
we can define for each $\la\in \lQ$ an element $K_\la$ in $U_q^0(\g)$ by
$$
K_\la=\Pi_{\be \in \Pi} K_\be^{m_\be} \text{ if }\la=\sum_{\be \in \Pi}m_\be \be.
$$


	For each $\lambda \in \lP$, let $L_q(\lambda)$ be the unique irreducible highest module of highest weight $\lambda$, that is, there exists nonzero $v_\lambda \in L_q(\lambda)$ such that
	\[K_\mu v_\lambda = q^{\lla \mu,\lambda\rra} v_\lambda,\;\;\;\; E_\alpha v_\lambda = 0.\]
	If $\lambda \in \lP^+$, then $L_q(\lambda)$ is finite dimensional. We say a $U_q(\fg)$-module is of {\it type 1} if it decomposes into a direct sum of $L_q(\lambda)$'s for $\lambda \in \lP^+$.
Notice that any subquotient of type 1 module is also of type 1. Hence, $U_q(\g)\modu$ is a locally finite semisimple abelian category. We remark that the abelian category structure does not depend on $q \in \C\setminus \Q$, however the tensor category structure does.

For any type 1 module $M$,
set for all $\la \in \lP$
$$
M_{\la} = \{m\in M \mid K_\al m= q^{\lla \la,\al\rra}m \text{ for all } \al \in \Pi \}.
$$
Then, we have 
$$M= \bigoplus_{\la \in \lP} M_{\la}.$$

The following Hopf algebra isomorphisms are important in this paper:
\begin{lem}
\label{lem_isom}
For any simple Lie algebra $\g$,
\begin{enumerate}
\item
There exist Hopf algebra isomorphisms
$\om: U_q(\g) \rightarrow U_q(\g)^\cop$,
$\theta:U_q(\g) \rightarrow U_{q^{-1}}(\g)_\op$ and
$\psi:U_q(\g) \rightarrow U_{q^{-1}}(\g)^\cop$
such that: For any $\al \in \Pi$
\begin{align*}
\om(E_\al)=F_{\al},\;\;\;\; \om(F_\al)&=E_\al,\;\;\;\; \om(K_\al)=K_\al^{-1},\\
\theta(E_\al)=E_{\al},\;\;\;\; \theta(F_\al)&=F_\al,\;\;\;\; \theta(K_\al)=K_\al,\\
\psi(E_\al)=-K_\al^{-1}E_\al,\;\;\;\; \psi(F_\al)&=-F_{\al}K_\al,\;\;\;\; \psi(K_\al)=K_\al^{-1}.
\end{align*}
\item
There exists an algebra isomorphism
$\tau:U_q(\g) \rightarrow U_q(\g)_\op$,
$$
\tau(E_\al)=E_{\al},\;\;\;\; \tau(F_\al)=F_\al,\;\;\;\; \tau(K_\al)=K_\al^{-1}.
$$
\end{enumerate}
\end{lem}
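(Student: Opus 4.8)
The plan is to treat each of the four maps as defined on the Chevalley generators $E_\al, F_\al, K_{\pm\al}$ ($\al\in\Pi$) and to exploit the generators-and-relations presentation of $U_q(\g)$: a map on generators extends to an algebra homomorphism into a given target algebra precisely when the images satisfy the defining relations. So for each of $\om,\theta,\psi,\tau$ I would first verify that the listed images obey the relation families ($K_\al K_\be=K_{\al+\be}$, the $K$-commutation relations, the $[E_\al,F_\be]$ relation, and the two $q$-Serre relations), computed in the appropriately decorated target. Bijectivity then comes for free: $\om$ and $\tau$ square to the identity on generators (so $\om^2=\id$ and $\tau^2=\id$ after identifying $(U_q(\g)^{\cop})^{\cop}$ and $(U_q(\g)_\op)_\op$ with $U_q(\g)$), while $\theta$ and $\psi$ have evident two-sided inverses given by the mirror recipe with base $q^{-1}$.

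Two elementary observations do most of the work in the relation check. First, the $q$-integers, $q$-factorials and $q$-binomials are invariant under $q\mapsto q^{-1}$, since $n_{q^{-1}}=n_q$; hence the Serre coefficients $\binom{1-a_{\al\be}}{r}_{q_\al}$ are unchanged when passing to $U_{q^{-1}}(\g)$. Second, the opposite multiplication reverses monomials, and a reindexing $r\mapsto 1-a_{\al\be}-r$ shows each $q$-Serre polynomial is carried to a scalar multiple of itself, so it still vanishes; meanwhile the sign produced by reversing the commutator $[E_\al,F_\be]$ is compensated for $\theta$ by the flip $q_\al-q_\al^{-1}\mapsto q_\al^{-1}-q_\al$ coming from $q\mapsto q^{-1}$, and for $\tau$ by $K_\al\mapsto K_\al^{-1}$ on the right-hand side. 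These cancellations are exactly why $\theta$ must land in $U_{q^{-1}}(\g)_\op$ and $\tau$ in $U_q(\g)_\op$; for $\om$, the swap $E_\al\leftrightarrow F_\al$ interchanges the two Serre families and carries $K_\al E_\be K_{-\al}=q^{\lla\al,\be\rra}E_\be$ to the corresponding $F$-relation.

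For the Hopf-algebra statements I would next check compatibility with the comultiplication and counit on generators; e.g.\ for $\om$ one verifies $\dD^{\cop}(\om(E_\al))=(\om\otimes\om)\dD(E_\al)$ directly from $\dD(F_\al)=F_\al\otimes K_\al^{-1}+1\otimes F_\al$, while the counit check is immediate from $\vep(E_\al)=\vep(F_\al)=0$ and $\vep(K_\al^{-1})=1$. The antipode then needs no separate argument: a bialgebra isomorphism between two Hopf algebras automatically intertwines their antipodes, since the antipode is the unique convolution inverse of the identity. The cleanest route to $\psi$ is to observe that its defining images $-K_\al^{-1}E_\al$, $-F_\al K_\al$, $K_\al^{-1}$ are exactly the values of the antipode $S$ of $U_q(\g)$; as $S$ realizes the standard Hopf isomorphism $U_q(\g)\cong U_q(\g)_\op^{\cop}$, composing with the identification $U_q(\g)_\op^{\cop}\cong U_{q^{-1}}(\g)^{\cop}$ furnished by $\theta$ produces $\psi$ without a fresh computation.

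The main obstacle is purely organizational: tracking the simultaneous $\op$/$\cop$ decorations together with the substitution $q\mapsto q^{-1}$ without sign or ordering errors, most acutely for $\psi$, whose antipode-like images carry extra $K_\al^{\pm1}$ factors that must be commuted past one another when checking the $[E_\al,F_\be]$ and Serre relations. Recognizing $\psi$ as an antipode twist, and recording once and for all the invariance $n_{q^{-1}}=n_q$ together with the reversal identity for the $q$-Serre polynomial, is what keeps these verifications routine rather than error-prone.
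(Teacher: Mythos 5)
Your strategy is essentially the paper's: the paper likewise disposes of $\om$ and $\theta$ by the routine generators-and-relations check and obtains $\psi$ from the antipode rather than by a fresh computation. However, your antipode shortcut composes the maps in the wrong order, and as stated it does not produce the map $\psi$ of the lemma. You propose $\psi=\theta\circ S$ with $S$ the antipode of $U_q(\g)$, via $U_q(\g)\stackrel{S}{\rightarrow}U_q(\g)_\op^\cop\stackrel{\theta}{\rightarrow}U_{q^{-1}}(\g)^\cop$. But $\theta$ is an algebra map into an opposite algebra, so it reverses products when both sides are written with the standard multiplications; hence
\[
\theta(S(E_\al))\;=\;\theta(-K_\al^{-1}E_\al)\;=\;-\,\theta(E_\al)\,\theta(K_\al^{-1})\;=\;-E_\al K_\al^{-1}\;=\;-\,q^{-\lla\al,\al\rra}\,K_\al^{-1}E_\al
\]
in $U_{q^{-1}}(\g)$, and likewise $\theta(S(F_\al))=-K_\al F_\al=-\,q^{\lla\al,\al\rra}\,F_\al K_\al$. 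So $\theta\circ S$ is indeed a Hopf algebra isomorphism $U_q(\g)\rightarrow U_{q^{-1}}(\g)^\cop$, but its values on $E_\al,F_\al$ differ from the stated ones by the factors $q^{-\lla\al,\al\rra}$ and $q^{\lla\al,\al\rra}$ (nontrivial, since $q$ is not a root of unity): it equals $\psi$ precomposed with the rescaling automorphism $E_\al\mapsto q^{-\lla\al,\al\rra}E_\al$, $F_\al\mapsto q^{\lla\al,\al\rra}F_\al$, $K_\al\mapsto K_\al$. Since the lemma asserts those exact formulas (which are then used, e.g., in Lemma \ref{f_transform}), the shortcut as written falls short of proving it.

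The repair is one line, and it is exactly the paper's proof: compose in the opposite order, $\psi=S'\circ\theta$, where $S'$ is the antipode of $U_{q^{-1}}(\g)$ regarded as a Hopf isomorphism $U_{q^{-1}}(\g)_\op\rightarrow U_{q^{-1}}(\g)^\cop$. Because $\theta$ fixes the generators and the antipode formulas $S'(E_\al)=-K_\al^{-1}E_\al$, $S'(F_\al)=-F_\al K_\al$, $S'(K_\al)=K_\al^{-1}$ do not involve the deformation parameter, this composite reproduces the stated $\psi$ on the nose. (Alternatively, keep your order and precompose with the Hopf automorphism $E_\al\mapsto q^{\lla\al,\al\rra}E_\al$, $F_\al\mapsto q^{-\lla\al,\al\rra}F_\al$, $K_\al\mapsto K_\al$.) The rest of your proposal is sound: the invariance $n_{q^{-1}}=n_q$, the reindexing $r\mapsto 1-a_{\al\be}-r$ of the Serre sums, the sign cancellation in the $[E_\al,F_\be]$ relation under $q\mapsto q^{-1}$, and the fact that a bialgebra isomorphism automatically intertwines antipodes are precisely what make the direct checks for $\om$, $\theta$, $\tau$ (and, if you prefer, a direct check for $\psi$) go through.
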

\begin{proof}
It is easy to show that $\theta$ and $\omega$ are Hopf algebra isomorphisms.
Since $\psi = S \circ \theta$ and
$U_q(\g) \stackrel{\theta}{\rightarrow} U_{q^{-1}}(\g)_\op
\stackrel{S}{\rightarrow} U_{q^{-1}}(\g)^\cop$,
$\psi$ gives a Hopf algebra isomorphism.
%
\end{proof}

\subsection{Bilinear form and R-matrix}
\label{sec_bilinear}
In this section, we recall the definitions of a bilinear form on $U_q(\g)$ and
the R-matrix. Here, we sometimes simplify our notations and write only
$U=U_q(\g)$, $U^+=U_q(\g)^+$ and $U^- = U_q(\g)^-$.
This section is mainly based on \cite{Ja} (see also \cite{Lu1,KS}).
The subalgebra of $U$ generated by $U^+$ and $U^0$ (resp. $U^-$ and $U^0$) is denoted by
$U^{\geq 0}$ (resp. $U^{\leq 0}$).
Recall that $U$ is a $\lQ$-graded algebra and
for all $\mu \in \lQ$,
$$
U_\mu=\{u \in U \mid K_\al u K_{\al}^{-1}=q^{\lla \mu,\al \rra}u \fora \al \in \Pi\}.
$$
Set $U_\mu^{+} = U^+ \cap U_{\mu}$
and $U_\mu^{-} = U^+ \cap U_{- \mu}$
for $\mu \in \lQ$ with $\mu \geq 0$.

\begin{prop}{\cite[Proposition 6.12 and Corollary 8.30]{Ja}}
\label{bilinear}
There exists a unique bilinear pairing $(-,-):U^{\leq 0} \times U^{\geq 0} \rightarrow \C$
such that for all $x,x' \in U^{\geq 0}$,
all $y,y' \in U^{\leq 0}$,
all $\mu,\nu \in \lQ$, and all $\al,\be \in \Pi$
\begin{align*}
(y,xx')=(\Delta y,x'\otimes x),\;\;\;\;\; &(yy',x)=(y\otimes y',\Delta x),\\
(K_\mu,K_\nu)=q^{-\lla\mu,\nu\rra},\;\;\;\;\; &(F_\al,E_\be) = - \delta_{\al,\be}(q_\al-q_\al^{-1})^{-1}\\
(K_\mu,E_\al)=0  \;\;\;\;\; &(F_\al, K_\mu) =0.
\end{align*}
Furthermore, the restriction of $(-,-)$ to any $U_\mu^- \times U_\mu^+$ with $\mu \in Q$, $\mu \geq 0$
is a non-degenerate paring.
\end{prop}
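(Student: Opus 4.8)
The plan is to treat $(-,-)$ as a \emph{skew-Hopf pairing} between the two triangular subalgebras $U^{\geq 0}$ and $U^{\leq 0}$, exploiting the decompositions $U^{\leq 0}=U^-U^0$ and $U^{\geq 0}=U^0U^+$. Conceptually, such a pairing is exactly the datum that realizes $U_q(\g)$ as a quotient of the Drinfeld double of $U^{\geq 0}$ with the two copies of the torus identified, so the content of the proposition is that the prescribed values on generators extend consistently and nondegenerately. Uniqueness is the easy half: since $U^{\geq 0}$ is generated by the $K_\mu$ and the $E_\al$, and $U^{\leq 0}$ by the $K_\mu$ and the $F_\al$, the two compatibility rules $(y,xx')=(\Delta y,x'\otimes x)$ and $(yy',x)=(y\otimes y',\Delta x)$ let one reduce any value $(y,x)$ to values on generators by repeatedly applying the explicit coproducts of $E_\al$, $F_\al$, $K_\mu$. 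Hence the stated values determine the form completely.

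For existence I would build the form in stages along the triangular decomposition. On the torus, $(K_\mu,K_\nu)=q^{-\lla\mu,\nu\rra}$ is forced and manifestly consistent. For the unipotent parts I would define a pairing $U^-\times U^+\to\C$ by induction on height, peeling off one generator at a time through the coproduct from the seed value $(F_\al,E_\be)=-\delta_{\al,\be}(q_\al-q_\al^{-1})^{-1}$, and then glue the torus and unipotent pieces using the orthogonality $(K_\mu,E_\al)=0=(F_\al,K_\mu)$. The essential consistency check is that the form, a priori defined on the free algebra, descends to $U_q(\g)$, i.e.\ that it annihilates the quantum Serre relations on each side. This I would verify through the skew-derivations $r_\al,{}_\al r$ on $U^+$ that extract, via the coproduct, the coefficient of $E_\al$ and are adjoint under the form to multiplication by $F_\al$; since they lower the height, the Serre identity reduces to the $q$-binomial identities already built into the relations of $U_q(\g)$. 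The grading orthogonality $(U^-_\mu,U^+_\nu)=0$ for $\mu\neq\nu$ then falls out of $K$-compatibility, as pairing with $K_\la$ on either side scales the value by incompatible powers of $q$ unless the weights coincide.

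The main obstacle is \emph{nondegeneracy} on each graded block $U^-_\mu\times U^+_\mu$. My plan is to identify the radical of the form on the free algebra with the ideal generated by the quantum Serre relations, so that the induced form on the quotient $U^+$ is nondegenerate essentially by construction. To show the radical is exactly the Serre ideal I would again use the skew-derivations: an element $x\in U^+_\mu$ lies in the radical iff all $r_\al x$ do, so by induction on height the radical is pinned down step by step, and the only degree-lowering relations this forces are the Serre ones. Equivalently, one can invoke a PBW basis of $U^+$ and check that the Gram matrix of the form is triangular with nonzero diagonal, the diagonal entries being products of the scalars $(q_\al-q_\al^{-1})^{-1}$ and $q$-factorials, hence invertible since $q\in\C\setminus\Q$. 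This last input is the genuinely hard part (it is Lusztig's nondegeneracy theorem); everything preceding it is formal manipulation with the Hopf structure.
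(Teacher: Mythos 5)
The paper gives no proof of this proposition at all: it is imported verbatim from Jantzen \cite[Proposition 6.12 and Corollary 8.30]{Ja}, and your outline is essentially a faithful reconstruction of the proof given in that reference (and in Lusztig) --- uniqueness from generators plus the skew-pairing rules, existence by inductive construction and descent through the quantum Serre relations via the skew-derivations $r_\al$, ${}_\al r$, and nondegeneracy on each graded block via the radical-equals-Serre-ideal argument or the triangular PBW Gram matrix. You also correctly isolate the one genuinely hard input (Lusztig's nondegeneracy theorem, Jantzen's Chapter 8 computation) and invoke it rather than reprove it, which is precisely the status the whole result has in the paper.
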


The bilinear form is preserved by the isomorphisms $\om:U_q(\g)\rightarrow U_q(\g)^\cop$: 
\begin{lem}{\cite[Lemma 6.16.]{Ja}}
\label{invariance}
For all $x \in U^+$ and $y \in U^-$,
$
(\om (x),\om(y))=(y,x).
$
\end{lem}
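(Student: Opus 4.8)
The plan is to exploit the uniqueness clause of Proposition \ref{bilinear}: I will show that the ``$\om$-twisted'' pairing obeys exactly the same list of defining relations as $(-,-)$, and hence must coincide with it. Concretely, for $y \in U^{\leq 0}$ and $x \in U^{\geq 0}$ define
\[
\langle y, x\rangle' := (\om(x), \om(y)).
\]
Since $\om$ interchanges $U^{\geq 0}$ and $U^{\leq 0}$ (it sends $E_\al \mapsto F_\al$, $F_\al \mapsto E_\al$, $K_\al \mapsto K_\al^{-1}$ by Lemma \ref{lem_isom}), we have $\om(x) \in U^{\leq 0}$ and $\om(y) \in U^{\geq 0}$, so $\langle -, -\rangle'$ is a well-defined bilinear form on $U^{\leq 0}\times U^{\geq 0}$. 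It then suffices to verify that $\langle -, -\rangle'$ satisfies the three families of relations characterizing $(-,-)$, since uniqueness forces $\langle -, -\rangle' = (-,-)$, and restricting to $x \in U^+$, $y \in U^-$ yields the assertion.

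The generator values are immediate from the definition of $\om$ together with the symmetry $(K_\mu, K_\nu) = q^{-\lla\mu,\nu\rra}$: for instance $\langle K_\mu, K_\nu\rangle' = (K_\nu^{-1}, K_\mu^{-1}) = q^{-\lla\nu,\mu\rra} = q^{-\lla\mu,\nu\rra}$ and $\langle F_\al, E_\be\rangle' = (F_\be, E_\al) = -\delta_{\al\be}(q_\al - q_\al^{-1})^{-1}$, while $\langle K_\mu, E_\al\rangle' = (F_\al, K_{-\mu}) = 0$ and $\langle F_\al, K_\mu\rangle' = (K_{-\mu}, E_\al) = 0$. The two coproduct relations are where the co-opposite structure does the work. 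Because $\om$ is an algebra homomorphism and a Hopf isomorphism onto $U_q(\g)^\cop$ (Lemma \ref{lem_isom}), it satisfies $\om(xx') = \om(x)\om(x')$ and $\Delta\circ\om = P_{21}\circ(\om\otimes\om)\circ\Delta$, so writing $\Delta y = \sum y_{(1)} \otimes y_{(2)}$ we get $\Delta\om(y) = \sum \om(y_{(2)})\otimes\om(y_{(1)})$. Hence, applying the relation $(yy', x) = (y \otimes y', \Delta x)$ of $(-,-)$,
\[
\langle y, xx'\rangle' = (\om(x)\om(x'), \om(y)) = (\om(x)\otimes\om(x'), \Delta\om(y)) = \sum (\om(x), \om(y_{(2)}))(\om(x'), \om(y_{(1)})) = \sum \langle y_{(1)}, x'\rangle'\langle y_{(2)}, x\rangle',
\]
which is precisely the relation $\langle y, xx'\rangle' = (\Delta y, x'\otimes x)'$ for $\langle -, -\rangle'$. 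The second coproduct relation is checked symmetrically, applying $(y, xx') = (\Delta y, x'\otimes x)$ to $(\om(x), \om(y)\om(y'))$ and using $\Delta\om(x) = \sum \om(x_{(2)})\otimes\om(x_{(1)})$.

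The only point requiring care — and the one I expect to be the crux — is this bookkeeping of transpositions: one must confirm that the flip introduced by $P_{21}$ (coming from $\Delta^\cop$) exactly cancels the order-reversal already built into the defining relations $(y, xx') = (\Delta y, x'\otimes x)$ and $(yy',x)=(y\otimes y',\Delta x)$, so that no stray transposition survives and the identities close up on $\langle -,-\rangle'$ itself. Once this matching of swaps is verified, the uniqueness in Proposition \ref{bilinear} immediately identifies $\langle -, -\rangle'$ with $(-,-)$ and completes the proof.
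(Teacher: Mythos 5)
Your proof is correct, and the paper itself contains no argument for this lemma — it is imported from Jantzen \cite[Lemma 6.16]{Ja}, whose proof is precisely the uniqueness argument you give, so you have in effect reconstructed the cited proof.

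The transposition bookkeeping you flag as the crux does close up: since $\Delta\circ\om = P_{21}\circ(\om\otimes\om)\circ\Delta$, applying the relation $(yy',x)=(y\otimes y',\Delta x)$ to $(\om(x)\om(x'),\om(y))$ yields $\sum \langle y_{(2)},x\rangle'\langle y_{(1)},x'\rangle'$, which (as these are commuting scalars) is exactly $(\Delta y, x'\otimes x)$ evaluated with $\langle-,-\rangle'$, i.e.\ the flip built into the defining relation $(y,xx')=(\Delta y,x'\otimes x)$ is cancelled by the $P_{21}$ coming from the $\cop$ structure, and the symmetric check of the second relation works the same way, so the uniqueness clause of Proposition \ref{bilinear} applies.
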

We will construct the R-matrix according to \cite{Ja}.
Choose for each $\mu \in Q$, $\mu  \geq 0$ a basis $u_1^\mu,u_2^\mu,\dots,u_r(\mu)^\mu$
of $U_\mu^+$. By Proposition \ref{bilinear} we can find a basis 
$v_1^\mu,v_2^\mu,\dots v_{r(\mu)}^\mu$ of $U_{-\mu}^-$ such that 
$(v_j^\mu,u_i^\mu)=\delta_{i,j}$ for all $i$ and $j$.
Set
$$
\Theta_\mu = \sum_{i=1}^{r(\mu)}v_i^{\mu} \otimes u_i(\mu) \in U_{\mu}^-\otimes U_{\mu}^+.
$$
By linear algebra, $\Theta_\mu$ does not depend on the choice of the basis $(u_i^\mu)_i$.
Thus, by Lemma \ref{invariance}, we have:
\begin{lem}[\cite{Ja}]
\label{omega_invariance}
For all $\mu \in Q$, $\mu \geq 0$,
$$(\om \otimes \om) \Theta_\mu =P_{21}(\Theta_\mu),$$
where $P_{21}$ is the transposition.
\end{lem}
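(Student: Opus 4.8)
The plan is to combine the basis-independence of $\Theta_\mu$ (already noted in the excerpt) with the $\om$-invariance of the pairing from Lemma~\ref{invariance}. First I would record how $\om$ interacts with the $\lQ$-grading and the triangular decomposition. Since $\om(E_\al)=F_\al$ and $\om(F_\al)=E_\al$, the isomorphism $\om$ interchanges $U^+$ and $U^-$ and reverses the $\lQ$-grading, so it carries $U_\mu^+=U^+\cap U_\mu$ onto $U_{-\mu}^-=U^-\cap U_{-\mu}$ and $U_{-\mu}^-$ onto $U_\mu^+$. Because $\om$ is a linear isomorphism, the images $\{\om(u_i^\mu)\}_i\subset U_{-\mu}^-$ and $\{\om(v_i^\mu)\}_i\subset U_\mu^+$ of the chosen dual bases are again bases of these spaces.

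Next I would compute the pairing between these images. Applying Lemma~\ref{invariance} with $x=u_i^\mu\in U^+$ and $y=v_j^\mu\in U^-$ gives $(\om(u_i^\mu),\om(v_j^\mu))=(v_j^\mu,u_i^\mu)=\delta_{ij}$, where the left-hand pairing makes sense because $\om(u_i^\mu)\in U^-$ and $\om(v_j^\mu)\in U^+$ sit in the correct slots of the form $(-,-)\colon U^{\leq 0}\times U^{\geq 0}\to\C$ from Proposition~\ref{bilinear}. Hence $\{\om(u_i^\mu)\}_i$ and $\{\om(v_i^\mu)\}_i$ are a pair of dual bases for the nondegenerate restriction of $(-,-)$ to $U_{-\mu}^-\times U_\mu^+$. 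Since $\Theta_\mu$ is the canonical element attached to this form and does not depend on the choice of dual bases, I would re-express it through the new pair,
\[
\Theta_\mu=\sum_i \om(u_i^\mu)\otimes \om(v_i^\mu).
\]
Applying the transposition $P_{21}$ gives $P_{21}(\Theta_\mu)=\sum_i \om(v_i^\mu)\otimes \om(u_i^\mu)$, and the right-hand side is exactly $(\om\otimes\om)\big(\sum_i v_i^\mu\otimes u_i^\mu\big)=(\om\otimes\om)\Theta_\mu$, which is the assertion.

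The main thing to keep straight — and really the only delicate point — is the bookkeeping of which tensor factor lands in $U^+$ and which in $U^-$. Because $\om$ swaps these two subalgebras (and reverses the grading), it automatically exchanges the two tensor slots, and this exchange is precisely what produces the transposition $P_{21}$ on the right-hand side; there is no further computation beyond the standard linear-algebra fact, invoked in the excerpt, that the canonical element of a nondegenerate pairing is insensitive to the choice of dual bases. I would therefore expect the proof to be short, with the only care needed in verifying the slot/grading compatibility of $\om(u_i^\mu)$ and $\om(v_j^\mu)$ with the domain of $(-,-)$.
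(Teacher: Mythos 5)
Your proof is correct and is exactly the argument the paper intends: the paper derives this lemma in one line from the basis-independence of $\Theta_\mu$ together with Lemma \ref{invariance}, which is precisely the route you take. Your write-up just makes explicit the bookkeeping (that $\{\om(u_i^\mu)\}_i$ and $\{\om(v_i^\mu)\}_i$ form a dual pair for the restricted pairing), which the paper leaves to the reader.
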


Let ${U\hat{\otimes} U}$
 be the completion of the vector space $U\otimes U$
with respect to the descending sequence of vector spaces
$$
(U^+U^0\sum_{\wt\, \mu \geq N} U_\mu^-)\otimes U
+ U\otimes U^- U^0 \sum_{\wt\, \mu \geq N} U_\mu^+)
$$
for $N=1,2,\dots$
and similarly
$U\hat{\otimes_{\rev}}U$
the completion with respect to
$$
(U^-U^0\sum_{\wt\, \mu \geq N} U_\mu^+)\otimes U
+ U\otimes U^+ U^0 \sum_{\wt\, \mu \geq N} U_\mu^-)
$$
for $N=1,2,\dots$.
Then, the algebra structure on $U\otimes U$ extends to algebra structures on
$U\hat{\otimes}U$ and $U\hat{\otimes_\rev}U$,
and $\Theta = \sum_{\mu \geq 0}\Theta_\mu$ is in $U\hat{\otimes}U$
and $P_{21} \Theta$ is in $U\hat{\otimes_\rev}U$.

Set 
$\Delta^\tau = (\tau \otimes \tau) \circ \Delta \circ \tau^{-1},$ c.f. Section \ref{sec_quantum_def}.
Then, we have:
\begin{prop}[\cite{Ja}]
\label{theta_property}
The element $\Theta \in U\hat{\otimes}U$ satisfies
$$
\Delta(u)\circ \Theta=\Theta \circ \Delta^\tau (u)
$$
for any $u \in U$.
\end{prop}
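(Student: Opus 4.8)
The plan is to verify the identity $\Delta(u)\circ \Theta = \Theta\circ \Delta^\tau(u)$ on algebra generators and then extend it multiplicatively. Since both sides are algebra homomorphisms in $u$ composed with the fixed element $\Theta$, and since $U$ is generated by $\{E_\al, F_\al, K_\al\}_{\al\in\Pi}$, it suffices to check the relation for each of these generators. First I would dispose of the Cartan generators: because $\Delta^\tau(K_\al) = (\tau\otimes\tau)\Delta(\tau^{-1}K_\al) = (\tau\otimes\tau)\Delta(K_\al^{-1}) = K_\al\otimes K_\al = \Delta(K_\al)$ (using $\tau(K_\al)=K_\al^{-1}$ and that $\tau$ leaves the coproduct of $K_\al$ structurally unchanged up to this inversion), the $K_\al$ case reduces to showing that $\Theta$ commutes with $\Delta(K_\al)$. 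This in turn follows from the fact that each $\Theta_\mu \in U_\mu^-\otimes U_\mu^+$ is homogeneous, so conjugation by $K_\al\otimes K_\al$ multiplies the two tensor factors by $q^{-\lla\mu,\al\rra}$ and $q^{\lla\mu,\al\rra}$ respectively, which cancel.

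The substantive work is the $E_\al$ and $F_\al$ cases. Here I would follow the standard computation in Jantzen: one interprets the bilinear pairing of Proposition \ref{bilinear} as providing, for each weight $\mu$, dual bases $(v_i^\mu)$ and $(u_i^\mu)$ of $U_{-\mu}^-$ and $U_\mu^+$, and then translates the defining adjointness properties $(y, xx') = (\Delta y, x'\otimes x)$ and $(yy',x)=(y\otimes y',\Delta x)$ into recursions relating $\Theta_{\mu}$ and $\Theta_{\mu+\al}$ under left/right multiplication by $E_\al$ and $F_\al$. Concretely, one establishes identities of the shape $E_\al\Theta_\mu - \Theta_\mu E_\al = (\text{terms involving }\Theta_{\mu-\al}\text{ and }K_\al^{\pm 1})$, and similarly for $F_\al$; summing over $\mu\geq 0$ these telescope into the asserted intertwining relation. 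The role of $\tau$, with $\tau(E_\al)=E_\al$, $\tau(F_\al)=F_\al$, $\tau(K_\al)=K_\al^{-1}$, is precisely to produce the twisted coproduct $\Delta^\tau$ that matches the asymmetry between the $U^-$ and $U^+$ legs of $\Theta$; for instance $\Delta^\tau(E_\al) = E_\al\otimes 1 + K_\al^{-1}\otimes E_\al$, which is what the recursion on the $U^+$ side demands.

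The main obstacle I expect is bookkeeping the weight-shifted pairing recursions correctly and confirming that the completed tensor product $U\hat\otimes U$ makes the infinite sum $\Theta = \sum_{\mu\geq 0}\Theta_\mu$ and the telescoping manipulations well-defined. In particular one must check that multiplication by a generator preserves the defining filtration of $U\hat\otimes U$ so that $\Delta(u)\circ\Theta$ and $\Theta\circ\Delta^\tau(u)$ both lie in the completion and can be compared term by term; this is where the precise form of the descending filtration used to define $U\hat\otimes U$ enters. Once the generator identities are in hand, extending from generators to all of $U$ is routine: both $u\mapsto \Delta(u)\circ\Theta$ and $u\mapsto\Theta\circ\Delta^\tau(u)$ are compatible with products (since $\Delta$ is an algebra map and $\Delta^\tau$ is an algebra map by construction), so agreement on generators forces agreement everywhere.
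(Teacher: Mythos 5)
Your outline is correct and coincides with the argument in the cited source: the paper itself gives no proof of Proposition \ref{theta_property}, deferring entirely to Jantzen \cite{Ja}, and Jantzen's proof is exactly what you describe — reduction to the generators $E_\al,F_\al,K_\al$, the Cartan case handled by weight homogeneity of $\Theta_\mu \in U_{-\mu}^-\otimes U_\mu^+$, the $E_\al,F_\al$ cases via the adjunction recursions coming from the bilinear pairing, and multiplicative extension using that $\Delta^\tau=(\tau\otimes\tau)\circ\Delta\circ\tau^{-1}$ is an algebra map (the two order-reversals cancelling). Your generator computations ($\Delta^\tau(K_\al)=\Delta(K_\al)$, $\Delta^\tau(E_\al)=E_\al\otimes 1+K_\al^{-1}\otimes E_\al$) are also the correct ones.
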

Furthermore, $\Theta$ is unique in the following sense:
\begin{thm}
\label{thm_unique}
\cite[Theorem 4.1.2.]{Lu1}
Let $\Gamma_{\mu} \in U_{\mu}^-\otimes U_{\mu}^+$ be a family of
elements (with $\mu \geq 0$) such that:
\begin{enumerate}
\item
$\Gamma_0=1 \otimes 1$;
\item
$\Gamma = \sum_{\mu \geq 0}\Gamma_\mu$ satisfies
$\Delta(u)\Gamma=\Gamma \Delta^\tau(u)$ for all $u\in U$ (identify in $U\hat{\otimes}U$).
\end{enumerate}
Then, $\Gamma_\mu=\Theta_\mu$ for all $\mu \geq 0$.
\end{thm}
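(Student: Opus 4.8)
The plan is to show that conditions (1) and (2) force the family $\{\Gamma_\mu\}$ to obey a recursion on the height of $\mu$ whose solution is unique, and that $\Theta=\sum_\mu\Theta_\mu$ satisfies the very same recursion by Proposition \ref{theta_property}; subtracting then yields $\Gamma_\mu=\Theta_\mu$ by induction. First I would observe that, since $\Delta$ and $\Delta^\tau$ are both algebra homomorphisms, the intertwining relation in (2) is multiplicative in $u$, so it suffices to impose it on the generators $E_\al,F_\al,K_\al$. A short computation from the definition of $\tau$ gives $\Delta^\tau(K_\al)=K_\al\otimes K_\al$, $\Delta^\tau(E_\al)=E_\al\otimes 1+K_\al^{-1}\otimes E_\al$ and $\Delta^\tau(F_\al)=F_\al\otimes K_\al+1\otimes F_\al$. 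The relation for $K_\al$ is automatic, because $\Gamma_\mu\in U_{-\mu}^-\otimes U_\mu^+$ has bidegree $(-\mu,\mu)$ and is therefore invariant under conjugation by $K_\al\otimes K_\al$.

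Next I would expand $\Delta(E_\al)\Gamma=\Gamma\,\Delta^\tau(E_\al)$ and collect the homogeneous component of bidegree $(-\mu+\al,\mu)$. Exactly four terms contribute, and they rearrange into
\[
(E_\al\otimes 1)\Gamma_\mu-\Gamma_\mu(E_\al\otimes 1)
=\Gamma_{\mu-\al}(K_\al^{-1}\otimes E_\al)-(K_\al\otimes E_\al)\Gamma_{\mu-\al},
\]
with the convention $\Gamma_{\mu-\al}=0$ when $\mu-\al\not\geq 0$. Thus the commutator $[E_\al\otimes 1,\Gamma_\mu]$ is completely determined by $\Gamma_{\mu-\al}$ (the relation obtained from $F_\al$ is entirely analogous). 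Since $\Theta$ satisfies Proposition \ref{theta_property}, it obeys the same identity, so the difference $D_\mu:=\Gamma_\mu-\Theta_\mu$ satisfies the homogeneous version of this recursion.

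I would then induct on $\mathrm{ht}(\mu)$, the base case $\mu=0$ being condition (1) together with $\Theta_0=1\otimes 1$. Assuming $\Gamma_{\mu'}=\Theta_{\mu'}$ for all $\mu'$ of smaller height, in particular for $\mu-\al$, the recursion collapses to $[E_\al\otimes 1,D_\mu]=0$ for every $\al\in\Pi$. Writing $D_\mu=\sum_k y_k\otimes x_k$ with $\{x_k\}\subset U_\mu^+$ linearly independent, this forces $[E_\al,y_k]=0$ in $U$ for all $\al,k$. Invoking the standard skew-derivation formula $[E_\al,y]=(q_\al-q_\al^{-1})^{-1}\bigl(r_\al(y)K_\al-K_\al^{-1}\,{}_\al r(y)\bigr)$ for $y\in U^-$ \cite{Ja} and separating the $K_\al$ and $K_\al^{-1}$ contributions by the triangular decomposition, I obtain $r_\al(y_k)={}_\al r(y_k)=0$ for all $\al$. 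The main obstacle is the concluding vanishing statement: an element $y\in U_{-\mu}^-$ with $\mu>0$ that is annihilated by every skew-derivation $r_\al$ must be zero. This is precisely a reformulation of the non-degeneracy of the pairing in Proposition \ref{bilinear}, since $r_\al$ is, up to a scalar, adjoint to left multiplication by $E_\al$ on $U^+$; hence $r_\al(y)=0$ for all $\al$ gives $(y,U_\mu^+)=0$ and therefore $y=0$. Consequently $y_k=0$, $D_\mu=0$, and the induction closes.
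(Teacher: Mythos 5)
The paper gives no proof of this statement at all: it is imported verbatim from \cite[Theorem 4.1.2]{Lu1}, so the only thing to compare against is Lusztig's argument, and yours is essentially it. Your proof is correct: the relation for $K_\al$ is indeed automatic from the bidegree of $\Gamma_\mu$, the $E_\al$-relation in bidegree $(\al-\mu,\mu)$ gives exactly the recursion you state, induction on height reduces to $[E_\al\otimes 1,D_\mu]=0$, and the concluding step --- that $y \in U_\mu^-$ with $\mu>0$ and $r_\al(y)={}_\al r(y)=0$ for all $\al\in\Pi$ forces $y=0$, via adjointness of the skew-derivations to multiplication by $E_\al$ and the non-degeneracy in Proposition \ref{bilinear} --- is precisely the lemma Lusztig's uniqueness proof rests on (and since you established the vanishing of both $r_\al(y_k)$ and ${}_\al r(y_k)$, the left/right adjointness conventions cause no gap).
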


Fix a complex number $\rho$ with 
$q=\exp(\pi i \rho).$
Note that if $\rho$ and $\rho'$ satisfy $q=\exp(\pi i \rho)=\exp(\pi i \rho')$,
then $\rho-\rho' \in 2\Z$.
Define for all type 1 $U$-modules $M$ and $N$
a bijective linear map $f_\rho: M\otimes N \rightarrow M\otimes N$ by
$$
f_\rho (m\otimes n)=\exp\left(- \pi i \rho \lla \la,\mu\rra\right) m\otimes n \fora m\in M_\la \text{ and }n\in N_{\mu}
$$
and for all $\mu, \la \in \lP$.
By definition, we have:
\begin{lem}
\label{lem_difference}
Let $n_\g$ be a minimal positive integer such that
$n_\g \lla \la,\mu\rra \in \Z$ for any $\la,\mu \in \lP$.
Then, $f_\rho = f_{\rho+2n_gN}$ for any $N \in \Z$ (as a linear map).
\end{lem}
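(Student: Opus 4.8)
The plan is to reduce the claimed equality of linear maps to a pointwise verification on weight spaces. Since $M$ and $N$ are type 1 modules, they decompose as $M=\bigoplus_{\la \in \lP}M_\la$ and $N=\bigoplus_{\mu \in \lP}N_\mu$, so that $M\otimes N=\bigoplus_{\la,\mu}M_\la\otimes N_\mu$. Both $f_\rho$ and $f_{\rho+2n_\g N}$ act as scalar multiplication on each summand $M_\la\otimes N_\mu$ by construction, so it suffices to show they act by the same scalar on each such summand; equality as linear maps on all of $M\otimes N$ then follows by linearity.

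I would then compute the ratio of the two scalars on $M_\la\otimes N_\mu$, namely
$$
\exp\left(-\pi i(\rho+2n_\g N)\lla\la,\mu\rra\right)\exp\left(-\pi i\rho\lla\la,\mu\rra\right)^{-1}=\exp\left(-2\pi i\, n_\g N\lla\la,\mu\rra\right).
$$
By the defining property of $n_\g$ we have $n_\g\lla\la,\mu\rra\in\Z$, hence $n_\g N\lla\la,\mu\rra\in\Z$ for every $N\in\Z$, so this exponential equals $1$. Thus the two maps agree on every weight summand and coincide as linear maps on $M\otimes N$; since this holds for all type 1 modules $M,N$, the equality $f_\rho=f_{\rho+2n_\g N}$ holds as asserted.

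There is essentially no obstacle here: the entire content is the integrality $n_\g\lla\la,\mu\rra\in\Z$, which is precisely how $n_\g$ is defined. The only point worth making explicit is the passage from the pointwise scalar identity to equality of maps, which is justified by the type 1 (weight-space) decomposition of the modules in the category under consideration.
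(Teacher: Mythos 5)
Your proof is correct and is exactly the argument the paper intends: the paper states this lemma with no proof beyond the phrase ``By definition,'' and your verification---restricting to weight summands $M_\la\otimes N_\mu$, where both maps act by scalars whose ratio is $\exp\left(-2\pi i\, n_\g N\lla\la,\mu\rra\right)=1$ by the integrality defining $n_\g$---is precisely the computation being left implicit. Nothing is missing; you have simply made the definitional check explicit.
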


Let $M$ and $N$ be type 1 $U$-modules.
Since for any $m \in M$ and $n\in N$, $\Theta_{\mu} (m\otimes n)=0$ for almost all $\mu \geq 0$,
we can define a linear map
$$
\Theta=\Theta_{M,N}:M\otimes N \rightarrow M\otimes N,\;\;\;\;\;\; \Theta =\sum_{\mu \geq 0} \Theta_{\mu}.
$$
Since $\Theta$ is unipotent, and thus, invertible,
we can define a linear map
$$R(\rho)=(\Theta \circ f_\rho)^{-1}:M\otimes N \rightarrow M\otimes N
$$
for all type 1 $U$-modules $M$ and $N$.
Then, as an operator acting on the tensor product of type 1 $U$-modules,
$R(\rho)$ satisfies
\begin{enumerate}
\item[R1)]
$R(\rho) \Delta(x) R(\rho)^{-1} = \Delta^\op(x)$ for all $x \in U$;
\item[R2)]
$(\Delta\otimes 1)(R(\rho)) = R_{13}(\rho)R_{23}(\rho)$;
\item[R3)]
$(1\otimes \Delta)(R(\rho)) = R_{13}(\rho)R_{12}(\rho)$.
\end{enumerate}
Thus, in the same way as for quasi-triangular Hopf algebra, 
we can define a braided tensor category structure on the category of type 1 $U$-modules.
We denote it by $(U_q(\g),R(\rho))\modu$.

Set $r^\vee=\frac{\lla \al,\al \rra}{2}$ for a long root $\al$,
which is the ratio of the norm of long roots and short roots.
Note that $r^\vee=1$ when the Lie algebra is simply-laced.
The result is due to Drinfeld, Kazhdan and Lusztig \cite{Dr1,Dr2,KL,Lu2} (see also \cite{BK}):
\begin{thm}[Drinfeld, Kazhdan-Lusztig]
\label{thm_DK}
Let $k \in \C \setminus \Q$ and $\rho(k)=\frac{1}{r^\vee(k+h^\vee)} \in \C$.
Then, there exists an equivalence of balanced braided tensor categories between $D(\g,k)$
and $(U_{\exp(\pi i \rho(k))}(\g),R(\rho(k))\modu$.
\end{thm}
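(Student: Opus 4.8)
The statement being a foundational result of Drinfeld and of Kazhdan--Lusztig, I would follow Drinfeld's route through the Knizhnik--Zamolodchikov (KZ) connection and its associator, since the Drinfeld category $D(\g,k)$ was defined above precisely via KZ monodromy. The plan is to produce one $\C$-linear equivalence of underlying abelian categories and then show it transports the braided, balanced structure on one side to the other. First I would observe that, because $k \notin \Q$ (equivalently $q=\exp(\pi i\rho(k))$ is not a root of unity), both $D(\g,k)$ and $(U_q(\g),R(\rho(k)))\modu$ are semisimple abelian categories with simple objects indexed by $\lP^+$ --- the Weyl modules $L_{\g,k}(\la)$ on one side and the type $1$ irreducibles $L_q(\la)$ on the other --- and this indexing is independent of the deformation parameter. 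Hence there is a tautological equivalence of abelian categories $L_{\g,k}(\la)\mapsto L_q(\la)$, and the entire content is to match the tensor and braiding data, both of which are deformations of the symmetric monoidal structure on $\Rep(\g)$.

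Next I would encode the monoidal structure of $D(\g,k)$ infinitesimally. With $\rho(k)=\frac{1}{r^\vee(k+h^\vee)}$ and $\lla-,-\rra=r^\vee(-,-)$, the invariant Casimir tensor $\Om\in(\g\otimes\g)^\g$ normalized by $\lla-,-\rra$ governs the KZ connection $\nabla=d-\rho(k)\sum_{i<j}\Om_{ij}\,d\log(z_i-z_j)$. Computing the regular-singular asymptotics of flat sections at the boundary strata of the configuration space (as in \cite{ES,EFK}) shows that the associativity isomorphism of $D(\g,k)$ is the value of the \emph{Drinfeld (KZ) associator} $\Phi_{\mathrm{KZ}}(\rho(k)\Om_{12},\rho(k)\Om_{23})$, while the two-point monodromy yields the braiding $\sigma\circ\exp(\pi i\,\rho(k)\,\Om)$. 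I would then invoke Drinfeld's theorem \cite{Dr1,Dr2} that $\Phi_{\mathrm{KZ}}$ is a genuine associator, satisfying the pentagon and the two hexagons, which follows from flatness of $\nabla$ together with the Drinfeld--Kohno description of its monodromy in terms of the Lie algebra of infinitesimal braids.

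The heart of the argument is Drinfeld's quantization and rigidity theorem. Both braided categories are deformations of the symmetric monoidal category $\Rep(\g)$ along the same infinitesimal braiding determined by $\Om$ and $\rho(k)$: on the quantum-group side the associativity is strict and the braiding is $\sigma\circ R(\rho(k))$ with $R(\rho(k))=(\Theta\circ f_{\rho(k)})^{-1}$, whose leading deformation term reproduces $\Om$ and thus matches the KZ braiding $\sigma\circ\exp(\pi i\,\rho(k)\,\Om)$ at first order. Drinfeld's rigidity result then supplies a gauge transformation (a Drinfeld twist $J$) carrying $\Phi_{\mathrm{KZ}}$ to the trivial associator and simultaneously carrying $\sigma\circ\exp(\pi i\,\rho(k)\,\Om)$ to $\sigma\circ R(\rho(k))$ for $q=\exp(\pi i\,\rho(k))$; this twist upgrades the abelian equivalence above to a braided tensor equivalence. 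Finally I would match the balances: the twist $\theta_{L_{\g,k}(\la)}=\exp(2\pi i\Delta(\la))=\exp\!\big(\pi i\tfrac{(\la+2\rho,\la)}{k+h^\vee}\big)$ arises from the monodromy of $\nabla$ around a single puncture, i.e.\ from the action of the quadratic Casimir by the scalar $(\la+2\rho,\la)$ on $L(\la)$; since the ribbon element of $U_q(\g)$ acts on $L_q(\la)$ by the identical scalar, the balanced structures agree under $J$, by the Drinfeld--Kazhdan--Lusztig comparison \cite{KL,Lu2,BK}.

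The main obstacle is Drinfeld's quantization/rigidity theorem itself: establishing that $\Phi_{\mathrm{KZ}}$ exists, satisfies the associator axioms, and is gauge-equivalent to the trivial associator carrying the R-matrix $R(\rho(k))$. This rests on the analytic theory of the KZ connection (flatness and regular singularities), the Drinfeld--Kohno identification of its monodromy, and the nontrivial uniqueness-up-to-twist of quantizations of a given Lie bialgebra; the alternative Kazhdan--Lusztig route \cite{KL} instead builds the equivalence directly from the fusion tensor structure on affine category $\mathcal{O}$ at negative level and is equally substantial. All remaining checks --- semisimplicity, the weight and Casimir bookkeeping, and compatibility of the normalizations $\lla-,-\rra=r^\vee(-,-)$ with $\rho(k)$ --- are routine.
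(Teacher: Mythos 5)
The paper offers no proof of this statement: it is quoted as a foundational black box, with the attribution ``due to Drinfeld, Kazhdan and Lusztig \cite{Dr1,Dr2,KL,Lu2} (see also \cite{BK})'' and nothing more. Your sketch is a faithful outline of exactly the route those cited works take (KZ connection, Drinfeld associator, Drinfeld--Kohno rigidity and twist, with the Kazhdan--Lusztig fusion construction as the alternative), so there is nothing in the paper to compare it against step by step; the only point worth flagging is that Drinfeld's rigidity-of-quantization argument is native to the formal $\hbar$-adic setting, and its transfer to a fixed numerical irrational $\rho(k)$ is precisely the nontrivial content supplied by the Kazhdan--Lusztig/EFK analysis that your last paragraph correctly identifies as the main obstacle rather than a routine check.
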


We note that
$R_{21}(\rho)^{-1}= (P_{21}\Theta)\circ f_\rho$ 
also satisfies (R1)-(R3). 
Denote by $(U_q(\g),R_{21}(\rho)^{-1})\modu$
 the braided tensor category defined by $R_{21}(\rho)^{-1}$.
Then, similarly to Lemma \ref{lem_rev}, by the above theorem, we have:
\begin{cor}
There exists an equivalence of balanced braided tensor categories between $D(\g,k)^\rev$
and $(U_q(\g),R(\rho(k))_{21}^{-1})\modu$.
\end{cor}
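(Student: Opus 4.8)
The plan is to deduce this directly from Theorem~\ref{thm_DK} together with the $R$-matrix analogue of Lemma~\ref{lem_rev}, following the remark immediately preceding the statement that $R(\rho)_{21}^{-1}=(P_{21}\Theta)\circ f_\rho$ again satisfies (R1)--(R3). First I would record that, although $R(\rho)$ lies in the completion $U\hat{\otimes}U$ rather than in $U_q(\g)\otimes U_q(\g)$, it acts as a genuine invertible operator on the tensor product of any two type $1$ modules and obeys the quasi-triangularity axioms (R1)--(R3). Hence every manipulation in the proof of Lemma~\ref{lem_rev}, which uses only these axioms, applies verbatim on the category of type $1$ modules. This gives an equivalence of braided tensor categories
$$
(U_q(\g),R(\rho)_{21}^{-1})\modu \;\simeq\; \bigl((U_q(\g),R(\rho))\modu\bigr)^\rev,
$$
realized by the identity functor on the underlying abelian category: both sides share the same objects, morphisms, tensor product and associativity constraint, and the braiding $P_{M,N}\circ(\rho_M\otimes\rho_N)(R(\rho)_{21}^{-1})$ coincides with the reverse braiding $B^\rev_{M,N}$ by the computation of Lemma~\ref{lem_rev}.

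Next I would invoke Theorem~\ref{thm_DK}, which supplies an equivalence $D(\g,k)\simeq(U_q(\g),R(\rho(k)))\modu$ of balanced braided tensor categories. Reversing the braiding is functorial, so this yields $D(\g,k)^\rev \simeq \bigl((U_q(\g),R(\rho(k)))\modu\bigr)^\rev$; composing with the identification of the previous paragraph produces the desired equivalence between $D(\g,k)^\rev$ and $(U_q(\g),R(\rho(k))_{21}^{-1})\modu$ at the level of braided tensor categories.

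It remains to upgrade this to an equivalence of \emph{balanced} categories. For any balanced braided tensor category $(\cB,B,\theta)$ the reverse $\cB^\rev$ is again balanced with balance $\theta^\rev:=\theta^{-1}$, since
$$
B^\rev_{M,N}B^\rev_{N,M}=(B_{M,N}B_{N,M})^{-1}=\bigl(\theta_{M\otimes N}\circ(\theta_M^{-1}\otimes\theta_N^{-1})\bigr)^{-1}=\theta^\rev_{M\otimes N}\circ\bigl((\theta^\rev_M)^{-1}\otimes(\theta^\rev_N)^{-1}\bigr).
$$
Because the equivalence of Theorem~\ref{thm_DK} respects the balance, and the equivalence of Lemma~\ref{lem_rev} is the identity functor (hence automatically respects any natural transformation of the identity functor), both equivalences above are equivalences of balanced categories, which proves the corollary.

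I expect the only genuinely delicate point to be the bookkeeping of the balanced structure: concretely, confirming that the balance transported from $D(\g,k)^\rev$, namely $\exp(-2\pi i\Delta(\la))$ obtained by inverting \eqref{eq_balance}, agrees with the intrinsic balance of $(U_q(\g),R(\rho(k))_{21}^{-1})\modu$ under the identifications used. Everything else is a formal combination of Theorem~\ref{thm_DK} and Lemma~\ref{lem_rev}, with no new computation required.
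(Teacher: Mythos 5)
Your proposal is correct and takes essentially the same route as the paper: the paper likewise observes that $R_{21}(\rho)^{-1}=(P_{21}\Theta)\circ f_\rho$ satisfies (R1)--(R3) and then deduces the corollary by combining the analogue of Lemma~\ref{lem_rev} with Theorem~\ref{thm_DK}. Your extra bookkeeping (the action of the completed $R$-matrix on type $1$ modules, and the inverse balance $\theta^{-1}$ on the reverse category) just makes explicit what the paper leaves implicit.
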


It is noteworthy that the braided tensor category structure depends
on the choice of $\rho \in \C$.
However,  by Lemma \ref{lem_difference}, we have:
\begin{cor}
\label{cor_difference}
There exists an equivalence of balanced braided tensor categories between 
$(U_q(\g),R(\rho))\modu$ and
$(U_q(\g),R(\rho+ 2n_\g N))\modu$
for any $N \in \Z$.
\end{cor}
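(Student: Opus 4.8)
The plan is to show that the shift $\rho \mapsto \rho + 2n_\g N$ changes \emph{nothing} in the construction, so that the two braided tensor categories are in fact literally identical and the identity functor supplies the equivalence. The essential content is already packaged in Lemma \ref{lem_difference}; what remains is to unwind the definition of $R(\rho)$ and to verify separately that the balance is also unaffected.

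First I would observe that the quantum group itself is unchanged: since $n_\g N \in \Z$, we have $\exp(\pi i(\rho + 2n_\g N)) = \exp(\pi i \rho)\exp(2\pi i n_\g N) = q$, so $U_{\exp(\pi i (\rho+2n_\g N))}(\g) = U_q(\g)$ as Hopf algebras. The quasi-$R$-matrix $\Theta = \sum_{\mu \geq 0}\Theta_\mu$ is built only from the bilinear pairing of Proposition \ref{bilinear} and is independent of $\rho$, so it too is unchanged. Hence in $R(\rho) = (\Theta \circ f_\rho)^{-1}$ the only potentially $\rho$-dependent factor is the diagonal operator $f_\rho$, and Lemma \ref{lem_difference} gives $f_\rho = f_{\rho + 2n_\g N}$ as a linear map on every tensor product of type 1 modules. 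Therefore $R(\rho) = R(\rho + 2n_\g N)$ as operators, and $(U_q(\g), R(\rho))\modu$ and $(U_q(\g), R(\rho + 2n_\g N))\modu$ have the same objects, morphisms, associativity constraint and braiding; the identity functor is then an equivalence of braided tensor categories.

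It then remains to match the balances. Writing $\varrho \in \lP$ for half the sum of the positive roots, Theorem \ref{thm_DK} together with \eqref{eq_balance} identifies the balance on $(U_q(\g), R(\rho))\modu$ with $\theta_{L_q(\la)} = \exp(\pi i \rho \lla \la + 2\varrho, \la \rra)$ on each simple object $L_q(\la)$. Replacing $\rho$ by $\rho + 2n_\g N$ multiplies this value by $\exp(2\pi i N\, n_\g \lla \la + 2\varrho, \la \rra)$; since $\la, \varrho \in \lP$ and $n_\g \lla \mu, \nu \rra \in \Z$ for all $\mu, \nu \in \lP$ by the definition of $n_\g$, the exponent is $2\pi i$ times an integer and the factor equals $1$. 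Thus the balance is unchanged as well. Because the braiding invariance is immediate once $R(\rho)=R(\rho+2n_\g N)$ is in hand, I expect no serious obstacle: the only additional point is the balance, which reduces precisely to the integrality property $n_\g \lla \la + 2\varrho, \la \rra \in \Z$ guaranteed by the very definition of $n_\g$.
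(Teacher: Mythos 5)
Your proof is correct and follows essentially the same route as the paper: the corollary is stated there as an immediate consequence of Lemma \ref{lem_difference}, the point being precisely that $f_\rho = f_{\rho+2n_\g N}$ forces $R(\rho)=(\Theta\circ f_\rho)^{-1}=R(\rho+2n_\g N)$, so the two categories coincide and the identity functor is the equivalence. Your explicit check that the balance $\theta_{L_q(\la)}=\exp(\pi i \rho\lla\la+2\varrho,\la\rra)$ is likewise unchanged, using $n_\g\lla\la+2\varrho,\la\rra\in\Z$, is a point the paper leaves implicit but is carried out correctly.
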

A comparison between $(U_q(\g),R(\rho))\modu$ and
$(U_q(\g),R(\rho+ N))\modu$, in a more general situation, will be made in Section \ref{sec_twist}.

\subsection{Isomorphism between $q$ and $q^{-1}$}
\label{sec_isomorphism}
Recall that our first aim is to construct a commutative algebra object
in $D(\g,k) \otimes D(\g,k)^\rev$ (or $D(\g,k) \otimes D(\g,k')$) (see Proposition \ref{full_vertex} and \ref{vertex}).
By Theorem \ref{thm_DK}, it suffices to consider it in $(U_q(\g),R(\rho))\modu$
and $(U_q(\g),R_{21}(\rho^{-1}))\modu$.
In this section, we will show that
three quasi-triangular Hopf algebras 
$(U_q(\g),R(\rho)_{21}^{-1})$, $(U_q(\g)^\cop,R(\rho)^{-1})$
and $(U_{q^{-1}}(\g),R(-\rho))$ are
isomorphic and thus their module categories
are equivalent as balanced braided tensor categories.

We first consider the equivalence between 
$(U_q(\g),R(\rho)_{21}^{-1})\modu$ and $(U_q(\g)^\cop,R(\rho)^{-1})\modu$.
Recall that by Lemma \ref{lem_isom} 
there is a Hopf algebra isomorphism $\om:U_q(\g)\rightarrow U_q(\g)^\cop$.
We will show that this isomorphism induces an equivalence of balanced braided tensor categories.

Let $M$ be a type 1 $U_q(\g)^\cop$-module
and $\om^*M$ a type 1 $U_q(\g)$-module defined by
$$
a \cdot_\om m = \om(a) \cdot m \;\;\;\;\;\;\;\;\;\fora a \in U_q(\g) \text{ and }m\in M.
$$

Denote by $\om^*M\otimes \om^*N$ (resp. $M \otimes N$) the tensor product of $M$ and $N$ as $U_q(\g)$-modules (resp. $U_q(\g)^\cop$-modules).
Since $\om^*M\otimes \om^*N$ and $M \otimes N$
have the same underlying vector space,
we can compare the R-matrices acting on them.

Since $K_\al \cdot_\om v = q^{-\lla\al,\la\rra}v$ for $v \in M_{\la}$,
$v \in (\om^*M)_{-\la}$
and thus the action of $f(\rho)$ on $\om^*M\otimes \om^*N$ and $M \otimes N$ are the same.

Thus, it suffices to compare
$\Theta$ with $(\omega\otimes \om)(P_{21}\Theta)$.
They are the same by Lemma \ref{omega_invariance}.
Thus, we have:
\begin{prop}
\label{op_isomorphism}
The Hopf algebra isomorphism $\om:U_q(\g)\rightarrow U_q(\g)^\cop$
induces an equivalence between $(U_q(\g),R(\rho)_{21}^{-1})\modu$ and $(U_q(\g)^\cop,R(\rho)^{-1})\modu$
as balanced braided tensor categories.
\end{prop}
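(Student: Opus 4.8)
The plan is to realize the asserted equivalence as the pullback functor along the Hopf algebra isomorphism $\om$. For a type 1 $U_q(\g)^\cop$-module $M$ I would let $\om^*M$ have the same underlying space with $U_q(\g)$ acting by $a\cdot_\om m = \om(a)\cdot m$, and first check that $\om^*$ is a strict equivalence of $\C$-linear tensor categories. Essential surjectivity and fully-faithfulness are immediate since $\om$ is an algebra isomorphism. Tensoriality is exactly the coalgebra compatibility $\Delta^\cop\circ\om = (\om\otimes\om)\circ\Delta$ packaged in the statement that $\om$ is a Hopf map into $U_q(\g)^\cop$: it forces $\om^*(M\otimes N) = \om^*M\otimes\om^*N$ on the nose, so the lax monoidal axioms LM1) and LM2) hold with identity structure morphisms.

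The heart of the proof is that $\om^*$ carries the braiding of $(U_q(\g)^\cop,R(\rho)^{-1})\modu$ to that of $(U_q(\g),R(\rho)_{21}^{-1})\modu$. Since both braidings are the flip $P$ composed with an $R$-matrix action on the common underlying space $M\otimes N$, it suffices to compare the operators $R(\rho)^{-1} = \Theta\circ f_\rho$ (acting on $M\otimes N$ through the $U_q(\g)^\cop$-action) and $R(\rho)_{21}^{-1} = (P_{21}\Theta)\circ f_\rho$ (acting on $\om^*M\otimes\om^*N$ through $\om$). I would split each into its diagonal factor $f_\rho$ and its $\Theta$-factor. For the diagonal factor: a vector $v\in M_\la$ lies in $(\om^*M)_{-\la}$ because $\om(K_\al)=K_\al^{-1}$, and since $\lla-\la,-\mu\rra=\lla\la,\mu\rra$ the scalar defining $f_\rho$ is unchanged, so the two $f_\rho$ agree as operators on $M\otimes N$. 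For the $\Theta$-factor: pulling $P_{21}\Theta$ back through $\om$ amounts to applying $\om\otimes\om$, and Lemma \ref{omega_invariance} gives $(\om\otimes\om)(P_{21}\Theta_\mu) = P_{21}(\om\otimes\om)(\Theta_\mu) = P_{21}P_{21}\Theta_\mu = \Theta_\mu$, so this factor reproduces $\Theta$ acting through the underlying algebra. Hence the two $R$-matrix actions coincide and the braidings agree.

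I expect the bookkeeping in this second step to be the only real obstacle: one must keep straight that the target is built from $U_q(\g)^\cop$ (so the relevant comultiplication is $\Delta^\cop$, which is precisely what makes $\om^*$ tensorial), that feeding an element through $\om^*$ means applying $\om\otimes\om$, and that the transposition $P_{21}$ in $R(\rho)_{21}^{-1}$ is exactly cancelled by the transposition produced by Lemma \ref{omega_invariance}. Finally, to obtain an equivalence of balanced categories I would verify that the twists agree on simple objects: $\om^*$ sends $L_q(\la)$ to the irreducible of lowest weight $-\la$, i.e.\ to $L_q(\la^*)$, and since $\Delta(\la^*)=\Delta(\la)$ the twist eigenvalues coincide. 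By Theorem \ref{thm_DK} these eigenvalues are read off from \eqref{eq_balance} and \eqref{eq_balance2}, and both the $R(\rho)^{-1}$ and the $R(\rho)_{21}^{-1}$ constructions produce the reverse-braided value $\exp(-2\pi i\Delta(\la))$, so the balances match under $\om^*$.
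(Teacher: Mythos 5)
Your proposal is correct and follows essentially the same route as the paper: pull back along $\om$, observe that $\om^*M\otimes\om^*N$ and $M\otimes N$ share an underlying space, check that the $f_\rho$ factors agree because $\om(K_\al)=K_\al^{-1}$ flips weights while $\lla-\la,-\mu\rra=\lla\la,\mu\rra$, and use Lemma \ref{omega_invariance} to identify $(\om\otimes\om)(P_{21}\Theta)$ with $\Theta$. Your explicit verification of strict tensoriality and of the matching of balances via $\Delta(\la^*)=\Delta(\la)$ fills in details the paper leaves implicit, but it is the same argument.
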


We will next consider the equivalence between 
$(U_q(\g),R(\rho))\modu$ and \\
$(U_{q^{-1}}(\g)^\cop,R(-\rho)^{-1})\modu$.
Recall that by Lemma \ref{lem_isom} 
there is a Hopf algebra isomorphism $\psi: U_q(\g)\rightarrow U_{q^{-1}}(\g)^\cop$.
We will show that this isomorphism induces an equivalence of balanced braided tensor categories.

Let $M$ and $N$ be a type 1 $U_{q^{-1}}(\g)^\cop$-modules.
Denote by $\psi^*M\otimes \psi^*N$ (resp. $M \otimes N$) the tensor product of $M$ and $N$ as $U_q(\g)$-modules (resp. $U_{q^{-1}}(\g)^\cop$-modules).
Since $\psi^*M\otimes \psi^*N$ and $M \otimes N$
have the same underlying vector space,
we can compare the R-matrices again.
We note that $\Theta^q = \Theta$ for $U_q(\g)$ and $\Theta^{q^{-1}}=\Theta$ for $U_{q^{-1}}(\g)$ are different. It suffices to show that
$R(-\rho)^{-1} = \Theta^{q^{-1}} \circ f_{-\rho}$ and $\psi^* R(\rho)= \psi^* (f_\rho^{-1}\circ (\Theta^q)^{-1})$ are the same as an linear maps on $M \otimes N$.
We need the following lemma:
\begin{lem}
\label{f_transform}
Let $u \in U_q(\g)_{\mu}^-$ and $u' \in U_q(\g)_{\mu}^+$ for $\mu \in \lQ$ with $\mu \geq 0$.
\begin{enumerate}
\item
For any type 1 $U_q(\g)$-modules $M$ and $N$,
$$
f_{\rho}^{-1} \circ (u\otimes u') \circ f_{\rho} =uK_{\mu} \otimes K_{-\mu}u'
$$
as linear maps acting on $M \otimes N$.
\item
$\psi(uK_{\mu}) \in U_{q^{-1}}(\g)_{\mu}^-$
and $\psi(K_{-\mu}u') \in U_{q^{-1}}(\g)_{\mu}^+$.
\end{enumerate}
\end{lem}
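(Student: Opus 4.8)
The plan is to treat the two assertions separately; both reduce to the weight-space and $\lQ$-graded structure of type 1 modules and involve no more than bookkeeping.

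For (1), I would compare the two operators on a decomposable tensor $m\otimes n$ with $m\in M_\la$ and $n\in N_\nu$. Since $u\in U_q(\g)_\mu^-$ lowers the weight by $\mu$ and $u'\in U_q(\g)_\mu^+$ raises it by $\mu$, both sides send $m\otimes n$ to a scalar multiple of $(um)\otimes(u'n)$, so it suffices to match the scalars. Using the definition of $f_\rho$ and $q=\exp(\pi i\rho)$, the left-hand side produces
$$
\exp\bigl(\pi i\rho(\lla\la,\mu\rra-\lla\mu,\nu\rra-\lla\mu,\mu\rra)\bigr),
$$
while $K_\mu m=q^{\lla\la,\mu\rra}m$ and $K_{-\mu}(u'n)=q^{-\lla\nu,\mu\rra-\lla\mu,\mu\rra}(u'n)$ show that $(uK_\mu)\otimes(K_{-\mu}u')$ acts by the same scalar; symmetry of $\lla-,-\rra$ gives the equality.

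For (2), the key observation is that $\psi$ is an algebra homomorphism preserving the $\lQ$-grading: the images $\psi(E_\al)=-K_\al^{-1}E_\al$, $\psi(F_\al)=-F_\al K_\al$ and $\psi(K_\al)=K_\al^{-1}$ have degrees $\al$, $-\al$ and $0$, so $\psi(U_q(\g)_\nu)\subseteq U_{q^{-1}}(\g)_\nu$. First I would establish the refined inclusions $\psi(U_q(\g)_\mu^+)\subseteq K_{-\mu}\,U_{q^{-1}}(\g)_\mu^+$ and $\psi(U_q(\g)_\mu^-)\subseteq U_{q^{-1}}(\g)_\mu^-\,K_\mu$. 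Each follows by evaluating $\psi$ on a monomial $E_{\al_1}\cdots E_{\al_k}$ (resp. $F_{\al_1}\cdots F_{\al_k}$) with $\sum_i\al_i=\mu$ and commuting all Cartan factors to the left (resp. right) by the relations of $U_{q^{-1}}(\g)$; the accumulated Cartan element is exactly $K_{-\mu}$ (resp. $K_\mu$), and linearity extends this to all of $U_q(\g)_\mu^\pm$. Since $\psi(K_{\pm\mu})=K_{\mp\mu}$, this yields $\psi(K_{-\mu}u')=K_\mu\psi(u')\in K_\mu K_{-\mu}\,U_{q^{-1}}(\g)_\mu^+=U_{q^{-1}}(\g)_\mu^+$ and $\psi(uK_\mu)=\psi(u)K_{-\mu}\in U_{q^{-1}}(\g)_\mu^-\,K_\mu K_{-\mu}=U_{q^{-1}}(\g)_\mu^-$.

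The main obstacle, such as it is, lies entirely in the bookkeeping of (2): one must keep track of the sign $(-1)^k$ and of the powers of $q^{-1}$ generated when commuting $K_\al^{\pm1}$ past the $E_\be$ or $F_\be$, and verify that the product of Cartan generators collapses to $K_{\mp\mu}$ so that the explicit factor $K_{\pm\mu}$ cancels it exactly. Everything else is immediate from the definitions in Section \ref{sec_quantum_def}.
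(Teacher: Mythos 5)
Your proof is correct and follows essentially the same route as the paper: part (1) is the identical weight-space scalar comparison (the paper computes the same factor $\exp(\pi i\rho(\lla\la,\mu\rra-\lla\la',\mu\rra-\lla\mu,\mu\rra))$ on both sides), and part (2), which the paper dispatches with ``follows from the definition of $\psi$,'' is exactly the monomial-by-monomial Cartan-commutation bookkeeping you carry out. Your elaboration of (2) is the intended argument, so there is nothing to add.
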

\begin{proof}
Let $\la,\la' \in \lP$.
Since $(u\otimes u') \cdot M_{\la}\otimes N_{\la'} \subset M_{\la-\mu} \otimes N_{\la'+\mu},$
for any $v \in M_\la$ and $w\in N_{\la'}$
\begin{align*}
f_{\rho}^{-1}\circ (u\otimes u') \circ f_{\rho} (v\otimes w)
=\exp\left(\pi i \rho(\lla \la,\mu \rra-\lla\la',\mu\rra-\lla\mu,\mu\rra)\right) (u\otimes u') (v\otimes w).
\end{align*}
Hence,
$
f_{\rho}^{-1}\circ (u\otimes u') \circ f_{\rho}
= q^{-\lla\mu,\mu\rra} (u\otimes u') (K_{\mu} \otimes K_{-\mu})
=(uK_{\mu} \otimes K_{-\mu}u')$, which implies (1).
(2) follows from the definition of $\psi$.
\end{proof}

Since 
$(\Theta^q)^{-1}$ is an infinite sum of elements $U_q(\g)_{\mu}^- \otimes U_q(\g)_\mu^+$ with $\mu \geq 0$,
by the above lemma,
there exits a family of elements $\Gamma_\mu \in U_{q^{-1}}(\g)_\mu^-\otimes U_{q^{-1}}(\g)_\mu^+$
such that
$$
\Gamma =\sum_{\mu \geq 0}\Gamma_\mu = (\psi\otimes \psi)(f_\rho^{-1}\circ (\Theta^q)^{-1} \circ f_\rho).
$$
Thus, to obtain the equivalence of the categories,
it suffices to show
that
$
\Gamma = \Theta^{q^{-1}}
$ as an element in $U_{q^{-1}}(\g) \hat{\otimes} U_{q^{-1}}(\g)$,
which follows from Theorem \ref{thm_unique}.
Hence, we have:
\begin{prop}
\label{psi_isomorphism}
The Hopf algebra isomorphism $\psi:U_q(\g)\rightarrow U_{q^{-1}}(\g)^\cop$
induces an equivalence between 
$(U_q(\g),R(\rho))\modu$ and $(U_{q^{-1}}(\g)^\cop,R(-\rho)^{-1})\modu$
as balanced braided tensor categories.
\end{prop}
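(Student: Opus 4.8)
The plan is to exploit that $\psi$ is a Hopf algebra isomorphism, so that the pullback functor $\psi^*$, sending a type $1$ $U_{q^{-1}}(\g)^\cop$-module $M$ to the $U_q(\g)$-module $\psi^*M$ with action $a\cdot_\psi m=\psi(a)m$, is automatically a tensor functor. First I would check that $\psi^*$ preserves type $1$ modules and is an equivalence of tensor categories: a highest weight vector $v_\la$ of $L_{q^{-1}}(\la)$ satisfies $K_\al\cdot_\psi v_\la=K_\al^{-1}v_\la=q^{\lla\la,\al\rra}v_\la$ and $E_\al\cdot_\psi v_\la=-K_\al^{-1}E_\al v_\la=0$, so $\psi^*L_{q^{-1}}(\la)\cong L_q(\la)$ (note that, unlike the type B map in the appendix, $\psi^*$ does send type $1$ to type $1$). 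The real content is then to show that $\psi^*$ matches the braidings, i.e.\ that it carries the operator $R(-\rho)^{-1}$ on $M\otimes N$ to the operator $R(\rho)$ on $\psi^*M\otimes\psi^*N$, for all type $1$ modules.

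Since $\psi^*M\otimes\psi^*N$ and $M\otimes N$ share the same underlying space, I would compare $\psi^*\!\big(R(\rho)\big)=\psi^*\!\big(f_\rho^{-1}\circ(\Theta^q)^{-1}\big)$ with $R(-\rho)^{-1}=\Theta^{q^{-1}}\circ f_{-\rho}$ directly. The diagonal factors already agree: because $\psi(K_\al)=K_\al^{-1}$ while the parameter changes from $q$ to $q^{-1}$, the two sign reversals cancel and $\psi^*$ preserves weights, whence $\psi^*(f_\rho^{-1})$ acts on $M\otimes N$ exactly as $f_{-\rho}$ does. For the unipotent factor I would write
\[
f_\rho^{-1}\circ(\Theta^q)^{-1}=\big(f_\rho^{-1}\circ(\Theta^q)^{-1}\circ f_\rho\big)\circ f_\rho^{-1},
\]
conjugate each graded component $u\otimes u'\in U_q(\g)_\mu^-\otimes U_q(\g)_\mu^+$ of $(\Theta^q)^{-1}$ by $f_\rho$ using Lemma \ref{f_transform}(1), turning it into $uK_\mu\otimes K_{-\mu}u'$, and then apply $\psi\otimes\psi$. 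By Lemma \ref{f_transform}(2) the result lands in $U_{q^{-1}}(\g)_\mu^-\otimes U_{q^{-1}}(\g)_\mu^+$, so $\psi^*\!\big(R(\rho)\big)=\Gamma\circ f_{-\rho}$ for a well-defined $\Gamma=\sum_{\mu\geq 0}\Gamma_\mu$ with $\Gamma_\mu\in U_{q^{-1}}(\g)_\mu^-\otimes U_{q^{-1}}(\g)_\mu^+$ and $\Gamma_0=1\otimes 1$.

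It then remains to identify $\Gamma=\Theta^{q^{-1}}$ in $U_{q^{-1}}(\g)\hat{\otimes}U_{q^{-1}}(\g)$, which I would obtain from the uniqueness statement, Theorem \ref{thm_unique}. The grading condition and $\Gamma_0=1\otimes1$ are already in place, so the one thing left is the intertwining relation $\Delta(u)\,\Gamma=\Gamma\,\Delta^\tau(u)$ for all $u\in U_{q^{-1}}(\g)$. I expect this to be the main obstacle: it requires transporting the defining property of $\Theta^q$ (Proposition \ref{theta_property}) through the $f_\rho$-conjugation and the anti-cocommutative isomorphism $\psi$, keeping careful track of the fact that $\psi$ exchanges $\Delta$ with $\Delta^\cop$ and of how the $\tau$-twist in $\Delta^\tau$ interacts with the $K_\mu$-factors produced by the conjugation in Lemma \ref{f_transform}(1). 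Once this relation is verified, Theorem \ref{thm_unique} forces $\Gamma_\mu=\Theta_\mu^{q^{-1}}$ for every $\mu$, giving $\psi^*\!\big(R(\rho)\big)=R(-\rho)^{-1}$ and hence the equivalence of braided tensor categories; the compatibility of balances is a parallel and lighter verification, following once the braidings are matched and using that $\psi^*$ preserves the weight grading.
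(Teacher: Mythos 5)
Your proposal follows essentially the same route as the paper's proof: pull back along $\psi$, observe that weights (hence the diagonal factor, so that $\psi^*(f_\rho^{-1})=f_{-\rho}$) are preserved, conjugate the graded components of $(\Theta^q)^{-1}$ by $f_\rho$ via Lemma \ref{f_transform} so that $\psi\otimes\psi$ sends them into $U_{q^{-1}}(\g)_\mu^-\otimes U_{q^{-1}}(\g)_\mu^+$, and identify the resulting $\Gamma$ with $\Theta^{q^{-1}}$ by the uniqueness Theorem \ref{thm_unique}. The intertwining relation $\Delta(u)\Gamma=\Gamma\Delta^\tau(u)$ that you flag as the remaining obstacle is precisely the hypothesis the paper leaves implicit in its appeal to Theorem \ref{thm_unique}, so your write-up is, if anything, slightly more explicit about what that appeal requires.
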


\subsection{Graded twists of Drinfeld categories}
\label{sec_twist}
In this section, we will consider a graded twist of the braided tensor category $(U_q(\g),R(\rho))\modu$ (see Section \ref{sec_twist_intro}).

Let $S$ be a subset of the weight lattice $\lP$ such that $\lQ+S \subset S$ (a subset of the coset $\lD$).
Let $(U_q(\g),R(\rho),S)\modu$ be the full subcategory of $(U_q(\g),R(\rho))\modu$ whose object is isomorphic to a direct sum of $L_q(\mu)$ for $\mu \in S \cap \lP^+$.
Since $L_q(\mu)\otimes L_q(\mu') \in (U_q(\g),R(\rho),\mu+\mu'+\lQ)\modu$ for any $\mu,\mu' \in \lP$,
the subcategories $\{(U_q(\g),R(\rho),\mu+\lQ)\modu)\}_{\mu \in \lD}$
define a $\lD$-grading on $(U_q(\g),R(\rho))\modu$ (see Section \ref{sec_twist_intro}).
We note that if $L$ is a subgroup of $\lP$ with $\lQ \subset L$,
then $(U_q(\g),R(\rho),L)\modu$ is closed under the tensor product and thus
a braided tensor subcategory.

Let $Q_\g:\lP/\lQ\rightarrow \C^\times$ be a map defined by
$$
Q_\g(\la)=\exp(\pi i \lla \la,\la \rra).
$$
Since $\lQ$ is an even lattice with respect to $\lla-,-\rra$,
the map is well-defined.
Since
$$
\frac{Q_\g(\la+\mu)}{Q_\g(\la)Q_\g(\mu)}= \exp(2\pi i \lla \la,\mu \rra)
$$
is a bicharacter on $\lP/\lQ$, $Q_\g$ is a quadratic form.
We note that $Q_\g^N$ is also a quadratic form for any $N\in\Z$.
By Theorem \ref{thm_coho}, we can consider the graded twist of $(U_q(\g),R(\rho))\modu$ 
associated with these quadratic forms.
We first show the following proposition:
\begin{prop}
\label{even_twist}
For any $N \in \Z$, the identify functor gives an equivalence between $(U_q(\g),R(\rho))\modu^{Q_\g^{2N}}$ 
and $(U_q(\g),R(\rho+2N))\modu$ as braided tensor categories.
\end{prop}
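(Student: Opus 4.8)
The plan is to exploit that the substitution $\rho \mapsto \rho + 2N$ does not change $q = \exp(\pi i \rho)$, so that both braided tensor categories in the statement have literally the same underlying abelian category, tensor functor, unit object, and $\lD$-grading; only the braidings (and a priori the associators) can differ. Thus I expect the identity functor to be the required equivalence, and the whole proof reduces to matching the structure morphisms. First I would note that the quasi-$R$-matrix $\Theta=\sum_{\mu\geq 0}\Theta_\mu$ is built solely from the pairing $(-,-)$ on $U_q(\g)$, hence depends only on $q$; since $q$ is unchanged, $\Theta$ is the same in $R(\rho)$ and $R(\rho+2N)$. Writing $R(\rho)=(\Theta\circ f_\rho)^{-1}=f_\rho^{-1}\circ\Theta^{-1}$, the only difference between $R(\rho)$ and $R(\rho+2N)$ is the diagonal factor $f_\rho$. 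For $M=L_q(\la)$, $N=L_q(\mu)$ and weight vectors $m\in M_\nu$, $n\in N_{\nu'}$ (so $\nu\in\la+\lQ$, $\nu'\in\mu+\lQ$) one computes
\[
f_{\rho+2N}^{-1}(m\otimes n)=\exp(2N\pi i\lla\nu,\nu'\rra)\,f_\rho^{-1}(m\otimes n).
\]

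The key observation is that this scalar is constant on $M\otimes N$. Indeed, writing $\nu=\la+\gamma$, $\nu'=\mu+\gamma'$ with $\gamma,\gamma'\in\lQ$, the integrality $\lla\lQ,\lP\rra\subseteq\Z$ (the root lattice pairs integrally with the weight lattice) gives $\lla\nu,\nu'\rra\equiv\lla\la,\mu\rra\pmod{\Z}$, whence $\exp(2N\pi i\lla\nu,\nu'\rra)=\exp(2N\pi i\lla\la,\mu\rra)$. Since this scalar commutes with $\Theta^{-1}$, I get $R(\rho+2N)=\exp(2N\pi i\lla\la,\mu\rra)\,R(\rho)$ as operators on $L_q(\la)\otimes L_q(\mu)$, so the braiding of $(U_q(\g),R(\rho+2N))\modu$ equals $\exp(2N\pi i\lla\la,\mu\rra)$ times the braiding $B^\rho$ of $(U_q(\g),R(\rho))\modu$ on the $(\la+\lQ,\mu+\lQ)$-graded piece. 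Moreover the associator of $(U_q(\g),R(\rho+2N))\modu$ is the standard one, unchanged from $(U_q(\g),R(\rho))\modu$.

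It then remains to identify this scalar with the braiding twist coming from $Q_\g^{2N}$. Define a bicharacter on $\lD$ by $B(\bar\la,\bar\mu)=\exp(2N\pi i\lla\la,\mu\rra)$; it is well-defined precisely because $2N\lla\lQ,\lP\rra\subseteq 2\Z$, and it satisfies $B(\bar\la,\bar\la)=Q_\g^{2N}(\bar\la)$. By Lemma \ref{lem_degenerate_quad} the $3$-cocycle associated with $Q_\g^{2N}$ is cohomologically trivial, so I may represent $\mathrm{EM}^{-1}(Q_\g^{2N})$ by the normalized abelian cocycle $(\om,c)=(1,B)$: since $B$ is a bicharacter this pair satisfies \eqref{eq_ab_co}, and $c(\bar\la,\bar\la)=Q_\g^{2N}(\bar\la)$ shows $\mathrm{EM}(1,B)=Q_\g^{2N}$. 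As the twisted category depends only on the cohomology class, $(U_q(\g),R(\rho))\modu^{Q_\g^{2N}}$ may be computed with this representative: its associator equals that of $(U_q(\g),R(\rho))\modu$ (as $\om=1$) and its braiding is $B(\bar\la,\bar\mu)\,B^\rho_{M,N}=\exp(2N\pi i\lla\la,\mu\rra)\,B^\rho_{M,N}$. Comparing with the previous paragraph, both associators and both braidings agree with those of $(U_q(\g),R(\rho+2N))\modu$, and the tensor functor, unit, and grading coincide on the nose; hence the identity functor is a braided tensor equivalence.

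The main obstacle is precisely the step making $B$ descend to $\lD=\lP/\lQ$, equivalently the triviality of the $3$-cocycle of $Q_\g^{2N}$ via Lemma \ref{lem_degenerate_quad}. This uses the factor $2$ in an essential way: $\lla\lQ,\lP\rra\subseteq\Z$ only guarantees $2N\lla\lQ,\lP\rra\subseteq 2\Z$. For an odd shift the analogous bicharacter generally fails to be well-defined on $\lP/\lQ$, the associator is genuinely nontrivial, and the identity functor no longer suffices; this is exactly why the odd case (Conjecture A and Theorem A) requires the deeper Hopf-algebra isomorphisms and categorical characterizations treated later.
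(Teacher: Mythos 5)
Your proof is correct and takes essentially the same route as the paper's: both identify $\mathrm{EM}^{-1}(Q_\g^{2N})$ with the cohomologically trivial pair $(1,B_N)$, $B_N(\la,\mu)=\exp(2\pi i N\lla\la,\mu\rra)$, via Lemma \ref{lem_degenerate_quad}, and then check that $B_N\circ f_\rho^{-1}=f_{\rho+2N}^{-1}$ on each graded piece, so that the identity functor matches the braidings. Your write-up is merely more explicit about two points the paper leaves implicit, namely that $\lla\lQ,\lP\rra\subseteq\Z$ makes $B_N$ well defined on $\lP/\lQ$ and makes the scalar constant on $L_q(\la)\otimes L_q(\mu)$.
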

\begin{proof}
We note that $(U_q(\g),R(\rho))\modu$ and $(U_q(\g),R(\rho+2N))\modu$ has the same underlying monoidal category and only the difference is the braiding.
Since $R(\rho) = (\Theta \circ f_{\rho})^{-1}$, the difference comes from $f_{\rho}$.

Let $B_{N}:\lP/\lQ\times \lP/\lQ\rightarrow \C^\times$ be a bicharacter defined by
$$
B_N(\la,\mu)=\exp(2\pi i N\lla \la,\mu \rra)
$$
and $1_{\lP/\lQ}:\lP/\lQ\times \lP/\lQ\times  \lP/\lQ\rightarrow \C^\times$ be a trivial 3-cocycle.
Then, $\mathrm{EM}(Q_\g^{2N})^{-1} = (1_{\lD},B_N) \in H_\ab^3(\lD,\C^\times)$ by Lemma \ref{lem_degenerate_quad}.
Thus, $(U_q(\g),R(\rho))\modu^{Q_\g^{2N}}$ 
is equivalent to $(U_q(\g), B_N \circ R(\rho))\modu$.
Since
\begin{align*}
B_N\circ f_{\rho}^{-1} |_{L_q(\la)_{\la+\al}\otimes L_q(\mu)_{\mu+\be}}
&= \exp(2\pi i N \lla \la,\mu \rra) 
\exp(\pi i \rho \lla  \la+\al,\mu+\be \rra)\\
&=\exp(\pi i (\rho+2N) \lla  \la+\al,\mu+\be \rra)\\
&= f_{\rho+2N}^{-1},
\end{align*}
the assertion holds.
\end{proof}

Hereafter, we will consider the twisted braided tensor category
$(U_q(\g),R(\rho))\modu^{Q_\g^{N}}$ for odd integers $N \in \Z$.
We note that $\exp(\pi i (\rho+N))=(-1)^N q$.
Hence, if $N$ is an odd number, $q$ will change.
We conjecture that the following statement holds:
\begin{conj}
\label{conj}
Let $\g$ be a simple Lie algebra.
For any $N \in \Z$, $(U_q(\g),R(\rho))\modu^{Q_\g^{N}}$ and
$(U_{(-1)^N q}(\g),R(\rho+N))\modu$ are equivalent as braided tensor categories.
\end{conj}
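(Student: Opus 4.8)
The plan is to reduce everything to the case $N=1$ and then exploit either an explicit Hopf algebra isomorphism $U_q(\g)\to U_{-q}(\g)$ or an abstract characterization of the braided tensor category. First I would use multiplicativity: since $Q_\g^N = Q_\g^{N-1}\cdot Q_\g$ and $\mathrm{EM}$ is a group isomorphism (Theorem \ref{thm_coho}), twisting by $Q_\g^N$ is the same as twisting successively by $Q_\g^{N-1}$ and then by $Q_\g$. For $N$ odd the first factor $Q_\g^{N-1}$ is an even power, so Proposition \ref{even_twist} turns that twist into an honest shift of the R-matrix:
$$
(U_q(\g),R(\rho))\modu^{Q_\g^{N}} \simeq (U_q(\g),R(\rho+N-1))\modu^{Q_\g}.
$$
Writing $\rho'=\rho+N-1$ (so that $\exp(\pi i\rho')=q$ still), the conjecture reduces to the single statement $(U_q(\g),R(\rho'))\modu^{Q_\g}\simeq (U_{-q}(\g),R(\rho'+1))\modu$, the point being that $\exp(\pi i(\rho'+1))=-q$. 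Throughout, the underlying abelian (fusion) category is the same on both sides because it is independent of $q$; only the associator and braiding move, and by Lemma \ref{lem_degenerate_quad} the odd quadratic form $Q_\g$ gives in general a \emph{nontrivial} associator, which is exactly the obstruction distinguishing this case from Proposition \ref{even_twist}.

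For type B I would realize the equivalence through the Hopf algebra isomorphism $\phi\colon U_q(\so)\to U_{-q}(\so)$. The key---and subtle---feature is that the pullback $\phi^{*}$ does \emph{not} send type $1$ modules to type $1$ modules: it twists the $\lP$-weight grading by a sign character depending only on the class in $\lD=\lP/\lQ$. I would track this weight shift through the braiding $B=P\circ R(\rho')$ and the balancing $\theta$, comparing $\Theta^{-q}$ with $(\phi\otimes\phi)(\Theta^{q})$ by the uniqueness Theorem \ref{thm_unique} and comparing $f_{\rho'+1}$ against $f_{\rho'}$ multiplied by that sign character. The residual discrepancy is governed by $\exp(\pi i\lla\la,\la\rra)=Q_\g(\la)$, so it is precisely the graded twist by $Q_\g$; this is the computation I would carry out in detail in the appendix.

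For types A, C and D I would avoid an explicit quantum-group isomorphism and instead invoke a reconstruction theorem: by \cite{KW} (type A) and \cite{TW} (types BCD), a braided tensor category with the relevant fusion rules is pinned down up to braided equivalence by its ribbon twist, i.e. by the quadratic form $\la\mapsto\theta_{L_q(\la)}$ on the simple objects. Since the two sides share the same fusion category, it suffices to match balancings. On the left the twist multiplies the balancing of $L_q(\la)$ by $Q_\g(\la)$, giving $\exp(\pi i\lla\la,\la\rra)\,\theta_{L_q(\la)}$, and I would verify that this matches the balancing of the right-hand category, computed via \eqref{eq_balance} at the parameter $\rho'+1$. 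For type D the characterization in \cite{TW} covers only the subcategory generated by the vector representation, which is why the type D conclusion is weaker than the full conjecture and is stated only for $\La_v$.

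The hard part, and the reason the statement remains a conjecture, is the odd-$N$ case for the exceptional types E, F, G (and the full, rather than vector-generated, statement in type D). For these neither a serviceable isomorphism $U_q(\g)\cong U_{-q}(\g)$ nor a complete reconstruction theorem determining the braiding from the twist on the whole category is available, so there is no current mechanism to convert the nontrivial $Q_\g$-twist into an honest R-matrix shift. This is exactly what limits the proven cases to $N$ even for all $\g$ and to arbitrary $N$ in type ABC.
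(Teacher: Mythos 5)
Your overall architecture is the same as the paper's: reduce to $N=1$ using Proposition \ref{even_twist} and multiplicativity of the graded twist; prove type B by pulling back along the Hopf algebra isomorphism $\phi\colon U_q(\so)\to U_{-q}(\so)$ and tracking the failure of $\phi^*$ to preserve type 1 modules (this is the paper's appendix; note that the bulk of the work there is constructing the monoidal structure on the functor $C^I\to C^\tw$ out of the sign characters and verifying the cocycle identities of Lemma \ref{lem_cocycle_explicit}, which your sketch compresses to one sentence); and settle types A, C, D by the Kazhdan--Wenzl and Tuba--Wenzl reconstruction theorems, with the type D restriction to $\La_v$ for exactly the reason you state.

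However, your step for types A, C, D has a genuine gap. The reconstruction theorems do not say that the braided structure on a fixed fusion category with these fusion rules is determined by the ribbon twist $\la\mapsto\theta_{L_q(\la)}$: what \cite[Section 2]{KW} (and analogously \cite{TW}) provide, and what the paper actually uses in Section \ref{sec_twist}, is that the braiding is pinned down by the \emph{eigenvalues of $B_{X,X}$ on the simple summands of $X\otimes X$}, where $X$ is the vector representation. Worse, the verification you propose would fail outright. Writing $\rho_W$ for half the sum of positive roots (denoted $\rho$ in Section \ref{sec_notation}), the natural balance of $(U_q(\g),R(\rho))\modu^{Q_\g^N}$ is $Q_\g^N(\la)\exp(\pi i\rho\lla\la+2\rho_W,\la\rra)$, while that of $(U_{(-1)^Nq}(\g),R(\rho+N))\modu$ is $\exp(\pi i(\rho+N)\lla\la+2\rho_W,\la\rra)$; their ratio is the character $\la\mapsto\exp(2\pi i N\lla\rho_W,\la\rra)$ of $\lP/\lQ$. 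For $\g=\mathrm{sl}_n$ this character equals $(-1)^{(n-1)N}$ on the vector representation, so already for $\mathrm{sl}_2$ and $N=1$ the two balancings disagree by a sign, even though the categories \emph{are} braided equivalent --- a braided equivalence need not match chosen balances, and here the two natural balances differ by a character of the grading group. So ``matching balancings'' is neither the criterion furnished by \cite{KW} and \cite{TW} nor true in the case at hand; the step must instead be the eigenvalue comparison on $X^{\otimes 2}$, e.g. $Q_{\mathrm{sl}_n}(\la_1)\exp\bigl(\pi i\rho(1-\tfrac1n)\bigr)=\exp\bigl(\pi i(\rho+1)(1-\tfrac1n)\bigr)$ together with its companion on the antisymmetric part, which is precisely the computation the paper carries out. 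The remainder of your plan (the even reduction, the type B mechanism, and the diagnosis of why types E, F, G and full type D remain open) is sound and consistent with the paper.
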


The conjecture is true if $N$ is even by Proposition \ref{even_twist}.
We will prove this conjecture for the Lie algebra of type ABC
and partially for type D.
In the case of type D, we show the conjecture for some full subcategory of $(U_q(\mathrm{so}_{2n}),R(\rho))\modu$.

More precisely, let $\la_1$ be the fundamental weight of $\mathrm{so}_{2n}$
such that $L_q(\la_1)$ is isomorphic to the vector representation of $\mathrm{so}_{2n}$.
Let $\Lambda_v$ be the subgroup of $\lP$ generated by the root lattice $\lQ$ and $\la_1$.
Then, the monoidal subcategory of $(U_q(\mathrm{so}_{2n}),R(\rho))\modu$ generated by the vector representation is equal to $(U_q(\mathrm{so}_{2n}),R(\rho),\Lambda_v)\modu$.
We remark that $(U_q(\mathrm{so}_{2n}),R(\rho),\Lambda_v)\modu$ is graded by $\Lambda_v/\lQ=\Z/2\Z$
and the restriction of the quadratic form $Q_{\mathrm{so}_{2n}}:\lP/\lQ \rightarrow \C^\times$
gives a quadratic form on $\Lambda_v/\lQ \cong \Z/2\Z$. We denote it by the same symbol $Q_{\mathrm{so}_{2n}}$.

\begin{thm}
\label{odd_twist}
Conjecture \ref{conj} is true if $\g$ is a simple Lie algebra of type ABC.
In the case of $\g=\mathrm{so}_{2n}$, i.e., of type $D_n$,
$(U_q(\mathrm{so}_{2n}),R(\rho),\Lambda_v)\modu^{Q_\g^{N}}$ and
$(U_{(-1)^N q}(\mathrm{so}_{2n}),R(\rho+N),\Lambda_v)\modu$ are equivalent as braided tensor categories for any $N\in \Z$.
\end{thm}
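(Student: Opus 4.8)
The plan is to dispose of the even case first and then reduce the odd case to $N=1$. By Proposition~\ref{even_twist} the statement already holds for even $N$ via the identity functor, so only odd $N$ needs work. Since quadratic forms multiply pointwise, $Q_\g^{N}=Q_\g^{1}\cdot Q_\g^{N-1}$, and twisting by a product of abelian cocycles is the composite of the two twists; composing the $N=1$ twist with the even twist $Q_\g^{N-1}$ (Proposition~\ref{even_twist}, with base point $\rho+N-1$) would carry $(U_q(\g),R(\rho))\modu$ to $(U_{-q}(\g),R(\rho+N))\modu$, using $\exp(\pi i(\rho+N))=(-1)^Nq=-q$ for odd $N$. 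Thus it suffices to treat $N=1$. I would stress that for odd $N$ the identity functor cannot work: by Lemma~\ref{lem_degenerate_quad} the $3$-cocycle $\om_{Q_\g^N}$ is nontrivial, because $Q_\g^N(\la)=\exp(\pi iN\lla\la,\la\rra)$ is not of the form $B(\la,\la)$ for a bicharacter on $\lP/\lQ$; the resulting change of associativity constraint must be matched against the change of associator caused by passing from $q$ to $-q$.

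For types A and C, and for the $\Lambda_v$-subcategory in type D, I would avoid constructing an explicit functor and instead invoke a rigidity/classification theorem. Both $(U_q(\g),R(\rho))\modu^{Q_\g^N}$ and $(U_{-q}(\g),R(\rho+N))\modu$ are semisimple braided tensor categories whose underlying fusion ring is the classical one of $\Rep(\g)$ (this ring is independent of the generic parameter, and a graded twist changes neither objects nor fusion). By the classification of braided tensor categories with type $A$ fusion in \cite{KW}, and of the orthogonal/symplectic series in \cite{TW}, such a category is pinned down up to braided equivalence by a short list of numerical invariants attached to the generating object $V=L_q(\la_1)$ (the standard, resp.\ vector, representation): essentially the eigenvalues of the self-braiding $B_{V,V}$ on the isotypic summands of $V\otimes V$. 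The problem thereby reduces to checking that these eigenvalues agree on the two sides.

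To carry this out I would compute $B_{V,V}=P\circ R(\rho)$ on the top weight component of $V\otimes V$, where $\Theta$ acts trivially and $f_\rho$ contributes $\exp(-\pi i\rho\lla\la_1,\la_1\rra)$, and track both operations. Twisting by $Q_\g^N$ rescales $B_{V,V}$ by $c_{Q_\g^N}(\la_1,\la_1)=Q_\g^N(\la_1)=\exp(\pi iN\lla\la_1,\la_1\rra)$ while rescaling the associator by $\om_{Q_\g^N}$; shifting $\rho\mapsto\rho+N$ with $q\mapsto-q$ changes $f_\rho$ by the same bicharacter factor and changes $\Theta$ through the substitution $q\mapsto-q$. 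The required identity is then an elementary relation among roots of unity built from $\lla\la_1,\la_1\rra$ and $\lla\la_1,\mu\rra$, and the evenness of $\lQ$ for $\lla-,-\rra$ guarantees that everything descends to $\lP/\lQ$. Running the same bookkeeping on each of the two (type A) or three (types C, D) eigenvalues gives the match, hence the braided equivalence; for type C the vector representation generates the entire category, so \cite{TW} yields the full statement, whereas for D and B it does not.

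The main obstacle is twofold. First, the characterization of \cite{TW} controls only the subcategory generated by the vector representation: for type D the spinor representations lie outside it, so one obtains the equivalence only for $(U_q(\mathrm{so}_{2n}),R(\rho),\Lambda_v)\modu$ — precisely the weaker statement in the theorem — and for type B the vector representation again fails to generate the spinor part, so this route does not yield the full Conjecture~\ref{conj}. Hence type B I would treat directly via the Hopf algebra isomorphism $\phi:U_q(\so)\to U_{-q}(\so)$: its pullback does not send type-$1$ modules of $U_{-q}(\so)$ to type-$1$ modules of $U_q(\so)$, and correcting this discrepancy by a sign character on weights is exactly what produces the cocycle twist; being delicate, I would defer it to the appendix. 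Second, the genuinely hard point in the ACD argument is verifying that the braiding invariants coincide despite the differing associators (nontrivial $3$-cocycle for odd $N$ on one side, the $q\mapsto-q$ change of $F$-matrices on the other), together with confirming that the hypotheses of \cite{KW} and \cite{TW} — semisimplicity, rigidity, and the exact fusion pattern of the generating object — hold, so that the classification genuinely applies.
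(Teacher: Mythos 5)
Your proposal is correct and takes essentially the same route as the paper: even $N$ via Proposition \ref{even_twist}, reduction of odd $N$ to $N=1$ by composing twists, the Kazhdan--Wenzl and Tuba--Wenzl classifications combined with matching the eigenvalues of the self-braiding on the tensor square of the vector representation for types A, C and D (with the same caveat that \cite{TW} only controls the $\Lambda_v$-subcategory in type D), and the Hopf algebra isomorphism $\phi:U_q(\so)\to U_{-q}(\so)$ with its sign-character correction handled separately in the appendix for type B. One inessential inaccuracy: your assertion that for odd $N$ the $3$-cocycle $\om_{Q_\g^N}$ is nontrivial because $Q_\g^N$ cannot come from a bicharacter is false precisely in the cases you treat by \cite{TW} and in type $A_{n-1}$ with $n$ odd (e.g.\ for types C and D one may take $B(1,1)=-1$ on $\Z/2\Z$, and for odd $n$ the form $\exp(\pi iN(n-1)ab/n)$ is a bicharacter on $\Z/n\Z$); what is nontrivial there is the abelian cohomology class through its braiding component, but since your actual argument never invokes this claim, the proof is unaffected.
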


If $\g$ is of type B, that is $\g \cong \so$, there is a Hopf algebra isomorphism $\phi: U_q(\so)\rightarrow U_{-q}(\so)$.
So in this case, the above theorem can be proved as in the previous section.
However, this case is a little more complicated because of the existence of twist, and the proof is given in appendix (Theorem \ref{thm_B}). In this section, we will prove the theorem for the simple Lie algebra of type ACD.

We first consider the case of $\g = \mathrm{sl}_n$, denote $Q_{\mathrm{sl}_n}$ by $Q$ for short.
In \cite{KW}, semisimple rigid monoidal tensor categories with fusions rule of $\mathrm{sl}_n$
is classified.
Such categories $C$ is parametrized by the pairs $(q_C,\tau_C)$ of nonzero complex numbers
defined up to replacing $(q_C,\tau_C)$ by $(q_C^{-1},\tau_C^{-1})$,
such that $q_C^{n(n-1)/2} = \tau_C^n$ and $q_C$ is not a nontrivial root of unity \cite{Jo}.
It is easy to verify that for $(U_q(\mathrm{sl}_n),R(\rho))\modu^{Q^N}$ 
the corresponding parameters are $q_C= q^2$ and $\tau_C =(-1)^{(n-1)N} q^{n-1}$.
Thus, $(U_q(\mathrm{sl}_n),R(\rho))\modu^{Q_{\mathrm{sl}_n}}$ and $(U_{-q}(\mathrm{sl}_n),R(\rho+2k+1))\modu$ are equivalent as a monoidal category for any $k\in \Z$.
We need more work to determine the integer $k \in \Z$ (or the braiding structure).
Let $X$ be a vector representation of $U_q(\mathrm{sl}_n)$.
Then, $X\otimes X$ is a direct sum of two simple objects,
$X^{\otimes 2} \cong X_a \oplus X_s$, where $X_s$ is the symmetric tensor and
$X_a$ is the antisymmetric tensor.
The braiding $B_{X,X} \in \mathrm{End}X^{\otimes 2}$ acts as a scalar on each component.
In the untwisted $(U_q(\mathrm{sl}_n),R(\rho))$ case,
these values are 
\begin{align*}
B_{X,X}|_{X_s} &= \exp(\pi i \rho(1-\frac{1}{n})),\\
B_{X,X}|_{X_a} &= -\exp(\pi i \rho(-1-\frac{1}{n}))
\end{align*}
(see for example \cite[Section 8.4.3]{KS}).

Hereafter, we will use the labeling of the Dynkin diagram in \cite{Hu}
and $\{\la_i\}_{i=1,\dots,n-1}$ are the fundamental weights with respect to this labeling.
Then, the vector representation is isomorphic to the highest weight representation $L_q(\la_1)$.
Since $Q(\la_1) =\exp ( \pi i \lla \la,\la\rra)= \exp (\pi i (1-\frac{1}{n}))$,
for the twisted braided tensor category $(U_q(\mathrm{sl}_n),R(\rho))^Q$,
the values of the twisted braiding $B_{X,X}^Q$
are
\begin{align*}
B_{X,X}^Q|_{X_s} &= \exp (\pi i (1-\frac{1}{n}))\exp(\pi i \rho(1-\frac{1}{n}))=\exp (\pi i (\rho+1)(1-\frac{1}{n})),\\
B_{X,X}^Q|_{X_a} &= - \exp (\pi i (1-\frac{1}{n})) \exp(\pi i \rho(-1-\frac{1}{n}))
=-\exp(\pi i (-(\rho+1)-\frac{\rho+1}{n}+2))\\
&=-\exp(\pi i (\rho+1) (-1-\frac{1}{n})),
\end{align*}
which are the same with that of $(U_{-q}(\mathrm{sl}_n),R(\rho+1))$.
By \cite[Section 2]{KW}, the braiding is uniquely determined by the 
eigenvalues of $B_{X,X}:X\otimes X \rightarrow X\otimes X$.
Hence, the conjecture is true for $\g=\mathrm{sl}_n$.
\begin{rem}
For odd $n$, this result is proved by using the generator and relation of
the quantum coordinate ring $O_q(\mathrm{SU}(n))$ \cite{PR} or \cite{BY}
and for $n=2$ is shown, for example, in \cite{BNY}.
\end{rem}

There is a partial extension of the result of Kazhdan and Wenzl to type BCD obtained by Tuba and Wenzl \cite{TW}. 
In \cite[Theorem 9.4]{TW}, semisimple rigid monoidal tensor categories with fusions rule of $\mathrm{sp}_{2n}$
(resp. $\mathrm{so}_{n}$) are classified.
By using their characterization, for any $n \in \Z_{>}$
there is an equivalence of monoidal categories,
\begin{align}
(U_q(\mathrm{sp}_n),R(\rho))\modu^{Q_{\mathrm{sp}_{2n}}} &\cong (U_{(-1)q}(\mathrm{sp}_{2n}),R(\rho+1))\modu \nonumber \\
(U_q(\mathrm{so}_n),R(\rho),\Lambda_v)\modu^{Q_{\mathrm{so}_n}} &\cong (U_{(-1)q}(\mathrm{so}_{n}),R(\rho+1),\Lambda_v)\modu. \label{eq_TW}
\end{align}
It is important to note that their characterization of the representation category of type BD is for a smaller
category that does not include the spin representations
and there is no such characterization for the whole category
$(U_q(\mathrm{so}_n),R(\rho))\modu$.
So, to get a monoidal equivalence
$(U_q(\mathrm{so}_n),R(\rho))\modu^{Q_{\mathrm{so}_n}} \cong (U_{(-1)q}(\mathrm{so}_{n}),R(\rho+1))\modu,$
which includes the spin representation, we need to think further. This can be obtained explicitly in the case of type B using the Hopf algebra isomorphism in Appendix.

Hereafter, we will show that
the monoidal equivalences \eqref{eq_TW} are equivalences as braided tensor categories.

We will first consider the case of type $C_n$.
Let $X$ be the vector representation of $U_q(\mathrm{sp}_{2n})$.
In order to show that they are the same braided tensor category,
it suffices to show that the braided on $X$ is the same as above.
The tensor product $X\otimes X$ is a direct sum of three simple objects,
$X^{\otimes 2} \cong X_s \oplus X_a \oplus X_1$, where $X_s$ is the symmetric tensor and
$X_1$ is the trivial representation.
The eigenvalues of the braiding $B_{X,X} \in \mathrm{End}X^{\otimes 2}$
are 
\begin{align*}
B_{X,X}|_{X_s} &= q,\\
B_{X,X}|_{X_a} &= -q^{-1},\\
B_{X,X}|_{X_1} &= -q^{-(1+2n)}
\end{align*}
(see \cite[Section 8.4.3]{KS}).
Since the vector representation is isomorphic to
the highest weight representation $L_q(\la_n)$ and
$Q_{\mathrm{sp}_{2n}}(\la_n) =\exp ( \pi i \lla \la_n,\la_n\rra)= -1$,
the braidings of $(U_q(\mathrm{sp}_{2n}),R(\rho))\modu^{Q}$
and $(U_{(-1)q}(\mathrm{sp}_{2n}),R(\rho+1))\modu$ are the same on $X^{\otimes 2}$.
Hence, the conjecture is true for $\mathrm{sp}_{2n}$.

Finally, we will consider the case of type $D_n$.
Let $X$ be the vector representation of $\mathrm{so}_{2n}$,
which is isomorphic to the highest weight representation $L_q(\la_1)$.
The tensor product $X\otimes X$ is a direct sum of three simple objects,
$X^{\otimes 2} \cong X_s \oplus X_a \oplus X_1$, where $X_s$ is the traceless symmetric tensor and $X_a$ is the antisymmetric tensor, and $X_1$ is the trivial representation.
The eigenvalues of the braiding $B_{X,X} \in \mathrm{End}X^{\otimes 2}$
are 
\begin{align*}
B_{X,X}|_{X_s} &= q,\\
B_{X,X}|_{X_a} &= -q^{-1},\\
B_{X,X}|_{X_1} &= q^{1-2n}
\end{align*}
(see \cite[Section 8.4.3]{KS} again).
Since $Q(\la_1) =\exp ( \pi i \lla \la_1,\la_1\rra)= -1$,
the braidings of $(U_q(\mathrm{so}_{2n}),R(\rho))\modu^{Q}$
and $(U_{(-1)q}(\mathrm{so}_{2n}),R(\rho+1))\modu$ are the same on $X^{\otimes 2}$.
Hence, the conjecture is true for the subcategory of $\mathrm{so}_{2n}$.


\section{Constructions}
\label{sec_construction}
By Lemma \ref{lem_rev}, Proposition \ref{op_isomorphism} and Proposition \ref{psi_isomorphism},
we have equivalences of the balanced braided tensor categories
\begin{align*}
((U_{q}(\fg),R(\rho))\modu)^\rev 
&\cong (U_{q}(\fg),R(\rho)_{21}^{-1})\modu\\
&\cong (U_{q}(\fg)^\cop,R(\rho)^{-1})\modu \\
&\cong (U_{q^{-1}}(\fg),R(-\rho))\modu.
\end{align*}

In Section \ref{sec_quantum_coordinate},
we will give commutative algebra objects in 
$(U_{q}(\g),R(\rho)) \otimes (U_{q}(\g)^\cop, R(\rho)^{-1})\modu$.
In Section \ref{sec_lax}, by using it,
we will construct a lax monoidal functor
$$
\mO_\g:\Vect_{\lP/\lQ} \rightarrow (U_{q}(\g),R(\rho))\otimes (U_{q^{-1}}(\g), R(-\rho))\modu.
$$
Then, by Theorem \ref{odd_twist} the grading twist associated with the quadratic form
$Q_\g:\lD \rightarrow \C^\times$ gives us a lax braided monoidal functor
$$
\mO_\g^N:\Vect_{\lD}^{Q_\g^N} \rightarrow (U_{(-1)^N q}(\g),R(\rho+N))\otimes (U_{q^{-1}}(\g), R(-\rho))\modu.
$$
The main Theorem will be proved by using this functor.
As an application, we construct many vertex superalgebras.
This will be discussed in Section \ref{sec_application}.

\subsection{Quantum coordinate ring}
\label{sec_quantum_coordinate}
Let $U_q(\g)^*$ be the dual vector space of $U_q(\g)$.
As we have seen in Section \ref{sec_quasi},
if a Hopf algebra is finite dimensional, then its dual is a Hopf algebra.
In the case of infinite dimension, in order to make the product
\eqref{eq_multiplication} well-defined, we must consider an appropriate subspace of $U_q(\g)^*$.

Let $G$ be the simply-connected simple Lie group associated with the Lie algebra $\g$.
For $\la \in \lP^+$,
$m \in L_q(\la)$ and $f \in L_q(\la)^*$,
define a linear map $c_{m,f}^\la: U_q(\fg) \rightarrow \C$ by
\begin{align*}
c_{m,f}^\la(a) = f(a\cdot m)
\end{align*}
for $a \in U_q(\fg)$.
Let $\mO_q(G)$ be the subspace of $U_q(\g)^*$
spanned by all matrix coefficients
$
\{c_{m,f}^\la \}_{\la \in \lP^+,m \in L_q(\la), f\in L_q(\la)^*}.
$
Then, $\mO_q(G)$ is closed under the product defined by
$$a\cdot b = (a\otimes b) \circ \Delta \text{ for }a,b \in \mO_q(G).$$

As discussed in Section \ref{sec_quasi},
there is a bimodule structure, or equivalently, a left $(U_q(\fg)\otimes U_q(\fg)^\cop)$-module structure on $\mO_q(G)$.
Then, we have a $(U_q(\fg)\otimes U_q(\fg)^\cop)$-module
isomorphism
$$
\oplus_{\la \in \lP^+}c_{\bullet,\bullet}^{\la}:
\bigoplus_{\la\in \lP^+} L_q(\la)\otimes L_q(\la)^* \cong \mO_q(G).
$$

Similarly to the case of dual Hopf algebras,
$\mO_q(G)$ inherits a natural Hopf algebra structure defined by \eqref{eq_multiplication}.
This Hopf algebra is called {\it a quantum coordinate ring} \cite{KS}.

Similarly to Lemma \ref{dual_commutative},
by Lemma \ref{lem_rev} and Proposition \ref{op_isomorphism},
we have:
\begin{prop}
\label{commutative_algebra}
The quantum coordinate ring $\mO_q(G)$ is a commutative algebra object in $(U_q(\g),R(\rho)) \otimes (U_q(\fg)^\cop, R(\rho)^{-1})\modu$
and satisfies $\theta_{\mO_q(G)}=\mathrm{id}$.
The unit of $\mO_q(G)$ is given by a natural injection 
$\epsilon:L_{q}(0)\otimes L_{q}(0) \rightarrow \mO_q(G)$.
Furthermore,
if $I \subset \mO_q(G)$ satisfies
\begin{enumerate}
\item
$a m \in I$ for any $m \in I$ and $a \in \mO_q(G)$;
\item
$(u\otimes v)\cdot a \in I$ for any $a \in I$
and $u \otimes v \in U_q(\g)\otimes U_q(\g)^\cop$,
\end{enumerate}
then $I=0$ or $I=\mO_q(G)$.
\end{prop}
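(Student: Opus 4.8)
The plan is to treat the three assertions in turn. For the claim that $\mO_q(G)$ is a commutative algebra object in $(U_q(\g),R(\rho)) \otimes (U_q(\g)^\cop, R(\rho)^{-1})\modu$, I would repeat the proof of Proposition \ref{dual_commutative} almost verbatim, with the restricted dual $\mO_q(G)$ of matrix coefficients playing the role of the full linear dual $H^\vee$. The only additional point is that $R(\rho)$ is not an element of $U_q(\g)\otimes U_q(\g)$ but an operator on tensor products of type 1 modules; since each $c^\la_{m,f}$ is supported on the finite-dimensional module $L_q(\la)$, the pairing identities \eqref{eq_pairing} and the antipode-invariance $(S\otimes S)R(\rho)=R(\rho)$ may be verified on the individual $L_q(\la)$, so (CA1)--(CA3) transfer without change. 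This is precisely what the phrase ``similarly to Proposition \ref{dual_commutative}'' records.

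For the triviality of the balance I would compute $\theta$ on each summand $L_q(\la)\otimes L_q(\la)^*$. The balance of the first factor acts on $L_q(\la)$ as in \eqref{eq_balance}, while on the second factor $(U_q(\g)^\cop,R(\rho)^{-1})\modu$, which by Proposition \ref{op_isomorphism} carries the reversed braiding, it acts on $L_q(\la)^*\cong L_q(\la^*)$ by the inverse scalar \eqref{eq_balance2}. Since $\la^*=-w_0\la$ and $(-,-)$ is invariant under the Weyl group, with $w_0$ the longest element satisfying $w_0\rho=-\rho$, one has $(\la^*+2\rho,\la^*)=(\la+2\rho,\la)$, so the two scalars are genuinely inverse and cancel; hence $\theta$ is the identity on every summand and on $\mO_q(G)$. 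The unit assertion is then immediate: the trivial isotypic piece $L_q(0)\otimes L_q(0)^*$ is one-dimensional, and the matrix coefficient $c^0_{v_0,f_0}$ equals the counit $\vep$ of $U_q(\g)$, which is exactly the unit $\eta^\vee$ of $\mO_q(G)$; thus $\eta^\vee$ is the stated injection $\epsilon$.

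For the simplicity statement I would first use condition (2): $I$ is a $U_q(\g)\otimes U_q(\g)^\cop$-submodule, and since the category is semisimple with the $L_q(\la)\otimes L_q(\la)^*$ pairwise non-isomorphic and of multiplicity one, $I=\bigoplus_{\la\in T}L_q(\la)\otimes L_q(\la)^*$ for some $T\subset\lP^+$. Assuming $I\neq 0$, fix $\la_0\in T$. The crux is that the product of a coefficient $c^{\la_0^*}_{w,h}\in\mO_q(G)$ with a coefficient $c^{\la_0}_{v,g}\in I$ is the matrix coefficient $c_{w\otimes v,\,h\otimes g}$ on $L_q(\la_0^*)\otimes L_q(\la_0)$, in which the trivial representation occurs with multiplicity one. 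Choosing $w,v$ so that their evaluation pairing (the projection onto the trivial quotient) is nonzero and $h,g$ so that they pair nontrivially with the canonical invariant element (the trivial sub-object), the $\nu=0$ component of this product is a nonzero multiple of $\vep$. By condition (1) the product lies in $I$, and by condition (2) so does its trivial isotypic component; as that component is one-dimensional, $\vep\in I$. Finally, $\vep\in I$ together with the left-ideal condition (1) yields $a=a\cdot\vep\in I$ for all $a\in\mO_q(G)$, whence $I=\mO_q(G)$.

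I expect the genuine obstacle to be the multiplicity-one computation in the last paragraph: showing that the product $c^{\la_0^*}_{w,h}\cdot c^{\la_0}_{v,g}$ really has a nonzero component in the trivial isotypic piece. This is the quantum Peter--Weyl/Cartan-product phenomenon, and it rests on the non-degeneracy of the evaluation pairing $L_q(\la_0^*)\otimes L_q(\la_0)\to\C$ and on the appearance of $L_q(0)$ in this tensor product exactly once; once this is secured, the remainder is routine bookkeeping with isotypic decompositions.
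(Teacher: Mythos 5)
Your handling of the first two assertions coincides with the paper's: the paper likewise treats the commutative-algebra-object structure as a transcription of Proposition \ref{dual_commutative}, and proves $\theta_{\mO_q(G)}=\mathrm{id}$ by the same computation $(\la^*+2\rho,\la^*)=(\la+2\rho,\la)$ with $\la^*=-w_0(\la)$. Where you genuinely diverge is the simplicity statement. The paper argues by contraposition through the counit: assuming $I\neq \mO_q(G)$ it deduces $I\cap L_q(0)\otimes L_q(0)=0$ (as you do, via condition (1)), then asserts that consequently $\epsilon^\vee(I)=0$, i.e.\ $a(1)=0$ for all $a\in I$, and finishes by noting that condition (2) gives $a(S(v)u)=((u\otimes v)\cdot a)(1)=0$ for all $u,v$, hence $a=0$. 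You instead decompose $I=\bigoplus_{\la\in T}L_q(\la)\otimes L_q(\la)^*$ and use the Cartan/Peter--Weyl product: for $\la_0\in T$, a product $c^{\la_0^*}_{w,h}\cdot c^{\la_0}_{v,g}$ has, for suitable $w,v,h,g$, a nonzero component in the trivial isotypic piece, which lies in $I$ by conditions (1) and (2), so $\epsilon\in I$ and $I=\mO_q(G)$. The two routes are not equivalent in completeness: the paper's step ``$I\cap\C\epsilon=0$, thus $\epsilon^\vee(I)=0$'' is precisely the nontrivial point and is left unjustified there --- it is false for general subbimodules (evaluation at $1$ is nonzero on every $L_q(\la)\otimes L_q(\la)^*$, by non-degeneracy of the evaluation pairing), so the left-ideal hypothesis must enter in an essential way, e.g.\ via the fundamental theorem of Hopf modules (a left ideal that is also a subcomodule is generated by its coinvariants $I\cap\C\epsilon$), or via exactly your multiplicity-one product argument. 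So your proof is correct and in fact supplies the mechanism that the paper's own proof elides; the price you pay, which you correctly identify as the crux, is verifying the nonvanishing of the trivial component of the product, and your reduction of this to the multiplicity-one occurrence of $L_q(0)$ in $L_q(\la_0^*)\otimes L_q(\la_0)$ together with non-degeneracy of the evaluation pairing is sound.
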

\begin{proof}
For $\la \in P^+$, set $\la^* = - w_0(\la)$, where $w_0$ is the longest element in
the Weyl group of $\g$.
Then, the dual module $L_q(\la)^*$ is isomorphic to $L_q(\la^*)$.
By \eqref{eq_balance} and \eqref{eq_balance2},
\begin{align*}
\theta_{L_q(\la)\otimes L_q(\la)^*}
=\exp\left(\frac{\pi i}{k+h^\vee}((\la+2\rho,\la) - (\la^*+2\rho,\la^*))
\right).
\end{align*}
Since
$(\la+2\rho,\la) - (\la^*+2\rho,\la^*)
=(\la+2\rho,\la) - (w_0(\la)-2\rho,w_0(\la))\\
=(\la+2\rho,\la) - (\la-2w_0(\rho),\la)=0$,
we have $\theta_{\mO_q(G)}=\mathrm{id}$.

Assume that 
$I\neq \mO_q(G)$.
Since $I$ is stable under the bimodule action,
by $I\neq \mO_q(G)$,
$I\cap L_q(0)\otimes L_q(0)=0$.
Thus, $\epsilon^\vee(I)=0$
or more specifically $a(1)=0$ for any $a \in I$.
By \eqref{eq_bimodule}, this implies that
$a=0$. Hence, $I=0$.
\end{proof}

Consequently, by Proposition \ref{full_vertex}, we have:
\begin{prop}
\label{prop_WZW}
For any $k \in \C \setminus \Q$, there exists a simple full vertex algebra structure on
$$
F_{G,k} = \bigoplus_{\la \in P^+\cap L} L_{\g,k}(\la) \otimes \overline{L_{\g,k}(\la^*)}
$$
as an extension of the full vertex algebra $L_{\g,k}(0)\otimes \overline{L_{\g,k}(0)}$.
\end{prop}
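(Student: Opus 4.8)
The plan is to build $F_{G,k}$ by transporting the commutative algebra object $\mO_q(G)$ of Proposition \ref{commutative_algebra} into the product of Drinfeld categories $D(\g,k)\otimes\overline{D(\g,k)}$ and then invoking Proposition \ref{full_vertex}. Set $\rho=\rho(k)=\frac{1}{r^\vee(k+h^\vee)}$ and $q=\exp(\pi i\rho)$. First I would assemble, for the second tensor factor, the chain of braided tensor equivalences
\begin{align*}
(U_q(\g)^\cop,R(\rho)^{-1})\modu
&\cong (U_q(\g),R(\rho)_{21}^{-1})\modu\\
&\cong D(\g,k)^\rev
\cong \overline{D(\g,k)},
\end{align*}
where the first equivalence is Proposition \ref{op_isomorphism}, the second is the corollary to Theorem \ref{thm_DK} (an instance of Lemma \ref{lem_rev}), and the last is the lemma identifying $D(\g,k)^\rev$ with the antiholomorphic Drinfeld category. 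For the first factor, Theorem \ref{thm_DK} gives $(U_q(\g),R(\rho))\modu\cong D(\g,k)$ directly. Tensoring the two yields a braided tensor equivalence
$$
(U_q(\g),R(\rho))\otimes(U_q(\g)^\cop,R(\rho)^{-1})\modu \;\cong\; D(\g,k)\otimes\overline{D(\g,k)}.
$$

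Under this equivalence the commutative algebra object $\mO_q(G)$ is carried to a commutative algebra object in $D(\g,k)\otimes\overline{D(\g,k)}$ (transporting a commutative algebra object only requires a braided equivalence, so the axioms (CA1)--(CA3), the last of which involves the braiding, survive). On underlying objects, the bimodule decomposition $\mO_q(G)\cong\bigoplus_{\la\in\lP^+}L_q(\la)\otimes L_q(\la)^*\cong\bigoplus_{\la\in\lP^+}L_q(\la)\otimes L_q(\la^*)$ is matched, via Kazhdan--Lusztig, with $\bigoplus_{\la\in\lP^+}L_{\g,k}(\la)\otimes\overline{L_{\g,k}(\la^*)}=F_{G,k}$, and the unit $\epsilon:L_q(0)\otimes L_q(0)\to\mO_q(G)$ becomes the injection of $L_{\g,k}(0)\otimes\overline{L_{\g,k}(0)}$, which supplies the vacuum. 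To apply Proposition \ref{full_vertex} I must check $\theta_{F_{G,k}}=\mathrm{id}$, and rather than track the balance through each equivalence I would verify it directly in the target using \eqref{eq_balance} and \eqref{eq_balance2}: on each summand $\theta_{L_{\g,k}(\la)\otimes\overline{L_{\g,k}(\la^*)}}=\exp\bigl(\tfrac{\pi i}{k+h^\vee}((\la+2\rho,\la)-(\la^*+2\rho,\la^*))\bigr)=1$, which is exactly the computation already carried out in Proposition \ref{commutative_algebra}.

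With these hypotheses in hand, Proposition \ref{full_vertex} directly endows $F_{G,k}$ with a full vertex algebra structure $(F_{G,k},I_m(-,\uz),\1)$ extending $L_{\g,k}(0)\otimes\overline{L_{\g,k}(0)}$. For simplicity I would translate a nonzero proper full-vertex ideal of $F_{G,k}$ into a subspace $I\subset\mO_q(G)$ that is closed under multiplication by $\mO_q(G)$ (condition (1) of Proposition \ref{commutative_algebra}) and stable under the $U_q(\g)\otimes U_q(\g)^\cop$-action (condition (2)); the proposition then forces $I=0$ or $I=\mO_q(G)$, so the extension is simple. The main obstacle is bookkeeping rather than a new idea: I must ensure the three equivalences compose correctly on the \emph{second} factor, so that the reversed braiding of $D(\g,k)^\rev$ is precisely matched by $\overline{D(\g,k)}$, and confirm the triviality of the balance (Proposition \ref{full_vertex} fails without $\theta_F=\mathrm{id}$). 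A secondary point to verify is that $I_m(-,\uz)$ genuinely lands in $\End F_{G,k}[[z,\z,|z|^\C]]$, i.e.\ condition (FV2); this holds because the conformal spin $\frac{(\la+2\rho,\la)}{2(k+h^\vee)}-\frac{(\la^*+2\rho,\la^*)}{2(k+h^\vee)}$ vanishes by the same cancellation above.
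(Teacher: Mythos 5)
Your proposal is correct and follows essentially the same route as the paper: transport $\mO_q(G)$ through the equivalences of Proposition \ref{op_isomorphism}, Lemma \ref{lem_rev} and Theorem \ref{thm_DK} into $D(\g,k)\otimes\overline{D(\g,k)}$, check $\theta=\mathrm{id}$ via the cancellation $(\la+2\rho,\la)=(\la^*+2\rho,\la^*)$ already done in Proposition \ref{commutative_algebra}, apply Proposition \ref{full_vertex}, and deduce simplicity by pulling an ideal of $F_{G,k}$ back to a sub-bimodule ideal of $\mO_q(G)$. The paper's own proof records only this last simplicity step, treating the existence part exactly as you do.
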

\begin{proof}
It suffices to show the simplicity.
Let $I \subset F_{G,k}$ be a left ideal
such that $I\neq F_{G,k}$.
Since $F_{G,k}$ has a subalgebra which
is isomorphic to $L_{\g,k}(0)\otimes \overline{L_{\g,k}(0)}$,
$I$ is an $L_{\g,k}(0)\otimes \overline{L_{\g,k}(0)}$-module.
Thus, there is a $D(\g,k)\otimes \overline{D(\g,k)}$ homomorphism
$i:I\rightarrow F_{G,k}$
and by the equivalence of categories
it corresponds to
$i': I' \rightarrow \mO_q(G)$.
Then, $I'$ is a left ideal of $\mO_q(G)$
and stable under the action of $U_q(\g)\otimes U_q(\g)^\cop$.
Thus, by Proposition \ref{commutative_algebra},
$I'=0$ and $I=0$, which implies that $F_{G,k}$
is a simple full vertex algebra.
\end{proof}
This full vertex algebra is the underlying algebra of the (analytic continuation of) WZW-model associated with the Lie group $G$ and level $k$.

By Proposition \ref{psi_isomorphism} and Proposition \ref{vertex}, we also have:
\begin{prop}
By $\psi: U_q(\g)\rightarrow U_{q^{-1}}(\g)^\cop$,
$\mO_q(G)$ is a commutative algebra object in $(U_q(\g),R(\rho)) \otimes (U_{q^{-1}}(\fg), R(-\rho))\modu$
and satisfies $\theta_{\mO_q(G)}=\mathrm{id}$.
In particular, for any $k\in \C$ there exists a simple vertex algebra structure on
$$
\bigoplus_{\la \in P^+\cap L} L_{\g,k}(\la) \otimes L_{\g,\bar{k}}(\la^*),
$$
where $\bar{k}=-k-2h^\vee$,
as an extension of the vertex algebra $L_{\g,k}(0)\otimes L_{\g,\bar{k}}(0)$.
\end{prop}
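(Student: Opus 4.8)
The plan is to transport the commutative algebra object produced in Proposition \ref{commutative_algebra} first along the equivalence coming from $\psi$ and then through the Kazhdan--Lusztig correspondence. By Proposition \ref{commutative_algebra}, $\mO_q(G)$ is a commutative algebra object in $(U_q(\g),R(\rho))\otimes(U_q(\g)^\cop,R(\rho)^{-1})\modu$ with $\theta_{\mO_q(G)}=\id$, satisfying the ideal (simplicity) condition stated there. Applying Proposition \ref{psi_isomorphism} with $q$ replaced by $q^{-1}$ and $\rho$ by $-\rho$, the isomorphism $\psi:U_{q^{-1}}(\g)\rightarrow U_q(\g)^\cop$ yields an equivalence of balanced braided tensor categories
\[
(U_q(\g)^\cop,R(\rho)^{-1})\modu\;\cong\;(U_{q^{-1}}(\g),R(-\rho))\modu .
\]
Tensoring with the identity on the first factor gives a balanced braided tensor equivalence of the two ambient categories; since any such equivalence sends commutative algebra objects to commutative algebra objects and preserves the balance, I obtain that $\mO_q(G)$, with its second action pulled back through $\psi$, is a commutative algebra object in $(U_q(\g),R(\rho))\otimes(U_{q^{-1}}(\g),R(-\rho))\modu$ with $\theta_{\mO_q(G)}=\id$. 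This is the first assertion.

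For the second assertion I would apply the Kazhdan--Lusztig correspondence (Theorem \ref{thm_DK}). Taking $\rho=\rho(k)=\frac{1}{r^\vee(k+h^\vee)}$ with $k\in\C\setminus\Q$ gives $(U_q(\g),R(\rho))\modu\cong D(\g,k)$, while the relation $-\rho=\frac{1}{r^\vee(\bar{k}+h^\vee)}$ forces $\bar{k}+h^\vee=-(k+h^\vee)$, i.e. $\bar{k}=-k-2h^\vee\in\C\setminus\Q$, so that $(U_{q^{-1}}(\g),R(-\rho))\modu\cong D(\g,\bar{k})$. Transporting the object of the previous paragraph, the bimodule decomposition $\bigoplus_{\la\in\lP^+}L_q(\la)\otimes L_q(\la)^*\cong\mO_q(G)$ together with $L_q(\la)^*\cong L_q(\la^*)$, $\la^*=-w_0(\la)$, shows that $\mO_q(G)$ is carried summand by summand to a commutative algebra object in $D(\g,k)\otimes D(\g,\bar{k})$ of the form $\bigoplus_{\la}L_{\g,k}(\la)\otimes L_{\g,\bar{k}}(\la^*)$, still with trivial balance. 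Proposition \ref{vertex} then endows this object with the structure of a $\Z$-graded vertex algebra extending $L_{\g,k}(0)\otimes L_{\g,\bar{k}}(0)$.

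Simplicity is established exactly as in Proposition \ref{prop_WZW}: a proper vertex-algebra ideal $I$ is in particular a module over the subalgebra isomorphic to $L_{\g,k}(0)\otimes L_{\g,\bar{k}}(0)$, giving a $D(\g,k)\otimes D(\g,\bar{k})$-homomorphism $I\to\bigoplus_{\la}L_{\g,k}(\la)\otimes L_{\g,\bar{k}}(\la^*)$; under the equivalences this corresponds to a left ideal $I'\subsetneq\mO_q(G)$ stable under $U_q(\g)\otimes U_{q^{-1}}(\g)$, and the ideal condition of Proposition \ref{commutative_algebra} forces $I'=0$, hence $I=0$. For a subgroup $\lQ\subseteq L\subseteq\lP$ one runs the same argument on the commutative subalgebra object $\bigoplus_{\la\in L\cap\lP^+}L_q(\la)\otimes L_q(\la)^*$ of $\mO_q(G)$ (closed under the product because $L$ is a group), which yields the stated algebra indexed by $\lP^+\cap L$.

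The main difficulty I anticipate is bookkeeping rather than conceptual. One must make sure that the equivalence induced by $\psi$ really does match \emph{both} the braiding and the balance on each factor --- this is precisely what Proposition \ref{psi_isomorphism} delivers, via the uniqueness Theorem \ref{thm_unique} for $\Theta$ --- and that the simple-object labels track correctly through $\psi$ and Kazhdan--Lusztig, so that the second tensor factor genuinely acquires the \emph{dual} highest weight $\la^*$ at the \emph{dual} level $\bar{k}$. The compatibility $\theta_{\mO_q(G)}=\id$ is then automatic (equivalences preserve the balance), and reduces, as in Proposition \ref{commutative_algebra}, to the identity $(\la+2\rho,\la)=(\la^*+2\rho,\la^*)$ together with $\bar{k}+h^\vee=-(k+h^\vee)$; this is what guarantees the $\Z$-grading needed for Proposition \ref{vertex}.
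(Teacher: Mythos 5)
Your proposal is correct and follows essentially the same route as the paper, which proves this proposition precisely by combining Proposition \ref{commutative_algebra}, the equivalence of Proposition \ref{psi_isomorphism} (applied with $q^{-1}$, $-\rho$), the Kazhdan--Lusztig correspondence (Theorem \ref{thm_DK}), and Proposition \ref{vertex}, with simplicity deduced exactly as in Proposition \ref{prop_WZW}. Your additional bookkeeping (the balance computation via $(\la+2\rho,\la)=(\la^*+2\rho,\la^*)$ and $\bar{k}+h^\vee=-(k+h^\vee)$, and the restriction to a subgroup $\lQ\subseteq L\subseteq\lP$) fills in details the paper leaves implicit but introduces nothing different in substance.
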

This vertex algebra is called a chiral differential operator constructed by many different ways
 \cite{AG,FS,GMS1,GMS2,Zh}.


%
%

\subsection{Quantum coordinate ring as lax monoidal functor}
\label{sec_lax}
Recall that in Section \ref{sec_twist_intro} we introduce a category of $\lD$-graded vector space $\Vect_{\lD}$,
which is a braided tensor category with the trivial associative isomorphism and the trivial braiding,
and for $\la \in \lD$, $\C \delta_\la \in \Vect_{\lD}$ is the one-dimensional vector space with the grading $\la$.
We will construct a lax braided monoidal functor
$\mO_\g:\Vect_\lD\rightarrow (U_q(\g),R(\rho))\otimes (U_{q^{-1}}(\g),R(-\rho))\modu$.

For $\la \in \lP/\lQ$,
set
$$
\mO_q(G)_\la = \oplus_{\mu \in (\la +\lQ) \cap \lP^+} L_q(\mu) \otimes L_q(\mu)^*.
$$
Then, 
$\mO_q(G) = \bigoplus_{\la \in \lD} \mO_q(G)_\la$ and $\mO_q(G)$ is a $\lD$-graded algebra,
that is, $\mO_q(G)_\la \cdot \mO_q(G)_{\la'} \subset 
\mO_q(G)_{\la+\la'}$ for any $\la,\la' \in \lD$.

Define a $\C$-linear functor $\mO_\g:\Vect_\lD\rightarrow (U_q(\g),R(\rho))\otimes (U_{q^{-1}}(\g),R(-\rho))\modu$ as follows:
For an object $V=\bigoplus_{\la \in \lD} V_\la \in \Vect_\lD$, 
$$\mO_\g(V) = \bigoplus_{\la \in \lD}
\mO_q(G)_\la \otimes_\C V_\la,$$
where $-\otimes_\C-$ is the tensor product of $\C$-vector spaces.
For a morphism $\{f_\la: V_\la \rightarrow W_\la\}_{\la \in \lD}$,
$$
\mO_\g(f)=\bigoplus_{\la \in \lD}\mathrm{id}_{\mO_q(G)_\la}\otimes f_\la:
\bigoplus_{\la \in \lD}\mO_q(G)_\la \otimes_\C V_\la\rightarrow \bigoplus_{\la \in \lD}\mO_q(G)_\la \otimes_\C W_\la.
$$

Since $\mO_q(G)$ is a $\lD$-graded algebra,
we can define linear maps
$m(\la,\la'): \mO_\g(\la) \times \mO_\g(\la') \rightarrow \mO_\g(\la+\la')$ by 
using the product $\cdot:\mO_q(G)\times \mO_q(G)\rightarrow \mO_q(G)$ for $\la,\la' \in \lD$.
Since $\mO_q(G)$ is a left $U_q(\g)\otimes U_{q^{-1}}(\g)$-module and the product $\cdot$
is compatible with this left module structure,
$m(\la,\la') \in \mathrm{Hom}_{U_q(\g)\otimes U_{q^{-1}}(\g)}(\mO_\g(\la) \otimes \mO_\g(\la'), \mO_\g(\la+\la'))$.

Thus, for any objects $V, W \in \Vect_\lD$,
we have a natural transformation
\begin{align*}
m_{V,W}:\mO_\g(V)\otimes \mO_\g(W) \rightarrow \mO_\g(V\otimes W)
\end{align*}
defined by 
\begin{align*}
m_{V,W}:
&(\mO_q(G)_\la \otimes_\C V_\la)\otimes_{U_q} (\mO_q(G)_{\la'} \otimes_\C W_{\la'})
\cong (\mO_q(G)_\la \otimes_{U_q} \mO_q(G)) \otimes_\C (V_\la \otimes_\C W_{\la'})\\
&\overset{m(\la,\la')\otimes \id_{V_\la\otimes W_{\la'}}}{\longrightarrow} \mO_q(G)_{\la+\la'} \otimes_\C (V_\la \otimes_\C W_{\la'})
\end{align*}
for any $\la,\la'\in \lD$ and the linear extension of it.
Let $\epsilon:L_{q}(0)\otimes L_{q}(0) \rightarrow \mO_q(G)_0$
be the natural injection.
Then, we have:
\begin{lem}
\label{lem_lax0}
The functor $\mO_\g:\Vect_\lD\rightarrow (U_q(\g),R(\rho))\otimes (U_{q^{-1}}(\g),R(-\rho))\modu$
together with a morphism $\epsilon:L_{q}(0)\otimes L_{q}(0) \rightarrow \mO_\g(\1_{\Vect_\lD})$
and the natural transformation $m_{V,W}:\mO_\g(V)\otimes \mO_\g(W) \rightarrow \mO_\g(V\otimes W)$
is a lax braided monoidal functor.
\end{lem}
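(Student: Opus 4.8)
The plan is to reduce each clause in the definition of a lax braided monoidal functor to the corresponding axiom of the commutative algebra object $\mO_q(G)$ in $\mathcal{D}:=(U_q(\g),R(\rho)) \otimes (U_{q^{-1}}(\g),R(-\rho))\modu$, whose structure was established in Proposition \ref{commutative_algebra} together with Proposition \ref{psi_isomorphism}. The source category $\Vect_\lD$ is strict, with associator and unitors trivial and braiding equal to the bare flip of graded vector spaces, so it contributes nothing beyond a flip of multiplicity spaces; the content of every axiom will therefore live on the $\mO_q(G)$-factors, graded by $\lD$. On homogeneous components the structure map reads
\begin{align*}
m_{V,W}\colon (\mO_q(G)_\la \otimes_\C V_\la) \otimes_{\mathcal D} (\mO_q(G)_{\la'} \otimes_\C W_{\la'}) \to \mO_q(G)_{\la+\la'} \otimes_\C (V_\la \otimes_\C W_{\la'}),
\end{align*}
induced by the graded product $m(\la,\la')$ of $\mO_q(G)$ and the identity on $V_\la \otimes_\C W_{\la'}$.

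First I would check that $m_{V,W}$ is natural: for grading-preserving maps $f\colon V\to V'$ and $g\colon W\to W'$ the morphisms $\mO_\g(f),\mO_\g(g)$ act only on the $\C$-factors while $m(\la,\la')$ is fixed, so they occupy disjoint tensor slots and the naturality square commutes. Next, condition LM1 is exactly the associativity axiom (CA2) of $\mO_q(G)$ read off in each grading: since $\al^{\mathcal C}=\id$ on $\Vect_\lD$ the bottom edge $F(\al^{\mathcal C})$ of the LM1 square becomes the identity, and what survives on the module factors is $m(\la+\la',\la'')\circ(m(\la,\la')\otimes\id)=m(\la,\la'+\la'')\circ(\id\otimes m(\la',\la''))\circ\al^{\mathcal D}$, tensored with the strict reassociation of the $V$-factors. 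Condition LM2 is similarly the unit axiom (CA1), once one identifies $1_{\mathcal D}=L_q(0)\otimes L_q(0)$ with $\mO_\g(\1_{\Vect_\lD})$ through the algebra unit $\epsilon=\eta$ and uses $\1_{\Vect_\lD}=\C\delta_0$.

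The heart of the argument is the lax braided condition. Tracing a homogeneous component through both routes, the target braiding $B^{\mathcal D}$, which is the flip composed with the R-matrix action, acts as $B_{\mO_q(G)_\la,\mO_q(G)_{\la'}}$ on the module factors and, since the R-matrix acts trivially on the multiplicity spaces, as the bare flip on $V_\la \otimes_\C W_{\la'}$; meanwhile $\mO_\g(B^{\mathcal C})$ is the identity on the module factors and the bare flip on the $\C$-factors. The flips of $V$ and $W$ therefore agree on the two sides, and the square reduces on the $\mO_q(G)$-factors to $m(\la',\la)\circ B_{\mO_q(G)_\la,\mO_q(G)_{\la'}}=m(\la,\la')$, which is precisely the commutativity axiom (CA3) graded by $\lD$ (the sign is $+1$ because $\mO_q(G)$ is an honest commutative algebra object, with $\theta_{\mO_q(G)}=\id$).

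I expect the only genuine difficulty to be bookkeeping rather than computation: one must confirm that the braiding and associator of $\mathcal D$ interact with each $V_\la$ as a trivial multiplicity module---so that $B^{\mathcal D}$ really does restrict to the bare flip there---and keep the canonical identifications $(\mO_q(G)_\la \otimes_\C V_\la)\otimes_{\mathcal D}(\mO_q(G)_{\la'}\otimes_\C W_{\la'})\cong (\mO_q(G)_\la\otimes_{\mathcal D}\mO_q(G)_{\la'})\otimes_\C(V_\la\otimes_\C W_{\la'})$ coherent across all three axioms, so that the $\lD$-graded pieces of (CA1)--(CA3) assemble into the global diagrams LM1, LM2 and the braided square. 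Beyond the already-established Proposition \ref{commutative_algebra}, no new calculation is required.
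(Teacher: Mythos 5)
Your proposal is correct and follows essentially the same route as the paper's own proof: both exploit the strictness of $\Vect_\lD$ and of the target module category to reduce (LM1) and (LM2) to the associativity and unit axioms of $\mO_q(G)$, and the lax braided square to its commutativity (Proposition \ref{commutative_algebra}). Your version merely spells out the grading bookkeeping and the behaviour of the braiding on the multiplicity spaces, which the paper leaves implicit.
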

\begin{proof}
Since both $\Vect_\lD$ and $(U_q(\g),R(\rho))\otimes (U_{q^{-1}}(\g),R(-\rho))\modu$
are strict monoidal categories, that is, the associative isomorphism $\al$
and units $l,r$ is trivial,
(LM1) and (LM2) follows from the fact that $\mO_q(G)$ is an associative algebra.
Furthermore, since $\mO_q(G)$ is a commutative algebra object,
$\mO_\g$ is a lax braided monodial functor.
\end{proof}

We consider the $\lD$-grading on $(U_q(\g),R(\rho))\otimes (U_{q^{-1}}(\g),R(-\rho))\modu$
obtained from the $\lD$-grading on the left component, $(U_q(\g),R(\rho))\modu$.
Let us consider the grading twist of 
$\Vect_\lD$ and $(U_q(\g),R(\rho))\otimes (U_{q^{-1}}(\g),R(-\rho))\modu$
by the quadratic form $Q_\g^N:\lD\rightarrow \C^\times$ for $N\in \Z$.
Since the grading twist does not change the underlying category structure, $\mO_\g$ still gives a functor $\Vect_\lD^{Q_\g} \rightarrow (U_q(\g),R(\rho))\otimes (U_{q^{-1}}(\g),R(-\rho))\modu^{Q_\g}$. We denote it by $\mO_\g^N$.
Since $\mO_\g$ preserves the gradings on $\Vect_\lD$
and $(U_q(\g),R(\rho))\otimes (U_{q^{-1}}(\g),R(-\rho))\modu$,
the following proposition clearly follows from the above lemma:
\begin{prop}
\label{prop_lax}
The functor $\mO_\g^N:\Vect_\lD^{Q_\g^N} \rightarrow (U_{q}(\g),R(\rho))\modu^{Q_\g^N} \otimes (U_{q^{-1}}(\g),R(-\rho))\modu$
is a lax braided monoidal functor.
\end{prop}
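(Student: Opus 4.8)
The plan is to reduce Proposition \ref{prop_lax} to the already-established Lemma \ref{lem_lax0}, using that a graded twist changes neither the underlying functor, nor its structure morphisms, nor the Hom-spaces, and rescales only the associator, unitors and braiding by scalars read off from the gradings. The crucial input is that $\mO_\g$ is \emph{grading-preserving}: by construction $\mO_\g(\C\delta_\la)=\mO_q(G)_\la=\bigoplus_{\mu\in(\la+\lQ)\cap\lP^+}L_q(\mu)\otimes L_q(\mu)^*$, and each $L_q(\mu)$ lies in degree $\mu+\lQ=\la$ for the $\lD$-grading that the target inherits from its left factor $(U_q(\g),R(\rho))\modu$. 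Hence for every $V\in\Vect_\lD$ one has $\mO_\g(V)_\la=\mO_q(G)_\la\otimes_\C V_\la=\mO_\g(V_\la)$, i.e. $\mO_\g$ sends the degree-$\la$ part to the degree-$\la$ part.

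Next I would note that twisting by $\mathrm{EM}^{-1}(Q_\g^N)=(\om_{Q_\g^N},c_{Q_\g^N})$ leaves the objects, the tensor bifunctor and all morphism spaces of both $\Vect_\lD$ and $(U_q(\g),R(\rho))\otimes(U_{q^{-1}}(\g),R(-\rho))\modu$ unchanged. Therefore the \emph{same} underlying functor $\mO_\g$, together with the same $\epsilon$ and the same natural transformation $m_{V,W}$, is a well-defined functor $\mO_\g^N\colon\Vect_\lD^{Q_\g^N}\to(U_q(\g),R(\rho))\modu^{Q_\g^N}\otimes(U_{q^{-1}}(\g),R(-\rho))\modu$; the $\epsilon$ and $m_{V,W}$ are still morphisms precisely because the Hom-spaces did not change, and the fact that $m_{V,W}$ is only lax (not invertible) causes no difficulty since the verification below never inverts it.

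It then remains to check the three axioms on homogeneous inputs of degrees $\la_1,\la_2,\la_3$, comparing each twisted diagram with its untwisted counterpart from Lemma \ref{lem_lax0}. Choosing the cocycle normalized, $\om_{Q_\g^N}(\al,0,0)=\om_{Q_\g^N}(0,0,\be)=1$, makes the twisted unitors agree with the untwisted ones, so axiom (LM2) is literally the untwisted one. In axiom (LM1) the twisted target associator contributes the scalar $\om_{Q_\g^N}(\la_1,\la_2,\la_3)$, while—because $\mO_\g$ is $\C$-linear and degree-preserving—the image under $\mO_\g$ of the twisted source associator contributes the identical scalar $\om_{Q_\g^N}(\la_1,\la_2,\la_3)$; this common factor divides out and (LM1) reduces to its untwisted form. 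Likewise, in the braided-compatibility square both the twisted target braiding and the image under $\mO_\g$ of the twisted source braiding acquire the same factor $c_{Q_\g^N}(\la_1,\la_2)$, which divides out, leaving the untwisted compatibility already verified in Lemma \ref{lem_lax0}. The only step demanding genuine care—hence the main, if modest, obstacle—is this degree-matching: it is exactly grading-preservation of $\mO_\g$ that forces the scalar appearing on the source side to coincide with that on the target side, after which every axiom is formal.
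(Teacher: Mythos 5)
Your proposal is correct and follows essentially the same route as the paper: the paper likewise observes that the twist leaves the underlying categories and Hom-spaces unchanged and that $\mO_\g$ preserves the $\lD$-gradings, and then deduces the proposition directly from Lemma \ref{lem_lax0}. Your explicit verification that the scalars $\om_{Q_\g^N}(\la_1,\la_2,\la_3)$ and $c_{Q_\g^N}(\la_1,\la_2)$ appear identically on both sides of the (LM1) and braiding diagrams and cancel is exactly the content the paper compresses into the phrase ``clearly follows from the above lemma.''
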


Now, we can prove the main theorem.
Let $M$ be an even lattice, $M^\vee$ a dual lattice, and
$Q_M:M^\vee/M\rightarrow \C^\times$ a quadratic form given by
$Q_M(\la)=\exp(\pi i (\la,\la))$.
Let $k,k' \in \C \setminus \Q$ and $N \in \Z$
satisfy 
$$
\frac{1}{r^\vee(k+h^\vee)}+\frac{1}{r^\vee(k'+h^\vee)}=N.
$$
For $\la \in \lD$, set 
$$
L_{\g,k,k'}^N(\la+\lQ) = \bigoplus_{\mu \in (\la +\lQ) \cap \lP^+} L_{\g,k}(\mu) \otimes L_{\g,k}(\mu)^*.
$$
Then, we have:
\begin{thm}
\label{thm_vertex}
Let $r \in \Z_{>}$ and $\g_i$ be simple Lie algebras
and $k_i,k_i' \in \C \setminus \Q$ and $N_i \in \Z$
satisfy 
$$
\frac{1}{r^\vee(k_i+h_i^\vee)}+\frac{1}{r^\vee(k_i'+h_i^\vee)}=N_i
$$
for $i=1,\dots,r$.
Let $M$ be an even lattice
and $(A,Q)$ a quadratic space defined by
\begin{align*}
A=(\bigoplus_{i=1}^r \lP_i/\lQ_i) \oplus M^\vee/M,\\
Q=(\bigoplus_{i=1}^r Q_{\g_i}^{N_i})\oplus Q_M.
\end{align*}
Let $(I,p)$ be a super isotropic subspace of the quadratic space $(A,Q)$.
Set 
$$
V_{\vec{\g},\vec{k},\vec{k'},M}^{\vec{N}}(I)=\bigoplus_{(\la_1,\dots,\la_r,\mu) \in I} \bigotimes_{i=1}^r L_{\g_i,k_i,k_i'}^{N_i}(\la_i+\lQ_i)\otimes V_{\mu+M}
$$
for $(\g,k,k',M,I)$ with $\vec{\g}=(g_1,\dots,g_r)$ and $\vec{k}=(k_1,\dots,k_r),\vec{k'}=(k'_1,\dots,k'_r),\vec{N}=(N_1,\dots,N_r)$.
Assume that for each $a=1,\dots,r$
one of the following conditions is satisfied:
\begin{enumerate}
\item
$N_a$ is even;
\item
The simple Lie algebra $\g_a$ is of type ABC;
\item
The simple Lie algebra $\g_a$ is of type D
and $\mathrm{pr}_a(I) \subset \Lambda_v/\lQ_i$,
where $\mathrm{pr}_a:(\oplus_{i=1}^r \lP_i/\lQ_i) \oplus M^\vee/M
\rightarrow \lP_a/\lQ_a$ is the projection to the $a$-th component.
\end{enumerate}
Then, there is a simple vertex superalgebra structure on $V_{\vec{\g},\vec{k},\vec{k'},M}^{\vec{N}}(I)$
as an extension of $\left(\bigotimes_{i=1}^r L_{\g_i,k_i}(0)\otimes {L_{\g_i,k'_i}(0)}\right) \otimes V_M$.
Furthermore, the even part ($s=0$) and the odd part ($s=1$) are given by
$$
V_{\vec{\g},\vec{k},\vec{k'},M}^{\vec{N}}(I)_s=\bigoplus_{\substack{(\la_1,\dots,\la_r,\mu) \in I\\p(\la_1,\dots,\la_r,\mu)=s}} \bigotimes_{i=1}^r L_{\g_i,k_i,k_i'}^{N_i}(\la_i+\lQ)\otimes V_{\mu+M}.
$$
\end{thm}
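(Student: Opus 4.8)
The plan is to obtain the vertex superalgebra by transporting a purely combinatorial supercommutative algebra object into a product of Drinfeld categories through a lax braided monoidal functor, and then invoking the affine/lattice dictionary. First I would feed the super isotropic subspace $(I,p)$ of $(A,Q)$ into Lemma \ref{lem_commutative}, producing a supercommutative algebra object $S(I)=S(I)_0\oplus S(I)_1$ in $\Vect_A^Q$ whose $\Z_2$-grading is $p$. Since $Q=(\bigoplus_{i=1}^r Q_{\g_i}^{N_i})\oplus Q_M$ is an orthogonal sum of quadratic forms, the Eilenberg--Mac Lane correspondence (Theorem \ref{thm_coho}) splits $\Vect_A^Q$ as the Deligne product $(\bigotimes_{i=1}^r \Vect_{\lP_i/\lQ_i}^{Q_{\g_i}^{N_i}})\boxtimes \Vect_{M^\vee/M}^{Q_M}$ of braided tensor categories, so $S(I)$ lives naturally in this product.

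Next I would assemble the transporting functor factorwise. For each $i$, Proposition \ref{prop_lax} supplies the lax braided monoidal functor $\mO_{\g_i}^{N_i}$ out of $\Vect_{\lP_i/\lQ_i}^{Q_{\g_i}^{N_i}}$ built from the quantum coordinate ring, and Proposition \ref{prop_lattice} gives a braided equivalence $\Vect_{M^\vee/M}^{Q_M}\simeq \Rep V_M$. Their Deligne product is again lax braided monoidal, so by Lemma \ref{lem_lax} it carries $S(I)$ to a supercommutative algebra object in the target. To recognize the target I would apply Conjecture A in the proven cases: condition (1) invokes Proposition \ref{even_twist} and conditions (2)--(3) invoke Theorem \ref{odd_twist}, yielding a braided equivalence $(U_{q_a}(\g_a),R(\rho_a))\modu^{Q_{\g_a}^{N_a}}\simeq (U_{(-1)^{N_a}q_a}(\g_a),R(\rho_a+N_a))\modu$; in case (3) one first passes to the subcategory generated by the vector representation, which is legitimate precisely because $\mathrm{pr}_a(I)\subset \Lambda_v/\lQ_a$ keeps the image inside it. Combined with Theorem \ref{thm_DK} and the numerology $\rho_a+N_a=\tfrac{1}{r^\vee(k_a+h_a^\vee)}$, $-\rho_a=\tfrac{1}{r^\vee(k_a'+h_a^\vee)}$, this identifies the target with $(\bigotimes_{a}D(\g_a,k_a)\otimes D(\g_a,k_a'))\boxtimes \Rep V_M$, the module category of $(\bigotimes_a L_{\g_a,k_a}(0)\otimes L_{\g_a,k_a'}(0))\otimes V_M$; degreewise the image of $S(I)$ is exactly the summand $\bigotimes_a L_{\g_a,k_a,k_a'}^{N_a}(\la_a+\lQ_a)\otimes V_{\mu+M}$.

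The \emph{main obstacle} is the balance hypothesis $\theta_{V_s}=(-1)^s$ needed to apply Proposition \ref{super_vertex} (extended over the Deligne product and supplemented by Proposition \ref{prop_lattice} for the lattice factor). On the summand indexed by $(\la_1,\dots,\la_r,\mu)\in I$, by \eqref{eq_balance} and the level relation the two affine factors of each $\g_a$ contribute conformal weights that add, giving a total affine balance $\exp(\pi i\sum_a N_a\lla\nu_a+2\rho,\nu_a\rra)$ (with $\nu_a$ a dominant lift of $\la_a$), while the lattice factor contributes $Q_M(\mu)$. The subtle point is that the equivalences of Theorem \ref{odd_twist} match the braiding but not the balance: $\exp(\pi i N_a\lla\nu_a+2\rho,\nu_a\rra)$ differs from $Q_{\g_a}^{N_a}(\la_a)=\exp(\pi i N_a\lla\nu_a,\nu_a\rra)$ by the factor $\exp(\pi i N_a\lla 2\rho,\nu_a\rra)$, which, since $2\rho\in\lQ_a$, is a $\{\pm1\}$-valued quadratic character of $\lP_a/\lQ_a$. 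Thus the total balance equals $(-1)^{p}$ exactly when this character is trivial on $\mathrm{pr}_a(I)$; equivalently, the total conformal weight of each summand lies in $\tfrac12\Z$ with parity $p$. For $N_a$ even this is automatic; for the odd cases it reduces to the integrality $\lla\rho,\nu_a\rra\in\Z$ on the relevant coset, which I would verify from the classification of the admissible lattices $L$ with $\lQ_a\subset L\subset\lP_a$ (the Tables). Granting this, Proposition \ref{super_vertex} endows $V_{\vec\g,\vec k,\vec{k'},M}^{\vec N}(I)$ with a vertex superalgebra structure whose even/odd parts are cut out by $p$, as claimed.

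Finally, for simplicity I would run the argument of Proposition \ref{commutative_algebra} and Proposition \ref{prop_WZW}: a nonzero vertex-superalgebra ideal is, under the categorical dictionary, a subobject of the image of $S(I)$ stable under the algebra multiplication and under the two quantum-group actions (and the $V_M$-action). Since each graded summand is a simple module over the base algebra $(\bigotimes_a L_{\g_a,k_a}(0)\otimes L_{\g_a,k_a'}(0))\otimes V_M$ and the vacuum lies in the $0$-graded summand, any such nonzero ideal must meet that summand nontrivially, hence contain the vacuum, and hence be everything; so $V_{\vec\g,\vec k,\vec{k'},M}^{\vec N}(I)$ is simple.
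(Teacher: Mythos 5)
Your construction follows the same route as the paper: feed $(I,p)$ into Lemma \ref{lem_commutative}, transport $S(I)$ through the lax braided monoidal functor of Proposition \ref{prop_lax} tensored with the lattice equivalence of Proposition \ref{prop_lattice}, identify the target via Proposition \ref{even_twist}/Theorem \ref{odd_twist} (restricting to the vector-representation subcategory in the type D case) and Theorem \ref{thm_DK}, and finish with Lemma \ref{lem_lax} and the simplicity argument of Proposition \ref{commutative_algebra}. You also make a genuinely sharp observation that the paper passes over in silence: the equivalence of Theorem \ref{odd_twist} matches the braidings but not the balances, and the balance of the image of $S(I)$ differs from $(-1)^p$ by the character $\chi_a(\la)=\exp(\pi i N_a\lla 2\rho,\la\rra)$ of $\lP_a/\lQ_a$.

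The gap is in how you resolve this point. You assert that the total balance equals $(-1)^p$, i.e.\ that $\chi_a$ is trivial on $\mathrm{pr}_a(I)$, and that this can be checked from the Tables. That assertion is false under the hypotheses of the theorem, and the Tables cannot rescue it: they cover only $r=1$, $M=0$, types BCDEFG, whereas condition (2) places no restriction at all on $I$ for type A. Concretely, take $\g=\mathrm{sl}_2$ with shift $1$, $M=\sqrt{2}\Z$, and $I$ generated by $(\la_1,\tfrac{\sqrt2}{2})$ — this is Proposition \ref{example_GL} with $n=2$, the Creutzig--Gaiotto algebra itself. Here $Q(\la_1,\tfrac{\sqrt2}{2})=i\cdot i=-1$, so $p=1$ on the generator, while $\chi(\la_1)=\exp(\pi i\lla 2\rho,\la_1\rra)=-1$; hence the balance on the odd part is $(-1)\cdot(-1)=+1$. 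Equivalently, the top of the odd summand $L_{\mathrm{sl}_2,k}(\la_1)\otimes L_{\mathrm{sl}_2,k'}(\la_1)\otimes V_{\frac{\sqrt2}{2}+\sqrt2\Z}$ has conformal weight $\tfrac34+\tfrac14=1\in\Z$. So $\theta|_{V_1}=+1\neq -1$, the hypothesis of Proposition \ref{super_vertex} fails, and the verification you defer is impossible — yet the theorem (correctly) asserts a vertex superalgebra structure in exactly this case. The underlying error is that your criterion is too strong: a vertex superalgebra does not require its odd part to have weights in $\Z+\tfrac12$. The parity is dictated by $p$ through supercommutativity (it produces the Koszul sign in locality); what the balance must supply is only integrality of the relative conformal weights, i.e.\ multiplicativity of $\theta$ along the product. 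This holds automatically for the image of $S(I)$, because $\theta$ restricted to it equals $(-1)^p\cdot\prod_a\chi_a$, and each $\chi_a$ is a homomorphism on $\lP_a/\lQ_a$ (as $2\rho\in\lQ_a$, so $\lla 2\rho,-\rra$ is $\Z$-valued and linear on $\lP_a$). The correct completion of your argument — and the reading one must give to the paper's terse appeal to Proposition \ref{super_vertex} — is a mild generalization of that proposition: a supercommutative algebra object on which the balance acts by a character of the grading group, rather than by $(-1)^p$ itself, still yields a vertex superalgebra with parity $p$. As written, your proof establishes the theorem only in the subcases where $\prod_a\chi_a$ is trivial on $I$, which excludes the central examples the theorem was designed to produce.
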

\begin{proof}
For the sake of simplicity, we consider only the case of $r=1$.
Let $q \in \C \setminus \Q$.
First, we assume that $\g$ is not of type D.
By the assumption, Conjecture \ref{conj} is true for $(\g,N)$ by Proposition \ref{even_twist} and
Theorem \ref{odd_twist}.
Combining this with Proposition \ref{prop_lax},
we have a lax braided monoidal functor
$$
\mO_\g^N:\Vect_\lD^{Q_\g^N} \rightarrow (U_{(-1)^N q}(\g),R(\rho+N)) \otimes (U_{q^{-1}}(\g),R(-\rho))\modu.
$$
By Proposition \ref{prop_lattice}, we also have a lax braided monoidal functor 
$$
\Vect_{\lD}^{Q_g^N} \otimes \Vect_{M^\vee/M}^{Q_M} \rightarrow (U_{(-1)^N q}(\g),R(\rho+N)) \otimes (U_{q^{-1}}(\g),R(-\rho))\modu \otimes V_M \modu.
$$
By Lemma \ref{lem_commutative},
we can associate to a super isotropic subspace $(I,p)$
a supercommutative algebra object 
$S(I) \in \Vect_{\lD\oplus M^\vee/M}^{Q_\g^N\oplus Q_M} = \Vect_{\lD}^{Q_\g^N} \otimes \Vect_{M^\vee/M}^{Q_M}$.
By Lemma \ref{lem_lax}, the image of $S(I)$ by the lax braided monoidal functor 
is again a supercommutative algebra object.
Then, by Theorem \ref{thm_DK} and Proposition \ref{super_vertex},
$S(I)$ corresponds to an extension of $L_{\g,k}(0)\otimes {L_{\g,k'}(0)} \otimes V_M$
with
\begin{align*}
\rho +N = \frac{1}{r^\vee(k+h^\vee)},\;\;\;\;\;\;\;
-\rho = \frac{1}{r^\vee(k'+h^\vee)},
\end{align*}
which implies that 
$$
\frac{1}{r^\vee(k+h^\vee)}+\frac{1}{r^\vee(k'+h^\vee)}=N.
$$
In the case of type D, by Theorem \ref{odd_twist}, we have a lax braided monoidal functor
$$
\mO_\g^N:\Vect_{\Lambda_v/\lQ}^{Q_{\mathrm{so}_{2n}}^N} \rightarrow (U_{(-1)^N q}(\mathrm{so}_{2n}),R(\rho+N),\Lambda_v) \otimes (U_{q^{-1}}(\mathrm{so}_{2n}),R(-\rho),\Lambda_v)\modu,
$$
which is enough to show the assertion by $\mathrm{pr}_1(I) \subset \Lambda_v/\lQ$.
The simplicity follows from Proposition \ref{commutative_algebra} and the definition of $S(I)$.
\end{proof}

\begin{rem}
A similar statement is true for a full vertex algebra.
In this case, the condition for the levels is
$$
\frac{1}{r^\vee(k+h^\vee)}- \frac{1}{r^\vee(k'+h^\vee)}=N.
$$
\end{rem}

\begin{rem}
\label{rem_AIA}
As a consequence of the above proof, we see that
\begin{align*}
\bigoplus_{\la \in \lD} L_{\g,k,k'}^N(\la+\lQ)
\end{align*}
is an abelian intertwining algebra with abelian cocycle $\mathrm{EM}^{-1}(\lQ_{\g}^N) \in H_\ab^3(\lD,\C^\times)$.
\end{rem}


\subsection{Applications}
\label{sec_application}
In this section, we introduce some examples of vertex (super)algebras which can be constructed from Theorem \ref{thm_vertex}.

%

In the construction, it is important to know the quadratic space $(\lD,Q_\g)$.
We will use the labeling of the Dynkin diagram in \cite{Hu}.
Let $\{\la_i\}_{i=1,\dots,n}$ be the fundamental weight of $\g$.
Table \ref{table_quadratic} summarizes the information about the quadratic space $(\lD,Q_\g)$:
\begin{table}[h]
\caption{Quadratic space}
\label{table_quadratic}
  \begin{tabular}{|c|c|c|c|} \hline
type & $\lD$ & Generator & Value of $Q_\g$ \\ \hline \hline
$A_{n-1}$ & $\Z/n\Z$ & $\la_1$ & $Q(\la_1)=\exp(\frac{n-1}{n}\pi i)$\\
$B_{n}$ & $\Z/2\Z$ & $\la_n$ & $Q(\la_n)=i^n$\\
$C_n$ & $\Z/2\Z$ & $\la_n$ & $Q(\la_n)=-1$\\
$D_{2n}$ & $\Z/2\Z \times \Z/2\Z$ & $\la_1,\la_{2n},\la_{{2n}-1}$ & $Q(\la_1)=-1,Q(\la_{{2n}-1})=Q(\la_{2n})=i^n$\\
$D_{2n+1}$ & $\Z/4\Z$ & $\la_{2n+1}$ & $Q(\la_{2n+1})=
\exp(\frac{2n+1}{4}\pi i)$\\
$E_6$ & $\Z/3\Z$ & $\la_6$ & $Q(\la_6)=\exp(\frac{4}{3}\pi i)$\\
$E_7$ & $\Z/2\Z$ & $\la_7$ & $Q(\la_7)=-i $\\
$E_8, F_4, G_2$ & $0$ & - & - \\ \hline
\end{tabular}
\end{table}

We note that for type D,
$\la_1$ corresponds to the vector representation,
and $\la_{n-1},\la_n$ correspond to the spin representation
and its conjugate representation. For type D, let $\La_s$ (resp. $\La_c$ and $\La_v$) be a subgroup of the weight lattice generated by $\lQ$ and $\la_{n}$ (resp. $\la_{n-1}$ and $\la_1$).

\begin{rem}
To check the value of $Q_\g$ in the case of type $E_6$ or $E_7$, 
it is convenient to use the fact that $\Vect_{A_k}^{Q_{A_k}}$ and $\Vect_{E_{8-k}}^{Q_{E_{8-k}}}$ are braided reverse equivalent.
In fact, the unimodular lattice $E_8$ is an index $2$ (resp. index $3$) extension of $A_1\oplus E_7$ (resp.
$A_2 \oplus E_6$).
Thus, $Q(\la_7^{E_7})Q(\la_1^{A_1})$ (resp. $Q(\la_6^{E_6})Q(\la_1^{A_2})$) should be equal to $1$.
\end{rem}

%
The following lemma is obvious:
\begin{lem}
\label{lem_isotropic}
Let $(A,Q)$ be a quadratic space and $I$ a subgroup of $A$.
If $Q(\al) \in \{1,-1\}$ for any $\al \in A$ and
$I$ is a cyclic group, then $I$ is a super isotropic subspace.
\end{lem}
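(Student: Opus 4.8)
The plan is to exhibit the homomorphism $p$ explicitly and then verify additivity, with the cyclicity of $I$ entering only at the very last step. Since $Q(\al)\in\{1,-1\}$ for every $\al\in A$, there is exactly one value $p(\al)\in\Z_2$ with $(-1)^{p(\al)}=Q(\al)$, namely $p(\al)=0$ when $Q(\al)=1$ and $p(\al)=1$ when $Q(\al)=-1$. This defines a map $p\colon I\to\Z_2$ for which the required identity $Q(\al)=(-1)^{p(\al)}$ holds by construction, so the candidate $p$ is forced. The entire content of the lemma is therefore that this $p$ is a group homomorphism, i.e.\ that $Q(\al+\be)=Q(\al)Q(\be)$, equivalently $b(\al,\be)=1$, for all $\al,\be\in I$, where $b(\al,\be)=Q(\al+\be)/\bigl(Q(\al)Q(\be)\bigr)$ is the associated bicharacter.

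The key step I would use is the general identity $b(\al,\al)=Q(\al)^2$, valid for any quadratic form: from $1=Q(0)=Q\bigl(\al+(-\al)\bigr)=Q(\al)Q(-\al)\,b(\al,-\al)$ together with $Q(-\al)=Q(\al)$ and $b(\al,-\al)=b(\al,\al)^{-1}$ one obtains $Q(\al)^2\,b(\al,\al)^{-1}=1$. Since $Q(\al)\in\{1,-1\}$ forces $Q(\al)^2=1$, this yields $b(\al,\al)=1$ for every $\al\in A$.

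Finally I would invoke the hypothesis that $I$ is cyclic, writing $I=\langle g\rangle$. Any two elements of $I$ have the form $\al=mg$ and $\be=ng$, so bimultiplicativity of $b$ gives $b(\al,\be)=b(g,g)^{mn}=1$. Hence $Q(\al+\be)=Q(\al)Q(\be)$ throughout $I$, so $p$ is a homomorphism and $(I,p)$ is a super isotropic subspace of $(A,Q)$. The one subtlety worth flagging is exactly the role of cyclicity: knowing $b(\al,\al)=1$ for all $\al$ does \emph{not} by itself force $b$ to be trivial off the diagonal — for independent generators one could have $b(g_1,g_2)=-1$ — so $p$ need not be additive for a non-cyclic $I$. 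Reducing every pairing $b(\al,\be)$ to the diagonal value $b(g,g)$ is precisely what the cyclic hypothesis provides, and this is the only place where that hypothesis is used.
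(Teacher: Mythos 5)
Your proof is correct. The paper itself offers no argument for this lemma (it is introduced with ``The following lemma is obvious:''), so there is nothing to compare against; your write-up supplies exactly the missing content, namely that $p$ is forced pointwise and that additivity of $p$ reduces, via the identity $b(\al,\al)=Q(\al)^2=1$ and bimultiplicativity on a cyclic group, to the diagonal. Your closing observation about where cyclicity is needed also matches the paper's own remark immediately after the lemma, which notes the failure for the non-cyclic group $\Z/2\Z\times\Z/2\Z$ in type $D_{2n}$.
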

\begin{rem}
If $I$ is not cyclic, then the above lemma is not true.
For example, $D_{2n}$ satisfies $Q_{D_{2n}}(\al)\in \{1,-1\}$
 for any $\al \in \Z/2\Z\times \Z/2\Z$, however, is not super isotropic.
\end{rem}

Then, for example, we have:
\begin{prop}
\label{example_GL}
Let $n \geq 2$ and $k,k' \in \Z$, $N\in \Z$ and $s=\{1,-1\}$ satisfy
\begin{align*}
\frac{1}{k+n}+\frac{1}{k'+n}=s+ n N.
\end{align*}
Then, $\bigoplus_{a=0}^{n-1}L_{\mathrm{sl}_n,k,k'}^{1+nN}(a\la_1+\lQ) \otimes V_{\frac{a\sqrt{sn}}{n}+\sqrt{sn}\Z}$
is a simple vertex superalgebra,
where $V_{\sqrt{sn}\Z}$ is a lattice vertex (super)algebra
associated with a rank one lattice such that  $(\al,\al)=sn$.
\end{prop}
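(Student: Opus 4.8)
The plan is to realize this algebra as an instance of Theorem \ref{thm_vertex} for $r=1$, $\g_1=\mathrm{sl}_n$ (type $A_{n-1}$, so condition (2) applies), together with the rank one lattice $M=\sqrt{sn}\,\Z$. First I would assemble the quadratic space. From Table \ref{table_quadratic}, $\lP/\lQ\cong\Z/n\Z$ is generated by $\la_1$ with $\lla\la_1,\la_1\rra=\frac{n-1}{n}$, and the twist parameter $N_1$ for the $\mathrm{sl}_n$-factor is fixed by the level relation $\frac{1}{k+n}+\frac{1}{k'+n}=N_1$, giving $N_1=s+nN$; note that the underlying module $L^{N_1}_{\mathrm{sl}_n,k,k'}(a\la_1+\lQ)$ does not depend on the superscript, so this matches the module written in the statement. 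On the lattice side a short computation identifies $M^\vee/M\cong\Z/n\Z$ with generator $\gamma=\frac{\sqrt{sn}}{n}$, for which $(\gamma,\gamma)=\frac{s}{n}$ and $a\gamma$ is precisely the coset labelling $V_{\frac{a\sqrt{sn}}{n}+M}$. Thus $A=\lP/\lQ\oplus M^\vee/M\cong\Z/n\Z\oplus\Z/n\Z$ with $Q=Q_{\mathrm{sl}_n}^{N_1}\oplus Q_M$, and the candidate subobject is the cyclic diagonal $I=\langle(\la_1,\gamma)\rangle$.

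The key step is the evaluation of $Q$ on $I$. On a general element I compute
\begin{align*}
Q(a\la_1,a\gamma)=\exp\!\Big(\pi i\,a^2\Big[N_1\tfrac{n-1}{n}+\tfrac{s}{n}\Big]\Big)=\exp\!\Big(\pi i\,a^2\,\tfrac{(s+nN)(n-1)+s}{n}\Big)=(-1)^{a^2(s+N(n-1))},
\end{align*}
the crucial point being that the $\frac1n$ denominators of the two quadratic forms cancel exactly, since $(s+nN)(n-1)+s=n\bigl(s+N(n-1)\bigr)$. Hence $Q|_I\subset\{\pm1\}$, and since $I$ is cyclic the assignment $p(a\la_1,a\gamma)\equiv a\,(s+N(n-1))\bmod 2$ (using $a^2\equiv a$) is a homomorphism $I\to\Z_2$, so that $(I,p)$ is a super isotropic subspace (cf.\ Lemma \ref{lem_isotropic}). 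When $sn$ is even, $M$ is an even lattice and feeding $(I,p)$ into Theorem \ref{thm_vertex}, using Proposition \ref{prop_lattice} for the $V_M$-factor and Lemma \ref{lem_commutative} for the supercommutative algebra object $S(I)$, produces the asserted simple vertex superalgebra as an extension of $L_{\mathrm{sl}_n,k}(0)\otimes L_{\mathrm{sl}_n,k'}(0)\otimes V_M$, with even and odd parts read off from $p$.

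The main obstacle is the parity analysis when $sn$ is odd, i.e.\ when $n$ is odd: then $M=\sqrt{sn}\,\Z$ is an odd lattice, $V_M$ is only a vertex superalgebra, and $Q_M$ fails to descend to a $\C^\times$-valued quadratic form on $M^\vee/M$ (equivalently, the displayed formula is not periodic modulo $n$, and $p$ is not well defined on $\Z/n\Z$). This is precisely why the statement reads ``(super)'', and it lies outside the literal even-lattice hypothesis of Theorem \ref{thm_vertex}. I would resolve this by passing to the even sublattice $M_0=\sqrt{4sn}\,\Z\subset M$ of index two and replacing $V_M$ by the super vertex algebra $V_{M_0}\oplus V_{M_0+\beta}$, where $\beta$ is the odd generator; the $\Z_2$-grading carried by $V_M$ then supplies exactly the parity that the even-lattice group cohomology could not, and the diagonal subspace becomes genuinely super isotropic in the correspondingly enlarged quadratic space, after which the same lax-functor argument applies. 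In all cases simplicity is inherited from Proposition \ref{commutative_algebra} together with the explicit construction of $S(I)$, just as in the proof of Theorem \ref{thm_vertex}.
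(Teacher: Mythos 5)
Your proposal is correct and takes essentially the same route as the paper's own proof: for even $n$ the same diagonal cyclic subgroup generated by $(\la_1,\tfrac{\sqrt{sn}}{n})$ is shown to be super isotropic via Lemma \ref{lem_isotropic} and fed into Theorem \ref{thm_vertex}, and for odd $n$ the same passage to the index-two even sublattice $\sqrt{4sn}\Z$ is used, with the resulting $\Z/2n\Z$-graded extension reassembled into $V_{\sqrt{sn}\Z}$-modules. Your evaluation $Q(a\la_1,a\gamma)=(-1)^{a^2(s+N(n-1))}$ is in fact slightly more careful than the paper's (which drops the factor $(-1)^{N(n-1)}$ and asserts the value $-1$), but this discrepancy is immaterial since Lemma \ref{lem_isotropic} only requires values in $\{1,-1\}$ on a cyclic subgroup.
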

\begin{proof}
We first assume that $n$ is even.
Then, $\sqrt{sn}\Z$ is an even lattice.
Let $I \subset \lD\oplus (\sqrt{sn}\Z)^\vee/\sqrt{sn}\Z\cong (\Z/n\Z)^2$
be a subgroup generated by $(\la_1,\frac{\sqrt{sn}}{n})$.
Since 
\begin{align*}
Q(\la_1,\frac{\sqrt{sn}}{n})=
Q_{\mathrm{sl}_n}(\la_1)^s Q_{\sqrt{sn}\Z}(\frac{\sqrt{sn}}{n})
=\exp(s\pi i \frac{n-1}{n})\exp(s\pi i\frac{1}{n})
=-1,
\end{align*}
by Lemma \ref{lem_isotropic}, $I$ is a super isotropic subspace.
Thus, the assertion follows from Theorem \ref{thm_vertex}.

We next consider the case of $n$ is odd
and consider the even lattice $\sqrt{4sn}\Z$.
Let $I \subset \lD\oplus (\sqrt{4sn}\Z)^\vee/\sqrt{4sn}\Z\cong 
\Z/n\Z\times \Z/4n\Z$
be a subgroup generated by $(\la_1,\frac{\sqrt{4sn}}{2n})$.
Then, $I \cong \Z/2n\Z$ and by the above calculus $I$ is a super isotropic. 
Hence,
\begin{align}
V = \bigoplus_{b=0}^{2n-1}L_{\mathrm{sl}_n,k,k'}^{1+nN}(b\la_1+\lQ) \otimes V_{\frac{b\sqrt{sn}}{n}+\sqrt{4sn}\Z} \label{eq_gl}
\end{align}
is a vertex superalgebra.
Since $n(\la_1,\frac{\sqrt{4sn}}{2n})=(0,\sqrt{sn}) \in \lD\oplus (\sqrt{4sn}\Z)^\vee/\sqrt{4sn}\Z$,
$V$ contains a subalgebra isomorphic to the lattice vertex superalgebra $V_{\sqrt{sn}\Z}$.
Rewrite \eqref{eq_gl} as the sum of $V_{\sqrt{sn}\Z}$-modules to get the assertion.
\end{proof}

If $r=1$ and $M = 0$ in Theorem \ref{thm_vertex}, then
the super isotropic subspace $I$ corresponds to a subgroup of 
$\lD$, which is denoted by $V_{\g,k,k'}^N(I)$.
Even when the shift $N$ is even and $M=0$, various nontrivial vertex superalgebras can be constructed, for example:
\begin{prop}
\label{example_even}
For any $n \geq 1$, 
\begin{enumerate}
\item
For any $N \in \Z$,
$V_{\mathrm{so}_{4n+a},k,k'}^{2+4N}(\lP)$
is a vertex superalgebra if $a=0,3$,
and a non-super vertex algebra if $a=1$,
and an abelian intertwining algebra if $a=2$;
\item
For any $N \in \Z$,
$V_{\mathrm{so}_{4n+a},k,k'}^{1+2N}(\Lambda_v)$
is a vertex superalgebra if $a=2$,
and a non-super vertex algebra if $a=0,1,3$.
\item
For any $N \in \Z$,
$V_{\mathrm{e}_{7},k,k'}^{2+4N}(\lP)$ is a vertex superalgebra;
\item
For any $N \in \Z$,
$V_{\mathrm{sl}_{n},k,k'}^{N}(m\lP+\lQ)$
is a vertex superalgebra if $\exp(\frac{m^2N(n-1)}{n}\pi i)=-1$
and a non-super vertex algebra if $\exp(\frac{m^2N(n-1)}{n}\pi i)=1$.
\end{enumerate}
\end{prop}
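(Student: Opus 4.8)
The plan is to reduce every assertion to Theorem \ref{thm_vertex} in the special case $r=1$ and $M=0$, where the relevant quadratic space is just $(\lD,Q_\g^N)$ and a super isotropic subspace amounts to a subgroup $I\subset\lD$ together with a homomorphism $p:I\to\Z_2$ satisfying $Q_\g^N(\la)=(-1)^{p(\la)}$ for $\la\in I$. Thus the entire proof is a case-by-case analysis of the restriction of $Q_\g^N$ to the prescribed subgroup $I$ (the image in $\lD$ of $\lP$, of $\Lambda_v$, or of $m\lP+\lQ$), governed by a trichotomy: if $Q_\g^N|_I$ takes a value outside $\{\pm1\}$ then no such $p$ exists and one instead invokes Remark \ref{rem_AIA} to obtain an abelian intertwining algebra; if $Q_\g^N|_I\equiv 1$ then $p=0$ and Theorem \ref{thm_vertex} yields a non-super vertex algebra; and if $Q_\g^N|_I$ is $\{\pm1\}$-valued but nontrivial then $p\not\equiv0$ and one gets a genuine vertex superalgebra. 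Before each application of Theorem \ref{thm_vertex} I would check that the hypothesis guaranteeing Conjecture \ref{conj} is met: the even shifts $2+4N$ in (1) and (3) fall under clause (1) (Proposition \ref{even_twist}); type $A$ in (4) and type $B$ in (2) fall under clause (2); and the type-$D$ cases of (2) use clause (3), which applies precisely because $I=\Lambda_v$ so that $\mathrm{pr}(I)\subset\Lambda_v/\lQ$.

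The computations use only Table \ref{table_quadratic} together with $Q_\g^N(k\la)=Q_\g(\la)^{k^2N}=\exp\!\big(k^2N\pi i\lla\la,\la\rra\big)$. For (4), $I$ is the cyclic group generated by $m\la_1$ in $\lD\cong\Z/n\Z$ and $Q_{\mathrm{sl}_n}^N(m\la_1)=\exp\!\big(\tfrac{m^2N(n-1)}{n}\pi i\big)$, so Lemma \ref{lem_isotropic} produces a super isotropic subspace exactly when this value is $\pm1$, the sign recording whether the algebra is super; this is the stated dichotomy. For (3), $\lD\cong\Z/2\Z$ is generated by $\la_7$ with $Q_{\mathrm{e}_7}(\la_7)=-i$, whence $Q_{\mathrm{e}_7}^{2+4N}(\la_7)=(-i)^{2}=-1$, a cyclic $\{\pm1\}$-valued form, and Lemma \ref{lem_isotropic} gives a vertex superalgebra. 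For (2), the vector weight lies in $\lQ$ in type $B$, so $\Lambda_v/\lQ=0$ and the extension is the trivial one, while in type $D$ the group $\Lambda_v/\lQ$ is cyclic of order two generated by $v=\la_1$ with $Q_\g(\la_1)=-1$, so $Q_\g^{1+2N}(\la_1)=(-1)^{1+2N}=-1$ and Lemma \ref{lem_isotropic} determines $p(v)$. For (1) with $I=\lP$ I would evaluate $Q_\g^{2+4N}$ on the generators from Table \ref{table_quadratic} for each residue $a$: type $B_{2n}$ ($a=1$) gives $i^{2n(2+4N)}=1$, type $B_{2n+1}$ ($a=3$) gives $-1$, and type $D_{2n+1}$ ($a=2$) gives a value in $\{\pm i\}\not\subset\{\pm1\}$, which is exactly the abelian-intertwining-algebra case handled by Remark \ref{rem_AIA}.

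The step I expect to be the genuine obstacle is the remaining case of (1), namely $a=0$, i.e.\ $\g=\mathrm{so}_{4n}$ of type $D_{2n}$, where $\lD\cong\Z/2\Z\times\Z/2\Z$ is \emph{not} cyclic and Lemma \ref{lem_isotropic} does not apply; indeed the Remark following that lemma shows that the untwisted $D_{2n}$ form is $\{\pm1\}$-valued yet fails to be super isotropic. Here I would verify by hand that $Q_{\mathrm{so}_{4n}}^{2+4N}$ restricts to a genuine $\{\pm1\}$-valued \emph{character} on all of $\lD$: writing the three nontrivial classes as the vector $v$ and the two half-spins $s,c$ with $v=s+c$, one has $Q^{2+4N}(v)=1$ and $Q^{2+4N}(s)=Q^{2+4N}(c)=(-1)^n$, so the multiplicativity constraint $Q^{2+4N}(v)=Q^{2+4N}(s)\,Q^{2+4N}(c)$ becomes $1=(-1)^{2n}=1$ and holds. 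The crucial point, and the reason the even shift is essential, is that it is precisely the factor $2+4N$ that sends $Q(v)=-1$ to $+1$ and thereby repairs the multiplicativity that fails in the untwisted case; once the restriction is a character the homomorphism $p$ is defined by $(-1)^{p}=Q^{2+4N}$ and Theorem \ref{thm_vertex} applies. In every case simplicity is then automatic from the final clause of Theorem \ref{thm_vertex}.
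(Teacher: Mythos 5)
Your overall strategy is exactly the intended one: the paper gives no separate proof of this proposition, and it is meant to follow from Theorem \ref{thm_vertex} with $r=1$, $M=0$, the values in Table \ref{table_quadratic}, Lemma \ref{lem_isotropic}, and Remark \ref{rem_AIA}, which is precisely your reduction. Your computations for parts (3) and (4), for the type-$B$ cases of (1) and (2), and for $a=1,2,3$ in (1) are correct (including the $\pm i$ value that forces the abelian intertwining algebra for $a=2$).

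The genuine problem is in the two type-$D$ cases with $a=0$, and it is not a computational slip: your own numbers contradict the statement you claim to prove. In part (2) your argument treats type $D$ uniformly — $\Lambda_v/\lQ$ is generated by $\la_1$ with $Q_\g(\la_1)=-1$, hence $Q_\g^{1+2N}(\la_1)=-1$ and $p(\la_1)=1$, with no dependence on the rank. This produces a \emph{genuine} vertex superalgebra, with nonzero odd part $L^{1+2N}_{\g,k,k'}(\la_1+\lQ)$, for $a=0$ just as for $a=2$; but the proposition asserts a \emph{non-super} vertex algebra when $a=0$. A proof whose computation is identical in the two cases cannot yield different conclusions for them. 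What your computation actually shows is that the case split in the statement is erroneous, and that the correct conclusion (super for all $D_n$ with odd shift and $I=\La_v/\lQ$) is the one recorded in Table \ref{table1}, row ``$D_n$, $1+2\Z$, $\La_v$, S''. You needed to flag this discrepancy rather than cite Lemma \ref{lem_isotropic} and move on. (A smaller wording issue in the same sentence: for type $B$ the subgroup $I=\lQ/\lQ$ is trivial, but the resulting algebra $\bigoplus_{\la\in\lQ\cap\lP^+}L_{\g,k}(\la)\otimes L_{\g,k'}(\la)$ is still a nontrivial, purely even extension, not ``the trivial one''.)

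The same issue occurs in milder form in part (1) for $a=0$. You correctly find $Q^{2+4N}(v)=1$ and $Q^{2+4N}(s)=Q^{2+4N}(c)=(-1)^n$, and correctly check multiplicativity, so $p$ with $(-1)^p=Q^{2+4N}$ exists and Theorem \ref{thm_vertex} applies. But $p$ is nontrivial only when $n$ is odd: for $n$ even one gets $p\equiv 0$ and $V^{2+4N}_{\mathrm{so}_{4n},k,k'}(\lP)$ is a purely even, non-super vertex algebra — again in agreement with Table \ref{table1}, which lists $D_{4n+2}$ with shift $2+4\Z$ as super but $D_{4n}$ with even shift as non-super. So the unconditional claim ``vertex superalgebra for $a=0$'' holds only in the degenerate sense that every vertex algebra is a superalgebra with zero odd part; a complete write-up must state this parity dependence explicitly.
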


All possible choices of $(\g,N,I)$ (except for A type) is summarized in Table \ref{table1}:
In the case of type $A_{n}$, 
 there are various ways of the subgroup $I \subset \lD$ (see
for example Proposition \ref{example_even} (4)).

\begin{table}[htbp]
  \begin{tabular}{cc}
    \begin{minipage}[c]{0.5\hsize}
      \centering
  \begin{tabular}{|c||c|c|c|} \hline
type & shift & $I$ & super \\ \hline \hline
$B_{2n}$ & $1+2\Z$ & $\lP$ & S \\
$B_{2n}$ & $2\Z$ & $\lP$ & \\
$B_{2n+1}$ & $\Z$ & $\lQ$ & \\
$B_{2n+1}$ & $2+4\Z$ & $\lP$ & S \\
$B_{2n+1}$ & $4\Z$ & $\lP$ &\\ \hline
$C_n$ & $1+2\Z$ & $\lP$ & S \\
$C_n$ & $2\Z$ & $\lP$ & \\ \hline
$D_{n}$ & $1+2\Z$ & $\La_v$ & S \\
$D_{n}$ & $2\Z$ & $\La_v$ & \\
$D_{4n+2}$ & $2+4\Z$ & $\lP$ & S\\
$D_{4n}$ & $2\Z$ & $\lP$ & \\
$D_{2n}$ & $4\Z$ & $\lP$ & \\
$D_{2n+1}$ & $4+8\Z$ & $\lP$ & S \\
$D_{2n+1}$ & $8\Z$ & $\lP$ & \\ \hline
 \hline
\end{tabular}
\caption{List of non-super triple}
\label{table1}
    \end{minipage} &
    \begin{minipage}[c]{0.5\hsize}
      \centering
  \begin{tabular}{|c||c|c|c|} \hline
type & shift & $I$ & super\\ \hline \hline
$E_{6}$ & $6\Z$ & $\lP$ &  \\
$E_{6}$ & $2\Z$ & $\lQ$ &  \\
$E_{7}$ & $2+4\Z$ & $\lP$ & S \\
$E_{7}$ & $4\Z$ & $\lP$ & \\
$E_{8}$ & $2\Z$ & $\lP$ &  \\ \hline
$F_{4}$ & $2\Z$ & $\lP$ &  \\ \hline
$G_{2}$ & $2\Z$ & $\lP$ &  \\
 \hline
\end{tabular}
\caption{List of super triple}
\label{table_S}
    \end{minipage}
  \end {tabular}
\end{table}

Finally, we give examples of vertex superalgebras with $r \geq 2$:
\begin{prop}
\label{example_r}
For any $n,m \geq 1$, 
\begin{enumerate}
\item
For any $N_1,\dots,N_{n-1} \in \Z$,
\begin{align*}
\bigoplus_{\la \in \lD} \bigotimes_{i=1}^{n} 
L_{\mathrm{sl}_n,k,k'}^{1+n N_i}(\la+\lQ)
\end{align*}
is a vertex superalgebra if $n$ is even,
and a non-super vertex algebra if $n$ is odd.
\item
For any $N_A,N_B \in \Z$,
\begin{align*}
\bigoplus_{a \in \Z/2n\Z}
L_{\mathrm{sl}_{2n},k,k'}^{n+2nN_A}(a\la_1+\lQ)
\otimes L_{\mathrm{so}_{4m+3},k,k'}^{1+2N_B}(a\la_{2m+1}+\lQ)
\end{align*}
is a vertex superalgebra if $(-1)^{n+m+N_A+N_B}=-1$,
and a non-super vertex algebra if $(-1)^{n+m+N_A+N_B}=1$.
\end{enumerate}
\end{prop}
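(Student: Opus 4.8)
The plan is to recognize both algebras as instances of Theorem \ref{thm_vertex} (Main Theorem B) with $M=0$, so that the entire task reduces to exhibiting the indexing set as a cyclic super isotropic subspace of the relevant quadratic space and computing its parity. A convenient feature is that in both parts every simple factor is of type A or B, so condition (2) of Theorem \ref{thm_vertex} holds for every component and neither the evenness hypothesis (1) nor the type-D hypothesis (3) is needed. For part (1) I would take $r=n$, each $\g_i=\mathrm{sl}_n$ with shift $1+nN_i$ and per-factor levels $k_i,k_i'$ chosen so that $\tfrac{1}{k_i+n}+\tfrac{1}{k_i'+n}=1+nN_i$; then $A=(\lP/\lQ)^{\oplus n}\cong(\Z/n\Z)^{\oplus n}$ with $Q=\bigoplus_i Q_{\mathrm{sl}_n}^{1+nN_i}$, and the common-index set is the diagonal $I=\langle g\rangle$ with $g=(\la_1,\dots,\la_1)$. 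For part (2) I would take $r=2$ with $\g_1=\mathrm{sl}_{2n}$ (shift $n+2nN_A$) and $\g_2=\mathrm{so}_{4m+3}\cong B_{2m+1}$ (shift $1+2N_B$), so $A\cong\Z/2n\Z\oplus\Z/2\Z$, and the map $a\mapsto(a\la_1,a\la_{2m+1})$ embeds $\Z/2n\Z$ as the cyclic subgroup $I=\langle g\rangle$, $g=(\la_1,\la_{2m+1})$.

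The heart of the argument is verifying that $(I,p)$ is super isotropic and reading off $p$. Since $I$ is cyclic with generator $g$ and $Q(jg)=Q(g)^j\,b(g,g)^{\binom{j}{2}}$ for the associated bicharacter $b$, it suffices to check that $Q(g)$ and $b(g,g)$ both lie in $\{1,-1\}$; this substitutes for a direct appeal to Lemma \ref{lem_isotropic}, which does not apply here because $Q$ takes the non-real values $\exp(\tfrac{n-1}{n}\pi i)$ (part (1)) and $\pm i$ (part (2)) on the individual summands. Using the values $Q_{\mathrm{sl}_n}(\la_1)=\exp(\tfrac{n-1}{n}\pi i)$ and $Q_{B_\ell}(\la_\ell)=i^\ell$ from Table \ref{table_quadratic}, the identity $Q_\g^N(\la)=Q_\g(\la)^N$, and multiplicativity of $Q$ over the orthogonal summands, a short computation gives $b(g,g)=1$ in both parts, so $Q|_I$ is $\{\pm1\}$-valued and $p(jg)=j\,p(g)\bmod 2$ is a well-defined homomorphism. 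The same computation yields $Q(g)=(-1)^{(n-1)(1+\sum_i N_i)}$ in part (1) and $Q(g)=(-1)^{n+m+N_A+N_B}$ in part (2).

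With $(I,p)$ established, Lemma \ref{lem_commutative} produces the supercommutative algebra object $S(I)\in\Vect_A^{Q}$, and Theorem \ref{thm_vertex} transports it to the asserted simple vertex superalgebra, whose even/odd parts are exactly the $p=0$/$p=1$ pieces. The algebra is genuinely super precisely when $p\not\equiv 0$, i.e. when $Q(g)=-1$: in part (1) the exponent $(n-1)(1+\sum_i N_i)$ is even for $n$ odd (non-super) and odd for the stated parameters when $n$ is even (super), and in part (2) the sign is governed by $(-1)^{n+m+N_A+N_B}$, giving the two stated dichotomies. The one genuinely delicate step is the bookkeeping of roots of unity in part (2): the two factors separately contribute the odd powers $i^{(1+2N_A)(2n-1)}$ and $i^{(1+2N_B)(2m+1)}$, and I expect the main obstacle to be checking cleanly that their product collapses to $\pm1$ and extracting the residue $n+m+N_A+N_B\bmod 2$; I would handle this by reducing each exponent modulo $4$ before multiplying.
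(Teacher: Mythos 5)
Your overall strategy is the intended one: the paper states Proposition \ref{example_r} without proof, as a direct application of Theorem \ref{thm_vertex} with $M=0$, exactly parallel to its proofs of Propositions \ref{example_GL} and \ref{example_even}, and your reduction to exhibiting the index set as a cyclic super isotropic subgroup is the right reading. Part (2) of your argument is correct: with $g=(\la_1,\la_{2m+1})$ the two factors contribute $i^{(2n-1)(1+2N_A)}$ and $i^{(2m+1)(1+2N_B)}$, whose product is indeed $(-1)^{n+m+N_A+N_B}$, and $\langle g\rangle\cong\Z/2n\Z$ is super isotropic with $p(ag)=a\,p(g)$. One simplification: your separate verification that $b(g,g)=1$ is redundant, because every quadratic form has $Q(0)=1$ and $Q(-x)=Q(x)$, whence $b(x,x)=Q(x)^{2}$; thus $Q(jg)=Q(g)^{j^{2}}=Q(g)^{j}$ follows from $Q(g)\in\{\pm1\}$ alone. (You are right that Lemma \ref{lem_isotropic} does not literally apply; note that the paper's own proof of Proposition \ref{example_GL} cites that lemma equally loosely, since there too $Q$ is not $\{\pm1\}$-valued on all of $A$, so the cyclic-subgroup argument you give is the correct repair in both places.)

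Part (1), however, has a genuine gap at the final parity step. Your formula $Q(g)=(-1)^{(n-1)(1+\sum_{i=1}^{n}N_i)}$ is correct for $n$ factors with shifts $1+nN_i$, but for even $n$ it yields a genuinely super algebra only when $\sum_{i=1}^{n}N_i$ is even; the phrase ``odd for the stated parameters when $n$ is even'' asserts exactly what has to be proved. For independent shifts the stated dichotomy fails: take $n=2$ with shifts $1$ and $3$ (i.e.\ $N_1=0$, $N_2=1$); then $Q(g)=i\cdot i^{3}=1$, the diagonal is isotropic, and the extension is a non-super vertex algebra although $n$ is even. The missing point is the mismatch you passed over silently: the proposition supplies only $n-1$ parameters $N_1,\dots,N_{n-1}$ for $n$ tensor factors, and writes a single pair $(k,k')$ in every factor, so the $n$-th shift cannot be treated as free. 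Any reading that forces $\sum_{i=1}^{n}N_i$ to be even --- for instance $N_n:=-\sum_{i<n}N_i$, or all shifts equal, which is what a genuinely common $(k,k')$ forces via $\frac{1}{k+n}+\frac{1}{k'+n}=1+nN_i$ --- makes your computation deliver exactly the claimed statement. To complete part (1) you must either make such a constraint explicit and use it to conclude $\sum_{i=1}^{n}N_i\in 2\Z$, or else record that for unconstrained shifts the correct dichotomy is ``super if and only if $n$ is even and $\sum_{i=1}^{n}N_i$ is even.''
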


\section*{Appendix}
\label{sec_appendix}
In Appendix, we will prove Conjecture \ref{conj} for the simple Lie algebras of type $B_n$ ($n \geq 1$).
The reader may wonder why our proof only applies to type B.
In order to clarify the path of the proof, we will first explain the reason.
Let $\g$ be a simple Lie algebra (not assumed to be of type B).
We want to relate the representation of $U_q(\g)$ to the representation of $U_{-q}(\g)$.
So we think of a representation that is not type 1.
That is, for each $\la \in \lP$ let $\C\chi_\la$ be a one-dimensional representation of $U_q(\g)$ defined by
$$
E_\al \cdot \chi_\la=F_\al \cdot \chi_\la=0,\;\;\;\;\; K_\al \cdot \chi_\la = (-1)^{\lla \al, \la \rra}\chi_\la \text{ for }\al \in \Pi.
$$
Then, for a type 1 module $M$, $M\otimes \C\chi_\la$ and $\C\chi_\la \otimes M$ are $U_q(\g)$-modules, which are not of type 1.
For example, for $L_q(\la)\otimes \C\chi_\la$, we have
$$
K_\al \cdot v_\la\otimes \chi_\la = q^{\lla \al,\la \rra}(-1)^{\lla \al,\la \rra} v_\la\otimes \chi_\la
$$
for the highest weight vector $v_\la \in L_q(\la)$.
In order for the correspondence of sending $L_q(\la)$ to $L_q(\la)\otimes \C\chi_\la$
to be compatible with ``the graded twist'' on $(U_q(\g),R(\rho))\modu$,
we expect $\chi_{\la+\al}=\chi_\la$ to hold for any $\al \in \lQ$ and $\la \in \lP$.
However, we have:
\begin{lem}
\label{lem_compatible}
The following conditions are equivalent:
\begin{enumerate}
\item
$\chi_{\la+\al}=\chi_\la$ for any $\al \in \lQ$ and $\la \in \lP$;
\item
$\lla \al,\be\rra \in 2\Z$ for any $\al,\be \in \lQ$;
\item
The simple Lie algebra $\g$ is of type $A_1$ or of type $B_n$ ($n\geq 2$).
\end{enumerate}
\end{lem}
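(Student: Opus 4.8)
The plan is to establish the two equivalences (1)$\Leftrightarrow$(2) and (2)$\Leftrightarrow$(3) separately: the first by a direct character computation, the second by inspecting the Gram matrix of the simple roots case by case.

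First, for (1)$\Leftrightarrow$(2), I would use that the one-dimensional module $\C\chi_\la$ is completely determined by the scalars $(-1)^{\lla\be,\la\rra}$ by which the $K_\be$ ($\be\in\Pi$) act. Hence $\chi_{\la+\al}=\chi_\la$ holds precisely when $(-1)^{\lla\be,\al\rra}=1$, i.e.\ $\lla\be,\al\rra\in2\Z$, for every $\be\in\Pi$; note this condition does not actually depend on $\la$. Thus (1) is equivalent to the statement that $\lla\be,\al\rra\in2\Z$ for all $\be\in\Pi$ and all $\al\in\lQ$. Since $\Pi$ generates $\lQ$ and $\lla-,-\rra$ is symmetric and $\Z$-bilinear on $\lQ$, expanding both entries over the simple roots shows that this is in turn equivalent to $\lla\al_i,\al_j\rra\in2\Z$ for all $i,j$, which is exactly condition (2).

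For (2)$\Leftrightarrow$(3), I would first reduce the evenness of $\lQ$ to its off-diagonal Gram entries: the diagonal entries $\lla\al_i,\al_i\rra$ equal $2$ for short simple roots and $2r^\vee$ for long ones, hence are always even, so (2) holds if and only if $\lla\al_i,\al_j\rra\in2\Z$ for every pair of adjacent simple roots. Using $\lla\al_i,\al_j\rra=\tfrac12\,a_{\al_i\al_j}\lla\al_i,\al_i\rra$, I would then tabulate this entry by edge type: a single edge between two equal-length roots gives $-1$ (odd); a long–short double edge gives $-2$ (even); a single edge between two long roots in a multiply-laced diagram gives $-r^\vee=-2$ (even); and the long–short triple edge of $G_2$ gives $-3$ (odd). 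Running through the classification then yields (3): any simply-laced diagram of rank $\ge 2$ has an odd single edge and fails, while $A_1$ has no edges and succeeds; for $B_n$ every edge is either a long–long single edge or the single long–short double edge at the short end, all even, so $B_n$ succeeds; $C_n$ for $n\ge 3$ has a short–short single edge and fails; $F_4$ likewise has a short–short single edge and fails; and $G_2$ fails by its triple edge.

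Finally I would record the low-rank coincidences $B_1=A_1$ and $B_2=C_2$ so that the surviving list reads exactly ``$A_1$ or $B_n$ ($n\ge 2$)''. The main obstacle is purely computational bookkeeping: keeping the short-root normalization $\lla\al,\al\rra=2$ straight so that the long–short double edge contributes the even value $-2$ while the short–short single edge contributes the odd value $-1$. This single distinction is the crux of the whole lemma, since it is precisely what separates type $B$ from type $C$ and explains why, apart from $A_1$, only type $B$ has an even root lattice with respect to $\lla-,-\rra$.
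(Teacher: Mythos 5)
Your proof is correct, and there is nothing in the paper to compare it against: the paper states Lemma \ref{lem_compatible} without any proof (it is asserted in the Appendix and then used, e.g.\ in the proof that $\phi: U_q(\so)\rightarrow U_{-q}(\so)$ is a Hopf algebra isomorphism). Your argument supplies the missing verification, and it is the natural one. Both halves check out: (1)$\Leftrightarrow$(2) is exactly the observation that $\chi_{\la+\al}=\chi_\la$ iff $(-1)^{\lla\be,\al\rra}=1$ for all $\be\in\Pi$, which by bilinearity and the fact that $\Pi$ generates $\lQ$ is condition (2); and for (2)$\Leftrightarrow$(3) your edge-by-edge parity table is right with the paper's normalization $\lla\al,\al\rra=2$ for \emph{short} roots: diagonal Gram entries are $2$ or $2r^\vee$ (always even), equal-length single edges give $-1$ for short roots but $-r^\vee=-2$ for long roots, the long--short double edge gives $-2$, and the $G_2$ triple edge gives $-3$. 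This kills the simply-laced types of rank $\geq 2$, $C_n$ ($n\geq 3$), $F_4$ and $G_2$, and keeps exactly $A_1$ and $B_n$ ($n\geq 2$), in agreement with the paper's later explicit use of the $B_n$ root system $\{\pm e_i\pm e_j, e_i\}$ with $\lla e_i,e_j\rra=2\delta_{ij}$, for which all pairings of roots are visibly even. One could shorten the case analysis slightly by noting that (2) is just the statement that $(\lQ,\lla-,-\rra)$ is an even lattice, but your bookkeeping, including the identifications $B_1=A_1$ and $B_2=C_2$, is exactly what is needed.
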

Therefore, only in the case of type B, the correspondence of sending $L_q(\la)$ to $L_q(\la)\otimes \chi_\la$ is compatible with the $\lP/\lQ$-grading.

The following lemma is very important (this lemma itself holds for any simple Lie algebra).
\begin{lem}
\label{natural_h}
Let $\ga \in \lP$.
For any type 1 module $M \in U_q(\g)\modu$, define a linear map 
$h_M^\ga:M\otimes \C\chi_\ga \rightarrow \C\chi_\ga \otimes M$
by
\begin{align*}
h_M^\ga (m_\la \otimes \chi_\ga)=\exp(\pi i (\ga,\la))\chi_\ga \otimes m_\la
\end{align*}
for any $\la \in \lP$ and $m_\la \in M_\la$.
Then, $h_M^\ga$ is a $U_q(\g)$-module homomorphism.
In particular, the family of the maps $\{h_M^{\ga}\}_{M \in U_q(\g)\modu}$ is
a natural transformation of $- \otimes \C \chi_\ga$ and $\C \chi_\ga \otimes -$.
\end{lem}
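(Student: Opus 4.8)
The plan is to verify directly that $h_M^\ga$ commutes with the $U_q(\g)$-action and then deduce naturality as an immediate consequence. Since $U_q(\g)$ is generated as an algebra by $\{E_\al,F_\al,K_\al\mid \al\in\Pi\}$, and both $M\otimes\C\chi_\ga$ and $\C\chi_\ga\otimes M$ carry the action induced by the coproduct $\dD$, it suffices to check that $h_M^\ga$ intertwines each generator on weight vectors $m_\la\in M_\la$. First I would record the formal observation that $h_M^\ga$ is diagonal for the weight decomposition $M=\bigoplus_\la M_\la$, acting on $M_\la\otimes\C\chi_\ga$ by the scalar $\exp(\pi i(\ga,\la))$; this turns the comparison on each generator into a purely scalar computation.

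For $K_\al$ the check is immediate: on $m_\la\otimes\chi_\ga$ and on $\chi_\ga\otimes m_\la$ the Cartan generator acts by the same scalar $q^{\lla\la,\al\rra}(-1)^{\lla\al,\ga\rra}$, coming from $\dD(K_\al)=K_\al\otimes K_\al$ and $K_\al\chi_\ga=(-1)^{\lla\al,\ga\rra}\chi_\ga$, so $h_M^\ga$ commutes with $K_\al$ because it is scalar on each weight space. For $E_\al$ I would use $\dD(E_\al)=E_\al\otimes 1+K_\al\otimes E_\al$ together with $E_\al\chi_\ga=0$: on $M\otimes\C\chi_\ga$ only the first summand survives, giving $(E_\al m_\la)\otimes\chi_\ga$ with $E_\al m_\la\in M_{\la+\al}$, whereas on $\C\chi_\ga\otimes M$ only the second survives, giving $(-1)^{\lla\al,\ga\rra}\chi_\ga\otimes(E_\al m_\la)$. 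Comparing $h_M^\ga\circ E_\al$ with $E_\al\circ h_M^\ga$ then collapses to the single scalar identity
\begin{align*}
\exp\big(\pi i(\ga,\la+\al)\big)=\exp\big(\pi i(\ga,\la)\big)\,(-1)^{\lla\al,\ga\rra},
\end{align*}
that is, $\exp(\pi i(\ga,\al))=(-1)^{\lla\al,\ga\rra}$. The computation for $F_\al$ is parallel, using $\dD(F_\al)=F_\al\otimes K_\al^{-1}+1\otimes F_\al$, $F_\al\chi_\ga=0$, $K_\al^{-1}\chi_\ga=(-1)^{\lla\al,\ga\rra}\chi_\ga$ and $F_\al m_\la\in M_{\la-\al}$, and it reduces to exactly the same scalar identity.

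The main obstacle is therefore this scalar identity $\exp(\pi i(\ga,\al))=(-1)^{\lla\al,\ga\rra}$ for every simple root $\al\in\Pi$ and every $\ga\in\lP$: it is the one place where more than formal manipulation is needed, since it encodes the compatibility between the normalization of the exponent defining $h_M^\ga$ and the parity $\lla\al,\ga\rra$ controlling the $K_\al$-eigenvalue on $\chi_\ga$. In the simply-laced case the two invariant forms coincide and the identity is transparent from $\lla\ga,\al\rra\in\Z$, giving $\exp(\pi i\lla\ga,\al\rra)=(-1)^{\lla\ga,\al\rra}=(-1)^{\lla\al,\ga\rra}$; in the remaining cases the delicate bookkeeping is to confirm that the form in the exponent is exactly the one whose pairing with $\al$ is an integer agreeing in parity with $\lla\al,\ga\rra$, which I would verify on simple roots using the explicit values of both forms on $\lP$. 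Once this identity is secured, $h_M^\ga$ is a $U_q(\g)$-module homomorphism (indeed an isomorphism, being a nonzero scalar on each weight space composed with the flip).

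Finally, naturality of $\{h_M^\ga\}_M$ comes essentially for free. For any morphism $f\colon M\to N$ in $U_q(\g)\modu$ we have $f(M_\la)\subset N_\la$, since $f$ commutes with the $K_\al$, and because $h_M^\ga$ and $h_N^\ga$ act by the same scalar $\exp(\pi i(\ga,\la))$ on weight $\la$, both routes around the naturality square send $m_\la\otimes\chi_\ga$ to $\exp(\pi i(\ga,\la))\,\chi_\ga\otimes f(m_\la)$. Hence the square commutes and $\{h_M^\ga\}_M$ is a natural transformation $-\otimes\C\chi_\ga\Rightarrow\C\chi_\ga\otimes -$.
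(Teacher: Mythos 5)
Your reduction is exactly the paper's: the paper also verifies the intertwining property generator by generator (writing out only the $E_\al$ case and leaving $F_\al$, $K_\al$ as "easy to check"), and its computation pivots on precisely the scalar identity you isolate, $\exp(\pi i(\ga,\al)) = (-1)^{\lla\al,\ga\rra}$ for $\al\in\Pi$, $\ga\in\lP$. Your computations for $K_\al$, $E_\al$, $F_\al$, the observation that $h_M^\ga$ is a scalar on each weight space, and the naturality argument all match the paper's proof.

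The gap is in the one step you defer. With the paper's stated normalizations --- $(\al,\al)=2$ for \emph{long} roots and $\lla\al,\al\rra=2$ for \emph{short} roots, so that $\lla-,-\rra=r^\vee(-,-)$ --- the identity $\exp(\pi i(\ga,\al))=(-1)^{\lla\al,\ga\rra}$ is \emph{false} for every non-simply-laced $\g$. Concretely, for $\g=\so$ (the only case in which the lemma is subsequently used), take $\ga=\la_n$ and $\al=\al_n=e_n$: then $(\la_n,e_n)=\tfrac12$, so the left side is $i$, while $\lla\la_n,e_n\rra=1$, so the right side is $-1$. Hence the "delicate bookkeeping" you postpone cannot be completed as proposed; what it actually reveals is that the exponent in the definition of $h_M^\ga$ must be taken with respect to $\lla-,-\rra$ rather than $(-,-)$. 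With $\exp(\pi i\lla\ga,\la\rra)$ in the definition, the required identity becomes $\exp(\pi i\lla\ga,\al\rra)=(-1)^{\lla\al,\ga\rra}$, which is immediate from the integrality $\lla\ga,\al\rra\in\Z$, valid in every type for $\ga\in\lP$ and $\al\in\lQ$ because $\lla\al,\ga\rra=\tfrac{\lla\al,\al\rra}{2}\langle\ga,\al^\vee\rangle$ with $\tfrac{\lla\al,\al\rra}{2}\in\{1,r^\vee\}$. This is a normalization slip in the statement itself: the paper's own proof silently asserts the same false-as-written equality, and the later applications of the lemma (e.g.\ the factors $\exp(\pi i\,\iota(p_2)\lla\la_n,\be_1\rra)$ in the proof of Proposition \ref{prop_tw_monoidal}) use the $\lla-,-\rra$-pairing. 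So your instinct that this scalar identity is the single non-formal point is exactly right, but a complete proof must either replace $(-,-)$ by $\lla-,-\rra$ in the definition of $h_M^\ga$ or restrict to simply-laced $\g$; as written, your verification plan fails at precisely the type B case for which the lemma is needed.
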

\begin{proof}
Let $\la \in \lP$ and $m_\la \in M_{\la}$ and $\al \in \Pi$.
Since
\begin{align*}
E_\al \cdot (m_\la \otimes \chi_\ga) &= \Delta(E_\al) \cdot (m_\la \otimes \chi_\ga)
= (E_\al\otimes 1+K_\al\otimes E_\al )\cdot (m_\la \otimes \chi_\ga)= (E_\al\cdot m_\la) \otimes \chi_\ga,
\end{align*}
and $E_\al\cdot m_\la \in M_{\la+\al}$
we have
$h_{M}^\ga(E_\al \cdot (m_\la \otimes \chi_\ga))= \exp(\pi i (\ga,\la+\al)) (\chi_\ga \otimes E_\al \cdot m_\la).$

Similarly, since $E_\al \cdot (\chi_\ga \otimes m_\la) = (E_\al\otimes 1+K_\al\otimes E_\al )\cdot (\chi_\ga \otimes m_\la)= 
(K_\al \cdot \chi_\ga \otimes E_\al \cdot m_\la)$,
and $K_\al\cdot \chi_\ga = (-1)^{\lla \ga,\al \rra}\chi_\ga$,
we have
\begin{align*}
E_\al \cdot h_{M}^\ga(m_\la \otimes \chi_\ga)= \exp(\pi i (\ga,\la)) E_\al \cdot (\chi_\ga \otimes m_\la)
=\exp(\pi i (\ga, \la+\al))(\chi_\ga \otimes E_\al\cdot  m_\la).
\end{align*}
Hence, $h_{M}^\ga(E_\al \cdot (m_\la \otimes \chi_\ga))=E_\al \cdot h_{M}^\ga(m_\la \otimes \chi_\ga)$ for any $\al \in \Pi$.
It is easy to check this for $F_\al$ and $K_\al$ and thus $h_M^\ga$ is a $U_q(\g)$-module homomorphism.
The naturality is obvious.
\end{proof}

We will now proceed to the case of type B.
We first recall the explicit descriptions of the root system of those Lie algebras.
According to \cite{Hu}, the root system of type $B_n$ can be written as 
$$\{\pm e_i\pm e_j, e_i\}_{1 \leq i,j \leq n},$$
where $\{e_i\}_{i=1,2,\dots,n}$ is the standard basis of $\R^n$,
and
the simple roots and the fundamental weights as 
\begin{align*}
(\al_1,\al_2,\dots,\al_{n-1},\al_n)&=(e_1-e_2,e_2-e_3,\dots, e_{n-1}-e_n, e_n),\\
(\la_1,\la_2,\dots,\la_{n-1},\la_n) &=(e_1,e_1+e_2,\dots,e_1+e_2+\dots+e_{n-1}, \frac{e_1+e_2+\dots+e_n}{2}).
\end{align*}
The weight lattice is spanned by $\{e_i, \lambda_n \}_{i=1,2,\dots,n}$
and $\lP/\lQ \cong \Z_2$ is generated by $\lambda_n$.
It is noteworthy that by the normalization $\lla e_i, e_i \rra=2$ for any $i=1,\dots,n$ (see Lemma \ref{lem_compatible})
and $\lla \la_n, \la_n \rra=\frac{n}{2}$.
Let us denote $E_{\al_i},F_{\al_i},K_{\al_i}$ by
$E_i,F_i,K_i$ for short.


We will define a type 2 module of $U_q(\mathrm{so}_{2n+1})$.
We first observe that by Lemma \ref{lem_compatible}
the one dimensional representation $\C\chi_{\ga}$ is only depends on $\ga \in \lP/\lQ=\Z/2\Z$.
Denote $\chi_{\la_n}$ by $\chi$.

	For each $\lambda \in \lP^+$, let $L_q^\tw(\lambda)$ be the unique irreducible highest module 
defined by
	\[K_i v_\lambda = (-q)^{\lla \al_i,\lambda\rra} v_\lambda,\;\;\;\; E_i v_\lambda = 0\;\;\;\;\text{ for }i=1,\dots,n.\]
We say a $U_q(\mathrm{so}_{2n+1})$-module is of {\it type 2} if it decomposes into a direct sum of $L_q^\tw(\lambda)$'s for $\lambda \in \lP^+$.
Denote the category of type 2 (resp. of type 1) $U_q(\mathrm{so}_{2n+1})$-modules by $C^\tw$ (resp. $C^I$).
For $S=I,\tw$ and $i \in \lP/\lQ=\Z/2\Z$, let $C_i^{S}$ be a full subcategory of $C^S$ consisting of
modules which is isomorphic to a direct sum of $L_q^S(\la)$'s for $\la \in i\la_n+\lQ$.
The following lemma is clear from the definition:
\begin{lem}
Let $\la \in \lQ\cap \lP^+$ and $\la'\in (\la_n+\lQ)\cap \lP^+$. Then,
$L_q^\tw(\la)$ is isomorphic to $L_q(\la)$
and $L_q(\la')\otimes \chi$ and $\chi \otimes L_q(\la')$ are isomorphic to $L_q^\tw(\la)$ as $U_q(\so)$-modules.
\end{lem}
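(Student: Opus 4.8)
The plan is to verify both isomorphisms directly at the level of highest weight vectors, the only genuine input being the parity identity of Lemma \ref{lem_compatible}: in type $B_n$ one has $\lla \al,\be\rra \in 2\Z$ for all $\al,\be \in \lQ$. I note in passing that in the second assertion the target should read $L_q^\tw(\la')$ (matching the hypothesis $\la' \in (\la_n+\lQ)\cap \lP^+$), and I will prove it in that form.

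For the first assertion I would compare the highest weight data of $L_q^\tw(\la)$ and $L_q(\la)$ for $\la \in \lQ\cap \lP^+$. Both are irreducible highest weight modules with highest weight vector annihilated by every $E_i$, so by the stated uniqueness of such modules they are isomorphic as soon as the $K_i$-eigenvalues of their highest weight vectors agree. The eigenvalue in $L_q^\tw(\la)$ is $(-q)^{\lla \al_i,\la\rra} = (-1)^{\lla \al_i,\la\rra} q^{\lla \al_i,\la\rra}$; since $\al_i \in \lQ$ and $\la \in \lQ$, Lemma \ref{lem_compatible} gives $\lla \al_i,\la\rra \in 2\Z$, so the sign is $+1$ and the eigenvalue equals $q^{\lla \al_i,\la\rra}$, which is exactly the eigenvalue in $L_q(\la)$. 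Hence $L_q^\tw(\la) \cong L_q(\la)$.

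For the second assertion I would exhibit $v_{\la'} \otimes \chi$ as a type 2 highest weight vector of $L_q(\la')\otimes \chi$. From $\Delta(E_i) = E_i \otimes 1 + K_i \otimes E_i$ together with $E_i v_{\la'} = 0$ and $E_i \chi = 0$ one gets $E_i(v_{\la'}\otimes \chi) = 0$, while $\Delta(K_i) = K_i \otimes K_i$ gives $K_i(v_{\la'}\otimes \chi) = q^{\lla \al_i,\la'\rra}(-1)^{\lla \al_i,\la_n\rra}\,(v_{\la'}\otimes \chi)$. Writing $\la' = \la_n + \mu$ with $\mu \in \lQ$ and using $\lla \al_i,\mu\rra \in 2\Z$ from Lemma \ref{lem_compatible}, the sign rewrites as $(-1)^{\lla \al_i,\la'\rra}$, so the eigenvalue is precisely $(-q)^{\lla \al_i,\la'\rra}$, i.e.\ the type 2 weight $\la'$. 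Since $\chi$ is one-dimensional, hence invertible, tensoring with it preserves irreducibility, so $L_q(\la')\otimes \chi$ is an irreducible type 2 module with a highest weight vector of weight $\la'$, whence $L_q(\la')\otimes \chi \cong L_q^\tw(\la')$. The computation for $\chi \otimes L_q(\la')$ is identical after swapping factors, using that $\Delta(K_i)$ is symmetric and that $K_i$ acts on $\chi$ by the same sign.

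There is no serious obstacle: the content is entirely a bookkeeping of signs, consistent with the lemma being \emph{clear from the definition}. The single nontrivial ingredient is the evenness $\lla \lQ,\lQ\rra \subset 2\Z$ of Lemma \ref{lem_compatible}, which is exactly what makes type $B$ special and guarantees that $\chi$ depends only on the class in $\lP/\lQ$. The only step demanding a moment's care is matching the sign $(-1)^{\lla \al_i,\la_n\rra}$ produced by $\chi$ with the sign $(-1)^{\lla \al_i,\la'\rra}$ built into the type 2 module, which is precisely where the hypothesis $\la' \in \la_n+\lQ$ is consumed.
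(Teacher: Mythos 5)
Your proof is correct and is exactly the verification the paper has in mind: the paper offers no proof at all (it declares the lemma ``clear from the definition''), and your sign bookkeeping via Lemma \ref{lem_compatible} --- comparing $(-q)^{\lla\al_i,\la\rra}$ with $q^{\lla\al_i,\la\rra}$ on highest weight vectors and using invertibility of the one-dimensional module $\C\chi$ to preserve irreducibility --- is precisely the intended argument. Your observation that the target in the second assertion should read $L_q^\tw(\la')$ rather than $L_q^\tw(\la)$ is also correct; this is a typo in the statement.
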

Thus, we can define a functor 
$F: C^I \rightarrow C^\tw$
by $F(M)=(\C\chi^0\otimes  M_0) \oplus (\C\chi \otimes M_1)$
for any $M=M_0\oplus M_1 \in C^I=C_0^I\oplus C_1^I$,
where $\C\chi^0$ is the trivial representation.
Then, $F$ gives an equivalence of abelian categories.

Let $M,N \in C^\tw$. Since $U_q(\so)$ is a Hopf algebra, $M\otimes N$ is a $U_q(\so)$-module
and it is easy to show that $M\otimes N \in C^\tw$.
Thus, $C^\tw$ is naturally a monoidal category.
Let $\rho \in \C$ satisfy $\exp(\pi i \rho)=q$
and denote the braided tensor category $(U_q(\so),R(\rho))\modu$ by $C^I(\rho)$.
In this section, we will prove Conjecture \ref{conj} for type B in three steps:
\begin{enumerate}
\item
To give a braided tensor category structure on $C^\tw$, denoted by $C^\tw(\rho+1)$;
\item
To show that a Hopf algebra isomorphism 
$\phi:U_q(\so)\rightarrow U_{-q}(\so)$ induces an equivalence of braided tensor categories between
$C^\tw(\rho+1)$ and $(U_{-q}(\so),R(\rho+1))\modu$;
\item
To show that $F:C^I(\rho)^{Q_\so}\rightarrow C^\tw(\rho+1)$ gives an equivalence of braided tensor categories.
\end{enumerate}

We will first consider Step (1).
For any type 2 module $M^\tw$,
set for all $\la \in \lP$
$$
M_{\la}^\tw = \{m\in M^\tw \mid K_i m= (-q)^{\lla \la,\al_i \rra}m \text{ for } i=1,\dots,n \}.
$$
Then, we have 
$$M^\tw= \bigoplus_{\la \in \lP} M_{\la}^\tw.$$
In order to define the R-matrix for type 1 representations,
we consider a linear map $f_\rho$ (see Section \ref{sec_bilinear}).
Define for all type 2 $U_q(\so)$-modules $M^\tw$ and $N^\tw$ a bijective linear map
$f_\rho^\tw:M^\tw\otimes N^\tw\rightarrow M^\tw\otimes N^\tw$ by
$$
f_\rho (m\otimes n)=\exp\left(- \pi i (\rho+1) \lla \la,\mu\rra\right) m\otimes n \fora m\in M_\la^\tw \text{ and }n\in N_{\mu}^\tw
$$
and for all $\mu, \la \in \lP$.

Then, a statement similar to Lemma \ref{f_transform} holds for type 2 modules  by replacing $f_\rho$ with $f_\rho^\tw$.
\begin{lem}
\label{ftw_transform}
Let $u \in U_q(\so)_{\mu}^-$ and $u' \in U_q(\so)_{\mu}^+$ for $\mu \in \lQ$ with $\mu \geq 0$.
For any type 2 $U_q(\so)$-modules $M^\tw$ and $N^\tw$,
$$
(f_{\rho}^\tw)^{-1} \circ (u\otimes u') \circ f_{\rho}^\tw =uK_{\mu} \otimes K_{-\mu}u'
$$
as linear maps acting on $M^\tw \otimes N^\tw$.
\end{lem}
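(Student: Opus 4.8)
The plan is to run exactly the same weight-vector computation as in the proof of Lemma~\ref{f_transform}(1), observing that the single new ingredient is the identity $-q=\exp(\pi i(\rho+1))$, which lets the shift $\rho\mapsto\rho+1$ built into $f_\rho^\tw$ absorb the sign $(-q)$ appearing in the type~2 action of the Cartan generators $K_i$.

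First I would check the weight-shifting property: for $u\in U_q(\so)_\mu^-$ and a type~2 module $M^\tw$, the operator $u$ sends $M_\la^\tw$ into $M_{\la-\mu}^\tw$ (and dually $u'\in U_q(\so)_\mu^+$ sends $N_{\la'}^\tw$ into $N_{\la'+\mu}^\tw$). From $K_i u=q^{-\lla\al_i,\mu\rra}uK_i$ and $K_i m=(-q)^{\lla\la,\al_i\rra}m$ for $m\in M_\la^\tw$, one gets $K_i(um)=(-q)^{\lla\la,\al_i\rra}q^{-\lla\al_i,\mu\rra}um$, whereas membership in $M_{\la-\mu}^\tw$ requires the eigenvalue $(-q)^{\lla\la-\mu,\al_i\rra}$; the two differ by $(-1)^{\lla\mu,\al_i\rra}$. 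Since $\mu,\al_i\in\lQ$ and $\g=\so$ is of type~B, Lemma~\ref{lem_compatible} gives $\lla\mu,\al_i\rra\in2\Z$, so the discrepancy vanishes and $um\in M_{\la-\mu}^\tw$. This is the only place the type~B hypothesis is used, and it is precisely the step that fails for other simple Lie algebras.

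With weight-preservation in hand, I would fix weight vectors $v\in M_\la^\tw$, $w\in N_{\la'}^\tw$ and evaluate both sides on $v\otimes w$. On the left, $f_\rho^\tw$ contributes the scalar $\exp(-\pi i(\rho+1)\lla\la,\la'\rra)$, applying $u\otimes u'$ moves the weights to $(\la-\mu,\la'+\mu)$, and $(f_\rho^\tw)^{-1}$ then contributes $\exp(\pi i(\rho+1)\lla\la-\mu,\la'+\mu\rra)$; multiplying these yields $\exp(\pi i(\rho+1)X)\,(uv)\otimes(u'w)$ with $X=\lla\la-\mu,\la'+\mu\rra-\lla\la,\la'\rra=\lla\mu,\la\rra-\lla\mu,\la'\rra-\lla\mu,\mu\rra$. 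On the right, $K_\mu v=(-q)^{\lla\mu,\la\rra}v$ and $K_{-\mu}(u'w)=(-q)^{-\lla\mu,\la'\rra-\lla\mu,\mu\rra}u'w$, so $(uK_\mu\otimes K_{-\mu}u')(v\otimes w)=(-q)^{X}\,(uv)\otimes(u'w)$. Since $-q=\exp(\pi i(\rho+1))$ gives $(-q)^{X}=\exp(\pi i(\rho+1)X)$, the two scalars agree, and extending linearly over all weight spaces finishes the proof. The computation is routine; the only real content is the type~B weight-shift check above, so I expect no serious obstacle beyond importing Lemma~\ref{lem_compatible}.
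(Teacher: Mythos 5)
Your proof is correct and follows essentially the same route as the paper, which states Lemma \ref{ftw_transform} without a separate argument, presenting it as the type~2 analogue of the computation proving Lemma \ref{f_transform}(1). Your explicit verification that $u\otimes u'$ shifts type~2 weight spaces correctly---which hinges on $\lla\mu,\al_i\rra\in 2\Z$ from Lemma \ref{lem_compatible}, i.e., precisely on the type~B hypothesis---is a detail the paper leaves implicit, and you identify its role accurately.
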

Let us define a linear map $R(\rho)^\tw$ by 
$$
R(\rho)^\tw = (\Theta\circ f_\rho^\tw)^{-1}:M^\tw\otimes 
N^\tw\rightarrow M^\tw\otimes N^\tw
$$
for all type 2 $U_q(\so)$-modules $M^\tw$ and $N^\tw$.
Then, by the above lemma,
$R(\rho)^\tw$ satisfies the axiom of an R matrix (R1-R3 in Section \ref{sec_bilinear})
as an operator on $C^\tw$ (see for example \cite[Section 3]{Ja}).
Denote by $C^\tw(\rho+1)$ the braided tensor category defined by $R(\rho)^\tw$.

We next consider Step (2).
We remark that among $\al_1,\al_2,\dots,\al_n$, only $\al_n$ is a short root.
\begin{prop}
There exist Hopf algebra isomorphisms
$\phi: U_q(\mathrm{so}(2n+1))\rightarrow U_{-q}(\mathrm{so}(2n+1))$
such that:
\begin{align*}
\phi(E_i)=E_i,\;\;\;\; \phi(F_i)&=F_{i},\;\;\;\; \phi(K_i)=K_i\;\;\;\;\;\; \text{ for }i=1,\dots,n-1
\end{align*}
and
\begin{align*}
\phi(E_n)=- E_n,\;\;\;\; \phi(F_n)&=F_{n},\;\;\;\; \phi(K_n)=K_n.
\end{align*}
\end{prop}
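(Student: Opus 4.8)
The plan is to verify directly that the prescribed assignment on generators extends to a well-defined algebra homomorphism $\phi\colon U_q(\so)\to U_{-q}(\so)$, that it is compatible with the Hopf structure, and that it is invertible. Writing the assignment uniformly as $\phi(E_i)=c_i E_i$, $\phi(F_i)=d_i F_i$, $\phi(K_i)=K_i$ with $c_i=d_i=1$ for $i<n$ and $c_n=-1$, $d_n=1$, I would check that each defining relation of $U_q(\so)$ is sent to the corresponding relation of $U_{-q}(\so)$. The group-like relations $K_0=1$ and $K_\al K_\be=K_{\al+\be}$ are immediate since $\phi$ fixes the $K_i$.

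Next I would treat the torus relations $K_\al E_\be K_{-\al}=q^{\lla\al,\be\rra}E_\be$ and its $F$-analogue. Applying $\phi$ replaces the scalar by $(-q)^{\lla\al,\be\rra}$, so the relation is preserved exactly when $(-1)^{\lla\al,\be\rra}=1$ for all $\al,\be\in\lQ$; this is precisely the content of Lemma \ref{lem_compatible} for type $B$, which is the structural reason the argument works only here. For the commutator relation $[E_\al,F_\be]=\delta_{\al\be}(K_\al-K_\al^{-1})/(q_\al-q_\al^{-1})$, I would use that $\lla\al_i,\al_i\rra=4$ for the long roots $\al_i$ ($i<n$), so $(-q)_{\al_i}=q^2=q_{\al_i}$ and $c_i d_i=1$ suffices; for the short root $\al_n$ one has $(-q)_{\al_n}=-q$, hence $(-q)_{\al_n}-(-q)_{\al_n}^{-1}=-(q-q^{-1})$, and matching denominators forces $c_n d_n=-1$, which is exactly the chosen signs $\phi(E_n)=-E_n$, $\phi(F_n)=F_n$.

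The remaining and most delicate step is the quantum Serre relations, which I expect to be the main obstacle. Here I would record the identity $\binom{m}{r}_{-q}=(-1)^{r(m-r)}\binom{m}{r}_q$, a consequence of $n_{-q}=(-1)^{n-1}n_q$. When the repeated root $\al$ is long, $q_\al=q^2$ is unchanged under $q\mapsto-q$ and $\phi$ rescales every monomial by a common factor, so the $U_q$ relation maps to a scalar multiple of the $U_{-q}$ relation and vanishes. The only genuinely new case is the ordered pair $(\al,\be)=(\al_n,\al_{n-1})$, where $a_{\al_n,\al_{n-1}}=-2$ and the relation is cubic in $E_n$; there $r(m-r)$ is even for every $r\in\{0,1,2,3\}$, so the relevant $q$-binomials are literally unchanged, while $\phi$ contributes only an irrelevant overall sign $(-1)^3$ coming from the three factors of $E_n$. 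Thus all Serre relations are preserved.

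Finally, compatibility with $\dD$, $\vep$ and $\dS$ is essentially automatic: the coproduct, counit and antipode formulas have the same shape in $U_q(\so)$ and $U_{-q}(\so)$, and since $\phi$ merely scales each generator while fixing the $K_i$, one verifies $\dD\circ\phi=(\phi\otimes\phi)\circ\dD$, $\vep\circ\phi=\vep$ and $\dS\circ\phi=\phi\circ\dS$ on generators by direct substitution. Invertibility then follows because the map $U_{-q}(\so)\to U_q(\so)$ defined by the same formulas is a two-sided inverse, applying the sign flip twice being the identity; this yields the desired Hopf algebra isomorphism.
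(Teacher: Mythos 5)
Your proof is correct and follows essentially the same route as the paper's: a direct check of the defining relations, hinging on exactly the same three facts — $\lla\al,\be\rra\in 2\Z$ on $\lQ$ (Lemma \ref{lem_compatible}) for the torus relations, $q_\al=(-q)_\al$ for long roots, and the invariance of the cubic Serre coefficients ($3_{-q}=3_q$, your identity $\binom{m}{r}_{-q}=(-1)^{r(m-r)}\binom{m}{r}_q$) for the pair $(\al_n,\al_{n-1})$. The paper merely states these points tersely as "an easy computation," while you additionally spell out the denominator matching that forces $c_nd_n=-1$ and the Hopf-structure and invertibility checks, which is a more complete write-up of the same argument.
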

\begin{proof}
The assertion follows from an easy computation.
The point is that $q_{\al_i}=q^{\frac{\lla\al_i,\al_i\rra}{2}}=q^2$ for any $i=1,2,\dots,n-1$
since they are long roots
and $\lla \al,\be \rra \in 2\Z$ for any $\al,\be \in \lQ$ (see Lemma \ref{lem_compatible}).
In particular, $q_{\al} = (-q)_{\al}$ for long roots.
The only non-trivial relation is 
	\[\sum_{r=0}^{1-a_{\al\be}} (-1)^r \left( \begin{matrix} 1-a_{\al\be} \\ r \end{matrix} \right)_{q_\al} E_\al^r E_\be E_\al^{1-a_{\al\be}-r} = 0,\]
for $\al = \al_n$ and $\be =\al_{n-1}$,
which follows from
$
\left( \begin{matrix} 3 \\ 1 \end{matrix} \right)_{-q} 
= 3_{-q}=(-1)^{3+1}3_{q} = \left( \begin{matrix} 3 \\ 1 \end{matrix} \right)_{q}$.
\end{proof}

\begin{rem}
We note that the above proposition is also applicable to the case of $A_1 = B_1$,
that is,
\begin{align*}
\phi: U_q(\mathrm{sl}_2)\rightarrow U_{-q}(\mathrm{sl}_2),\;\;\;\;
\phi(E)=-E,\;\;\;\;\;\phi(F)=F,\;\;\;\;\;\; \phi(K)=K.
\end{align*}
It is noteworthy that the Hopf algebras $U_q(\mathrm{sl}_2)$
and $U_{q'}(\mathrm{sl}_2)$ are isomorphic if and only if $q'=\pm q^{\pm}$ (see \cite[Proposition 6 in Section 3]{KS}).
\end{rem}


As shown in Section \ref{sec_isomorphism},
the pullback by $\phi$ induces an equivalence of categories between $U_q(\mathrm{so}(2n+1))\modu$
and $U_{-q}(\mathrm{so}(2n+1))\modu$,
but in general the pullback of a type 1 representation does not necessarily a type 1 representation.

In fact, let $M$ be a type 1 $U_{-q}(\mathrm{so}(2n+1))$-module
and $\phi^*M$ an $U_{q}(\mathrm{so}(2n+1))$-module defined by
$$
a \cdot_\phi m = \phi(a) \cdot m \;\;\;\;\;\;\;\;\;\fora a \in U_{q}(\mathrm{so}(2n+1)) \text{ and }m\in M.
$$
Then, we have
$$K_i \cdot_\phi v = (-q)^{\lla\al_i,\la\rra}v =(-1)^{\lla\al_i,\la\rra} q^{\lla\al_i,\la\rra}v
\text{ for }v \in M_{\la}.
$$
Hence, $\phi^*M$ is a type 2 module and $\phi$ define a functor
$\phi^*: (U_{-q}(\so),R(\rho+1))\modu \rightarrow C^\tw(\rho+1)$.
Then, the following lemma follows from a similar argument in Section \ref{sec_isomorphism}:
\begin{lem}
\label{tw_minus}
The Hopf algebra isomorphism $\phi:U_{q}(\mathrm{so}(2n+1)) \rightarrow U_{-q}(\mathrm{so}(2n+1))$
induces an equivalence between 
$C^\tw(\rho+1)$ and $(U_{-q}(\mathrm{so}(2n+1)),R(\rho+1))\modu$ as balanced braided tensor categories.
\end{lem}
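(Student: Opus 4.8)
The plan is to follow the template of Section~\ref{sec_isomorphism}, in particular the proofs of Proposition~\ref{op_isomorphism} and Proposition~\ref{psi_isomorphism}, adapting it to the type~2 setting. Since $\phi\colon U_q(\so)\to U_{-q}(\so)$ is a Hopf algebra isomorphism, the pullback $\phi^*$ is automatically an equivalence of monoidal abelian categories, and we have already observed that it carries type~1 $U_{-q}(\so)$-modules to type~2 $U_q(\so)$-modules, i.e.\ that it lands in $C^\tw$. Thus the only thing left to verify is that $\phi^*$ intertwines the braidings, namely that $R(\rho)^\tw$ acting on $\phi^*M\otimes\phi^*N$ agrees with $R(\rho+1)$ acting on $M\otimes N$. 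Since both R-matrices factor as $(\Theta\circ f)^{-1}$, I would compare the two factors separately on the common underlying vector space.

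For the $f$-factor: because $\phi(K_i)=K_i$, a weight vector $v\in M_\la$ of a type~1 $U_{-q}(\so)$-module satisfies $K_i\cdot_\phi v=(-q)^{\lla\al_i,\la\rra}v$, so $M_\la=(\phi^*M)_\la$ as a type~2 weight space. Hence $f_{\rho+1}$ on $M\otimes N$, which scales $M_\la\otimes N_\mu$ by $\exp(-\pi i(\rho+1)\lla\la,\mu\rra)$, coincides literally with $f_\rho^\tw$ on $\phi^*M\otimes\phi^*N$; this matching is precisely the reason the shift $\rho+1$ was built into the definition of $f_\rho^\tw$.

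The main step is the $\Theta$-factor, where I would show $(\phi\otimes\phi)(\Theta^q)=\Theta^{-q}$ as operators on type~1 $U_{-q}(\so)$-modules, via the uniqueness Theorem~\ref{thm_unique}. Set $\Gamma_\mu=(\phi\otimes\phi)(\Theta_\mu^q)$. The explicit formulas for $\phi$ show that it preserves the $\lQ$-grading and sends $U_q^\pm$ into $U_{-q}^\pm$, so $\Gamma_\mu\in U_{-q}(\so)_\mu^-\otimes U_{-q}(\so)_\mu^+$ and $\Gamma_0=1\otimes 1$. Moreover $\phi$ intertwines the comultiplications (it is a Hopf map) and commutes with $\tau$, as one checks on generators: $\tau$ fixes the $E$'s and $F$'s and inverts the $K$'s, and each of these relations is respected by $\phi$ despite the sign $\phi(E_n)=-E_n$, which $\tau$ leaves untouched. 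Consequently $\phi$ commutes with the twisted coproduct $\Delta^\tau$, and applying $\phi\otimes\phi$ to the defining relation $\Delta(u)\Theta^q=\Theta^q\Delta^\tau(u)$ of Proposition~\ref{theta_property} and writing $u=\phi^{-1}(u')$ yields $\Delta(u')\Gamma=\Gamma\Delta^\tau(u')$ for all $u'\in U_{-q}(\so)$; hence $\Gamma_\mu=\Theta_\mu^{-q}$ by Theorem~\ref{thm_unique}. Combining the two factors, $R(\rho)^\tw$ and $R(\rho+1)$ agree after pullback, so $\phi^*$ is a braided equivalence, and the balance is transported automatically because it is built from the same $\Theta$ and $f_\rho$ data, exactly as in Section~\ref{sec_isomorphism}. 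The only genuine subtlety — and the point I would double-check most carefully — is the grading and $\tau$-compatibility of $\phi$ in the presence of the asymmetric signs $\phi(E_n)=-E_n$, $\phi(F_n)=F_n$; everything else is a transcription of the untwisted argument.
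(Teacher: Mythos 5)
Your proposal is correct and is essentially the paper's own argument: the paper proves Lemma \ref{tw_minus} simply by invoking ``a similar argument in Section \ref{sec_isomorphism}'', and your write-up is precisely that argument carried out — the $f$-factors match by construction of $f_\rho^\tw$ (since $\phi(K_i)=K_i$ preserves weight spaces), and the $\Theta$-factors match via Theorem \ref{thm_unique} exactly as in the proof of Proposition \ref{psi_isomorphism}, using that $\phi$ respects the triangular decomposition, the $\lQ$-grading, $\Delta$, and $\tau$. Your verification of the $\tau$-compatibility on generators (including the sign $\phi(E_n)=-E_n$) is the right point to check, and it holds.
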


This completes step (1) and step (2). Next, we will show the last step,
that is, the functor $F:C^I\rightarrow C^\tw$ gives a braided tensor equivalence between $(C^I)^{Q_\so}$ and $C^\tw$.
For any $M \in C^I$, define a linear map $h_M:M\otimes \C\chi \rightarrow \C\chi\otimes M$
by
\begin{align*}
h_M(m_\la \otimes \chi)=\exp(\pi i (\la_n,\la))\chi\otimes m_\la
\end{align*}
for any $\la \in \lP$ and $m_\la \in M_\la$.
Then, $h_\bullet$ is a natural transformation by Lemma \ref{natural_h}.

We note that $C^\tw$ is a full subcategory of a strict monoidal category, the category of all $U_q(\so)$-modules, and thus $C^\tw$ is also strict. Hence, we can assume that $\chi^0\otimes M=M$ for any $M \in C^\tw$.
Define a $U_q(\so)$-module isomorphism 
$\epsilon_2:\C\chi\otimes \C\chi \rightarrow \C$ by $\epsilon_2(\chi \otimes \chi)=1$.

Let $M_i \in C_i^I$ and $N_j \in C_j^I$ for $i,j=0,1$.
Define a natural transformation
$g_{M_i,N_j}:F(M_i) \otimes F(N_j) \rightarrow F(M_i\otimes N_j)$
by
\begin{align*}
g_{M_0,N_0}&:
(\chi^0 \otimes M_0)\otimes (\chi^0\otimes N_0)
= \chi^0 \otimes M_0 \otimes N_0,\\
g_{M_1\otimes N_0}&:
(\chi^1 \otimes M_1)\otimes (\chi^0\otimes N_0)=
\chi^{1}\otimes M_1 \otimes N_0,\\
g_{M_0\otimes N_1}&:
(\chi^0 \otimes M_0) \otimes (\chi^1\otimes N_1)
\stackrel{\id_{\chi^0}\otimes h_{M_0} \otimes \id_{N_1}}{\longrightarrow}
\chi^{1}\otimes M_0\otimes N_1,\\
g_{M_1\otimes N_1}&:
(\chi^1 \otimes M_1)\otimes (\chi^1\otimes N_1)
\stackrel{\id_{\chi^1}\otimes h_{M_1} \otimes \id_{N_1}}{\longrightarrow}
\chi^{2}\otimes M_1\otimes N_1
\stackrel{\epsilon_2\otimes \id_{M_1\otimes N_1}}{\rightarrow}\chi^{0}\otimes M_1\otimes N_1.
\end{align*}

Then, we have:
\begin{prop}
\label{prop_tw_monoidal}
The functor $F: C^I\rightarrow C^\tw$ together with the natural transformation 
$g_{M,N}:F(M)\otimes F(N) \rightarrow F(M\otimes N)$
and $\epsilon:\1 = F(\1)$ is a monoidal functor
between $(C^I)^{Q_\so}$ and $C^\tw$.
\end{prop}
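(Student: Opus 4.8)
The plan is to verify the two monoidal-functor axioms (LM1) and (LM2) for the triple $(F,g,\epsilon)$, after first observing that $g$ and $\epsilon$ are isomorphisms, so that $F$ is in fact strong monoidal. Since $\epsilon$ is the identity $\1_{C^\tw}=\chi^0\otimes L_q(0)=F(\1_{C^I})$, and each component of $g$ is assembled from the module isomorphism $h_M$ (an isomorphism by Lemma \ref{natural_h}) and from $\epsilon_2$, invertibility of $g_{M,N}$ is immediate, while naturality of $g$ follows from naturality of $h$. I will use throughout that both $C^I=(U_q(\so),R(\rho))\modu$ and $C^\tw$ are \emph{strict} monoidal, so the only nontrivial associativity datum is the twist on the source: for homogeneous $M\in C_i^I$, $N\in C_j^I$, $L\in C_k^I$ one has associator $\om_Q(i,j,k)\,\id$ on $(C^I)^{Q_\so}$, where $(\om_Q,c_Q)$ is the chosen normalized abelian cocycle representing $\mathrm{EM}^{-1}(Q_\so)$.

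First I would pin down $\om_Q$ explicitly on $\lD\cong\Z/2\Z$. Since $Q_\so(\la_n)=i^n$, Lemma \ref{lem_existence_coboundary} together with Theorem \ref{thm_coho} lets me take the normalized representative with $\om_Q(i,j,k)=1$ unless $i=j=k=1$, and $\om_Q(1,1,1)=Q_\so(\la_n)^2=(-1)^n$. The unit axiom (LM2) is then immediate: as $\1_{C^I}\in C_0^I$, the components $g_{\1,M}$ and $g_{M,\1}$ contain no occurrence of $h$ and reduce to the strict identifications $\chi^0\otimes(-)=(-)$. It remains to establish (LM1).

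The heart of the proof is (LM1), which I would check on homogeneous triples $M\in C_i^I,\ N\in C_j^I,\ L\in C_k^I$ and on weight vectors $m_\la,n_\mu,l_\nu$. Both routes around the square yield the same vector $\chi^{i+j+k}\otimes m\otimes n\otimes l$, with the $\chi\otimes\chi$ factors collapsed by $\epsilon_2$, up to a scalar built from the eigenvalues $\kappa(\la)=\exp(\pi i\lla\la_n,\la\rra)$ of the maps $h$. The two facts I would exploit are that $\kappa$ is additive in the weight, $\kappa(\la+\mu)=\kappa(\la)\kappa(\mu)$, and that $\kappa(\la)^2=\exp(2\pi i\lla\la_n,\la\rra)$ depends only on $\deg\la\in\lD$ (because $\lla\la_n,\be\rra\in\Z$ for $\be\in\lQ$), with value $(-1)^n$ when $\deg\la=1$ and $1$ when $\deg\la=0$. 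A direct bookkeeping of the $h$-phases along each route then shows that the two routes differ by the scalar $\kappa(\la)^2$, which equals $1$ whenever not all of $i,j,k$ are $1$ and equals $(-1)^n$ when $i=j=k=1$; comparing with $\om_Q(i,j,k)$ gives precisely (LM1). The cases with one or two indices equal to $1$ are where weight-independence of $\kappa^2$ is used — e.g. for $(i,j,k)=(0,1,1)$ the lower route applies $h$ twice, producing $\kappa(\la)^2$ with $\la\in\lQ$, and this is $1=\om_Q(0,1,1)$ exactly because $\kappa^2$ is trivial on $\lQ$. The main obstacle is the case $i=j=k=1$: there the lower route applies $h$ once in $g_{M,N}$ and once in $g_{M\otimes N,L}$ (the latter on the combined weight $\la+\mu$), while the upper route applies it only once, so the routes differ by $\kappa(\la)\,\kappa(\la+\mu)/\kappa(\mu)=\kappa(\la)^2=(-1)^n$; this discrepancy must be absorbed by the associator twist $\om_Q(1,1,1)=(-1)^n$. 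It is exactly at this point that the value $Q_\so(\la_n)^2$ of the quadratic form, the double cancellation of $\chi\otimes\chi$ by $\epsilon_2$, and the weight-independence of $\kappa(\la)^2$ have to conspire; verifying this single scalar identity, together with the routine vanishing of phases in the remaining seven cases, completes the proof that $(F,g,\epsilon)\colon(C^I)^{Q_\so}\to C^\tw$ is a strong monoidal functor.
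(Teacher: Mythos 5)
Your proposal is correct and takes essentially the same route as the paper: both proofs verify (LM1) and (LM2) directly on weight vectors of homogeneous objects, using strictness of $C^I$ and $C^\tw$ and the explicit normalized cocycle with $\om_Q(1,1,1)=(-1)^n$ and $\om_Q$ trivial otherwise (the paper's Lemma \ref{lem_cocycle_explicit}), so that everything reduces to the scalar identity in which the discrepancy $\exp(2\pi i\lla\la_n,\la\rra)=(-1)^n$ on the odd coset is absorbed by the associator twist. The only blemish is your summary sentence asserting that the two routes always differ by $\kappa(\la)^2$ --- in fact they agree on the nose unless $j=k=1$ (e.g.\ for $(i,j,k)=(1,1,0)$ the difference is $1$, not $\kappa(\la)^2=(-1)^n$) --- but your subsequent case-by-case bookkeeping is the correct one and coincides with the paper's computation.
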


Before giving the proof, 
we remark that the value $\exp(\pi i p \lla\la_n,\la_n\rra)=\exp(\frac{\pi i np}{2})$ is not well-defined for
$p \in \Z/2\Z$, which is the source of the 3-cocycle $\al:(\Z/2\Z)^3\rightarrow \C^\times$.
In fact, let $\iota:\Z/2\Z \rightarrow \Z$ be a map defined by
$a\mapsto a$. Then, we have:
\begin{lem}
\label{lem_cocycle_explicit}
The explicit form of the abelian cocycle $(\al_n,c_n) \in Z_\ab^3(\Z/2\Z,\C^\times)$ such that 
$c(a,a)=Q_{\so}(a)$ for $a \in \Z/2\Z=\lP/\lQ$ can be give by
\begin{align*}
\al_n(a,b,c)&=\begin{cases}
(-1)^n & (a=b=c=1)\\
1 & (\text{otherwise})\\
\end{cases}\\
&=\exp\left(\pi i a\frac{\iota(b)+\iota(c)-\iota(a+b)}{2}\right),\\
c_n(a,b)&=\begin{cases}
i^n & (a=b=1) \\
1 & (\text{otherwise}).
\end{cases}
\end{align*}
\end{lem}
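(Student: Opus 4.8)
The plan is to verify by hand that the displayed pair $(\al_n,c_n)$ lies in $Z_\ab^3(\lP/\lQ,\C^\times)$ and induces the quadratic form $Q_{\so}$; once this is done, Theorem \ref{thm_coho} (the Eilenberg--Mac Lane isomorphism) guarantees that it represents $\mathrm{EM}^{-1}(Q_{\so})$, so no further identification is required. First I would compute $Q_{\so}$ on the two-element group $\lD=\lP/\lQ$. For type $B_n$ this group is generated by $\la_n$, and with the normalization $\lla\la_n,\la_n\rra=n/2$ recalled above one gets $Q_{\so}(\la_n)=\exp(\pi i\lla\la_n,\la_n\rra)=\exp(\pi i n/2)=i^n$ together with $Q_{\so}(0)=1$. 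Thus the requirement $c_n(a,a)=Q_{\so}(a)$ is literally the definition of $c_n$, namely $c_n(1,1)=i^n$ and $c_n(0,0)=1$.

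Next I would check the three abelian cocycle identities \eqref{eq_ab_co}. Both $\al_n$ and $c_n$ are normalized, equal to $1$ as soon as one argument is $0$, so each identity is automatic unless every argument appearing equals $1$; the whole verification therefore reduces to finitely many evaluations over $\{0,1\}$. The decisive case is the second identity of \eqref{eq_ab_co} at $\al_1=\al_2=\al_3=1$: since $\al_2+\al_3=0$ and $c_n(1,0)=1$, it collapses to $\al_n(1,1,1)^2=c_n(1,1)^2\,\al_n(1,1,1)$, which forces $\al_n(1,1,1)=c_n(1,1)^2=(i^n)^2=(-1)^n$, exactly the value asserted. With this value installed, the remaining cases of the pentagon-type identity and of the two hexagon-type identities are immediate; for instance the all-ones pentagon reads $\al_n(0,1,1)\,\al_n(1,1,0)=\al_n(1,1,1)\,\al_n(1,0,1)\,\al_n(1,1,1)$, i.e.\ $1=(-1)^{2n}$. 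This establishes $(\al_n,c_n)\in Z_\ab^3(\Z/2\Z,\C^\times)$.

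It then remains to match the case-split formula for $\al_n$ with the closed form. Writing $\iota$ for the integer lift and using that the carry $\iota(b)+\iota(c)-\iota(b+c)$ takes the value $2$ precisely when $b=c=1$ and $0$ otherwise, the exponential --- carrying the factor $\lla\la_n,\la_n\rra=n/2$ that supplies the $n$-dependence --- evaluates to $\exp(\pi i\,\tfrac n2\cdot 2)=(-1)^n$ exactly at $(a,b,c)=(1,1,1)$ (where also $a=1$) and to $1$ in every other case, reproducing the case split. This closed form is not accidental: it is the associativity discrepancy that appears when the phase $\exp(\pi i(\la_n,\la))$ of the natural transformation $h_M$ of Lemma \ref{natural_h}, which is used to build the constraints $g_{M,N}$ of the functor $F$, is evaluated through a lift of $\lD$ to $\lP$; this is why this representative is the one that governs the twist $(C^I)^{Q_{\so}}$ in Proposition \ref{prop_tw_monoidal}.

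There is no genuine obstacle here: on a group of order two everything is a finite check. The only points that require care are keeping the normalization $\lla\la_n,\la_n\rra=n/2$ straight (without it $c_n(1,1)$ and hence $Q_{\so}$ would be wrong), and reading the integer lift in the carry correctly so that the closed form yields the diagonal value $(-1)^n$ rather than an $n$-independent sign.
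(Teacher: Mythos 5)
Your proof is correct. In fact the paper gives no proof of this lemma at all --- it is stated as a bare computation immediately before the proof of Proposition \ref{prop_tw_monoidal} --- and your direct verification (normalization kills every instance of \eqref{eq_ab_co} except the all-ones ones; the all-ones case of the second identity forces, and is satisfied by, $\al_n(1,1,1)=c_n(1,1)^2=(-1)^n$; the remaining all-ones cases reduce to $1=(-1)^{2n}$), together with the appeal to Theorem \ref{thm_coho} to identify the class as $\mathrm{EM}^{-1}(Q_{\so})$, is precisely the argument left implicit. One thing you should state more loudly: the closed formula printed in the lemma, $\exp\left(\pi i a\frac{\iota(b)+\iota(c)-\iota(a+b)}{2}\right)$, is not literally correct --- it contains no $n$, and at $(a,b,c)=(1,1,0)$ it evaluates to $i$ rather than $1$ --- so it cannot coincide with the case-split definition; your reading, with the carry $\iota(b)+\iota(c)-\iota(b+c)$ and the factor $\lla\la_n,\la_n\rra=\tfrac{n}{2}$ supplying the $n$-dependence, i.e.\ $\al_n(a,b,c)=\exp\left(\pi i\,n\,a\,\frac{\iota(b)+\iota(c)-\iota(b+c)}{2}\right)$, is the correct repair, and it is exactly the form needed where the lemma is invoked in the proof of Proposition \ref{prop_tw_monoidal}. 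So you have not only supplied the missing proof but also fixed a typo in the statement.
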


\begin{proof}[proof of Proposition \ref{prop_tw_monoidal}]
We will verify the conditions (LM1) and (LM2) in Section \ref{sec_braided}.
Since both $C^I$ and $C^\tw$ are strict monoidal categories,
the associative isomorphisms are trivial before twisting.
Let $p_i \in \Z/2\Z$ and $M_i \in C_{p_i}^I$, $\be_i \in \lP$, and $v_i \in (M_i)_{\be_i}$ for $i=1,2,3$.
Then, we have
\begin{align*}
g_{M_1\otimes M_2,M_3} &\circ (g_{M_1,M_2}\otimes \id_{M_3})
\left((\chi^{p_1} \otimes v_1 \otimes \chi^{p_2} \otimes v_2)\otimes \chi^{p_3} \otimes v_3\right)\\
&=g_{M_1\otimes M_2,M_3}
(\exp(p_2\pi i \lla \la_n,\be_1\rra) (\chi^{p_1+p_2} \otimes v_1 \otimes \otimes v_2) \otimes \chi^{p_3} \otimes v_3)\\
&= \exp(\pi i (p_2\lla \la_n,\be_1\rra+p_3\lla \la_n,\be_1+\be_2 \rra)
 (\chi^{p_1+p_2+p_3} \otimes (v_1 \otimes \otimes v_2) \otimes v_3
\end{align*}
and
\begin{align*}
g_{M_1,M_2\otimes M_3} &\circ (\id_{M_1}\otimes g_{M_2,M_3})
( \chi^{p_1} \otimes v_1 \otimes (\chi^{p_2} \otimes v_2 \otimes \chi^{p_3} \otimes v_3))\\
&=g_{M_1,M_2\otimes M_3}(
\exp(\pi i p_3 \lla \la_n,\be_2 \rra)
 \chi^{p_1} \otimes v_1 \otimes (\chi^{p_2+p_3} \otimes v_2 \otimes v_3)
)\\
&=
\exp(\pi i (p_3 \lla \la_n,\be_2 \rra+\iota(p_2,p_3)\lla \la_n,\be_1\rra)
 \chi^{p_1+p_2+p_3} \otimes v_1 \otimes (v_2 \otimes v_3).
\end{align*}
Thus, in order to verify (LM1), it suffices to show that
\begin{align*}
\al(p_1,p_2,p_3)\exp(\pi i (p_2\lla \la_n,\be_1\rra+p_3\lla \la_n,\be_1+\be_2 \rra)
=\exp(\pi i (p_3 \lla \la_n,\be_2 \rra+\iota(p_2,p_3)\lla \la_n,\be_1\rra),
\end{align*}
which follows from Lemma \ref{lem_cocycle_explicit}.
(LM2) is obvious. Hence, the assertion holds.
\end{proof}

Finally, we will prove that $F:C^I(\rho)^{Q_\so}\rightarrow C^\tw(\rho+1)$ is a braided monoidal functor.
Let $p_i \in \Z/2\Z$ and $M_i \in C_{p_i}^I$, $\be_i \in \lP$, and $m_i \in (M_i)_{\be_i}$ for $i=1,2$.
It suffices to show that
the following diagram commutes:
\begin{align*}
\begin{array}{ccc}
    F(M_1) \otimes_{{\tw}}  F(M_2)
      &\overset{(B_{F(M_1),F(M_2)}^\tw)^{-1}}{\longleftarrow}&
    F(M_2) \otimes_{{\tw}}  F(M_1)
    \\
    {}^{{g_{M_1,M_2}}}\downarrow 
      && 
    \downarrow^{{g_{M_2,M_1}}}
    \\
    F(M\otimes_{{I}} M_2) 
      &\overset{c_n(p_1,p_2)F(B_{M_1,M_2}^I)^{-1}}{\longleftarrow}&
    F(M_2\otimes_{{I}} M_1),
\end{array}
\end{align*}
where $c_n(p_1,p_2)$ is given in Lemma \ref{lem_cocycle_explicit}.
Recall $\Theta=\sum_{\mu \geq 0}\Theta_\mu$ and 
$\Theta_\mu=\sum_{i=0}^{r(\mu)}v_i^\mu \otimes u_i^\mu  \in U_\mu^-\hat{\otimes}U_\mu^+$ (see Section \ref{sec_bilinear}).
Then, we have:
\begin{align*}
g_{M_1,M_2} &\circ (B_{F(M_1),F(M_2)}^\tw)^{-1} (\chi^{p_2} \otimes m_2)\otimes (\chi^{p_1} \otimes m_1)\\
&=g_{M_1,M_2} \circ \Theta \circ f_\rho^\tw\circ P_{21} (\chi^{p_2} \otimes v_2)\otimes (\chi^{p_1} \otimes v_1)\\
&= 
\exp(-\pi i (\rho+1)\lla \be_1,\be_2 \rra) \sum_{\mu \geq 0}\sum_{i=0}^{r(\mu)} 
g_{M_1,M_2}  (v_i^\mu \cdot (\chi^{p_1} \otimes m_1)) \otimes (u_i^\mu \cdot (\chi^{p_2} \otimes m_2)).
\end{align*}
Since $\Delta(E_i)=E_i \otimes 1 + K_i\otimes E_i$ and $\Delta(F_i)=F_i\otimes K_i^{-1}+1\otimes F_i$,
we have
\begin{align*}
(v_i^\mu \cdot (\chi^{p_1} \otimes m_1)) \otimes (u_i^\mu \cdot (\chi^{p_2} \otimes m_2))
&=(\chi^{p_1} \otimes v_i^\mu \cdot m_1) \otimes (K_\mu\cdot \chi^{p_2} \otimes u_i^\mu  \cdot m_2)\\
&=(-1)^{p_2 \lla \la_n,\mu \rra}
(\chi^{p_1} \otimes v_i^\mu \cdot m_1) \otimes (\chi^{p_2} \otimes u_i^\mu  \cdot m_2).
\end{align*}
Hence, we have:
\begin{align*}
&g_{M_1,M_2}\circ (B_{F(M_1),F(M_2)}^\tw)^{-1} (\chi^{p_2} \otimes m_2)\otimes (\chi^{p_1} \otimes m_1)\\
&= \exp(\pi i (\rho+1)\lla \be_1,\be_2 \rra) \sum_{\mu \geq 0}\sum_{i=0}^{r(\mu)} 
\exp(\pi i \iota(p_2) \lla \la_n, \be_1+\mu \rra)
(-1)^{p_2\lla \la_n,\mu \rra}
(\chi^{p_1+p_2} \otimes v_i^\mu \cdot m_1 \otimes u_i^\mu  \cdot m_2)\\
&= \exp(\pi i (\rho+1)\lla \be_1,\be_2 \rra)
\exp(\pi i \iota(p_2) \lla \la_n, \be_1\rra)
 \sum_{\mu \geq 0}\sum_{i=0}^{r(\mu)} 
(\chi^{p_1+p_2} \otimes v_i^\mu \cdot m_1 \otimes u_i^\mu  \cdot m_2).
\end{align*}
Similarly, we have
\begin{align*}
&c_n(p_1,p_2)F(B_{M_1,M_2}^I)^{-1} \circ g_{M_1,M_2} (\chi^{p_2} \otimes m_2)\otimes (\chi^{p_1} \otimes m_1)\\
&=c_n(p_1,p_2)\exp(\pi i \iota(p_1)\lla \la_n, \be_2 \rra) F(B_{M_1,M_2}^I)^{-1}
(\chi^{p_1+p_2} \otimes m_2 \otimes m_1)\\
&=c_n(p_1,p_2)\exp(\pi i \iota(p_1)\lla \la_n, \be_2 \rra) 
\chi^{p_1+p_2} \otimes 
\left( 
\Theta \circ f_\rho \circ P_{21} (m_2 \otimes m_1)\right)\\
&=c_n(p_1,p_2)\exp(\pi i \iota(p_1)\lla \la_n, \be_2 \rra) 
\exp(-\pi i \rho (\be_1,\be_2))
\sum_{\mu \geq 0}\sum_{i=0}^{r(\mu)}
\chi^{p_1+p_2} \otimes 
(v_i^\mu \cdot m_1 \otimes u_i^\mu m_2).
\end{align*}
Thus, the proof of the conjecture comes down to the following Lemma:
\begin{lem}
If $(M_i)_{\be_i} \neq 0$ for $i=1,2$,
then
\begin{align*}
c_n(p_1,p_2)= \exp(\pi i (\lla \be_1,\be_2 \rra + \iota(p_2) \lla \la_n, \be_1\rra - \iota(p_1)\lla \la_n, \be_2 \rra).
\end{align*}
\end{lem}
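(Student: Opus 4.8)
The plan is to prove the lemma as a self-contained identity of scalars, by evaluating the exponent modulo $2\Z$ and comparing with the explicit value of $c_n$ from Lemma \ref{lem_cocycle_explicit}. First I would use the grading to pin down the weights: since $(M_i)_{\be_i}\neq 0$ and $M_i\in C_{p_i}^I$, every weight of $M_i$ lies in $\iota(p_i)\la_n+\lQ$, so I may write $\be_i=\iota(p_i)\la_n+\ga_i$ with $\ga_i\in\lQ$ for $i=1,2$. Setting $a_i=\iota(p_i)\in\{0,1\}$, the whole claim reduces to computing
\[
X=\lla\be_1,\be_2\rra+a_2\lla\la_n,\be_1\rra-a_1\lla\la_n,\be_2\rra
\]
modulo $2\Z$, since only $\exp(\pi i X)$ is relevant.

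Next I would expand $X$ using bilinearity and symmetry of $\lla-,-\rra$. Substituting $\be_i=a_i\la_n+\ga_i$, the three contributions proportional to $\lla\la_n,\la_n\rra$ combine (with coefficient $1+1-1=1$) into a single term, the two terms of the form $a_1\lla\la_n,\ga_2\rra$ cancel, and the surviving cross terms collect into $2a_2\lla\la_n,\ga_1\rra$. One is left with
\[
X=a_1a_2\lla\la_n,\la_n\rra+2a_2\lla\la_n,\ga_1\rra+\lla\ga_1,\ga_2\rra.
\]

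The key step is then to discard the last two terms modulo $2\Z$. Since $\la_n=\frac12(e_1+\dots+e_n)$ and $\lla\la_n,e_j\rra=1$ under the normalization $\lla e_j,e_j\rra=2$, one gets $\lla\la_n,\ga\rra\in\Z$ for every $\ga\in\lQ=\bigoplus_j\Z e_j$, hence $2a_2\lla\la_n,\ga_1\rra\in 2\Z$; and by Lemma \ref{lem_compatible} — the defining feature of type B — one has $\lla\ga_1,\ga_2\rra\in 2\Z$. Therefore $X\equiv a_1a_2\lla\la_n,\la_n\rra=\frac{n}{2}a_1a_2\pmod{2\Z}$, using $\lla\la_n,\la_n\rra=\frac{n}{2}$, so that $\exp(\pi i X)=i^{na_1a_2}$. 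This equals $i^n$ precisely when $p_1=p_2=1$ and equals $1$ otherwise, which is exactly $c_n(p_1,p_2)$ as recorded in Lemma \ref{lem_cocycle_explicit}.

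I expect the only genuinely delicate point to be the bookkeeping in the expansion that isolates $a_1a_2\lla\la_n,\la_n\rra$ as the sole surviving term; everything else is routine. Conceptually, the crux is that \emph{both} vanishing facts — the integrality of $\lla\la_n,\ga\rra$ and the evenness $\lla\ga_1,\ga_2\rra\in 2\Z$ — hold only because $\g$ is of type B, which is exactly the reason the whole construction (and this final compatibility of the braiding) does not extend to the other types.
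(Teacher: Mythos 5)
Your proof is correct and follows essentially the same route as the paper: the paper shows that the right-hand side, viewed as a function $k(\be_1,\be_2)$ on $\lP\times\lP$, is invariant under $\lQ$-translation in each argument (using Lemma \ref{lem_compatible} and the integrality of $\lla\la_n,-\rra$ on $\lQ$) and then evaluates it at the coset representatives $0,\la_n$, whereas you carry out the equivalent computation by substituting $\be_i=\iota(p_i)\la_n+\ga_i$ and expanding bilinearly so that the $\ga_i$-dependent terms land in $2\Z$. The key inputs are identical in both arguments: the hypothesis $(M_i)_{\be_i}\neq 0$ forcing $\be_i\in\iota(p_i)\la_n+\lQ$, the type-B evenness $\lla\lQ,\lQ\rra\subset2\Z$, and the final comparison of $\exp(\pi i\lla\la_n,\la_n\rra)=i^n$ with Lemma \ref{lem_cocycle_explicit}.
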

\begin{proof}
Let $k:\lP\times \lP \rightarrow \C^\times$ be a map defined by 
$k(\be_1,\be_2)=\exp(\pi i (\lla \be_1,\be_2 \rra + \iota(\be_2) \lla \la_n, \be_1\rra - \iota(\be_1)\lla \la_n, \be_2 \rra)$, where $\iota:\lP\rightarrow \lP/\lQ = \{0,1\}$ is defined by the composition of the projection
and the identification.

We claim that 
$k(\be_1+\al,\be_2)=k(\be_1,\be_2+\al)=k(\be_1,\be_2)$ for any $\al \in \lQ$.
The difference $k(\be_1+\al,\be_2)k(\be_1,\be_2)^{-1}$
is equal to $\exp(\pi i (\lla \be_2,\al \rra+\iota(\be_2)\lla \la_n,\al \rra))$.
Thus, if $\be_2 \in \lQ$ i.e., $\iota(\be_2)=0$, then 
$k(\be_1+\al,\be_2)k(\be_1,\be_2)^{-1}$ is equal to $1$ by Lemma \ref{lem_compatible}.
Similarly, if $\iota(\be_2)=1$, then 
$k(\be_1+\al,\be_2)k(\be_1,\be_2)^{-1}
=\exp(\pi i (\lla \be_2,\al \rra+\lla \la_n,\al \rra))
=\exp(\pi i (\lla \la_n,\al \rra+\lla \la_n,\al \rra))=1,$
thus the claim is proved.

Since $k(0,0)=k(\la_n,0)=k(0,\la_n)=1$ and $k(\la_n,\la_n)=i^n$,
the assertion follows from Lemma \ref{lem_cocycle_explicit}.
\end{proof}
Hence, we have:
\begin{thm}
\label{thm_B}
The composition of $F$ and $\phi^*$ gives a braided monoidal equivalence
between $(U_q(\so),R(\rho))\modu^{Q_\so}$
and $(U_{-q}(\so),R(\rho))\modu$ for any $n \geq 1$.
\end{thm}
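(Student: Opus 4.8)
The plan is to assemble the three-step program carried out above into a single chain of braided equivalences. The desired functor factors through the category $C^\tw(\rho+1)$ of type 2 modules:
$$
(U_q(\so),R(\rho))\modu^{Q_\so}=C^I(\rho)^{Q_\so}\xrightarrow{\,F\,}C^\tw(\rho+1)\xleftarrow{\,\phi^*\,}(U_{-q}(\so),R(\rho+1))\modu,
$$
so it suffices to show that each of the two outer functors is a braided monoidal equivalence and then transport along $(\phi^*)^{-1}\circ F$.

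First I would dispose of the right-hand functor using the machinery already in place: Lemma \ref{tw_minus} shows that $\phi:U_q(\so)\rightarrow U_{-q}(\so)$ sends type 1 $U_{-q}(\so)$-modules to type 2 $U_q(\so)$-modules and that $\phi^*$ is a balanced braided equivalence between $(U_{-q}(\so),R(\rho+1))\modu$ and $C^\tw(\rho+1)$; in particular $\phi^*$ is invertible as a braided functor. For the left-hand functor, Proposition \ref{prop_tw_monoidal} already gives that $F$, equipped with the structure isomorphisms $g_{M,N}$ and $\epsilon$, is a monoidal equivalence from $(C^I)^{Q_\so}$ onto $C^\tw$ (it is an equivalence of abelian categories, and the $g_{M,N},\epsilon$ are isomorphisms). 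The braiding compatibility is exactly the commuting square preceding the statement, which reduces to the final Lemma: the identity $c_n(p_1,p_2)=\exp(\pi i(\lla\be_1,\be_2\rra+\iota(p_2)\lla\la_n,\be_1\rra-\iota(p_1)\lla\la_n,\be_2\rra))$ makes $F$ intertwine the twisted type 1 braiding with $B^\tw$. Hence $F:C^I(\rho)^{Q_\so}\rightarrow C^\tw(\rho+1)$ is a braided monoidal equivalence, and composing with $(\phi^*)^{-1}$ proves the theorem.

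The genuine content is not this final assembly but the verifications behind Proposition \ref{prop_tw_monoidal} and the final Lemma, namely that the associativity constraint forced by tensoring with the characters $\chi$ reproduces precisely the abelian cocycle $\mathrm{EM}^{-1}(Q_\so)$. The source of the difficulty is that $\exp(\pi i\,p\,\lla\la_n,\la_n\rra)=\exp(\tfrac{\pi i n p}{2})$ is ill-defined for $p\in\Z/2\Z$, since $\lla\la_n,\la_n\rra=n/2\notin\Z$; one must choose an integral lift $\iota$, and the ambiguity between lifts is exactly the 3-cocycle $\al_n$ of Lemma \ref{lem_cocycle_explicit}. Coherently tracking this lift through the hexagon (LM1) and checking that the resulting symmetric part $c_n$ equals $Q_\so$ on the diagonal is the delicate step — and it is precisely this nontriviality that forces the appearance of the twist $Q_\so$ rather than the naive untwisted category $C^I(\rho)$.
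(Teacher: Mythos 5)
Your proposal is correct and follows essentially the same route as the paper: the theorem is exactly the assembly of the appendix's three steps, with Lemma \ref{tw_minus} handling $\phi^*$, Proposition \ref{prop_tw_monoidal} handling the monoidal structure of $F$, and the braiding square reducing to the final lemma identifying $c_n$ with the coboundary-corrected phase, after which one composes $F$ with a quasi-inverse of $\phi^*$. You also correctly locate the real content (the cocycle $\al_n$ forced by the ill-definedness of $\exp(\pi i\,p\,\lla\la_n,\la_n\rra)$ for $p\in\Z/2\Z$), which is precisely what the paper's appendix computations establish.
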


\end{document}